\newenvironment{eq}{\begin{equation}}{\end{equation}}
\newenvironment{proof}{{\bf Proof}:}{\vskip 5mm }
\newtheorem{proposition}{Proposition}[section]
\newtheorem{lemma}[proposition]{Lemma}
\newtheorem{definition}[proposition]{Definition}
\newtheorem{theorem}[proposition]{Theorem}
\newtheorem{cor}[proposition]{Corollary}
\newtheorem{example}[proposition]{Example}
\newtheorem{examples}[proposition]{Examples}
\newtheorem{remark}[proposition]{Remark}
\newcommand{\llabel}[1]{\label{#1}}
\newcommand{\comment}[1]{}
\newcommand{\sr}{\rightarrow}
\newcommand{\bdl}{\bar{\Delta}}
\newcommand{\tdl}{\bar{\Delta}}
\newcommand{\zz}{{\bf Z\rm}}
\newcommand{\oo}{\otimes}
\newcommand{\af}{{\bf A}^1}
\newcommand{\dsr}{\stackrel{\sr}{\scriptstyle\sr}}
\newcommand{\BB}{_{\bullet}}
\newcommand{\RR}{{\bf R}}
\newcommand{\LL}{{\bf L}}
\begin{document}

\begin{center}
{\Large\bf Simplicial radditive functors}\\ 
\vspace{3mm} 

{\large\bf Vladimir Voevodsky}\footnote{School of Mathematics, Institute for Advanced Study,
Princeton NJ, USA. e-mail: vladimir@ias.edu}\\
\vspace{3mm}

{September 2009}\\
\end{center}
\tableofcontents
\parskip= 0.2in
\section{Introduction}
Let us start with the following observation:

{\em Let $F:Sets\sr Sets$ be any functor which commutes with directed colimits. Then its extension to the category of simplicial sets takes weak equivalences to weak equivalences} 

The goal of this paper is construct a framework which can be used to proof results of this kind for a wide class of closed model categories and functors between those categories. Originally I was interested in the proof that the symmetric power functors respect $\af$-equivalences between simplicial schemes but it soon became clear that similar problems arise for other categories (such as finite correspondences or pointed schemes) and other functors (such as the forgetting functor from correspondences to schemes) and that a new toolbox is required to tackle these problems. 

Let $C$ be a category with finite coproducts $\amalg$ and an initial object $\emptyset$. A contravariant functor $F:C\sr Sets$ is called radditive if $F(\emptyset)=pt$ and for any $X, Y$ in $C$ the natural map $F(X\amalg Y)\sr F(X)\times F(Y)$ is a bijection. In model theory, categories which are equivalent to the categories of radditive functors are known as finitary varieties.  Examples of such categories include categories of presheaves, categories of pointed presheaves and categories of additive functors on additive categories.
 
The category of simplicial objects in the category $Rad(C)$ of radditive functors on $C$ carries a natural finitely generated simplicial c.m.s. called the projective c.m.s. Let $H(C)$ be its homotopy category. For any set of morphisms $E$ in $\Delta^{op}Rad(C)$ one defines in the usual way the class of (left) $E$-local equivalences $cl_l(E)$. The localization $H(C)[cl_l(E)^{-1}]$ always exists  and we denote it by $H(C,E)$.  If the projective c.m.s. is left proper then by Smith's localization theorem $cl_l(E)$ is the class of weak equivalences of $E$-local local c.m.s. which is the (left) Bousfield localization of the projective c.m.s. but many important results about $H(C,E)$ hold without the left properness assumption.

Let $C^{\#}$ be the full subcategory in $Rad(C)$ which consists of directed colimits of representable functors. Any continuous functor (i.e. a functor which commutes with directed colimits) $F:C^{\#}\sr (C')^{\#}$ defines a functor $F^{rad}:Rad(C)\sr Rad(C)$ which is not a left adjoint and does not commute with colimits unless $F$ commutes with coproducts. Given two sets of morphisms $E$ and $E'$ in $\Delta^{op}Rad(C)$  and $\Delta^{op}Rad(C')$ respectively, we want to find a natural condition on $E$, $E'$ and $F$ and a sufficiently wide class of objects in $\Delta^{op}Rad(C)$ such that the simplicial extension of $F^{rad}$ takes $E$-local equivalences between objects of this class to $E$-local equivalences.

Since $F^{rad}$ does not commute with colimits the class of cofibrant objects which is usually considered in the constructions of homotopy derived functors does not play any particular role in our approach. Instead one considers the class $\Delta^{op}C^{\#}$. The standard cofibrant replacement functor $Cof$ of the projective c.m.s. takes values in $\Delta^{op}C^{\#}$ which implies that any cofibrant object is a retract of an object in $\Delta^{op}C^{\#}$ but most objects of $\Delta^{op}C^{\#}$ are neither fibrant nor cofibrant. Since any object of $\Delta^{op}Rad(C)$ is projectively equivalent to an object of $\Delta^{op}C^{\#}$ we may consider $H(C)$ as a localization of the later category. 

Corollary \ref{2009forin} of our first functoriality theorem asserts that projective equivalences between objects of $\Delta^{op}C^{\#}$ are preserved by the simplicial extension of $F$ for any continuous functor $F:C^{\#}\sr (C')^{\#}$. This is a direct generalization of the result about simplicial sets mentioned at the beginning to categories of radditive functors. From it we conclude that any continuous $F:C^{\#}\sr (C')^{\#}$ defines in a natural way a functor 
$${\bf L}F^{rad}:H(C)\sr H(C').$$

Given sets of morphisms $E$ and $E'$ in $\Delta^{op}Rad(C)$ and $\Delta^{op}Rad(C')$ respectively we may now ask for a natural condition on $E$, $E'$ and $F$ which would guarantee that ${\bf L}F^{rad}$ takes $E$-local equivalences to $E'$-local equivalences. It is done in Theorem \ref{2007el1} which asserts that for a continuous functor $F:C^{\#}\sr (C')^{\#}$ and a set of morphisms $E$ in $\Delta^{op}C^{\#}$ such that for any $f\in E$ and $X\in C$ the morphism $F(f\coprod Id_X)$ is in $cl_l(E')$ one has 
\begin{eq}
\llabel{2009eq1}
F(cl_l(E)\cup \Delta^{op}C^{\#})\subset cl_l(E')
\end{eq}
and in particular
\begin{eq}
\llabel{2009eq2}
{\bf L}F^{rad}(cl_l(E))\subset cl_l(E').
\end{eq}
When $F$ is a functor $C\sr C'$ which commutes with finite coproducts the functor $F^{rad}$ has a right adjoint $F_{rad}$ and Theorem \ref{2007eadj} asserts that under the obvious necessary conditions $F^{rad}$ respects $E$-local equivalences between objects from $\Delta^{op}C^{\#}$ and $F_{rad}$ respects $E'$-local equivalences between all objects of $\Delta^{op}Rad(C')$. 

The main technical tool which we use in the proofs is the notion of a $\tdl$-closed class of morphisms in the category of $\Delta^{op}C$ of simplicial objects over a category $C$ (see Definition \ref{2009td}) and the related notion of $\tdl$-closure $cl_{\tdl}(E)$. As follows immediately from their definition, $\tdl$-closures commute with simplicial extensions of all continuous functors. 

Our functoriality results are obtained from this property of $\tdl$-closure and Theorems \ref{2009main} and \ref{2009main2} which express $E$-local equivalences in $\Delta^{op}C^{\#}$ and $\Delta^{op}Rad(C)$ respectively in terms of $\tdl$-closures.

I am very grateful to Charles Weibel who made a great number of useful suggestions both for the original version of the paper and for the new one.

\section{Elementary properties of $\Delta$-closed classes}
\subsection{$\Delta$-closed classes}
\label{section1}
Let $C$ be a category and $\Delta^{op}C$ the category of simplicial
objects over $C$. Following \cite{Swan2}, define a unit homotopy
from a morphism $f:A\sr B$ to a morphism $g:A\sr B$ in $\Delta^{op}C$
as a collection of morphisms $h_i^n:A_n\sr B_n$ where $n\ge 0$ and
$i=-1,\dots,n$ satisfying the following conditions:
\begin{enumerate}
\item $h^n_{-1}=f_n$, $h^n_n=g_n$ where $f_n$ and $g_n$ are the
components of $f$ and $g$
\item $\partial_ih_j=h_{j-1}\partial_i$ if $i\le j$,
$\partial_ih_j=h_{j}\partial_i$ if $i>j$
\item $s_ih_j=h_{j+1}s_i$ if $i\le j$, $s_ih_j=h_js_i$ if $i>j$.
\end{enumerate}
If $C$ has coproducts (resp. finite coproducts), $K$ is a set (resp. a finite set) and $X$ an object of
$C$ we let $X\oo K=\amalg_K X$ denote the coproduct of $K$ copies of $X$. Similarly for a
simplicial set (resp. finite simplicial set) $K$ and an object $X$ of $\Delta^{op}C$ we let   $X\oo K$ denote the simplicial object with terms $X_n\oo K_n$. 
\begin{example}\rm
If $C$ is the category of sets then $X\oo K=X\times K$. If $C$ is the category of pointed sets then $X\oo K=X\wedge (K_+)$.
\end{example}
One
verifies easily (see \cite[Prop. 2.1]{Swan2}) that if $C$ has finite coproducts then a unit homotopy from $f$
to $g$ in the sense of the definition given above is the same as a
morphism $h:A\oo\Delta^1\sr B$ such that $h\circ
(Id\oo\partial_0)=f$ and $h\circ
(Id\oo\partial_1)=g$. 

Two morphisms are called homotopic if they can be connected by a chain
of unit homotopies (going in either direction). A morphism
$f:A\sr B$ in $\Delta^{op}C$ is called a homotopy equivalence if there
exists a morphism $g:B\sr A$ such that the compositions $gf$ and $fg$
are homotopic to the corresponding identity morphisms.
\begin{definition}
\llabel{fd} Let $\cal C$ be a category. A
class $E$ of morphisms in $\Delta^{op}{\cal C}$ is called
$\Delta$-closed if the following conditions hold.
\begin{enumerate}
\item All  homotopy equivalences are in $E$. 
\item If $f$ and $g$ are morphisms such that the composition $gf$ is
defined and two out of three morphisms $f, g, gf$ are in $E$ then the
third is in $E$.
\item If $f:B\sr B'$ is a morphism of bisimplicial objects over $C$
such that the rows or columns of $f$ are in $E$ then the diagonal morphism
$\Delta(f)$ is in $E$
\end{enumerate}
\end{definition}
We denote the smallest $\Delta$-closed class containing a
class $E$ by $cl_{\Delta}(E)$.
\begin{remark}
\rm\llabel{2009r1}
The definitions of a $\Delta$-closed and $\bdl$-closed classes given here are not equivalent to the definitions of classes with the same names in \cite{Delnotessub}. However the reader should have no problem connecting these definitions to each other.
\end{remark} 
A functor $F:C\sr D$ defines in the usual way a functor $\Delta^{op}C\sr \Delta^{op}D$ which we denote by the same symbol $F$ and call the simplicial extension of $F$.  The simplicial extensions 
take homotopic morphisms to homotopic morphisms and if we define
$F$ on bisimplicial objects by setting $(F(X))_{ij}=F(X_{ij})$ we have
$F\circ \Delta=\Delta\circ F$. This implies the following result.
\begin{lemma}
\llabel{fandd} Let $F:C\sr C'$ be a functor. Then for any class of morphisms $E$
in $\Delta^{op}C$ one has
$$F(cl_{\Delta}(E))\subset cl_{\Delta}(F(E))$$
\end{lemma}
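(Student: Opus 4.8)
The plan is to exploit the fact that $cl_{\Delta}(E)$ is characterized by a universal property: it is the \emph{smallest} $\Delta$-closed class containing $E$. So rather than tracking an arbitrary element of $cl_{\Delta}(E)$ through $F$ directly, I would pull the target class back along $F$. Concretely, set
$$G = \{f \textrm{ in } \Delta^{op}C \ : \ F(f) \in cl_{\Delta}(F(E))\}.$$
It suffices to prove that $G$ is a $\Delta$-closed class containing $E$, since then the minimality of $cl_{\Delta}(E)$ forces $cl_{\Delta}(E) \subset G$, which unwinds to exactly the assertion $F(cl_{\Delta}(E)) \subset cl_{\Delta}(F(E))$.

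That $E \subset G$ is immediate: for $f \in E$ we have $F(f) \in F(E) \subset cl_{\Delta}(F(E))$. To verify that $G$ is $\Delta$-closed I would check the three conditions of Definition \ref{fd} one at a time, in each case transporting the corresponding condition for $cl_{\Delta}(F(E))$ back through $F$. For condition (1), if $f$ is a homotopy equivalence then, since the simplicial extension of $F$ carries homotopic morphisms to homotopic morphisms and hence homotopy equivalences to homotopy equivalences, $F(f)$ is a homotopy equivalence and so lies in $cl_{\Delta}(F(E))$ by its first defining property; thus $f \in G$. For condition (2), functoriality gives $F(gf) = F(g)F(f)$, so whenever two of $f, g, gf$ lie in $G$, two of $F(f), F(g), F(gf)$ lie in $cl_{\Delta}(F(E))$, and two-out-of-three for $cl_{\Delta}(F(E))$ puts the third image there as well.

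The one condition that genuinely invokes the bisimplicial compatibility noted just before the lemma is condition (3). Given a morphism $f$ of bisimplicial objects over $C$ whose rows (or columns) lie in $G$, the rows (or columns) of the bisimplicial morphism $F(f)$ over $C'$ are precisely the images under $F$ of the rows (or columns) of $f$, hence lie in $cl_{\Delta}(F(E))$; applying condition (3) for $cl_{\Delta}(F(E))$ gives $\Delta(F(f)) \in cl_{\Delta}(F(E))$. The identity $F \circ \Delta = \Delta \circ F$ then rewrites this as $F(\Delta(f)) \in cl_{\Delta}(F(E))$, i.e.\ $\Delta(f) \in G$, as needed.

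I do not expect any real obstacle here: the argument is entirely formal once the two structural facts recorded in the paragraph preceding the lemma --- preservation of homotopies (and therefore of homotopy equivalences) and the commutation $F \circ \Delta = \Delta \circ F$ --- are in hand. The only point deserving a moment of care is the bookkeeping in condition (3), namely confirming that ``taking the $i$-th row/column'' really does commute with the levelwise application of $F$; but this is exactly the content of the definition $(F(X))_{ij} = F(X_{ij})$ of $F$ on bisimplicial objects.
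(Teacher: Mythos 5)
Your proof is correct and is essentially the paper's own argument: the paper derives the lemma directly from the two facts you invoke (preservation of homotopies by simplicial extensions and the identity $F\circ\Delta=\Delta\circ F$), with the pullback class $G=\{f : F(f)\in cl_{\Delta}(F(E))\}$ and minimality of $cl_{\Delta}(E)$ being exactly the implicit mechanism behind its ``this implies the following result.'' No gaps; your verification of the three conditions of Definition \ref{fd} is the standard and intended reasoning.
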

\begin{proposition}
\llabel{fcop} Let $C$ be a category with finite coproducts and $E$ a class of morphisms in $\Delta^{op}C$. Let $E\amalg Id_{C}$ be the class of morphisms of the form$f\amalg Id_{X}$ for $f\in E$ and $X\in C$. Then $cl_{\Delta}(E\amalg Id_C)$ is closed under finite coproducts.
\end{proposition}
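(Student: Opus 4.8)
The plan is to prove the statement by two successive reductions. Write $P=cl_{\Delta}(E\amalg Id_C)$. Since finite coproducts are built from the empty one (namely $Id_{\emptyset}$, a homotopy equivalence and so in $P$) and from binary ones, it suffices to treat binary coproducts: given $f:A\sr B$ and $g:A'\sr B'$ in $P$, I want $f\amalg g\in P$. I would factor this as the composite $(Id_B\amalg g)\circ(f\amalg Id_{A'})$; by condition (2) of Definition \ref{fd} (two-out-of-three, which in particular makes $P$ closed under composition) it then suffices to show that $f\amalg Id_Y\in P$ and $Id_Y\amalg f\in P$ for every $f\in P$ and every $Y\in\Delta^{op}C$. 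The two cases are interchanged by the symmetry isomorphisms $B\amalg Y\cong Y\amalg B$, which are homotopy equivalences and hence in $P$, so by condition (2) again it is enough to prove $f\amalg Id_Y\in P$.

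I would first settle this for a constant object, i.e. for $Y=X$ with $X\in C$. Fix such an $X$ and consider the class $Q_X=\{f: f\amalg Id_X\in P\}$. The functor $-\amalg X:C\sr C$ has simplicial extension sending $f$ to $f\amalg Id_X$, and it commutes with the diagonal in the sense $F\circ\Delta=\Delta\circ F$ recorded before Lemma \ref{fandd}. Using this I would check that $Q_X$ is $\Delta$-closed: homotopy equivalences land in $Q_X$ because simplicial extensions of functors take homotopic morphisms to homotopic morphisms, hence homotopy equivalences to homotopy equivalences; condition (2) for $Q_X$ follows from condition (2) for $P$ together with the functorial identity $(gf)\amalg Id_X=(g\amalg Id_X)(f\amalg Id_X)$; and condition (3) holds because if the rows (or columns) of a bisimplicial morphism $h$ lie in $Q_X$ then the rows (or columns) of $h\amalg Id_X$ lie in $P$, whence $\Delta(h)\amalg Id_X=\Delta(h\amalg Id_X)\in P$. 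Finally $Q_X$ contains the generators, since for $g\in E$ and $X'\in C$ one has $(g\amalg Id_{X'})\amalg Id_X=g\amalg Id_{X'\amalg X}$ with $X'\amalg X\in C$, which is again of the form $E\amalg Id_C$. Hence $P=cl_{\Delta}(E\amalg Id_C)\subset Q_X$, which is exactly the assertion $f\amalg Id_X\in P$ for all $f\in P$.

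To pass from constant $X$ to an arbitrary simplicial object $Y$, I would invoke the diagonal axiom once more. Given $f:A\sr B$ in $P$ and $Y\in\Delta^{op}C$, I form the bisimplicial morphism $\phi$ with $\phi_{ij}=f_i\amalg Id_{Y_j}:A_i\amalg Y_j\sr B_i\amalg Y_j$, which is functorial in both indices because $\amalg$ is a bifunctor. For each fixed $j$ the corresponding column of $\phi$ is $f\amalg Id_{Y_j}$ with the constant coefficient $Y_j\in C$, hence lies in $P$ by the previous step. By condition (3) the diagonal $\Delta(\phi)$ is in $P$, and since $\Delta(\phi)_n=f_n\amalg Id_{Y_n}=(f\amalg Id_Y)_n$ we get $\Delta(\phi)=f\amalg Id_Y\in P$, which completes the argument.

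The step I expect to require the most care is the verification of the diagonal axiom for $Q_X$ and the bisimplicial bookkeeping in the last paragraph: one must check that forming $-\amalg X$ (respectively $-\amalg Y_j$) termwise really does produce a bisimplicial morphism whose rows and columns are the expected coproducts and which is compatible with $\Delta$. Everything else reduces to the defining closure properties of $\Delta$-closed classes and to the elementary fact that $C$ is closed under the finite coproduct $X'\amalg X$, which is exactly what keeps the generators inside $E\amalg Id_C$ during the constant-coefficient step.
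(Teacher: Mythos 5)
Your proof is correct and takes essentially the same route as the paper's: the same factorization $f\amalg g=(Id\amalg g)\circ(f\amalg Id)$, the same use of the diagonal axiom to reduce $f\amalg Id_Y$ for $Y\in\Delta^{op}C$ to the constant case $X\in C$, and for constant $X$ the same functor argument --- your class $Q_X$ is just an inlined proof of the paper's Lemma \ref{fandd} applied to $(-)\amalg X$, combined with the observation that this functor carries the generators $E\amalg Id_C$ back into themselves. The only differences are expository: you make the symmetry isomorphism and the empty coproduct explicit, and you do the constant-coefficient step before the diagonal step rather than after.
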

\begin{proof}
For a pair of morphisms $f_1:X_1\sr
Y_1$, $f_2:X_2\sr Y_2$ we have 
$$f\amalg g=(Id_{Y_1}\amalg f_2)\circ (f_1\amalg
Id_{X_2})$$
and therefore it is sufficient to check that for a morphism $f$ in
$cl_{\Delta}(E\amalg Id)$ and an object $X$ in $\Delta^{op}C$ one has
$f\amalg Id_X\in cl_{\Delta}(E\amalg Id)$. Since $f\amalg Id_X$ is the diagonal of a morphism whose rows are of the form $f\amalg Id_{X_i}$ for $X_i\in C$ it is sufficient to show that for $f\in cl_{\Delta}(E\amalg Id)$ and $X\in C$ one has $f\amalg Id_X\in cl_{\Delta}(E\amalg Id)$. This follows from Lemma \ref{fandd} applied to the functor $(-)\amalg Id_X$.
\end{proof}
Proposition \ref{fcop} shows that $cl_{\Delta}(E\amalg Id_C)$ is the smallest class which contains $E$ and is $(\Delta,\amalg_{<\infty})$-closed i.e. $\Delta$-closed and closed under finite coproducts. We will denote it by $cl_{\Delta,\amalg_{<\infty}}(E)$.
\begin{cor}
\llabel{fcope}
Let $C$ be a category with finite coproducts. Then 
$$cl_{\Delta,\amalg_{<\infty}}(\emptyset)=cl_{\Delta}(\emptyset)$$
\end{cor}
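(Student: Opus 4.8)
The plan is to recognize that this corollary is nothing more than the specialization of Proposition \ref{fcop} to the empty generating class, so essentially all the work reduces to unwinding the definition of $cl_{\Delta,\amalg_{<\infty}}$. First I would record the bookkeeping point that the class $\emptyset\amalg Id_C$ is itself empty: by construction its elements are exactly the morphisms $f\amalg Id_X$ indexed by $f\in\emptyset$ and $X\in C$, and since there is no such $f$, this yields no morphisms at all. Substituting $E=\emptyset$ into the defining equality $cl_{\Delta,\amalg_{<\infty}}(E)=cl_{\Delta}(E\amalg Id_C)$ then gives $cl_{\Delta,\amalg_{<\infty}}(\emptyset)=cl_{\Delta}(\emptyset\amalg Id_C)=cl_{\Delta}(\emptyset)$ immediately.

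A slightly more conceptual way to present the same argument, which I would prefer, reads the corollary as the assertion that the minimal $\Delta$-closed class $cl_{\Delta}(\emptyset)$ is already closed under finite coproducts. This is Proposition \ref{fcop} applied verbatim with $E=\emptyset$: that proposition asserts $cl_{\Delta}(E\amalg Id_C)$ is closed under finite coproducts, and for $E=\emptyset$ its left-hand side is just $cl_{\Delta}(\emptyset)$. Hence $cl_{\Delta}(\emptyset)$ is simultaneously $\Delta$-closed and closed under finite coproducts, i.e.\ it is $(\Delta,\amalg_{<\infty})$-closed; being contained in every $\Delta$-closed class it is a fortiori contained in every $(\Delta,\amalg_{<\infty})$-closed class, so it coincides with the smallest such class. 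By the characterization recorded after Proposition \ref{fcop}, that smallest class is exactly $cl_{\Delta,\amalg_{<\infty}}(\emptyset)$, which gives the stated equality.

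I do not expect any substantive obstacle here. The single point requiring care is the empty-class convention, namely that indexing $f\amalg Id_X$ over $f\in\emptyset$ produces the empty family of morphisms rather than, for instance, the identities $Id_X$ themselves; once this is granted, the equality is forced purely by the definitions. All of the real content — that the three $\Delta$-closure axioms of Definition \ref{fd} already force stability under finite coproducts — is carried entirely by Proposition \ref{fcop}, which in turn rests on Lemma \ref{fandd} applied to the functors $(-)\amalg Id_X$.
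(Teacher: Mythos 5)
Your proposal is correct and matches the paper's own (implicit) argument: the paper states this corollary without proof, precisely because, as you observe, $\emptyset\amalg Id_C$ is the empty class, so $cl_{\Delta,\amalg_{<\infty}}(\emptyset)=cl_{\Delta}(\emptyset\amalg Id_C)=cl_{\Delta}(\emptyset)$ is immediate from Proposition \ref{fcop} and the definition of $cl_{\Delta,\amalg_{<\infty}}$. Your more conceptual reformulation (that $cl_{\Delta}(\emptyset)$ is itself $(\Delta,\amalg_{<\infty})$-closed and hence equals the smallest such class) is the same content repackaged, not a different route.
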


\begin{definition}
\llabel{strict} A morphism $e:A\sr X$ in a category $C$ is called a
coprojection if it is isomorphic to the canonical morphism $A\sr A\amalg Y$ for some $Y$. A morphism $f:A\sr X$ in $\Delta^{op}C$ is
called a term-wise coprojection if for any $i\ge 0$ the morphism
$f_i:A_i\sr X_i$ is a coprojection.
\end{definition}
For any morphism
$f:B\sr A$ and any object $X$, the square
\begin{equation}
\label{elexe0}
\begin{CD}
B@>e_B>>B\amalg X\\
@VVV @VVV\\
A@>e_A>>A\amalg X
\end{CD}
\end{equation}
is a push-out square. This shows that in a category with finite
coproducts there exist push-outs for all pairs of morphisms
$(e,f)$ such that $e$ is a coprojection. Therefore the same is true
for pairs of morphisms $(e,f)$ in $\Delta^{op}C$ such that $e$ is a
term-wise coprojection. 
\begin{definition}
\llabel{elex}
A square in $\Delta^{op}C$ is called an
elementary push-out square if it is isomorphic to the push-out square
for a pair of morphisms $(e,f)$ where $e$ is a term-wise coprojection.
\end{definition}
Suppose that $C$ has finite coproducts. For any commutative square $Q$ of the form 
\begin{equation}
\label{elexe1}
\begin{CD}
B@>e_B>>Y\\
@VuVV @VVvV\\
A@>e_A>>X
\end{CD}
\end{equation}
denote by $K_Q$ the object defined by the elementary push-out square
\begin{equation}
\llabel{kq}
\begin{CD}
B\amalg B @>>> B\oo\Delta^1\\
@VVV @VVV\\
A\amalg Y @>>> K_Q
\end{CD}
\end{equation}
and by $p_Q:K_Q\sr X$ the obvious morphism. For a morphism $f:X\sr X'$
the object $K_{Q}$ defined by the square
\begin{equation}
\llabel{elexe2}
\begin{CD}
X @>f>> X'\\
@VVV @VVV\\
X @>f>> X'
\end{CD}
\end{equation}
is called the cylinder of $f$ and denoted by $cyl(f)$.
\begin{lemma}
\llabel{cyl}
The morphisms $X'\sr cyl(f)$ and $cyl(f)\sr X'$ are mutually inverse
homotopy equivalences. 
\end{lemma}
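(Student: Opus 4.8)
The plan is to name the two maps explicitly, dispatch one composite by hand, and witness the other by a single unit homotopy built from the contraction of $\Delta^1$ onto the relevant vertex. First I would set $Q$ equal to the square (\ref{elexe2}), so that in the notation of (\ref{elexe1})--(\ref{kq}) one has $B=A=X$, $Y=X'$, $u=Id_X$ and $e_B=f$. Thus $cyl(f)=K_Q$ is obtained by gluing $A\amalg Y=X\amalg X'$ to $X\oo\Delta^1$ along $X\amalg X$, where the two summands map to the two endpoints of the cylinder on one side and via $Id_X\amalg f$ on the other. The map $cyl(f)\sr X'$ is $p_Q$, equal to $(f,Id_{X'})$ on $A\amalg Y$ and to the composite $X\oo\Delta^1\sr X\stackrel{f}{\sr}X'$ on the cylinder summand; the map $X'\sr cyl(f)$ is the canonical inclusion $i$ of the summand $Y$ into the pushout. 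Restricting $p_Q$ to $Y$ gives $v=Id_{X'}$, so $p_Q\circ i=Id_{X'}$ with no homotopy needed.

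The real content is therefore $i\circ p_Q\simeq Id_{cyl(f)}$, and I would prove it by exhibiting one morphism $H:cyl(f)\oo\Delta^1\sr cyl(f)$ with $H\circ(Id\oo\partial_0)=Id_{cyl(f)}$ and $H\circ(Id\oo\partial_1)=i\circ p_Q$, i.e.\ a unit homotopy in the sense recalled before Definition \ref{fd}. What legitimizes a piecewise construction of $H$ is that $(-)\oo\Delta^1$ is computed term-wise by the finite-coproduct functors $X_n\mapsto\amalg_{(\Delta^1)_n}X_n$, and finite coproducts commute with pushouts; hence $cyl(f)\oo\Delta^1$ is again the pushout of the defining diagram (\ref{kq}) of $cyl(f)$ after tensoring each of its objects with $\Delta^1$. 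It then suffices to define $H$ on $(A\amalg Y)\oo\Delta^1$ and on $(X\oo\Delta^1)\oo\Delta^1$ and to check that the two agree on $(X\amalg X)\oo\Delta^1$.

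On the cylinder summand I would use the identification $(X\oo\Delta^1)\oo\Delta^1=X\oo(\Delta^1\times\Delta^1)$ and set $H=Id_X\oo m$, where $m:\Delta^1\times\Delta^1\sr\Delta^1$ is the order-preserving ("maximum") map that collapses onto the endpoint at which $Y$ is glued; on the summand $Y=X'$ I would take the projection $X'\oo\Delta^1\sr X'\sr cyl(f)$, keeping $X'$ fixed along the homotopy. Since $m$ restricts to $Id_{\Delta^1}$ over the vertex $0$ of the outer $\Delta^1$ and to the constant map at the $Y$-endpoint over the vertex $1$, the two endpoint conditions read off as $Id_{cyl(f)}$ and as the retraction $i\circ p_Q$ (collapse the cylinder onto the $Y$-end and apply $f$, identity on $X'$), exactly as required. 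Compatibility over $(X\amalg X)\oo\Delta^1$ holds because on the first summand both prescriptions are constant at the $A$-endpoint while on the second both are constant at the $Y$-endpoint (on the cylinder side because $m$ pins the outer vertex $1$ there, on the $X'$ side because the projection is constant in the outer direction).

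The main obstacle is precisely this last bookkeeping: verifying that $(-)\oo\Delta^1$ genuinely preserves the pushout (\ref{kq}), and that the two partial definitions of $H$ match along $(X\amalg X)\oo\Delta^1$ so that $H$ descends to a morphism on $cyl(f)\oo\Delta^1$. Once $H$ is seen to be well defined the endpoint identities are immediate from the two restrictions of $m$, and combined with the on-the-nose equality $p_Q\circ i=Id_{X'}$ this exhibits $i$ and $p_Q$ as mutually inverse homotopy equivalences.
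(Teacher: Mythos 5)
Your overall strategy is the same as the paper's: the composite $X'\sr cyl(f)\sr X'$ is the identity on the nose, and the other composite is connected to $Id_{cyl(f)}$ by a single unit homotopy $H:cyl(f)\oo\Delta^1\sr cyl(f)$ defined piecewise, using the contraction of $\Delta^1$ on the cylinder part and the constant homotopy on the $X'$ part. However, your well-definedness check contains a genuine error. Your endpoint conditions force $m$ to be, on vertices, $m(s,0)=s$ (identity at outer time $0$) and $m(s,1)=1$ (collapse onto the $Y$-end at outer time $1$); the unique such order-preserving map is $m=\max$, and then $m(0,t)=t$. So on the \emph{first} summand of $(X\amalg X)\oo\Delta^1$ (the copy of $X$ glued to $A$, i.e.\ at inner vertex $0$) the cylinder-side prescription is \emph{not} constant at the $A$-endpoint: it is the flow $Id_X\oo(t\mapsto t)$ carrying the outer interval onto the cylinder interval. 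Indeed, constancy there would force the vertex $(0,1)$ to map to $0$, while your endpoint condition at outer time $1$ forces it to map to $1$; no simplicial $m$ can do both. Since you defined $H$ only on the $Y$-summand and on the cylinder summand, the definition on $A\oo\Delta^1$ is actually missing, and if one fills it in with the constant map that your compatibility claim describes, the two prescriptions disagree on the first copy of $X\oo\Delta^1$ and $H$ does not descend to the pushout. (Your check on the second summand, where $m$ restricted to inner vertex $1$ is constant at $1$ and matches $i\circ f\circ pr$ on the $X'$ side, is correct.)

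The gap is local and fixable in two ways. Either define $H$ on $A\oo\Delta^1$ to be $Id_X\oo m(0,-)$ followed by the cylinder inclusion $X\oo\Delta^1\sr cyl(f)$, i.e.\ the flow into the cylinder; since the gluing $u:B\sr A$ is $Id_X$, compatibility on the first copy then holds by construction, and the endpoint identities still hold (at outer time $1$ the copy of $A$ is collapsed to the $Y$-end, giving $i\circ f=i\circ p_Q|_A$, because $p_Q$ restricted to $A$ is $e_A=f$). Or do what the paper does: first absorb the trivial gluing along $u=Id_X$, observing that $cyl(f)$ is also the pushout of $X'\stackrel{f}{\leftarrow}X\xrightarrow{Id\oo\partial_0}X\oo\Delta^1$, with a single gluing locus at the $X'$-end. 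With this reduced presentation the piecewise homotopy (projection on $X'\oo\Delta^1$, $Id\oo h$ on $X\oo(\Delta^1\times\Delta^1)$ with $h$ the contraction onto the glued end) has exactly one compatibility condition — precisely your correct second-summand check — and the argument goes through verbatim. That preliminary simplification is what makes the paper's verification clean, and it is the step your write-up is missing.
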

\begin{proof}
The object $cyl(f)$ is defined by the elementary
push-out square
\begin{equation}
\begin{CD}
X@>Id\oo\partial_0>>X\oo\Delta^1\\
@VVV @VVV\\
X'@>>>cyl(f)
\end{CD}
\end{equation}
The composition $X'\sr cyl(f)\sr X'$ is the identity. The homotopy
from the identity of $cyl(f)$ to the composition $cyl(f)\sr X'\sr
cyl(f)$ is given by the morphism $cyl(f)\oo\Delta^1\sr cyl(f)$
which equals to the projection $X'\oo\Delta^1\sr X'$ on
$X'\oo\Delta^1$ and to the morphism $Id\oo h$ on
$$(X'\oo\Delta^1)\oo\Delta^1=X'\oo(\Delta^1\times\Delta^1)$$
where $h:\Delta^1\times\Delta^1\sr \Delta^1$ is the usual homotopy
from the identity to the composition
$\Delta^1\sr\Delta^0\stackrel{\partial_0}{\sr}\Delta^1$.
\end{proof}
\begin{lemma}
\llabel{lm1} For an elementary push-out square $Q$ of the form
(\ref{elexe1}) in $\Delta^{op}C$, the morphism $p_Q:K_Q\sr X$ belongs to
$cl_{\Delta}(\emptyset)$.
\end{lemma}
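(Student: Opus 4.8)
The plan is to realize both $K_Q$ and $X$ as diagonals of bisimplicial objects over $C$ and to exhibit $p_Q$ as the diagonal of a bisimplicial morphism all of whose rows are homotopy equivalences; condition (3) of Definition \ref{fd} then gives the conclusion immediately. The morally correct statement is that $p_Q$ is a homotopy equivalence, since it merely collapses the cylinder of the double mapping cylinder $K_Q$ onto the genuine push-out $X$. The reason one cannot just write down a simplicial homotopy inverse is that the term-wise coprojection $e_B:B\sr Y$ need not split compatibly with the simplicial structure maps, so there is no simplicial ``complement'' of $B$ in $Y$ along which to slide. The whole point of the argument is to trade this for the levelwise splitting, which does exist in $C$, and to reassemble the levelwise homotopies by passing to a diagonal.

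First I would observe that $B\oo\Delta^1$ is the diagonal of the bisimplicial object $(p,q)\mapsto B_p\oo(\Delta^1)_q$, and that $K_Q$, being built from $A$, $Y$ and this cylinder by finite coproducts and one elementary push-out, is therefore the diagonal of the bisimplicial object $\hat K$ with $\hat K_{p,q}=(A_p\amalg Y_p)\amalg_{B_p\amalg B_p}(B_p\oo(\Delta^1)_q)$, the push-out being formed in $C$ along the two endpoint inclusions $(Id\oo\partial_0)\amalg(Id\oo\partial_1)$. Putting $\hat X_{p,q}=X_p$ (constant in $q$) we get $\Delta(\hat X)=X$, and the evident collapse $\hat p:\hat K\sr\hat X$ satisfies $\Delta(\hat p)=p_Q$. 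By condition (3) of Definition \ref{fd} it then suffices to show that for each fixed $p$ the row $\hat p_{p,\bullet}:\hat K_{p,\bullet}\sr\hat X_{p,\bullet}$ is a homotopy equivalence in $\Delta^{op}C$ and hence lies in $cl_{\Delta}(\emptyset)$.

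To this end I would fix $p$ and use that $Q$ is an elementary push-out square. Push-outs in $\Delta^{op}C$ are computed termwise, so $X_p=A_p\amalg_{B_p}Y_p$, and the coprojection $(e_B)_p:B_p\sr Y_p$ splits in $C$ as $Y_p\cong B_p\amalg W_p$. Using this splitting the row $\hat K_{p,\bullet}$ is identified with $cyl(u_p)\amalg W_p$, where $cyl(u_p)$ is the mapping cylinder in the $q$-direction of $u_p:B_p\sr A_p$ viewed as a map of constant simplicial objects and $W_p$ is regarded as a constant simplicial object; under this identification $\hat p_{p,\bullet}$ becomes the collapse $cyl(u_p)\sr A_p$ extended by $Id_{W_p}$, landing in $A_p\amalg W_p=X_p$. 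By Lemma \ref{cyl} the collapse $cyl(u_p)\sr A_p$ is a homotopy equivalence, and since the simplicial extension of the functor $(-)\amalg W_p$ takes homotopies to homotopies (and so homotopy equivalences to homotopy equivalences), the row $\hat p_{p,\bullet}$ is a homotopy equivalence as well. This is precisely the input condition (3) requires, and the lemma follows.

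I expect the only genuinely delicate point to be the bookkeeping in the second step: checking that $\hat K$ is honestly bisimplicial, i.e. that the endpoint inclusions and the push-out defining $\hat K_{p,q}$ are natural in both $p$ and $q$, and that $\Delta(\hat K)$ reproduces $K_Q$ together with its correct simplicial structure maps rather than merely its terms. Once the bisimplicial model is in place, the appeals to Lemma \ref{cyl} and to condition (3) are routine, and the entire force of the proof is in replacing the missing simplicial complement of $e_B$ by its termwise complement plus the diagonal axiom.
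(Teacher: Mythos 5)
Your proof is correct and takes essentially the same route as the paper: both realize $p_Q$ as the diagonal of the bisimplicial morphism whose rows are the levelwise maps $K_{Q_p}\sr X_p$, and then show each row is a simplicial homotopy equivalence using the levelwise splitting of the coprojection, Lemma \ref{cyl}, and the stability of homotopy equivalences under coproducts. The only cosmetic difference is that the paper organizes the levelwise step as a decomposition of the square (\ref{elexe0}) into a coproduct of two squares (using $K_{Q_1\amalg Q_2}=K_{Q_1}\amalg K_{Q_2}$), whereas you compute $K_{Q_p}\cong cyl(u_p)\amalg W_p$ directly from the splitting $Y_p\cong B_p\amalg W_p$; these amount to the same computation.
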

\begin{proof}
The object $K_Q$ is the diagonal of the bisimplicial object whose rows
are $K_{Q_i}$ where $Q_i$ is the square formed by the i-th terms of
$A$, $B$ and $Y$ and $p_Q$ is the diagonal of the morphism whose terms
are $p_{Q_i}:K_{Q_i}\sr X_i$. Therefore, it is sufficient to prove the
statement of the lemma for a square in $C$ of the form
(\ref{elexe0}). Since $K_{Q_1\amalg Q_2}=K_{Q_1}\amalg K_{Q_2}$ and
a square of the form (\ref{elexe0}) is a coproduct of a square of the
form (\ref{elexe2}) and a transpose of such a square our result
follows from Lemma \ref{cyl} and the fact that the coproduct of two simplicial homotopy equivalences is a simplicial homotopy equivalence.
\end{proof}
\begin{lemma}
\llabel{lm0}
Let $f=(f_A,f_B,f_Y,f_X):Q\sr Q'$ be a morphism of commutative squares of the form
(\ref{elexe1}) then one has
$$(K(f):K_{Q}\sr K_{Q'})\in cl_{\Delta,\amalg_{<\infty}}(\{f_A, f_B,
f_Y\})$$
\end{lemma}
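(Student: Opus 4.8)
The plan is to exploit the two independent simplicial directions present in $K_Q$: the internal one coming from $A$, $B$, $Y$ being simplicial objects, and the external one coming from the $\Delta^1$ in the defining square (\ref{kq}). First I would separate these directions, realizing $K_Q$ and $K_{Q'}$ as diagonals of bisimplicial objects and $K(f)$ as the diagonal of a single bisimplicial morphism, so that by the diagonal axiom (part 3 of Definition \ref{fd}) the problem reduces to understanding the rows or columns of this morphism. Concretely, I would set $\mathcal{K}_{i,j}=(A_i\amalg Y_i)\cup_{B_i\amalg B_i}(B_i\oo(\Delta^1)_j)$, so that its diagonal in $(i,j)$ is exactly $(K_Q)_n$; the analogous bisimplicial object $\mathcal{K}'$ computes $K_{Q'}$, and the evident bisimplicial morphism induced by $f_A$, $f_B$, $f_Y$ has diagonal $K(f)$. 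Functoriality of the pushout in both variables, together with the fact that the endpoint inclusions $\Delta^0\sr\Delta^1$ are simplicial, makes $\mathcal{K}$ genuinely bisimplicial.

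The heart of the argument is the decision to examine the columns obtained by fixing the external degree $j=q$ rather than the internal degree. For fixed $q$ the set $(\Delta^1)_q$ is finite and $B_i\oo(\Delta^1)_q=\amalg_{(\Delta^1)_q}B_i$; the inclusion of the two endpoint summands is a coprojection, and pushing it out along $(u_i,(e_B)_i):B_i\amalg B_i\sr A_i\amalg Y_i$ simply replaces those two summands by $A_i$ and $Y_i$, leaving the remaining $q$ summands free. Hence the column $\mathcal{K}_{\bullet,q}$ is the coproduct of \emph{full} simplicial objects $A\amalg(\amalg_{k=1}^{q}B)\amalg Y$, and the column map $\mathcal{K}_{\bullet,q}\sr\mathcal{K}'_{\bullet,q}$ is literally $f_A\amalg(\amalg_{k=1}^{q}f_B)\amalg f_Y$, where compatibility at the glued ends is guaranteed by the relations $u'f_B=f_Au$ and $e'_Bf_B=f_Ye_B$ encoded in the morphism of squares. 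Since $cl_{\Delta,\amalg_{<\infty}}(\{f_A,f_B,f_Y\})$ contains $f_A$, $f_B$, $f_Y$ and is closed under finite coproducts by Proposition \ref{fcop}, every column map lies in this class, and the diagonal axiom then places $K(f)$ in it as well.

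The hard part, and the real content of the proof, is precisely this choice of grouping. The proof of Lemma \ref{lm1} groups in the opposite way, writing $K_Q$ as the diagonal of the rows $K_{Q_i}$ attached to the term squares $Q_i$ in $C$; imitating that strategy here would reduce $K(f)$ to maps of constant simplicial objects built from the individual components $(f_A)_i$, $(f_B)_i$, $(f_Y)_i$, and these components are not visibly members of $cl_{\Delta,\amalg_{<\infty}}(\{f_A,f_B,f_Y\})$, so the row reduction stalls. Recognizing that fixing the $\Delta^1$-coordinate instead keeps the whole simplicial objects $A$, $B$, $Y$ intact — so that $f_A$, $f_B$, $f_Y$ reappear directly as coproduct factors — is the key insight. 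Once that grouping is in place, the remaining verifications (that the coproduct decomposition of each column is compatible with the internal simplicial structure, and that the diagonal of the bisimplicial morphism is indeed $K(f)$) are routine.
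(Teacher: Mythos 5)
Your proof is correct and coincides with the paper's own argument: the paper likewise realizes $K_Q$ as the diagonal of the bisimplicial object obtained by separating the internal direction from the $\Delta^1$-direction, observes that the slices at fixed external degree are $A\amalg Y\amalg(\amalg_n B)$ naturally in the square, and concludes via closure under finite coproducts (Proposition \ref{fcop}) and the diagonal axiom of Definition \ref{fd}. What you call columns the paper calls rows, but the decomposition — and the key choice of fixing the $\Delta^1$-coordinate so that $f_A$, $f_B$, $f_Y$ survive as whole simplicial morphisms — is exactly the same.
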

\begin{proof}
The object $K_Q$ is isomorphic to
the diagonal of a bisimplicial object whose rows are of the form $A\amalg
Y\amalg (\amalg_{n} B)$ and this isomorphism is natural with respect to  morphisms of squares.
\end{proof}
Combining Lemma \ref{lm1} and Lemma \ref{lm0} we get the following result.
\begin{proposition}
\llabel{2009short}
Let $f=(f_A,f_B,f_Y,f_X):Q\sr Q'$ be a morphism of elementary push-out squares of the form
(\ref{elexe1}). Then one has
$$f_X\in cl_{\Delta,\amalg_{<\infty}}(\{f_A, f_B,
f_Y\})$$
\end{proposition}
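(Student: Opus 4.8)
The plan is to compare each elementary push-out square with the auxiliary object $K$ built from it, and to transport the conclusion along the canonical maps $p_Q$ supplied by Lemma \ref{lm1}. The point is that the three generators $f_A,f_B,f_Y$ already control $K(f)$ by Lemma \ref{lm0}, while the discrepancy between $K_Q$ and $X$ is homotopically invisible by Lemma \ref{lm1}; so it only remains to glue these two facts together along a naturality square and read off $f_X$ by two-out-of-three.

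First I would record that the formation of $K_Q$ together with the morphism $p_Q:K_Q\sr X$ is natural in $Q$: a morphism $f=(f_A,f_B,f_Y,f_X):Q\sr Q'$ induces $K(f):K_Q\sr K_{Q'}$ and the square
\begin{CD}
K_Q @>K(f)>> K_{Q'}\\
@Vp_QVV @VVp_{Q'}V\\
X @>f_X>> X'
\end{CD}
commutes. This commutativity is the one computation the argument genuinely needs, and I expect it to follow immediately from the universal property of the push-out square (\ref{kq}) defining $K$: both composites $f_X\circ p_Q$ and $p_{Q'}\circ K(f)$ are maps out of $K_Q$, so it suffices to compare them after restriction to the two legs $A\amalg Y$ and $B\oo\Delta^1$, where they visibly agree.

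Next I would feed in the three inputs. Applying Lemma \ref{lm1} to the elementary push-out squares $Q$ and $Q'$ gives $p_Q,p_{Q'}\in cl_{\Delta}(\emptyset)$; by Corollary \ref{fcope} this class equals $cl_{\Delta,\amalg_{<\infty}}(\emptyset)$, which is contained in $cl_{\Delta,\amalg_{<\infty}}(\{f_A,f_B,f_Y\})$ since the closure is monotone in its generating set. Applying Lemma \ref{lm0} gives $K(f)\in cl_{\Delta,\amalg_{<\infty}}(\{f_A,f_B,f_Y\})$. Writing $S:=cl_{\Delta,\amalg_{<\infty}}(\{f_A,f_B,f_Y\})$, all three of $p_Q,p_{Q'},K(f)$ then belong to $S$.

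Finally I would extract $f_X$ using the two-out-of-three property (condition 2 of Definition \ref{fd}), which $S$ enjoys because it is $\Delta$-closed. Since $K(f)$ and $p_{Q'}$ lie in $S$, so does the composite $p_{Q'}\circ K(f)=f_X\circ p_Q$; and since $f_X\circ p_Q$ and $p_Q$ lie in $S$, a second application of two-out-of-three yields $f_X\in S$, as desired. I do not expect a real obstacle here: the homotopy-theoretic substance is entirely carried by Lemmas \ref{lm1} and \ref{lm0}, and the only point requiring care is the bookkeeping of the naturality square together with the two successive uses of two-out-of-three.
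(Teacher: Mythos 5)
Your proof is correct and is precisely the argument the paper intends: its one-line proof ("combining Lemma \ref{lm1} and Lemma \ref{lm0}") is exactly your naturality square $p_{Q'}\circ K(f)=f_X\circ p_Q$ together with the two applications of the two-out-of-three property. You have merely made explicit the bookkeeping the paper leaves to the reader.
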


\begin{lemma}
\llabel{exactsquares} 
Let $C$ be a category with finite
coproducts. Then for any elementary push-out square in 
$\Delta^{op}C$ of the form (\ref{elexe1}) one has
$$e_A\in cl_{\Delta, \amalg_{<\infty}}(\{e_B\})$$
$$v\in cl_{\Delta, \amalg_{<\infty}}(\{u\})$$
\end{lemma}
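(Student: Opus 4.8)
The plan is to unwind the structure of an elementary push-out square and then read off each of the two containments from the machinery already in place: the map $v$ directly from Proposition \ref{fcop}, and the map $e_A$ from Proposition \ref{2009short} applied to a well-chosen morphism of squares. The first thing I would do is record what a square of the form (\ref{elexe1}) looks like concretely. By Definition \ref{elex} and the model square (\ref{elexe0}), up to isomorphism $e_B$ and $e_A$ are the two horizontal coprojections, so $Y\cong B\amalg W$ and $X\cong A\amalg W$ for a simplicial object $W$, with $e_A$ the canonical map $A\sr A\amalg W$ and $v=u\amalg Id_W$. Since isomorphisms are homotopy equivalences and therefore lie in $cl_{\Delta}(\emptyset)$, the two-out-of-three axiom lets me pass freely between a square and an isomorphic one, so I may argue with these concrete descriptions.

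For the containment $v\in cl_{\Delta,\amalg_{<\infty}}(\{u\})$ the quickest route is the identification $v=u\amalg Id_W$. Since $u$ lies in $cl_{\Delta,\amalg_{<\infty}}(\{u\})$, it suffices to recall from the proof of Proposition \ref{fcop} that for any morphism $f$ in such a closure and any simplicial object $X$ one has $f\amalg Id_X$ in the closure as well (identities being homotopy equivalences, the class is stable under $(-)\amalg Id_X$). Taking $f=u$ and $X=W$ gives $v\in cl_{\Delta,\amalg_{<\infty}}(\{u\})$.

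For the containment $e_A\in cl_{\Delta,\amalg_{<\infty}}(\{e_B\})$ I would invoke Proposition \ref{2009short}. Let $Q'$ be the original square (\ref{elexe1}), and let $Q$ be the degenerate square with corners $B,B,A,A$, horizontal coprojections $Id_B$ and $Id_A$, left map $u$, and induced map $v_Q=u$; this is an elementary push-out square because $Id_B$ is the coprojection $B\sr B\amalg\emptyset$. The collection $f=(f_A,f_B,f_Y,f_X)$ with $f_A=Id_A$, $f_B=Id_B$, $f_Y=e_B$, $f_X=e_A$ is then a morphism $Q\sr Q'$ of elementary push-out squares, so Proposition \ref{2009short} yields $e_A=f_X\in cl_{\Delta,\amalg_{<\infty}}(\{Id_A,Id_B,e_B\})$. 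As $Id_A$ and $Id_B$ are homotopy equivalences, this closure coincides with $cl_{\Delta,\amalg_{<\infty}}(\{e_B\})$, which is the assertion.

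The parts that require genuine attention, as opposed to the formal invocation of the cited propositions, are two. First, one must check that $f$ really is a morphism of squares, i.e. that the corresponding cube commutes; every face is immediate except the one reading $e_A\circ u=v\circ e_B$, and this holds precisely because $v=u\amalg Id_W$ sends the $B$-summand into the $A$-summand via $u$. Second, one must confirm that the auxiliary square $Q$ genuinely qualifies as an elementary push-out square, and it is here that the empty coproduct (the initial object coming with the finite coproducts on $C$) is used, so that the identity maps count as term-wise coprojections. Everything remaining is bookkeeping, namely absorbing the identity components $f_A,f_B$ into the closure through the fact that homotopy equivalences belong to $cl_{\Delta}(\emptyset)$.
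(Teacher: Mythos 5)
Your proof of the first inclusion is correct and is exactly the paper's argument: the same auxiliary square with corners $B,B,A,A$ and identity horizontals, the same morphism of squares into the given square, and the same absorption of the identity components via the fact that homotopy equivalences lie in $cl_{\Delta}(\emptyset)$. One correction within that part: the commutativity $e_A\circ u=v\circ e_B$ of the one nontrivial face of the cube is nothing but the commutativity of the given square (\ref{elexe1}); it neither needs, nor can it legitimately be derived from, a decomposition of $v$.

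Your proof of the second inclusion has a genuine gap, located in the opening structural claim that up to isomorphism $Y\cong B\amalg W$ and $X\cong A\amalg W$ for a \emph{simplicial object} $W$, with $v=u\amalg Id_W$. Definition \ref{elex} only requires $e_B$ to be a \emph{term-wise} coprojection in the sense of Definition \ref{strict}: each $e_{B,i}:B_i\sr Y_i$ is isomorphic to $B_i\sr B_i\amalg W_i$, but the objects $W_i$ need not assemble into a simplicial object, because the face and degeneracy morphisms of $Y$ may carry the summand $W_i$ into the $B$-summand of the adjacent level. A minimal counterexample is $C=Sets$ and $e_B$ the inclusion $\partial\Delta^1\sr\Delta^1$: this is a term-wise coprojection, yet $\Delta^1\not\cong\partial\Delta^1\amalg W$ for any simplicial set $W$ (comparing $0$-simplices forces $W_0=\emptyset$, hence $W=\emptyset$, contradicting the count of $1$-simplices). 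This is not a repairable shortcut: morphisms of exactly this kind are the reason the lemma is stated for term-wise coprojections --- in the proof of Theorem \ref{diag} it is applied to squares built on $\amalg\,\Lambda^{n,k}\sr\amalg\,\Delta^n$, which are not coprojections in $\Delta^{op}Sets$ --- so an argument valid only for genuine coprojections $B\sr B\amalg W$ proves a strictly weaker statement, and the diagonal trick underlying Proposition \ref{fcop} is unavailable here since $W$ carries no simplicial structure. The statement itself is of course true; the fix is to argue symmetrically to your first part, which is what the paper does: apply Proposition \ref{2009short} to the morphism of elementary push-out squares from the square with corners $B,Y,B,Y$, horizontals $e_B$ and identity verticals (it is the push-out square of the pair $(e_B,Id_B)$) to the given square, with components $(f_A,f_B,f_Y,f_X)=(u,Id_B,Id_Y,v)$; this yields $v\in cl_{\Delta,\amalg_{<\infty}}(\{u,Id_B,Id_Y\})=cl_{\Delta,\amalg_{<\infty}}(\{u\})$.
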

\begin{proof}
To prove the first inclusion one applies Proposition \ref{2009short} to the morphism of squares of the
form 
$$
\left(
\begin{CD}
B @>>> B\\
@VVV @VVV\\
A @>>> A
\end{CD}
\right)
\longrightarrow
\left(
\begin{CD}
B @>>> Y\\
@VVV @VVV\\
A @>>> X
\end{CD}
\right)
$$
To prove the second inclusion one applies Proposition \ref{2009short}  to the
morphism of squares
$$
\left(
\begin{CD}
B @>>> Y\\
@VVV @VVV\\
B @>>> Y
\end{CD}
\right)
\longrightarrow
\left(
\begin{CD}
B @>>> Y\\
@VVV @VVV\\
A @>>> X
\end{CD}
\right)$$
\end{proof}
\begin{lemma}
\llabel{twosq}
Consider a commutative diagram of the form
\begin{equation}
\begin{CD}
B @>>> Y @>>> Z\\
@VVV  @VVV @VVV\\
A @>>> X @>>>T
\end{CD}
\end{equation}
Denote the left square by $Q_1$, the right square by $Q_2$ and the
big square by $Q_3$. Consider the canonical morphisms
$$\begin{array}{ccc}
p_1:K_{Q_1}\sr X&p_2:K_{Q_2}\sr T&p_3:K_{Q_3}\sr T
\end{array}$$
and let $E$ be a $(\Delta,\coprod_{<\infty})$-closed class. If 
two out of three morphisms $p_1,p_2,p_3$ are in $E$ then the third is
in $E$.
\end{lemma}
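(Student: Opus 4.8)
The plan is to reduce the three-square statement to the two-square comparison results (Lemma \ref{lm0} and Proposition \ref{2009short}) together with the naturality of the assignment $Q\mapsto (p_Q:K_Q\sr X)$ and the two-out-of-three property built into Definition \ref{fd}(2). Write $a:B\sr Y$, $b:Y\sr Z$ for the two horizontal maps of the top row and $e:A\sr X$ for the lower-left horizontal map. First I would record the naturality that is visible from the defining square (\ref{kq}): a morphism of squares $\Phi:Q\sr Q'$ induces $K(\Phi):K_Q\sr K_{Q'}$ with $p_{Q'}\circ K(\Phi)=\Phi_X\circ p_Q$. Applying this to the morphism of squares $h:Q_3\sr Q_2$ with components $(a,\ Id_Z,\ e,\ Id_T)$ — which is a morphism of squares exactly because the left square $Q_1$ commutes — produces a natural map $\phi:=K(h):K_{Q_3}\sr K_{Q_2}$, and, since $h_T=Id_T$, the key factorization
\[
p_3=p_2\circ\phi .
\]

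Second I would link $\phi$ to $p_1$ by a pasting/associativity identity. Iterating the construction (\ref{kq}) one builds an auxiliary square $\hat Q$ whose object $K_{\hat Q}$ is the double mapping cylinder of $K_{Q_1}\leftarrow Y\sr Z$, with comparison $\hat p:K_{\hat Q}\sr T$ and a comparison map $\beta$ between $K_{\hat Q}$ and $K_{Q_3}$ lying in $cl_{\Delta}(\emptyset)$, compatibly with the maps to $T$; that $\beta\in cl_{\Delta}(\emptyset)$ is the statement that the homotopy push-out of $A\leftarrow B\sr Z$ may be computed in one step or through $Y$, and it is proved by the same diagonal-of-a-bisimplicial-object argument as Lemma \ref{lm1}. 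There is then a morphism of squares $f:\hat Q\sr Q_2$ whose only non-identity corner is $p_1$ (on the $A$-corner), for which $\phi\circ\beta=K(f)$ and $p_2\circ K(f)=\hat p$. By Lemma \ref{lm0}, $K(f)\in cl_{\Delta,\amalg_{<\infty}}(\{p_1\})$. Since $\beta\in cl_{\Delta}(\emptyset)\subseteq E$ is always available, $\phi\in E\iff K(f)\in E$; in particular $p_1\in E\Rightarrow\phi\in E$.

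With these two facts the two-out-of-three of Definition \ref{fd}(2) closes two of the three cases. If $p_1,p_2\in E$ then $\phi\in E$, so $p_3=p_2\circ\phi\in E$. If $p_1,p_3\in E$ then $\phi\in E$, and applying two-out-of-three to $p_3=p_2\circ\phi$ with $p_3,\phi\in E$ yields $p_2\in E$. Both use only the implication $p_1\in E\Rightarrow\phi\in E$ established above.

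The remaining case $p_2,p_3\in E\Rightarrow p_1\in E$ is the one I expect to be the main obstacle. Here two-out-of-three applied to $p_3=p_2\circ\phi$ gives only $\phi\in E$ (equivalently $K(f)\in E$), and one cannot formally recover $p_1$ from the induced map $K(f)$ on double mapping cylinders, because forming the homotopy push-out along $Y\sr Z$ is not a conservative operation. To handle this case I would look for a dual presentation in which $p_1$ itself appears as the $X$-corner of a morphism of elementary push-out squares whose remaining three corners are expressible through $\phi$ (hence through $p_2,p_3$) and maps in $cl_{\Delta}(\emptyset)$, so that Proposition \ref{2009short} delivers $p_1\in cl_{\Delta,\amalg_{<\infty}}(\{\phi\}\cup cl_{\Delta}(\emptyset))\subseteq E$; the symmetric role of rows and columns in Definition \ref{fd}(3) suggests realizing all three $p_i$ inside a single bisimplicial object so that the diagonal condition yields the symmetric two-out-of-three directly. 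Constructing that square (or that bisimplicial object) is the step whose difficulty I would expect to dominate the proof.
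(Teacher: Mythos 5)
Your two completed cases reproduce the paper's own proof. The auxiliary square you call $\hat Q$ is the paper's $Q_4$, with vertices $Y$, $Z$, $K_{Q_1}$, $K_{Q_3}$: since $K_{Q_3}$ is \emph{literally} the push-out of $K_{Q_1}\leftarrow Y\sr Z$ and $Y\sr K_{Q_1}$ is a term-wise coprojection, $Q_4$ is an elementary push-out square, so your comparison map $\beta$ is just $p_{Q_4}$ and lies in $cl_{\Delta}(\emptyset)$ by Lemma \ref{lm1} directly --- no separate pasting argument is needed. The paper then applies Lemma \ref{lm0} to the morphism of squares $Q_4\sr Q_2$ with components $p_1$, $Id_Y$, $Id_Z$ (and $p_3$ on the fourth vertex) to get $K(f)\in cl_{\Delta,\amalg_{<\infty}}(\{p_1\})$, and uses the commutative square $p_2\circ K(f)=p_3\circ p_{Q_4}$. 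Exactly as in your write-up, this yields $p_1,p_2\in E\Rightarrow p_3\in E$ and $p_1,p_3\in E\Rightarrow p_2\in E$.

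The case you could not finish is a genuine gap, but it is the paper's gap as well: after establishing the three facts above, the paper's proof simply declares ``this implies the statement of the lemma'', and those facts give only the two implications in which $p_1$ is among the hypotheses; in the remaining case they give $K(f)\in E$, and, as you observe, there is no formal way back from $K(f)$ to $p_1$. Moreover, your diagnosis that push-out along $Y\sr Z$ is not conservative is correct in the strongest sense: the implication $p_2,p_3\in E\Rightarrow p_1\in E$ is \emph{false}, so the dual square or bisimplicial object you propose to look for cannot exist. Take $C=Sets$ and $E$ the class of weak equivalences, which is $(\Delta,\amalg_{<\infty})$-closed by Theorem \ref{diag}; let $W$ be a connected simplicial set that is acyclic but not weakly contractible (e.g.\ the nerve of a non-trivial acyclic group), let $CW$ be its cone, and consider
$$
\begin{CD}
W @>Id>> W @>>> CW\\
@VIdVV @VVV @VVV\\
W @>>> pt @>>> pt
\end{CD}
$$
where the unlabeled maps are the cone inclusion and the projections to the point. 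Here $K_{Q_1}=cyl(Id_W)\cong W\oo\Delta^1$, so $p_1$ is not a weak equivalence; but $K_{Q_3}=cyl(W\sr CW)$ is contractible by Lemma \ref{cyl}, and $K_{Q_2}$ is the unreduced suspension of $W$, which is simply connected with vanishing reduced homology, hence weakly contractible; so $p_2,p_3\in E$. Thus the lemma is only valid in the two cases you (and the paper) prove; this affects nothing else, since Lemma \ref{twosq} is not used elsewhere in the paper.
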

\begin{proof}
Let $Q_4$ be the square
\begin{equation}
\begin{CD}
Y @>>> Z\\
@VVV @VVV\\
K_{Q_1} @>>> K_{Q_3}
\end{CD}
\end{equation}
One can easily see that it is elementary push-out. The identity morphisms
$Y\sr Y$, $Z\sr Z$ and the morphism $p_1:K_{Q_1}\sr X$ define a
morphism of squares $f:Q_4\sr Q_2$ and we get a commutative diagram
\begin{equation}
\begin{CD}
K_{Q_4} @>K(f)>> K_{Q_2}\\
@Vp_{4}VV @VVp_{2}V\\
K_{Q_3} @>p_{3}>> T
\end{CD}
\end{equation}
By Lemma \ref{lm1} we have $p_{4}\in
cl_{\Delta,\coprod_{<\infty}}(\emptyset)$ and by Lemma \ref{lm0} we have
$K(f)\in cl_{\Delta,\coprod_{<\infty}}(p_{1})$. This implies the
statement of the lemma.
\end{proof}

\subsection{$\tdl$-closed classes}
\begin{definition}
\llabel{filtcolnew}
A class of morphisms $E$ in a category $C$ is said to be closed under filtered colimits if for any pair of filtered systems $(X_i)_{i\in I}$, $(Y_i)_{i\in I}$ such that $X=colim_i X_i$ and $Y=colim_i Y_i$ exist and any  morphism of systems $(f_i):(X_i)\sr (Y_i)$ such that $f_i\in E$ one has $f=colim_i f_i\in E$.
\end{definition}
We will say that a functor between any two categories $C\sr C'$ is continuous (or finitary) if it commutes with filtered colimits which exist in the first category. As shown in \cite[p. 15]{Adamek} a functor is continuous if and only if it commutes with directed colimits, and a functor whose domain is a category with directed colimits is continuous if and only if it preserves colimits of chains. 

Let us recall the following definition (see \cite[p. 452]{Beke}).
\begin{definition}
\llabel{2009tc}
Let $C$ be a category and $E$ be a class of morphisms in $C$. A morphism $f:X\sr Y$ is called a transfinite composition of morphisms from $E$ if there is an ordinal $\alpha$ and a continuous functor $F:\alpha\sr C$ such that $colim F$ exists, $f$ is isomorphic to the morphism $F(0)\sr colim F$ and for any $i\in \alpha$ the morphism $F(i\sr i+1)$ is in $E$.
\end{definition}
\begin{lemma}\llabel{obv2}
Let $E$ be a class closed under finite compositions and filtered colimits. Then one has:
\begin{enumerate}
\item for any filtered system $i\mapsto X(i)$ such that for all $i\sr j$ the morphism $X(i)\sr X(j)$ is in $E$ and any $i\in I$, the map $X(i)\sr colim_i X(i)$ is in $E$,
\item  $E$ it is closed under transfinite compositions. 
\end {enumerate}
\end{lemma}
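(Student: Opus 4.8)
The plan is to read off (1) directly from the definition of closure under filtered colimits (Definition \ref{filtcolnew}) by comparing the given system with a constant one, and then to deduce (2) from (1) by transfinite induction, using closure under finite compositions to cross successor stages and (1) to cross limit stages. Throughout I read ``closed under finite compositions'' as including the empty composition, so that all identities lie in $E$.

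For (1), I would fix $i\in I$ and pass to the coslice category $I_{i/}$ whose objects are the morphisms $i\sr j$ of $I$. Since $I$ is filtered, $I_{i/}$ is again filtered and the projection $p:I_{i/}\sr I$ is cofinal, so that $colim_{I_{i/}}(X\circ p)=colim_I X$. Over $I_{i/}$ I consider two systems: the constant system with value $X(i)$ and identity transition maps, and the restriction $X\circ p$. The transition maps $X(i)\sr X(j)$ assemble into a morphism of systems from the first to the second, and every component is of the form $X(i)\sr X(j)$, hence lies in $E$ by hypothesis (including the component at the object $(i,Id_i)$, which is the transition map $X(Id_i)=Id_{X(i)}$). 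The colimit of the constant system over the connected nonempty category $I_{i/}$ is $X(i)$, while the colimit of $X\circ p$ is $colim_I X$. Applying Definition \ref{filtcolnew} to this morphism of filtered systems then shows that the induced map $X(i)\sr colim_I X$ is in $E$, which is (1).

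For (2), let $f$ be a transfinite composition as in Definition \ref{2009tc}, given by a continuous $F:\alpha\sr C$, and extend $F$ along $\alpha\sr\alpha+1$ by setting $F(\alpha)=colim\,F$ with the canonical maps, which is consistent with continuity. I would prove by transfinite induction on $\gamma\le\alpha$ the strengthened statement that for every $\beta\le\gamma$ the structure map $F(\beta)\sr F(\gamma)$ lies in $E$; taking $\beta=0$ and $\gamma=\alpha$ then yields $f\in E$. The base case $\gamma=0$ is the identity of $F(0)$. At a successor $\gamma=\delta+1$ with $\beta\le\delta$, the map $F(\beta)\sr F(\delta+1)$ factors as $F(\beta)\sr F(\delta)\sr F(\delta+1)$, the first factor being in $E$ by the inductive hypothesis and the second by the defining property of a transfinite composition, so closure under finite compositions gives the claim. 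At a limit ordinal $\gamma=\lambda$ with $\beta<\lambda$, the up-set $\{\delta:\beta\le\delta<\lambda\}$ is cofinal in $\{\delta:\delta<\lambda\}$, so by continuity of $F$ the colimit of the linearly ordered system $(F(\delta))_{\beta\le\delta<\lambda}$ is $F(\lambda)$; all of its transition maps are in $E$ by the inductive hypothesis, so part (1) applied at the index $\beta$ gives $F(\beta)\sr F(\lambda)\in E$.

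The only genuinely delicate point is the limit step, and it is exactly what forces the two-variable strengthening of the induction: to invoke (1) at a limit ordinal I need all transition maps of the truncated system $(F(\delta))_{\beta\le\delta<\lambda}$, not merely those emanating from $0$, to belong to $E$, so the naive statement ``$F(0)\sr F(\gamma)\in E$'' would not propagate through limits. The remaining work is routine bookkeeping: verifying that the coslice $I_{i/}$ is filtered with cofinal projection in (1), identifying the colimit of the constant system, and checking the cofinality of $\{\delta:\beta\le\delta<\lambda\}$ so that continuity of $F$ identifies $colim_{\beta\le\delta<\lambda}F(\delta)$ with $F(\lambda)$.
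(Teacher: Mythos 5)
Your proof is correct, and part (1) is exactly the paper's argument: pass to the coslice $i/I$ and apply Definition \ref{filtcolnew} to the morphism from the constant system at $X(i)$ to the system $(i\sr j)\mapsto X(j)$. In part (2) you follow the same skeleton as the paper (successors by closure under finite composition, limits by filtered-colimit closure), but you diverge on the bookkeeping: the paper runs the induction on the one-variable statement, taking $J=\{j: X(0)\sr X(j)\in E\}$ and crossing limit stages ``by the first assertion,'' whereas you strengthen the inductive hypothesis to all pairs $\beta\le\gamma$. Your strengthening certainly works, but your justification for it --- that the naive statement ``$F(0)\sr F(\gamma)\in E$'' cannot propagate through limits --- is overstated. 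At a limit $\lambda$ one can apply Definition \ref{filtcolnew} directly, rather than assertion (1) verbatim, to the morphism from the constant system at $X(0)$, indexed by $\{i:i<\lambda\}$, to the system $(X(i))_{i<\lambda}$: the definition only requires the \emph{components} $X(0)\sr X(i)$ of this morphism of systems to lie in $E$, and says nothing about the transition maps of the target system. Since the index poset is directed, hence nonempty and connected, the colimit of the constant system is $X(0)$, and continuity identifies the colimit of the target with $X(\lambda)$; so the one-variable hypothesis does cross limits. This is the same constant-system trick that proves (1) in the first place. You are right about one thing: the paper's literal citation of ``the first assertion'' at the limit step does not typecheck, since (1) as stated demands that \emph{all} transition maps lie in $E$, which the one-variable hypothesis does not provide --- so you have identified a genuine looseness in the text. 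But the minimal repair is to invoke the definition rather than to carry the heavier two-variable induction; either route proves the lemma.
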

\begin{proof}
To prove the first assertion observe first that by replacing $I$ with the category $i/I$ of morphisms $i\sr j$ and our original system with the system $(i\sr j)\mapsto X_j$ we do not change the colimit. The first assertion of the lemma follows now by the application of Definition \ref{filtcolnew} to the obvious morphism from the constant system $(i\sr j)\mapsto X_i$ to the system $(i\sr j)\mapsto X_j$.

To prove the second assertion let $I$ be an ordinal considered as a partially ordered set and $i\mapsto X(i)$ be a functor such that $X$ takes all successor arrows to elements of $E$ and such that for $j=\lim_{i<j} i$ one has $X(j)=colim_{i<j} X(i)$. Consider the subset $J\subset I$ which consists of $J$ such that $X(0)\sr X(j)$ is in $E$. If $j=colim_{i<j} i$ and for all $i<j$ we have $i\in J$ then by the first assertion $j\in J$. Since $E$ is closed under finite compositions we conclude that if $j\in J$ then $j+1\in J$. Therefore $J=I$ by the "transfinite induction" axiom.
\end{proof}
\begin{definition}
\llabel{2009td}
Let $C$ be a category. A class in $\Delta^{op}C$ is called $\tdl$-closed if it is $\Delta$-closed and closed under filtered colimits.
\end{definition}
\begin{remark}\rm
As follows from \cite[Th. 1.5, p.14]{Adamek} filtered colimits are equivalent to directed colimits in a way which makes these two notions interchangeable in all of our results and definitions.
\end{remark}
We let $cl_{\tdl}(E)$ denote the smallest $\tdl$-closed class which contains $E$. There is the following obvious analog of Lemma \ref{fandd}.
\begin{lemma}
\llabel{fandt}
Let $F:C\sr C'$ be a continuous functor. Then for any class of morphisms $E$ in $\Delta^{op}C$ one has
$$F(cl_{\tdl}(E))\subset cl_{\tdl}(F(E))$$
\end{lemma}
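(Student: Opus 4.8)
The plan is to mimic the proof of Lemma \ref{fandd}, which established the analogous containment $F(cl_{\Delta}(E))\subset cl_{\Delta}(F(E))$ for an arbitrary functor, and then to upgrade it so that the one extra closure condition distinguishing $cl_{\tdl}$ from $cl_{\Delta}$—namely closure under filtered colimits—is also respected. Concretely, I would argue that the class
$$G=\{f \text{ in } \Delta^{op}C : F(f)\in cl_{\tdl}(F(E))\}$$
contains $E$ and is $\tdl$-closed; since $cl_{\tdl}(E)$ is by definition the smallest $\tdl$-closed class containing $E$, this immediately yields $cl_{\tdl}(E)\subset G$, which is precisely the assertion $F(cl_{\tdl}(E))\subset cl_{\tdl}(F(E))$. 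That $E\subset G$ is obvious because $F(E)\subset cl_{\tdl}(F(E))$.

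It remains to verify that $G$ is $\tdl$-closed, i.e.\ that it is $\Delta$-closed and closed under filtered colimits. First I would handle the three $\Delta$-closure axioms of Definition \ref{fd}. These follow exactly as in Lemma \ref{fandd} from the two structural facts about simplicial extensions recorded just before that lemma: that $F$ carries homotopic morphisms to homotopic morphisms (so homotopy equivalences go to homotopy equivalences, giving axiom 1 since $cl_{\tdl}(F(E))$ contains all homotopy equivalences), that $F$ commutes with composition (giving the two-out-of-three axiom 2), and that $F\circ\Delta=\Delta\circ F$ on bisimplicial objects (so that if the rows or columns of a bisimplicial morphism land in $G$ then applying $F$ produces a bisimplicial morphism whose rows or columns lie in $cl_{\tdl}(F(E))$, whence its diagonal does too, giving axiom 3). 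None of these three steps uses continuity of $F$.

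The one genuinely new point, and the only place where the hypothesis that $F$ is continuous enters, is closure of $G$ under filtered colimits. Here I would take filtered systems $(X_i)$, $(Y_i)$ with a morphism of systems $(f_i):(X_i)\sr(Y_i)$ such that each $f_i\in G$, and whose colimits $X=colim_i X_i$, $Y=colim_i Y_i$ exist. Because $F$ is continuous it commutes with these filtered colimits, so $F(f)=colim_i F(f_i)$; since each $F(f_i)\in cl_{\tdl}(F(E))$ and that class is itself closed under filtered colimits, Definition \ref{filtcolnew} gives $F(f)\in cl_{\tdl}(F(E))$, i.e.\ $f\in G$. The main obstacle—really the only substantive issue—is making sure the continuity of $F$ is deployed at exactly this colimit step and that the simplicial extension of $F$ (applied termwise) indeed commutes with termwise filtered colimits, which it does because colimits in $\Delta^{op}C$ are computed termwise. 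Everything else is a transcription of the earlier argument, so I expect the proof to be short, with the author likely just remarking that it is the evident analog of Lemma \ref{fandd}.
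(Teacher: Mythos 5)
Your proof is correct and is essentially the paper's own argument: the paper gives no written proof, stating only that this is the "obvious analog" of Lemma \ref{fandd}, whose justification is exactly the structural facts you invoke (simplicial extensions preserve homotopies, compositions, and diagonals), so that the preimage class $G$ is $\Delta$-closed, with continuity supplying the one additional filtered-colimit closure. Your prediction that the author would simply remark on the analogy with Lemma \ref{fandd} is precisely what happens in the text.
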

\begin{proposition}
\llabel{2009fcop}
Let $C$ be a category with coproducts and $A$ a class of objects in $C$ which is closed under finite coproducts and such that any object of $C$ is a filtered colimit of objects of $A$. Then for any class of morphisms $E$ in $\Delta^{op}C$ the class $cl_{\tdl}(E\amalg Id_A)$ is closed under coproducts.
\end{proposition}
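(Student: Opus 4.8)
Write $B=cl_{\tdl}(E\amalg Id_A)$. The plan is to first show that $B$ is stable under adjoining the identity of a single summand, and then to exhibit an arbitrary coproduct as a transfinite composition of such maps.

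\emph{Reduction to one summand.} The core claim will be: for every $g\in B$ and every object $X$ of $\Delta^{op}C$ one has $g\amalg Id_X\in B$. Granting this, consider a family $f_k\colon X_k\sr Y_k$, $k\in K$, with each $f_k\in B$. I would well-order $K$ by an ordinal $\lambda$ and set $Z_\mu=(\coprod_{k<\mu}Y_k)\amalg(\coprod_{k\ge\mu}X_k)$, so that $Z_0=\coprod_k X_k$, $Z_\lambda=\coprod_k Y_k$, and $\coprod_k f_k$ is the induced morphism $Z_0\sr Z_\lambda$. The successor map $Z_\mu\sr Z_{\mu+1}$ is $f_\mu$ coproduced with the identity of all the remaining summands, hence lies in $B$ by the core claim; and for limit $\mu$ one has $Z_\mu=\colim_{\nu<\mu}Z_\nu$ because a coproduct, being itself a colimit, commutes with the filtered colimit along $\nu$. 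Thus $\coprod_k f_k$ is a transfinite composition (Definition \ref{2009tc}) of morphisms of $B$. Since $B$ is $\Delta$-closed it is closed under finite composition (two-out-of-three), and it is closed under filtered colimits by definition, so Lemma \ref{obv2} shows that it is closed under transfinite compositions; therefore $\coprod_k f_k\in B$.

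\emph{The core claim for $X\in A$.} Fix a single object $X\in A$ and put $B_X=\{g : g\amalg Id_X\in B\}$. I would prove $B\subseteq B_X$ by checking that $B_X$ is $\tdl$-closed and contains $E\amalg Id_A$. The containment is immediate: $(f\amalg Id_a)\amalg Id_X=f\amalg Id_{a\amalg X}$ and $a\amalg X\in A$ since $A$ is closed under finite coproducts. The two-out-of-three and homotopy-equivalence axioms for $B_X$ follow from the functoriality of $-\amalg Id_X$ and from the fact (used already in Lemma \ref{lm1}) that a coproduct of homotopy equivalences is a homotopy equivalence; closure under filtered colimits holds because $-\amalg X$ commutes with filtered colimits (filtered index categories being connected). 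The one axiom that requires care—and the step I expect to be the main obstacle—is the diagonal axiom: given a bisimplicial morphism $f$ whose rows lie in $B_X$, I would adjoin the constant summand $X$ in both simplicial directions to obtain a bisimplicial morphism whose diagonal is $\Delta(f)\amalg Id_X$ and whose $j$-th row is precisely $f_{\bullet j}\amalg Id_X$. The latter lies in $B$ because $f_{\bullet j}\in B_X$, so $\Delta$-closedness of $B$ yields $\Delta(f)\amalg Id_X\in B$, i.e. $\Delta(f)\in B_X$.

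\emph{Upgrading $X$ to $\Delta^{op}C$.} Finally I would remove the restriction $X\in A$. For $X\in C$, write $X=\colim_i X_i$ with $X_i\in A$ filtered; then $g\amalg Id_X=\colim_i(g\amalg Id_{X_i})$ is a filtered colimit of a morphism of systems whose components lie in $B$ by the previous step (again because $g\amalg-$ commutes with filtered colimits), so $g\amalg Id_X\in B$ by Definition \ref{filtcolnew}. For an arbitrary $X\in\Delta^{op}C$, the morphism $g\amalg Id_X$ is the diagonal of the bisimplicial morphism whose $j$-th row is $g\amalg Id_{X_j}$ with $X_j\in C$; each such row lies in $B$ by the case just treated, so $\Delta$-closedness of $B$ gives $g\amalg Id_X\in B$. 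This establishes the core claim in full and hence the proposition.
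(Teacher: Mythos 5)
Your proof is correct and follows essentially the same route as the paper's: the same well-ordering/transfinite-composition decomposition of the coproduct (justified via Lemma \ref{obv2}), the same diagonal argument relating $\Delta^{op}C$ to $C$, and the same filtered-colimit reduction from $C$ to $A$. The only difference is that where the paper simply invokes Lemma \ref{fandt} for the continuous functor $(-)\amalg Y$, $Y\in A$, you re-derive its content by hand via the class $B_X=\{g : g\amalg Id_X\in B\}$ and a direct check of the $\tdl$-closure axioms --- this is just that lemma's proof unrolled, not a different idea.
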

\begin{proof}
Let $f_i:X_i\sr Y_i$, $i\in I$ be a set of morphisms in $\Delta^{op}C$. Choose a well-ordering on $I$ and for $j\in I+1$ set
$$Z_j=(\amalg_{i<j} Y_i)\amalg (\amalg_{i\ge j} X_i)$$
then $j\mapsto Z_j$ is a transfinite composition diagram such that $Z_j\sr Z_{j+1}$ is isomorphic to $f_j\amalg Id_{(\amalg_{i<j} Y_i)\amalg (\amalg_{i<j} X_i)}$ and $Z_0\sr colim Z_j$ is isomorphic to $\amalg f_i$. Therefore it remains to show that for $f\in cl_{\tdl}(E\amalg Id_A)$ and $X\in \Delta^{op}C$ one has $f\amalg Id_X\in cl_{\tdl}(E\amalg Id_A)$. Since $f\amalg Id_X$ is the diagonal of the morphism of bisimplicial objects whose rows are of the form $f\amalg X_i$ we may assume that $X\in C$. By our assumption on $A$ the morphism $f\amalg Id_X$ is a filtered colimit of morphisms of the form $f\amalg Id_Y$ for $Y\in A$. Therefore it remains to show that for $f\in cl_{\tdl}(E\amalg Id_A)$ and $Y\in A$ one has $f\amalg Id_Y\in cl_{\tdl}(E\amalg Id_A)$. This follows from Lemma \ref{fandt} applied to the functor $(-)\amalg Y$.
\end{proof}
Proposition \ref{2009fcop} shows that $cl_{\tdl}(E\amalg Id_C)$ is the smallest class which is $\tdl$-closed and closed under coproducts. We denote it by $cl_{\tdl,\amalg}(E)$.
\begin{cor}
\llabel{2009fcope}
Let $C$ be a category with coproducts. Then 
$$cl_{\tdl,\amalg}(\emptyset)=cl_{\tdl}(\emptyset)$$
\end{cor}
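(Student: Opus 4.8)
The plan is to deduce the statement directly from Proposition \ref{2009fcop} by specializing it to the largest possible class of objects. First I would take $A=C$, i.e. the class of all objects of $C$, and check that it satisfies the hypotheses of that proposition: since $C$ has coproducts it is in particular closed under finite coproducts, and every object of $C$ is trivially a filtered colimit of objects of $A$ (the constant one-object system). Hence Proposition \ref{2009fcop} applies with this choice of $A$ to the class $E=\emptyset$.

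Next I would observe that the class $\emptyset\amalg Id_C$ is empty: by the definition recalled before Proposition \ref{fcop} it consists of the morphisms $f\amalg Id_X$ with $f\in\emptyset$ and $X\in C$, and there are no such $f$. Consequently $cl_{\tdl}(\emptyset\amalg Id_C)=cl_{\tdl}(\emptyset)$, and since $cl_{\tdl,\amalg}(\emptyset)$ is by its very definition $cl_{\tdl}(\emptyset\amalg Id_C)$, the asserted equality is already visible at the level of notation. To make the content explicit I would phrase the conclusion through the two inclusions. One inclusion is immediate: any class that is $\tdl$-closed and closed under coproducts is in particular $\tdl$-closed, so $cl_{\tdl}(\emptyset)\subseteq cl_{\tdl,\amalg}(\emptyset)$.

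For the reverse inclusion I would use Proposition \ref{2009fcop}, applied as above, which shows that $cl_{\tdl}(\emptyset)$ is itself closed under coproducts. Being also $\tdl$-closed by construction, it is a $\tdl$-closed class closed under coproducts; and since $cl_{\tdl,\amalg}(\emptyset)$ is the smallest such class, we obtain $cl_{\tdl,\amalg}(\emptyset)\subseteq cl_{\tdl}(\emptyset)$. Combining the two inclusions gives the equality.

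There is no genuine obstacle here: the entire weight of the argument sits in Proposition \ref{2009fcop}, whose transfinite-composition construction is what guarantees closure under arbitrary (not merely finite) coproducts. The only points needing a moment's care are the verification that $A=C$ is an admissible choice and that $\emptyset\amalg Id_C$ is empty; once these are in place the corollary is a formal consequence, exactly parallel to the way Corollary \ref{fcope} follows from Proposition \ref{fcop} in the finite setting.
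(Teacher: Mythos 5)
Your proof is correct and is essentially the paper's own argument: the corollary is stated without proof precisely because, as you observe, $\emptyset\amalg Id_C=\emptyset$ makes $cl_{\tdl,\amalg}(\emptyset)=cl_{\tdl}(\emptyset\amalg Id_C)=cl_{\tdl}(\emptyset)$ immediate once Proposition \ref{2009fcop} (with $A=C$, which satisfies its hypotheses trivially) guarantees this class is closed under coproducts. Your two-inclusion elaboration is a harmless unpacking of the same point, exactly parallel to how Corollary \ref{fcope} follows from Proposition \ref{fcop}.
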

\begin{theorem}
\llabel{diag} The class of weak equivalences in $\Delta^{op}Sets$ coincides with
$cl_{\tdl}(\emptyset)$.
\end{theorem}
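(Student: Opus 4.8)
Write $W$ for the class of weak equivalences in $\Delta^{op}Sets$. The plan is to prove the two inclusions separately, and $cl_{\tdl}(\emptyset)\subset W$ is the routine one: since $cl_{\tdl}(\emptyset)$ is the smallest $\tdl$-closed class, it suffices to check that $W$ is itself $\tdl$-closed. Simplicial homotopy equivalences are weak equivalences (apply geometric realization to a chain of unit homotopies); $W$ satisfies two-out-of-three; and $W$ is closed under filtered colimits because homotopy groups commute with filtered colimits of simplicial sets. The only substantial point is the third axiom of Definition \ref{fd}, which for $W$ is exactly the classical realization (diagonal) lemma: a map of bisimplicial sets that is a weak equivalence on every row (or every column) induces a weak equivalence on diagonals. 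Granting this, $W$ is $\tdl$-closed and contains $\emptyset$, whence $cl_{\tdl}(\emptyset)\subset W$.

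For the reverse inclusion $W\subset cl_{\tdl}(\emptyset)$ I would first reduce to trivial cofibrations. Given a weak equivalence $f:X\sr Y$, factor it through the cylinder as $X\sr cyl(f)\sr Y$, where $X\sr cyl(f)$ is the end inclusion (a term-wise coprojection) and $cyl(f)\sr Y$ is a homotopy equivalence by Lemma \ref{cyl}. Since the latter lies in $cl_{\Delta}(\emptyset)\subset cl_{\tdl}(\emptyset)$ and $f\in W$, two-out-of-three shows it is enough to put every trivial cofibration (a monomorphism that is a weak equivalence) into $cl_{\tdl}(\emptyset)$.

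Next I would analyze trivial cofibrations through the small object argument for the horn inclusions $\Lambda^n_k\hookrightarrow\Delta^n$. Each such inclusion is a simplicial homotopy equivalence: both $\Delta^n$ and $\Lambda^n_k$ are simplicially contractible (the vertex $k$ is a cone vertex of $\Lambda^n_k$, since $S\cup\{k\}$ is a simplex of $\Lambda^n_k$ whenever $S$ is), so a constant map serves as a common homotopy inverse, and therefore $\Lambda^n_k\hookrightarrow\Delta^n\in cl_\Delta(\emptyset)$. A pushout of such an inclusion along an arbitrary attaching map $\Lambda^n_k\sr Z$ is the cobase change $Z\sr Z\cup_{\Lambda^n_k}\Delta^n$, which by the first inclusion of Lemma \ref{exactsquares} (the horn playing the role of the coprojection $e_B$) lies in $cl_{\Delta,\amalg_{<\infty}}(\{\Lambda^n_k\hookrightarrow\Delta^n\})=cl_{\Delta,\amalg_{<\infty}}(\emptyset)=cl_\Delta(\emptyset)$, using that the horn is already in $cl_\Delta(\emptyset)$ together with Corollary \ref{fcope}. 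Because $cl_{\tdl}(\emptyset)$ is $\Delta$-closed it is closed under finite composition (two-out-of-three) and under filtered colimits, so by Lemma \ref{obv2} it is closed under transfinite composition; hence every transfinite composition of horn pushouts lies in $cl_{\tdl}(\emptyset)$. The small object argument now presents an arbitrary trivial cofibration $i$ as a retract of such a transfinite composition $\bar\imath$: factor $i=q\bar\imath$ with $\bar\imath$ a horn cell complex and $q$ a Kan fibration; two-out-of-three makes $q$ a trivial fibration, and lifting the cofibration $i$ against $q$ exhibits the retraction.

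It then remains to show that $cl_{\tdl}(\emptyset)$ is closed under retracts. If $f$ is a retract of $g$, the two retracting idempotents $e,e'$ satisfy $ge=e'g$, and in the idempotent-complete category $\Delta^{op}Sets$ the mapping telescopes $colim(\cdot\stackrel{e}{\sr}\cdot\stackrel{e}{\sr}\cdots)$ recover the splittings; thus $f$ is the filtered colimit of the morphism of telescope systems all of whose components equal $g$, and closure of $cl_{\tdl}(\emptyset)$ under filtered colimits gives $f\in cl_{\tdl}(\emptyset)$. Assembling the four steps yields $W\subset cl_{\tdl}(\emptyset)$. The hard direction is the main obstacle, and within it the genuinely delicate elementary points feeding the closure arguments are two: that the horn inclusions are simplicial homotopy equivalences in the chain sense of Definition \ref{fd} (contracting $\Lambda^n_k$ onto its cone vertex respects the orientation only after a zigzag of unit homotopies, not via a single monotone one), and that $cl_{\tdl}(\emptyset)$ is stable under retracts. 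The realization lemma used in the easy direction is the one external input I would quote rather than reprove.
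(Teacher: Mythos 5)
Your proof is correct, and its easy direction ($cl_{\tdl}(\emptyset)\subset W$) coincides with the paper's: both verify that $W$ is $\tdl$-closed by quoting the diagonal lemma for bisimplicial sets, two-out-of-three, and closure of weak equivalences under filtered colimits. For the hard direction the routes genuinely diverge, even though both rest on the same two ingredients (horn inclusions are simplicial homotopy equivalences, and Lemma \ref{exactsquares} together with closure under compositions and filtered colimits puts horn attachments into $cl_{\tdl}(\emptyset)$). The paper never factors the map $f$ itself: it applies the horn-filling completion functor $Ex=colim_n Ex_n$ to both source and target, so that $Ex(f)$ becomes a simplicial homotopy equivalence between Kan complexes, while each $X\sr Ex(X)$ is an honest countable composition of elementary push-outs along coproducts of horns --- no lifting and no retracts ever appear. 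You instead factor $f$ through the mapping cylinder to reduce to a trivial cofibration, present that as a retract of a horn cell complex via the small object argument and a lift against the resulting trivial fibration, and then must supply the closure property that $\tdl$-closed classes do not have by definition: stability under retracts, which you obtain from the idempotent/telescope trick (a retract is the sequential colimit of the constant system on the bigger map, so Definition \ref{filtcolnew} applies). That retract lemma is correct and is the genuinely new ingredient of your route; it shows more generally that any class closed under filtered colimits is closed under retracts, a fact of independent use. What the paper's route buys is economy: the functorial fibrant replacement keeps the whole argument inside the defining closure properties of $\tdl$-closed classes. What your route buys is independence from any particular fibrant-replacement construction, at the cost of invoking the lifting calculus of the Kan model structure (right lifting property of trivial fibrations against monomorphisms) plus the extra lemma. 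One small point to make explicit: your ``transfinite composition of horn pushouts'' should either attach one horn at a time, or note that arbitrary coproducts of horn inclusions are again homotopy equivalences in the chain sense (possible here because all the contracting zigzags have the same length and direction); either repair is immediate.
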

\begin{proof}
Let us show first that any weak equivalence $f$ belongs to $cl_{\tdl}(\emptyset)$. 
Let $Ex=colim_n Ex_n$ be the Kan completion functor. For a weak equivalence $f$ the map $Ex(f)$ is a homotopy equivalence and by the 2-out-of-3-property it remains to show that the maps $X\sr Ex(X)$ are in $cl_{\tdl}(\emptyset)$.  Since any $cl_{\tdl}(\emptyset)$ is
closed under countable compositions it is sufficient
to show that the maps
$$Ex_n(X)\sr Ex_{n+1}(X)$$
belong to $cl_{\tdl}(\emptyset)$. By definition of
$Ex_n$ (see e.g. \cite{GabZis}) we have a square of the form
$$
\begin{CD}
\amalg_{\Lambda^{n,k}\sr Ex_n(X)}
\Lambda^{n,k} @>>> 
Ex_n(X)\\ 
@VVV @VVV\\
\amalg_{\Lambda^{n,k}\sr Ex_n(X)}
\Delta^n @>>> 
Ex_{n+1}(X)
\end{CD}
$$
which is clearly elementary push-out. The left hand side vertical arrow is a homotopy equivalence and by Lemma \ref{exactsquares} the right hand side vertical arrow is in $cl_{\Delta, \amalg_{\le \infty}}(\emptyset)=cl_{\Delta}(\emptyset)$. 

To show that $cl_{\tdl}(\emptyset)$ is contained in the class of weak equivalences it is sufficient to verify that the class of weak equivalences is $\tdl$-closed.  The conditions of Definition \ref{fd} are well known (for the proof of the third condition see e.g. \cite[Lemma 5.3.1 p.129]{Hovey}, \cite[Prop. 1.7, p.199]{GJ}). The fact that the class of weak equivalences is closed under
filtered colimits follows easily from the
fact that $Ex$ commutes with filtered colimits and the definition of weak equivalences of Kan simplicial sets in terms of homotopy groups. 
\end{proof}
\begin{cor}
\llabel{unexpect} 
Let $C$ be a category with small coproducts, $f:K\sr K'$ a weak equivalence of simplicial sets and $X$ an object of $C$. Then one has
$$(Id_{X}\oo
f:X\oo K\sr
X\oo K')\in cl_{\tdl}(\emptyset)$$ 
\end{cor}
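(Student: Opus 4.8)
The plan is to exhibit the operation $K\mapsto X\oo K$ as the simplicial extension of a single continuous functor $Sets\sr C$ and then to combine Theorem \ref{diag} with Lemma \ref{fandt}. The point is that once the relevant set-level functor is recognized as continuous, both of these results apply directly and the corollary drops out with no further work.

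First I would introduce the functor $\phi:Sets\sr C$ which sends a set $S$ to $X\oo S=\amalg_S X$ and a map of sets to the induced map on coproducts; this is defined on all of $Sets$ precisely because $C$ has small coproducts. The key observation is that $\phi$ is left adjoint to $\mathrm{Hom}_C(X,-):C\sr Sets$, since
$$\mathrm{Hom}_C(\amalg_S X,\,Y)\cong \prod_S \mathrm{Hom}_C(X,Y)\cong \mathrm{Hom}_{Sets}(S,\,\mathrm{Hom}_C(X,Y)).$$
Being a left adjoint, $\phi$ preserves all colimits, in particular filtered colimits, and is therefore continuous in the sense of Definition \ref{filtcolnew}.

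Next I would identify the simplicial extension of $\phi$: applying $\phi$ termwise to a simplicial set $K$ yields the simplicial object with terms $\phi(K_n)=X\oo K_n$, which is exactly $X\oo K$ with $X$ viewed as a constant simplicial object, and on a morphism $f:K\sr K'$ it produces $Id_X\oo f$. The deduction then proceeds in two steps. By Theorem \ref{diag} the weak equivalence $f$ lies in the class $cl_{\tdl}(\emptyset)$ computed in $\Delta^{op}Sets$. Applying Lemma \ref{fandt} to $\phi$, and using $\phi(\emptyset)=\emptyset$, gives $\phi(cl_{\tdl}(\emptyset))\subset cl_{\tdl}(\emptyset)$ in $\Delta^{op}C$, whence $Id_X\oo f=\phi(f)\in cl_{\tdl}(\emptyset)$, as required.

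The only nontrivial point---and hence the step I would guard most carefully---is the continuity of $\phi$, which is exactly the hypothesis needed to invoke Lemma \ref{fandt}; everything else is bookkeeping about how the simplicial extension of $\phi$ unwinds to $Id_X\oo(-)$. I do not expect any serious obstacle here, since continuity follows formally from the adjunction, but it is worth recording explicitly that the role of the ``small coproducts'' hypothesis is both to define $\phi$ and, together with the adjunction, to make it continuous.
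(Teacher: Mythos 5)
Your proposal is correct and is essentially the paper's own argument: the paper's proof reads, in full, that the corollary ``follows from Theorem \ref{diag} and Lemma \ref{fandt} applied to the functor $K\mapsto X\oo K$.'' You have simply made explicit the routine verification the paper leaves implicit, namely that $K\mapsto X\oo K$ is continuous because it is left adjoint to $\mathrm{Hom}_C(X,-)$.
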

\begin{proof}
It follows from Theorem \ref{diag} and Lemma \ref{fandt} applied to the functor $K\mapsto X\oo K$.
\end{proof} 
Let $I$ be a small category and $C$ be a category with coproducts. Let further $F:I\sr C$ be a functor.  Define a simplicial object $hocolim(F)$ in $C$ setting 
$$hocolim(F)_n=\coprod_{i_0\sr\dots\sr i_n} F(i_0)$$
where the coproduct is taken over all sequences of morphisms in $I$ of length $n+1$. One defines the face and degeneracy morphisms in the obvious way (it is the same construction as in \cite{BKan} or in \cite[p.199]{GJ} only now the target category is $C$ instead of $Sets$).  Note that 
$$hocolim(F)_0=\coprod_{i\in I} F(i)$$
and in particular for any $i$ in $I$ we have a morphism $F(i)\sr hocolim(F)$. There is also an obvious morphism $hocolim(F)\sr colim(F)$ if $colim(F)$ exists. 

If $F$ is a functor $I\sr \Delta^{op}C$ then $hocolim(F)$ is a bisimplicial object. By abuse of notation we will often write $hocolim(F)$ instead of $\Delta(hocolim(F))$. The definition of $hocolim$ immediately implies the following lemma. 
\begin{lemma}
\llabel{2009li}
Let $C$ be a category with coproducts and $I$ a small category. Let further $F, G: I \sr \Delta^{op}C$ be two functors and $f:F\sr G$ a natural transformation. Then one has
$$hocolim(f)\in cl_{\tdl,\amalg}(\{f(i)\}_{i\in I})$$
\end{lemma}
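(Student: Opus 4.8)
The plan is to read off the bisimplicial structure of $hocolim(f)$ directly from the defining formula and then invoke the two closure properties built into $cl_{\tdl,\amalg}$. Since $F,G:I\sr\Delta^{op}C$, each term $hocolim(F)_n=\coprod_{i_0\sr\dots\sr i_n}F(i_0)$ is itself an object of $\Delta^{op}C$, so $hocolim(F)$ is a simplicial object in $\Delta^{op}C$, i.e.\ a bisimplicial object over $C$, and likewise for $G$. Under the abuse of notation fixed just before the statement, $hocolim(f)$ denotes the diagonal $\Delta$ of the induced morphism of bisimplicial objects. I would therefore first describe this morphism of bisimplicial objects and only at the end pass to the diagonal.

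The key observation is that in the $hocolim$-direction the morphism is level-wise a coproduct of the $f(i)$. Concretely, the natural transformation $f$ induces on the $n$-th row the morphism
$$hocolim(f)_n=\coprod_{i_0\sr\dots\sr i_n}f(i_0):\coprod_{i_0\sr\dots\sr i_n}F(i_0)\sr\coprod_{i_0\sr\dots\sr i_n}G(i_0),$$
a coproduct, indexed by the (small) set of length-$(n+1)$ chains in $I$, of morphisms drawn from the set $\{f(i)\}_{i\in I}$. Because $cl_{\tdl,\amalg}(\{f(i)\}_{i\in I})$ is by construction (Proposition \ref{2009fcop}) closed under coproducts and contains each $f(i)$, every such row morphism lies in $cl_{\tdl,\amalg}(\{f(i)\}_{i\in I})$.

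It then remains to descend from the rows to the diagonal. Since $cl_{\tdl,\amalg}(\{f(i)\}_{i\in I})$ is $\tdl$-closed and in particular $\Delta$-closed, condition (3) of Definition \ref{fd} applies: a morphism of bisimplicial objects all of whose rows lie in a $\Delta$-closed class has its diagonal in that class. Applying this to the morphism of bisimplicial objects above yields $\Delta(hocolim(f))\in cl_{\tdl,\amalg}(\{f(i)\}_{i\in I})$, which is the assertion.

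The only point that requires care --- and the reason the statement is phrased with $cl_{\tdl,\amalg}$ rather than $cl_{\Delta,\amalg_{<\infty}}$ --- is that the indexing sets of chains are in general infinite, so one genuinely needs closure under arbitrary small coproducts and not merely finite ones; this is exactly what Proposition \ref{2009fcop} supplies, given that $C$ has all coproducts. Everything else is bookkeeping, which is why the statement can be called an immediate consequence of the definition of $hocolim$.
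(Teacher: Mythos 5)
Your proof is correct and is exactly the argument the paper intends: the paper states this lemma without proof, as an ``immediate consequence'' of the definition of $hocolim$, and the immediate argument is precisely yours --- each level $hocolim(f)_n=\coprod_{i_0\sr\dots\sr i_n}f(i_0)$ is a (generally infinite) coproduct of morphisms from $\{f(i)\}_{i\in I}$, hence lies in $cl_{\tdl,\amalg}(\{f(i)\}_{i\in I})$ by closure under coproducts, and the diagonal then lies in the class by condition (3) of Definition \ref{fd}. Your remark that infinite chain sets force the use of $cl_{\tdl,\amalg}$ rather than $cl_{\Delta,\amalg_{<\infty}}$ correctly identifies why the statement is phrased the way it is.
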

\begin{proposition}
\llabel{2009hcol}
\llabel{import1}
Let $C$ be a category with coproducts and $F:I\sr \Delta^{op}C$ a filtered diagram such that $colim(F)$ exists. Then one has
$$(hocolim(F)\sr colim(F))\in cl_{\tdl}(\emptyset).$$
\end{proposition}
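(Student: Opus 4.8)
The plan is to exhibit the canonical morphism $\phi:hocolim(F)\sr colim(F)$ as a filtered colimit of homotopy equivalences and then invoke the closure of $cl_{\tdl}(\emptyset)$ under filtered colimits. For each object $a$ of $I$ let $p_a:I/a\sr I$ be the projection from the over-category of objects $i\sr a$. The identity morphism $a\sr a$ is a terminal object of $I/a$, so it is cofinal and hence $colim_{I/a}(F\circ p_a)=F(a)$; moreover appending this terminal object to the end of a chain defines extra degeneracies for the augmented bisimplicial object $hocolim_{I/a}(F\circ p_a)\sr F(a)$, since appending at the end does not change the initial object and therefore acts as the identity on the coefficient $F(i_0)$. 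Thus the augmentation $\epsilon_a:hocolim_{I/a}(F\circ p_a)\sr F(a)$ is a simplicial homotopy equivalence of bisimplicial objects in the external (nerve) direction. Because the diagonal preserves simplicial homotopies (one has $\Delta(U\oo\Delta^1)=\Delta(U)\oo\Delta^1$ for the external $\Delta^1$), the induced morphism of diagonals is a homotopy equivalence in $\Delta^{op}C$, and so lies in $cl_{\tdl}(\emptyset)$ by the first condition of Definition \ref{fd}.

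Next I would check naturality. For $\alpha:a\sr a'$ post-composition gives a functor $\alpha_*:I/a\sr I/a'$ with $F\circ p_{a'}\circ\alpha_*=F\circ p_a$, which induces the transition morphism $hocolim_{I/a}(F\circ p_a)\sr hocolim_{I/a'}(F\circ p_{a'})$ compatibly with the augmentations. Hence $a\mapsto hocolim_{I/a}(F\circ p_a)$ is a covariant functor on $I$, the assignment $a\mapsto F(a)$ is the original diagram, and $\{\epsilon_a\}_{a\in I}$ is a morphism of $I$-diagrams in $\Delta^{op}C$. Since $I$ is filtered, this is a filtered system of morphisms, each of which lies in $cl_{\tdl}(\emptyset)$ by the previous paragraph.

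The crux is the identification of the two colimits. On the target $colim_a F(a)=colim(F)$ by hypothesis. On the source I fix bidegrees: in external degree $n$ and internal degree $m$ the term $(hocolim_{I/a}(F\circ p_a)_n)_m$ is the coproduct of copies of $F(i_0)_m$ indexed by chains $i_0\sr\dots\sr i_n$ in $I$ equipped with a morphism $i_n\sr a$. As the under-category $i_n/I$ has the initial object $Id_{i_n}$ it is connected, so $colim_a\,Hom_I(i_n,a)=pt$; verifying the universal property directly shows that this filtered colimit exists in $C$ and equals $\coprod_{i_0\sr\dots\sr i_n}F(i_0)_m=(hocolim(F)_n)_m$. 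Assembling these over all bidegrees yields a natural isomorphism $colim_a\,hocolim_{I/a}(F\circ p_a)\cong hocolim(F)$ under which $colim_a\epsilon_a$ becomes the canonical morphism $\phi$. Since $cl_{\tdl}(\emptyset)$ is closed under filtered colimits by Definitions \ref{filtcolnew} and \ref{2009td}, I conclude $\phi=colim_a\epsilon_a\in cl_{\tdl}(\emptyset)$.

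The step I expect to be most delicate is this colimit identification: one must confirm not merely the bidegreewise isomorphism but its compatibility with all face and degeneracy operators, so that it holds at the level of bisimplicial, and hence diagonal, objects, as well as the care required because $C$ is assumed only to have coproducts (so the relevant filtered colimit must be produced, rather than assumed, via the universal property that exhibits it as a coproduct). It is worth emphasizing that the construction of the $\epsilon_a$ and the bidegreewise identification are valid for an arbitrary small category $I$; filteredness enters exactly once, to guarantee that $colim_a\epsilon_a$ is a \emph{filtered} colimit and is therefore preserved by $cl_{\tdl}(\emptyset)$.
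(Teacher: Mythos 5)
Your proof is correct and takes essentially the same route as the paper: Voevodsky likewise exhibits $hocolim(F)\sr colim(F)$ as the filtered colimit over the objects $j$ of $I$ of the augmentations $hocolim(F/j)\sr F(j)$ (his $F/j$ is exactly your $F\circ p_a$), shows each augmentation is a simplicial homotopy equivalence, and concludes by closure of $cl_{\tdl}(\emptyset)$ under filtered colimits. The only cosmetic differences are that he first reduces to the case $F:I\sr C$ using $\Delta$-closedness of the diagonal (rather than carrying the external homotopy through the diagonal as you do), and he obtains the homotopy equivalence from the standard theory of simplicial objects associated to the cotriple $\phi^*\phi_*$ on $Funct(I,C)$ rather than from your explicit extra degeneracies --- which is the same underlying fact.
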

\begin{proof}
Since our morphism is the diagonal of a morphism of bisimplicial objects whose rows are of the form $hocolim(F_n)\sr colim(F_n)$ for $F_n:I\sr C$ we may assume that $F$ takes values in $C$. 

Let $I_0$ be the set of objects of $I$ considered as a category where all morphisms are identities. We have an obvious functor $\phi:I_0\sr I$ which defines a functor $\phi_*:Funct(I,C)\sr Funct(I_0,C)$. If $C$ has coproducts then $\phi_*$ has a left adjoint $\phi^*$ which sends a family $(X_i)_{i\in I}$ to the functor 
$$\phi^*((X_i)_{i\in I_0}):j\mapsto \amalg_{i\sr j} X_i$$
where the coproduct is over all morphisms $i\sr j$ in $I$. The adjoint pair $(\phi^*,\phi_*)$ defines in the usual way a cotriple $\Phi=\phi^*\phi_*$ on $Funct(I,C)$. For any $F\in Funct(I,C)$ we get a simplicial functor $\Phi_*(F)$ with terms  $\Phi_n(F)=(\phi_*\phi^*)^{\circ (n+1)}(F)$. An elementary computation shows that 
$$\Phi_*(F)(j)=\phi_*\Phi_*(F)(j)=hocolim(F/j)$$
where $F/j:(i\sr j)\mapsto F(i)$. By the standard properties of simplicial objects defined by co-triples we conclude that  the obvious map $hocolim(F/j)\sr colim(F/j)=F(j)$ is a simplicial homotopy equivalence. 

Suppose now that $I$ is a filtered category. Then 
$$hocolim(F)=colim(j\mapsto hocolim(F/j))$$
and the morphism $hocolim(F)\sr colim(F)$ is a filtered colimit of homotopy equivalences and therefore belongs to $cl_{\tdl}(\emptyset)$. 
\end{proof}
\begin{proposition}
\llabel{bdlchar}
Let $C$ be a category with coproducts and filtered colimits. Then a class $E$ in $\Delta^{op}C$ is $(\tdl,\amalg)$-closed if and only if it is $\Delta$-closed, closed under coproducts and contains $cl_{\tdl}(\emptyset)$.
\end{proposition}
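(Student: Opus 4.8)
The plan is to treat the two implications separately; the forward direction is immediate and essentially all of the content sits in the converse.

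For the forward direction, if $E$ is $(\tdl,\amalg)$-closed then by definition it is $\Delta$-closed and closed under coproducts, so the only thing to check is that it contains $cl_{\tdl}(\emptyset)$. Since a $(\tdl,\amalg)$-closed class is in particular $\tdl$-closed, and $cl_{\tdl}(\emptyset)$ is by definition the smallest $\tdl$-closed class, the inclusion $cl_{\tdl}(\emptyset)\subset E$ is automatic.

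For the converse I would assume $E$ is $\Delta$-closed, closed under coproducts and contains $cl_{\tdl}(\emptyset)$, and reduce the claim to showing that $E$ is closed under filtered colimits in the sense of Definition \ref{filtcolnew}: once this is established, combining it with the two standing closure properties gives exactly $(\tdl,\amalg)$-closedness. So let $I$ be filtered, let $X,Y:I\sr\Delta^{op}C$ be diagrams whose colimits exist (they do by the hypotheses on $C$), and let $f=(f_i):X\sr Y$ be a natural transformation with each $f_i\in E$; the goal is $f=colim_i f_i\in E$. The key tool is the naturality square of the canonical map $hocolim\sr colim$:
$$
\begin{CD}
hocolim(X) @>hocolim(f)>> hocolim(Y)\\
@Vp_XVV @VVp_YV\\
colim(X) @>f>> colim(Y)
\end{CD}
$$
By Proposition \ref{2009hcol} the vertical maps $p_X,p_Y$ lie in $cl_{\tdl}(\emptyset)\subset E$. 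For the top map I would argue directly that $hocolim(f)$ is the diagonal of a bisimplicial morphism whose row in nerve-degree $m$ is the coproduct $\coprod_{i_0\sr\dots\sr i_m}f_{i_0}$ of morphisms taken from $\{f_i\}\subset E$; since $E$ is closed under coproducts each such row is in $E$, and since $E$ is $\Delta$-closed its diagonal $hocolim(f)$ is in $E$. I would deliberately avoid invoking Lemma \ref{2009li} here, since it phrases this membership through $cl_{\tdl,\amalg}$ and would make the argument circular. Two applications of the two-out-of-three property then finish: $p_Y\circ hocolim(f)\in E$ by composition, hence $f\circ p_X\in E$ by commutativity of the square, and finally $f\in E$ because $p_X\in E$.

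The only step requiring genuine care is the membership $hocolim(f)\in E$: one must unwind the definition of $hocolim$ so as to exhibit it explicitly as a diagonal whose rows are coproducts of the given $f_i$, after which coproduct-closure and $\Delta$-closure of $E$ apply in turn. Everything else is routine bookkeeping with the two-out-of-three property, and the hypotheses that $C$ has coproducts and filtered colimits are used only to guarantee that every object appearing in the square exists.
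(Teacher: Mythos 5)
Your proof is correct, and it follows the same route as the paper: the paper likewise treats the ``only if'' direction as obvious and reduces the ``if'' direction to closure under filtered (directed) colimits, which it settles with the same $hocolim$/$colim$ comparison square, citing Proposition \ref{import1} for the vertical arrows and Lemma \ref{2009li} for the horizontal one. The single point where you diverge is your deliberate refusal to invoke Lemma \ref{2009li}, and that caution is well placed: as stated, that lemma only places $hocolim(f)$ in $cl_{\tdl,\amalg}(\{f_i\})$, and since at this stage $E$ is not yet known to be closed under filtered colimits, the containment $cl_{\tdl,\amalg}(\{f_i\})\subset E$ is not available --- read literally, the paper's citation is circular. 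The intended, non-circular reading (consistent with the paper's remark that Lemma \ref{2009li} follows ``immediately'' from the definition of $hocolim$) is precisely the argument you inline: $hocolim(f)$ is the diagonal of a bisimplicial morphism whose row over chains $i_0\sr\dots\sr i_n$ is $\coprod f_{i_0}$, so it lies in any class containing the $f_i$ that is closed under coproducts and $\Delta$-closed. So your write-up is, in substance, the paper's proof with that one citation unwound into the argument that actually justifies it; the paper's additional appeal to the equivalence of filtered and directed colimits is unnecessary in your version, since your argument applies to filtered systems directly.
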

\begin{proof}
The "only if" part is obvious. To prove the "if" part it is sufficient by \cite[Th. 1.5, p.14]{Adamek} to check that a class $E$ satisfying the conditions of the proposition is closed under directed colimits which follows immediately from Proposition \ref{import1} and Lemma \ref{2009li}.
\end{proof}

\section{Homotopy theory of simplicial radditive\\ functors}
\subsection{Radditive functors}
\llabel{rf}
Let $C$ be a category with finite coproducts and an initial object
$0$. Denote by $Rad(C)$ the full subcategory of the category of
contravariant functors from $C$ to sets which consists of functors $F$
such that $F(0)=pt$ and for any finite family of objects $X_{i}$, $i\in I$ the
map
$$F(\amalg_{i\in I} X_i)\sr \prod_{i\in I} F(X_i)$$
is bijective. The objects of $Rad(C)$ will be called radditive
functors. 

Categories of radditive functors can be also thought of as categories of covariant functors on categories with finite products which respect the products. Such categories are known in model theory as finitary varieties. See \cite[p.132]{Adamek}.

\begin{examples}\rm\llabel{2009exs1}
\begin{enumerate}
\item \llabel{st1}\rm
Recall that a presheaf on a small category is a contravariant functor
from this category to the category of sets. Let $C$ be a small
category and $C^{\amalg_{<\infty}}$ the full subcategory of the
category of presheaves on $C$ which consists of finite coproducts of
representable presheaves. Then $Rad(C^{\amalg_{<\infty}})$ is
equivalent to the category of presheaves on $C$.
\item\llabel{st2}\rm
For an object $X$ of a category $C$ let $X_+$ be the pointed presheaf
on $C$ obtained from the presheaf represented by $X$ by the addition
of a disjoint base point. Let $C^{\amalg_{<\infty}}_+$ be the full
subcategory of the category of pointed presheaves on $C$ which
consists of coproducts of objects of the form $X_+$. Then
$Rad(C^{\amalg_{<\infty}}_+)$ is the category of pointed presheaves on
$C$.
\item \llabel{st0}\rm
If $C$ is an additive category then $Rad(C)$ is equivalent to the
abelian category of additive contravariant functors from $C$ to abelian
groups. 
\item \llabel{alg}
Let $A$ be a commutative ring and $C$ the category of finitely
generated free algebras over $A$. Then $Rad(C)$ is
equivalent to the category of all  algebras over $A$.  A similar result holds for categories of finitely generated free groups, etc. See Proposition \ref{2009mon}.
\item \llabel{union}
Let $C_1$, $C_2$ be two categories with finite coproducts. Then $C_1\times C_2$ has finite coproducts given by $(X_1,X_2)\amalg (Y_1,Y_2)=(X_1\amalg Y_1, X_2\amalg Y_2)$ and the category $Rad(C_1\times C_2)$ is canonically equivalent to the category $Rad(C_1)\times Rad(C_2)$.
\end{enumerate}
\end{examples}
Example \ref{2009exs1}(\ref{st2}) has a generalization which we will state as a lemma.
\begin{lemma}
\llabel{2007pointed}
Let $C$ be a category with finite coproducts and a final object $pt$ and $C_+$ the full subcategory of pointed objects in $C$ which consists of objects of the form $X_+=X\amalg pt$. Then $Rad(C_+)$ is equivalent to the category of pointed objects in $Rad(C)$.
\end{lemma}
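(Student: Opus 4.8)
The plan is to exhibit an equivalence of categories by constructing functors in both directions and checking they are mutually quasi-inverse. First I would unwind the definitions: an object of $Rad(C_+)$ is a contravariant functor $G:C_+\sr Sets$ with $G(0_+)=pt$ (where $0_+=0\amalg pt=pt$ is the initial object of $C_+$, which is the one-point object, i.e.\ $pt$ itself serves as the zero object of $C_+$) and $G$ sends finite coproducts in $C_+$ to products. A pointed object of $Rad(C)$ is a radditive functor $F:C\sr Sets$ together with a morphism $pt\sr F$ from the terminal object of $Rad(C)$, equivalently a choice of base point in $F(X)$ for each $X$ compatible with all restrictions; since $pt$ is final in $C$ the representable functor $h_{pt}$ is the terminal radditive functor, so a pointing is a natural transformation $h_{pt}\sr F$.

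The key construction in one direction sends a radditive functor $G$ on $C_+$ to the functor $F=G\circ (-)_+$ on $C$, i.e.\ $F(X)=G(X_+)$. I would first verify that $(-)_+:C\sr C_+$ is well defined on objects and that it is compatible with the relevant coproducts: the point is that in $C_+$ the coproduct is the wedge, so $(X\amalg Y)_+$ relates to $X_+$ and $Y_+$ via the cofiber sequence expressing $X_+\vee Y_+$. Because $G$ is radditive on $C_+$ and sends the wedge to a product (modulo the identifications forced by the base point), $F$ will be radditive on $C$. The pointing on $F$ comes from the structure morphism $X_+\sr pt_+$ in $C_+$, which induces $G(pt_+)\sr G(X_+)=F(X)$, i.e.\ a natural map from the terminal functor, giving the base point.

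In the reverse direction, given a pointed radditive functor $(F,*)$ on $C$, I would produce a radditive functor $\tilde F$ on $C_+$. The natural guess is to use the fact that every object $X_+$ of $C_+$ sits in a splitting $X_+=X\amalg pt$ in $C$, so one defines $\tilde F$ on $X_+$ using $F(X)$ together with the base point coming from $*$, organized so that morphisms in $C_+$ (which are base-point-preserving) act correctly. Concretely I would use that $C_+$ can be described via the comma-category presentation $pt/C$ or $C/pt$ of pointed objects, and then the two assignments are manifestly inverse up to natural isomorphism because $(X_+)$ recovers $X$ as the complement of the base point. The radditivity of $\tilde F$ follows from that of $F$ together with the base-point data, and the two functors are checked to be mutually inverse by the evident natural isomorphisms $G\cong \widetilde{G\circ(-)_+}$ and $F\cong (\tilde F)\circ(-)_+$.

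The main obstacle I anticipate is bookkeeping around the base point and the wedge: one must check carefully that the product-preservation condition defining $Rad(C_+)$ matches, under $(-)_+$, the combination of radditivity on $C$ and the pointing, since a coproduct in $C_+$ is a wedge rather than a disjoint union and the base point must be factored out consistently. In particular verifying that $F(X\amalg Y)=F(X)\times F(Y)$ compatibly with base points is equivalent to $G((X\amalg Y)_+)=G(X_+)\times_{G(pt_+)}G(Y_+)$ requires using $G(pt_+)=pt$ and the splitting of $X_+\vee Y_+$; this is the one place where the correspondence is not purely formal and merits explicit attention. Everything else is a routine translation between the comma-category description of pointed objects and the added-base-point description of $C_+$.
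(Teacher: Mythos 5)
Your first functor (sending $G\in Rad(C_+)$ to $X\mapsto G(X_+)$, pointed via the map induced by the unique morphism to the zero object $\emptyset_+\cong pt$ of $C_+$) is exactly the paper's functor $\phi$, and your radditivity check via $(X\amalg Y)_+=X_+\vee Y_+$ is fine. The genuine gap is in the reverse direction, which is where the whole content of the lemma lies. You set $\tilde F(X_+)=F(X)$, say that the morphisms of $C_+$ will be ``organized so that they act correctly,'' and conclude that the two assignments are ``manifestly inverse because $X_+$ recovers $X$ as the complement of the base point.'' In a general category with finite coproducts there is no complement operation, and --- this is the crux --- a pointed morphism $f\colon X_+\to Y_+$ need not be of the form $h_+$ for any $h\colon X\to Y$: it can carry $X$ into the base-point copy of $pt$. (Equivalently, $(-)_+\colon C\to C_+$ is surjective on objects but need not be full; this failure of fullness is precisely why the lemma is not a formality.) So $\tilde F(f)\colon F(Y)\to F(X)$ cannot be obtained by restriction or by any comma-category translation, and your proposal never says what it is. The paper's proof supplies exactly this missing definition: since $Y_+=Y\amalg pt$ is an object of $C$, radditivity of $F$ gives $F(Y_+)\cong F(Y)\times F(pt)$, and one sets $\tilde F(f)$ equal to the composite
\[
F(Y)\stackrel{(\mathrm{id},\,*)}{\longrightarrow} F(Y)\times F(pt)\cong F(Y_+)\stackrel{F(f)}{\longrightarrow} F(X_+)\cong F(X)\times F(pt)\longrightarrow F(X),
\]
where $*\in F(pt)$ is the distinguished point --- the only place the pointing of $F$ enters. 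Functoriality of this assignment is then a genuine (if short) verification: for composable pointed morphisms it works because pointedness of $f$ forces the $F(pt)$-coordinate of $F(f)(y,*)$ to be $*$ again. Without this definition and this check, neither the inverse functor nor the natural isomorphisms $G\cong\widetilde{G\circ(-)_+}$ and $F\cong\tilde F\circ(-)_+$ (which also require a computation with the radditivity isomorphisms, not an appeal to complements) are established.

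A secondary point: your notation conflates $pt_+=pt\amalg pt$ with the zero object $\emptyset_+\cong pt$ of $C_+$. Only for the latter is the value of a radditive functor automatically a point, and it is the latter that must be used both for your base-point construction and in your fiber-product formula $G((X\amalg Y)_+)=G(X_+)\times_{G(\emptyset_+)}G(Y_+)$. Likewise $C/pt\cong C$ since $pt$ is final, so the comma category of pointed objects is $pt/C$, and $C_+$ is only a full subcategory of it; no identification of $C_+$ with a comma category can replace the morphism-level construction above.
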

\begin{proof}
Note that $pt=\emptyset_+$ is the initial object in $C_+$. Therefore for any $F\in Rad(C_+)$ one has $F(pt)=pt$. Let $Rad(C)_{\BB}$ be the category of pointed objects in $Rad(C)$. Define a functor
$$\phi:Rad(C_+)\sr Rad(C)_{\bullet}$$
by the rule $\phi(F):X\sr F(X_+)$ with the distinguished point in $F(X_+)$ being the image of $F(pt)\sr F(X_+)$. 

Define a functor 
$$\psi:Rad(C)_{\bullet}\sr Rad(C_+)$$
by the rule $\phi(G):X_+\sr G(X)$. For a morphism $f:X_+\sr Y_+$ we define $\psi(G)(f)$ as the composition
$$G(Y)\sr G(Y)\times G(pt)\cong G(Y_+)\sr G(X_+)\cong G(X)\times G(pt)\sr G(X).$$
where the first morphism is the product of the identity with the distinguished point in $G(pt)$. One verifies easily that $\phi$ and $\psi$ are mutually inverse equivalences.
\end{proof}
\begin{remark}\rm
Lemma \ref{2007pointed} may be considered as a particular case of Proposition \ref{2009mon} since pointed objects are exactly algebras over the monad $X\mapsto X_+$. 
\end{remark}
Any representable functor is radditive by definition of
coproducts. Therefore we have a full embedding  of $C$ to $Rad(C)$ which sends
an object $X$ to the corresponding representable functor. 
\begin{proposition}
\llabel{2009elprop}
\begin{enumerate}
\item \llabel{rad1} The category $Rad(C)$ has all small limits.
The limit of a diagram $F:I\sr Rad(C)$ is the same as its limit in the
category of presheaves of sets i.e.
$$(lim F(i))(U)=lim (F(i)(U)).$$
\item \llabel{rad6} The functor $C\sr Rad(C)$ commutes with finite coproducts.
\item \llabel{2009rad1}
Let $f:F\sr G$ be a morphism of radditive functors and $Im(f)$ be its image in the category of presheaves. Then $Im(f)$ is a radditive functor.
\item \llabel{rad5} Let $F:I\sr Rad(C)$ be a filtered system of radditive
functors. Then the colimit $colim F(i)$ of $F$ in the category of 
functors is radditive and gives a colimit of $F$ in the category of
radditive functors.
\item \llabel{2007rad1} Recall that a coequalizer diagram $X\dsr Y$ is called {\em reflexive} (or split) if both morphisms have a common right inverse (section) $Y\sr X$.
Let $X\dsr Y$ be a reflexive coequalizer diagram of radditive functors. Then the coequalizer of this diagram in the category of presheaves is a radditive functor and a coequalizer of $X\dsr Y$ in  the category of
radditive functors.
\end{enumerate}
\end{proposition}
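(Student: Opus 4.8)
The plan is to reduce all five assertions to the single principle that limits and the relevant colimits are computed pointwise in the ambient category of presheaves, together with the observation that radditivity is a condition phrased entirely in terms of finite products in $Sets$. Since $Rad(C)$ is by definition a \emph{full} subcategory of the category of presheaves, once we know that a pointwise (co)limit of radditive functors is again radditive, it automatically serves as the (co)limit inside $Rad(C)$: for limits this is the standard fact that a full subcategory closed under a limit computes it as in the ambient category, and for colimits it follows because any cocone into a radditive $H$ is in particular a cocone of presheaves and hence factors uniquely through the pointwise colimit, the factorization being a morphism of $Rad(C)$ by fullness. Thus in each case the work is to check the two radditivity conditions $L(\emptyset)=pt$ and $L(\amalg_k X_k)\cong\prod_k L(X_k)$ for the pointwise construction $L$.

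For (\ref{rad1}) I would take $L(U)=lim_i F(i)(U)$ and use that limits commute with limits: $L(\emptyset)=lim_i pt=pt$, and $L(\amalg_k X_k)=lim_i\prod_k F(i)(X_k)=\prod_k lim_i F(i)(X_k)=\prod_k L(X_k)$. For (\ref{rad6}) the cleanest route is Yoneda rather than a pointwise computation: for $H$ radditive and $X,Y\in C$ one has $Hom_{Rad(C)}(h_{X\amalg Y},H)=H(X\amalg Y)=H(X)\times H(Y)=Hom_{Rad(C)}(h_X,H)\times Hom_{Rad(C)}(h_Y,H)$, so $h_{X\amalg Y}$ represents the product functor and is therefore the coproduct $h_X\amalg h_Y$ in $Rad(C)$; this simultaneously proves existence of this coproduct and the claim. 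For (\ref{2009rad1}) I would use that in $Sets$ the image of a product of maps is the product of the images: writing $Im(f)(U)\subseteq G(U)$ for the pointwise image, $Im(f)(\emptyset)=pt$ and $Im(f)(\amalg_k X_k)=Im(\prod_k f_{X_k})=\prod_k Im(f_{X_k})=\prod_k Im(f)(X_k)$.

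The two colimit statements hinge on commutation of a colimit with finite products in $Sets$. For (\ref{rad5}), with $L(U)=colim_i F(i)(U)$, the value at $\emptyset$ is the filtered colimit of a constant diagram on $pt$, which is $pt$ because a filtered index category is connected; and the product condition follows from the standard fact that \emph{filtered} colimits commute with finite products in $Sets$, giving $L(\amalg_k X_k)=colim_i\prod_k F(i)(X_k)=\prod_k colim_i F(i)(X_k)=\prod_k L(X_k)$. For (\ref{2007rad1}) the same template applies to $L(U)=coeq(X(U)\dsr Y(U))$: the value at $\emptyset$ is the coequalizer of the identity pair on $pt$, hence $pt$, and the product condition reduces to the assertion that the coequalizer of the reflexive pair $\prod_k X(X_k)\dsr\prod_k Y(X_k)$ is $\prod_k coeq(X(X_k)\dsr Y(X_k))$.

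I expect the last point to be the main obstacle, since it is exactly the place where the reflexivity hypothesis is indispensable: general coequalizers in $Sets$ do \emph{not} commute with finite products, whereas \emph{reflexive} coequalizers do. Concretely, I would note that a reflexive pair presents a reflexive relation and that the coequalizer is the quotient by the equivalence relation it generates; the product of the generated equivalence relations coincides with the equivalence relation generated by the product (here reflexivity is used to absorb the ``mixed'' pairs), which yields the desired commutation. One should also check the minor compatibility that the structural bijections of each pointwise construction are the canonical comparison maps, so that the verified isomorphism $L(\amalg_k X_k)\cong\prod_k L(X_k)$ is indeed the radditivity map; this is routine naturality bookkeeping in each of the five cases.
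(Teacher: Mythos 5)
Your proposal is correct and follows essentially the same route as the paper's (much terser) proof: the paper declares the first three assertions obvious from the definitions and derives the fourth and fifth precisely from the facts that filtered colimits and reflexive coequalizers in $Sets$ commute with finite products, which are exactly the key points you identify and verify. Your elaborations (pointwise computation in presheaves, the Yoneda argument for the coproduct of representables, the full-subcategory transfer of (co)limits, and the reflexivity-absorbs-mixed-pairs argument) are all sound fillings-in of what the paper leaves implicit.
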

\begin{proof} The first three statements are obvious from definitions. 
The fourth and the fifth statements follow from the fact that filtered colimits and reflexive coequalizers of sets commute with finite products.
\end{proof}
\begin{lemma}
\llabel{rad21}
Let $X_{i}$, $i\in I$ be a collection of objects of $C$. Denote by the same
symbols the radditive functors represented by $X_{i}$. Define 
$\amalg_i X_{i}$ as the functor given by
$$U\mapsto colim_{A\subset I} Hom(U, \amalg_{i\in A} X_{i})$$
where $A$ run through finite subsets of $I$. Then $\amalg_{i}
X_{i}$ is radditive and it is a coproduct of $X_{i}$ in the
category of radditive functors.
\end{lemma}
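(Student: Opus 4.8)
The plan is to treat the two assertions separately, deriving radditivity from the filtered-colimit stability already established and the universal property from a Yoneda computation. First I would observe that the finite subsets $A\subset I$, ordered by inclusion, form a filtered (indeed directed) poset, since the union of two finite subsets is finite. For $A\subset A'$ the coprojection $\amalg_{i\in A} X_i\sr \amalg_{i\in A'} X_i$ induces a map of representable functors $Hom(-,\amalg_{i\in A} X_i)\sr Hom(-,\amalg_{i\in A'} X_i)$, so $A\mapsto Hom(-,\amalg_{i\in A} X_i)$ is a filtered system of representable --- hence radditive --- functors whose colimit, computed in presheaves, is exactly $\amalg_i X_i$. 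Radditivity then follows immediately from Proposition \ref{2009elprop}(\ref{rad5}), which guarantees that a filtered colimit of radditive functors is radditive and computes the colimit in $Rad(C)$.

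For the universal property I would first define the coprojections. For each $i$ the singleton $\{i\}$ is a finite subset, so $X_i=Hom(-,X_i)$ maps canonically into the colimit, giving $\iota_i:X_i\sr \amalg_i X_i$; these are compatible with the transition maps. The claim to establish is that for any radditive $G$, composition with the $\iota_i$ induces a bijection $Hom_{Rad(C)}(\amalg_i X_i,G)\cong \prod_{i\in I} Hom_{Rad(C)}(X_i,G)$.

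The computation I have in mind runs as follows. Since $\amalg_i X_i$ is a filtered colimit, $Hom(\amalg_i X_i,G)=\lim_A Hom(Hom(-,\amalg_{i\in A} X_i),G)$, the limit being taken over the system of finite subsets (cofiltered, after reversing the arrows). By the Yoneda lemma each term equals $G(\amalg_{i\in A} X_i)$, and since $G$ is radditive this is $\prod_{i\in A} G(X_i)$. Under these identifications the transition map attached to $A\subset A'$ becomes the projection $\prod_{i\in A'} G(X_i)\sr \prod_{i\in A} G(X_i)$. I would then check that the limit of this system over all finite $A$ is the full product $\prod_{i\in I} G(X_i)$: a compatible family assigns to each finite $A$ a tuple whose $i$-component is independent of the chosen $A\ni i$, which is precisely an element of $\prod_{i\in I} G(X_i)$. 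Finally, again by Yoneda $G(X_i)=Hom_{Rad(C)}(X_i,G)$, and one verifies that the resulting bijection $Hom(\amalg_i X_i,G)\cong \prod_i Hom(X_i,G)$ is exactly restriction along the $\iota_i$, which is the required universal property.

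The step I expect to require the most care is the identification of $\lim_A \prod_{i\in A} G(X_i)$ with $\prod_{i\in I} G(X_i)$, together with the bookkeeping that the transition maps are the evident projections and that the final bijection is genuinely induced by the coprojections rather than merely abstractly existing; everything else is a direct invocation of Proposition \ref{2009elprop}(\ref{rad5}) and the Yoneda lemma.
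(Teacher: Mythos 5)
Your proposal is correct and follows essentially the same route as the paper: the paper likewise deduces radditivity from Proposition \ref{2009elprop}(\ref{rad5}) and identifies $\amalg_i X_i$ with the filtered colimit in $Rad(C)$ of the finite coproducts $\amalg_{i\in A}X_i$ (using \ref{2009elprop}(\ref{rad6})), from which the coproduct property follows. Your Yoneda/limit computation of $Hom(\amalg_i X_i,G)\cong\prod_i Hom(X_i,G)$ is simply an explicit unwinding of the step the paper declares obvious.
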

\begin{proof}
The fact that $\amalg_i X_i$ is radditive follows from Proposition \ref{2009elprop}(\ref{rad5}) which together with \ref{2009elprop}(\ref{rad6}) also shows that 
$$\amalg_i X_i=colim_{A\subset I} \amalg_{i\in A} X_{i}$$
which obviously implies the statement of the lemma.
\end{proof}
For a set $S$ and an object $X$ of $C$ we let $X\oo S$ denote the coproduct of $S$ copies of $X$ in $Rad(C)$.
\begin{proposition}
\llabel{rad2}
The inclusion functor 
$$Rad(C)\sr Funct(C^{op}, Sets)$$
has a left adjoint.
\end{proposition}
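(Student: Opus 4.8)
\emph{The plan} is to verify the hypotheses of the general adjoint functor theorem for the inclusion $\iota:Rad(C)\sr Funct(C^{op},Sets)$. Two of the three are immediate from what has already been established: by Proposition \ref{2009elprop}(\ref{rad1}) the category $Rad(C)$ has all small limits and these are computed pointwise, exactly as in $Funct(C^{op},Sets)$, so $\iota$ is a limit--preserving functor out of a complete category. It therefore remains only to produce, for each presheaf $G$, a solution set for $\iota$: a small family of morphisms $f_\lambda:G\sr \iota R_\lambda$ with $R_\lambda\in Rad(C)$ through which every morphism $G\sr \iota R$ factors.

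First I would record a closure property. For a radditive functor $R$, the pointwise intersection $\bigcap_\alpha R_\alpha$ of any family of sub-radditive functors $R_\alpha\subset R$ is again radditive: an element of $(\bigcap_\alpha R_\alpha)(X\amalg Y)$ corresponds under the bijection $R(X\amalg Y)\cong R(X)\times R(Y)$ to a pair lying in $R_\alpha(X)\times R_\alpha(Y)$ for every $\alpha$, hence to a pair in $(\bigcap_\alpha R_\alpha)(X)\times(\bigcap_\alpha R_\alpha)(Y)$, and the nullary condition is checked the same way. Consequently, for any morphism $f:G\sr \iota R$ there is a smallest sub-radditive functor $R_f\subset R$ containing the image presheaf $Im(f)$ (the intersection of all of them), and $f$ factors through $\iota R_f$.

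Next I would bound the cardinality of $R_f$. Fix a cardinal $\kappa\ge\aleph_0$ with $\kappa\ge|Mor(C)|$ and $\kappa\ge\sum_{U}|G(U)|$. Exactly as for the subalgebra of a finitary variety generated by a subset, $R_f$ is obtained from $Im(f)$ by a finitary closure process dictated by the radditive structure, so $\sum_U|R_f(U)|\le\kappa$. Hence every $f:G\sr \iota R$ factors through $\iota R_f$ with $R_f$ of cardinality at most $\kappa$. Choosing representatives for the (set of) isomorphism classes of radditive functors of cardinality at most $\kappa$, together with all morphisms from $G$ into them, gives the required solution set, and the general adjoint functor theorem produces the left adjoint.

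\emph{The main obstacle} I expect to be precisely this cardinality bound: one must make rigorous that the sub-radditive functor generated by a sub-presheaf is built by a finitary generation procedure, which is where the ``finitary variety'' character of $Rad(C)$ is genuinely used. A parallel and essentially equivalent route would avoid the explicit count by identifying $Rad(C)$, via Yoneda, with the full subcategory of presheaves orthogonal to the small set of maps consisting of the unique map from the initial presheaf to the representable functor $h_{\emptyset}$ and the canonical maps $h_X\amalg h_Y\sr h_{X\amalg Y}$ for $X,Y\in C$; the reflectivity of a small--orthogonality class in a locally presentable category then yields the adjoint, with the same cardinality content reappearing in the convergence of the small-object-argument construction.
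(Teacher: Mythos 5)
Your proposal is correct, but it proves existence by a genuinely different route than the paper. The paper's proof is constructive: it writes down the reflection $r(F)$ explicitly as the reflexive coequalizer (computed in presheaves) of the canonical diagram $\amalg_{(p:U\sr V)} U\oo F(V) \rightrightarrows \amalg_{W} W\oo F(W)$, where the coproducts are taken in $Rad(C)$; radditivity of the coequalizer comes from Proposition \ref{2009elprop}(\ref{2007rad1}) (reflexive coequalizers of radditive functors, computed in presheaves, are radditive), and the universal property is then checked directly. You instead invoke Freyd's general adjoint functor theorem, with completeness and limit-preservation supplied by Proposition \ref{2009elprop}(\ref{rad1}) and the solution set supplied by a bounded-generation argument: your intersection lemma for sub-radditive subfunctors is correct (the key point being that $R_\alpha(X\amalg Y)$ is exactly the preimage of $R_\alpha(X)\times R_\alpha(Y)$ under the bijection for $R$), and the cardinality bound does go through because the generation process involves only finitary operations (closure under restriction maps, the binary amalgamation $(a,b)\mapsto c$, and the basepoint at $\emptyset$), exactly as for generated subalgebras in a finitary variety; note this step, like your orthogonality variant, silently requires $C$ to be small, an assumption the paper also makes implicitly. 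The trade-off is meaningful: the paper's explicit presentation is not incidental but is reused downstream --- Lemma \ref{2007s1} is explicitly a corollary ``of the proof of Proposition \ref{rad2}'', Example \ref{chuck2} computes $r$ on a concrete functor, and the same presentation underlies the construction of $F^{rad}$ --- so a pure existence proof would force those later arguments to re-derive the coequalizer description from the adjunction. Conversely, your second route (radditive functors as the presheaves orthogonal to $\emptyset\sr h_{\emptyset}$ and $h_X\amalg h_Y\sr h_{X\amalg Y}$, plus reflectivity of small-orthogonality classes in locally presentable categories) is a clean conceptual packaging that anticipates Proposition \ref{2009pr1}, at the price of invoking heavier machinery from \cite{Adamek} rather than the elementary facts the paper has already established.
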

\begin{proof}
Let $F$ be a functor which is not necessarily radditive. Consider the
coequalizer diagram in $Rad(C)$ of the form
\begin{eq}
\llabel{chuck1}
\amalg_{(p:U\sr V)\in C} U\oo F(V) \rightrightarrows
\amalg_{W\in C} W\oo F(W)
\end{eq}
where the first arrow maps the summand $U$ corresponding to $p:U\sr V$ and 
$f\in F(V)$ to the summand corresponding to $U$ and $F(p)(f))$ by the 
identity and the second one maps it to the summand corresponding to
$V$ and $f$ by $p$. Let $r(F)$ be the coequalizer of these two maps in
the category of functors. Since (\ref{chuck1}) is reflexive via $W\mapsto Id_W$  this functor is radditive by Proposition \ref{2009elprop}(\ref{2007rad1}) and one
verifies easily that for any radditive $G$ one has 
$$Hom(F, G)= Hom(r(F), G)$$
\end{proof}
\begin{example}\rm\llabel{chuck2}
The functor $r$ is not, in general,
left exact i.e. it does not commute with finite limits. In particular,
radditive functors can not be thought of as sheaves with respect to
some topology on $C$.
Let $C$ be the category of finitely generated free abelian groups so
that $Rad(C)$ is equivalent to the category of all abelian groups. Consider the functor $F$ defined by the push-out square
$$
\begin{CD}
\zz\amalg\zz @>>> \zz\times\zz\\
@VVV @VVpV\\
0 @>>> F
\end{CD}
$$
where $\zz$ is the functor represented by $\zz$ and $\amalg$ and $\times$ are in the category of all functors from $C$ to $Sets$. Let $i:\zz\sr F$ be the composition of the diagonal $\zz\sr\zz\times\zz$ with $p$. One verifies easily that it is a monomorphism. On the other hand $r(F)=0$ and therefore $r(i)$ is not a monomorphism.
\end{example}
As a corollary of Proposition \ref{2009elprop}(\ref{rad5},\ref{2007rad1}) and of the proof of Proposition \ref{rad2} we get the following characterization of radditive functors.
\begin{lemma}
\llabel{2007s1}
A contravariant functor from $C$ to $Sets$ is radditive iff it is the coequalizer of a reflexive pair whose terms are filtered colimits of representable functors.
\end{lemma}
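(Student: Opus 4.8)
The plan is to deduce both implications directly from the structural results for $Rad(C)$ already established, exactly as the statement of the lemma advertises. First I would record the three facts to be used: every representable functor is radditive; by Proposition \ref{2009elprop}(\ref{rad5}) a filtered colimit of radditive functors is radditive and is computed termwise in presheaves; and by Proposition \ref{2009elprop}(\ref{2007rad1}) a reflexive coequalizer of radditive functors computed in presheaves is radditive and coincides with the coequalizer taken in $Rad(C)$.

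For the "if" direction I would argue as follows. Suppose $F$ is presented as the coequalizer of a reflexive pair $X\dsr Y$ whose terms $X$, $Y$ are filtered colimits of representable functors. Since representables are radditive, Proposition \ref{2009elprop}(\ref{rad5}) shows that $X$ and $Y$ are radditive, and then Proposition \ref{2009elprop}(\ref{2007rad1}) applied to the reflexive pair $X\dsr Y$ shows that its presheaf coequalizer $F$ is radditive.

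For the "only if" direction I would use the construction from the proof of Proposition \ref{rad2}. Given a radditive $F$, consider the reflexive diagram (\ref{chuck1})
$$\amalg_{(p:U\sr V)\in C} U\oo F(V) \rightrightarrows \amalg_{W\in C} W\oo F(W),$$
whose presheaf coequalizer is $r(F)$. Each term is a coproduct of representable functors, since the summand $U\oo F(V)$ is a coproduct of copies of the representable $U$. By Lemma \ref{rad21} any coproduct of representables is the filtered colimit, over the finite subsets of the index set, of finite coproducts of representables, and a finite coproduct of representables is itself representable because $C\sr Rad(C)$ preserves finite coproducts (Proposition \ref{2009elprop}(\ref{rad6})). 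Hence both terms of (\ref{chuck1}) are filtered colimits of representable functors, and the pair is reflexive via $W\mapsto Id_W$ as observed in the proof of Proposition \ref{rad2}. Finally, since the inclusion $Rad(C)\sr Funct(C^{op},Sets)$ is fully faithful, the counit of the adjunction of Proposition \ref{rad2} is invertible on radditive functors, so $r(F)\cong F$, and this exhibits $F$ as a coequalizer of the required form.

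The step I expect to require the most care is the bookkeeping between coequalizers computed in presheaves and coequalizers computed in $Rad(C)$: in both directions the relevant coequalizer is a priori only the presheaf coequalizer, and one must invoke reflexivity of the pair together with Proposition \ref{2009elprop}(\ref{2007rad1}) to know that it is radditive and agrees with the coequalizer in $Rad(C)$. Once this identification and the reduction of "coproduct of representables" to "filtered colimit of representables" via Lemma \ref{rad21} are in hand, the lemma follows formally from the cited results.
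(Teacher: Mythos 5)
Your proof is correct and takes essentially the same route as the paper, which presents this lemma precisely as a corollary of Proposition \ref{2009elprop}(\ref{rad5},\ref{2007rad1}) together with the reflexive diagram (\ref{chuck1}) from the proof of Proposition \ref{rad2}. The details you supply beyond the paper's citation --- identifying coproducts of representables as filtered colimits of representables via Lemma \ref{rad21} and Proposition \ref{2009elprop}(\ref{rad6}), and invoking full faithfulness of the inclusion $Rad(C)\sr Funct(C^{op},Sets)$ to conclude $r(F)\cong F$ --- are exactly the intended bookkeeping.
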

\begin{remark}\rm
Lemma \ref{2007s1} allows one to define the notion of a radditive functor on a category $C$ without the assumption that $C$ has finite coproducts. However in this case it is unclear why $Rad(C)$ has limits or colimits.
\end{remark} 
\begin{lemma}
\llabel{rad3}
The category $Rad(C)$ (and therefore the category $\Delta^{op}Rac(C)$) is cocomplete i.e. has all  small colimits.
\end{lemma}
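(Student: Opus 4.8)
The plan is to realize $Rad(C)$ as a reflective subcategory of the cocomplete presheaf category $Funct(C^{op},Sets)$ and to transport colimits along the reflector.

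First I would recall that the inclusion $i:Rad(C)\sr Funct(C^{op},Sets)$ is fully faithful, being the inclusion of a full subcategory, and that by Proposition \ref{rad2} it admits a left adjoint $r$. Thus $Rad(C)$ is a reflective subcategory of the presheaf category, and since a functor $F$ is radditive precisely when it lies in the image of $i$, we have $r\circ i\cong Id_{Rad(C)}$. I would then invoke the standard fact that $Funct(C^{op},Sets)$ is cocomplete, with all small colimits computed objectwise in $Sets$.

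Given a small diagram $D:I\sr Rad(C)$, I would form the presheaf colimit $P=colim_I(i\circ D)$ and set $colim_I\, D := r(P)$. It then remains to verify the universal property, which follows from the chain of natural bijections
$$Hom_{Rad(C)}(r(P),G)\cong Hom_{Funct}(P,i(G))\cong \lim_I Hom_{Funct}(i\circ D,\, i(G))\cong \lim_I Hom_{Rad(C)}(D,G),$$
valid for every $G\in Rad(C)$: the first isomorphism is the $(r,i)$-adjunction, the second is the universal property of $P$ in the presheaf category, and the third uses the full faithfulness of $i$. This exhibits $r(P)$ as a colimit of $D$ in $Rad(C)$, so $Rad(C)$ is cocomplete. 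Finally, since colimits of simplicial objects are computed termwise, cocompleteness of $Rad(C)$ immediately yields cocompleteness of $\Delta^{op}Rad(C)$.

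The one point that must be handled carefully — though it is entirely routine — is the full faithfulness step, which is precisely what allows the $Hom$-sets computed in the presheaf category to be replaced by $Hom$-sets in $Rad(C)$; without it the displayed chain would not close up and the universal property would fail. Beyond this there is no genuine obstacle, since both the reflector $r$ (Proposition \ref{rad2}) and the cocompleteness of the presheaf category are already available.
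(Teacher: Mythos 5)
Your proposal is correct and is essentially the paper's own proof: the paper also constructs the colimit by applying the radditivization functor of Proposition \ref{rad2} to the colimit taken in the category of all functors, i.e.\ it uses exactly the reflective-subcategory argument you spell out. Your version merely makes explicit the routine verification of the universal property that the paper leaves implicit.
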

\begin{proof}
For a diagram $F:I\sr Rad(C)$ of radditive functors one gets $colim F$
applying the radditivization functor of Proposition \ref{rad2} to the colimit of $F$ in
the category of functors.
\end{proof}
For two simplicial radditive functors (and in particular for simplicial sets) $X$, $Y$ we let $S(X,Y)$ denote the simplicial set with terms $S(X,Y)_n=Hom(X\oo\Delta^n,Y)$.  For a simplicial set $K$ and $X\in \Delta^{op}Rad(C)$  let $Hom_{\oo}(K,X)$ be the simplicial radditive functor which takes
$U\in C$ to $S(K, X(U))$. Then $Hom_{\oo}(K,-)$ is right adjoint to $(-)\oo K$ and $\Delta^{op}Rad(C)$ is a simplicial category with respect to these structures (see \cite[Def. 2.1, p.81]{GJ}). 

Recall (see e.g. \cite{Adamek}) that an object $X$ in a category  is called compact or finitely presentable if $Hom(X,-)$ commutes with filtered (directed) colimits 
\begin{lemma}
\llabel{2009comcom}
Representable functors are compact in $\Delta^{op}Rad(C)$.  If $X$ be a compact object of $\Delta^{op}Rad(C)$ and $K$ is a finite simplicial set then the object $X\oo K$ is compact.
\end{lemma}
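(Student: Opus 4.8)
The plan is to prove the two assertions separately, both by reducing to the definition of compactness as commutation of $Hom(X,-)$ with filtered colimits. For the first assertion, let $U\in C$ and let $U$ also denote the representable functor it defines, regarded as a discrete (constant) simplicial object in $\Delta^{op}Rad(C)$. Given a filtered diagram $j\mapsto Y(j)$ in $\Delta^{op}Rad(C)$, I want a natural bijection $colim_j Hom(U,Y(j))\cong Hom(U,colim_j Y(j))$. First I would use that $Hom_{\Delta^{op}Rad(C)}(U,Y)=Hom_{Rad(C)}(U,Y_0)$ for a constant object $U$ (a simplicial map out of a discrete object is just a map of $0$-simplices whose compatibility with faces and degeneracies is automatic), so the simplicial question collapses to a question in $Rad(C)$. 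Then, since by Proposition \ref{2009elprop}(\ref{rad5}) filtered colimits in $Rad(C)$ agree with filtered colimits of presheaves computed objectwise, and since $Hom_{Rad(C)}(U,F)=F(U)$ by the Yoneda lemma, the claim reduces to the tautology $(colim_j F(j))(U)=colim_j(F(j)(U))$. This is the crux of the first assertion: representability plus the objectwise nature of filtered colimits of radditive functors.

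For the second assertion, suppose $X$ is compact and $K$ is a finite simplicial set; I want to show $X\oo K$ is compact. The key input is the adjunction $Hom_{\oo}(K,-)$ right adjoint to $(-)\oo K$ recorded just before the lemma, which gives
$$Hom(X\oo K, Y)=Hom(X, Hom_{\oo}(K,Y)).$$
So for a filtered diagram $Y(j)$ I would compare $colim_j Hom(X, Hom_{\oo}(K,Y(j)))$ with $Hom(X,Hom_{\oo}(K,colim_j Y(j)))$. Since $X$ is compact, the first rewrites as $Hom(X, colim_j Hom_{\oo}(K,Y(j)))$ provided I can move the colimit inside, and it then suffices to show that the functor $Hom_{\oo}(K,-)$ commutes with filtered colimits whenever $K$ is finite. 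By definition $Hom_{\oo}(K,Y)(U)=S(K,Y(U))$ has $n$-simplices $Hom_{sSet}(K\times\Delta^n, Y(U))$; since $K\times\Delta^n$ is a finite simplicial set and filtered colimits of radditive functors are computed objectwise and termwise, this reduces to the standard fact that $Hom_{sSet}(L,-)$ commutes with filtered colimits of simplicial sets for $L$ finite, which in turn reduces to the fact that a finite (hence compact) simplicial set $L$, being built from finitely many nondegenerate simplices, satisfies $Hom_{sSet}(L,colim_j Z(j))=colim_j Hom_{sSet}(L,Z(j))$.

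I expect the main obstacle to be the step showing $Hom_{\oo}(K,-)$ preserves filtered colimits for finite $K$, because it requires care about the \emph{two} directions in which the colimit must commute with a $Hom$: once past the external $Hom(X,-)$ via compactness of $X$, and once inside, past the internal simplicial mapping space. The cleanest route is to observe that a finite simplicial set $K$ is a finite colimit of representables $\Delta^n$ and that $Hom_{\oo}(\Delta^n,-)$ is essentially given by the degeneracy/matching structure, so $Hom_{\oo}(K,-)$ is assembled from finitely many $Y\mapsto Y_m$ via finite limits; finite limits commute with filtered colimits of sets by Proposition \ref{2009elprop}(\ref{rad5}) together with the classical compatibility of finite limits and filtered colimits. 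I would present the argument by first isolating this commutation as the essential lemma, then feeding it into the adjunction to conclude compactness of $X\oo K$. Everything else is the routine bookkeeping of identifying the relevant colimits with objectwise, termwise colimits of sets.
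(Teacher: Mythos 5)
Your proposal is correct and takes essentially the same approach as the paper: the first assertion is reduced via Proposition \ref{2009elprop}(\ref{rad5}) (filtered colimits of radditive functors computed objectwise) together with Yoneda, and the second is reduced via the adjunction between $(-)\oo K$ and $Hom_{\oo}(K,-)$ to the fact that $Hom_{\oo}(K,-)$ commutes with filtered colimits for finite $K$, which holds termwise and objectwise because finite simplicial sets are compact in simplicial sets. The paper's proof states these two reductions without elaboration; you have simply supplied the routine details it leaves implicit.
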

\begin{proof}
The fact that representable functors are compact follows immediately from Proposition \ref{2009elprop}(\ref{rad5}). 
Since $Hom_{\oo}(K,-)$ is right adjoint to $(-)\oo K$, to prove the second assertion it is sufficient,to verify that for a finite simplicial set $K$ the functor $Hom_{\oo}(K,-)$ commutes with filtered colimits.  This follows immediately from the definition of $Hom_{\oo}(K,-)$ and Proposition \ref{2009elprop}(\ref{rad5}).
\end{proof}
A category is called locally finitely presentable if it is cocomplete and there is a set $\cal A$ of finitely presentable objects such that every object is a directed colimit of objects from $\cal A$. 
\begin{proposition}
\llabel{2009pr1}
The category $\Delta^{op}Rad(C)$ is locally finitely presentable.
\end{proposition}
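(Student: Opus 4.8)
The plan is to verify the two clauses of the definition separately, with cocompleteness coming for free and all the real content concentrated in producing the generators. Cocompleteness of $\Delta^{op}Rad(C)$ is exactly Lemma \ref{rad3}. For the generating set I would take the objects $X\otimes\Delta^n$, where $X$ runs over a skeleton of the representable functors (a set, since $C$ is essentially small) and $n\ge 0$; by Lemma \ref{2009comcom} each such object is finitely presentable, being of the form (representable)$\,\otimes\,$(finite simplicial set). The final set $\mathcal{A}$ required by the definition will be a set of representatives of the isomorphism classes of all finitely presentable objects, and the task is to see that every object is a filtered colimit of these.

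The computational heart of the argument is the identity
$$\mathrm{Hom}(X\otimes\Delta^n,\,Y)=Y_n(X),$$
natural in $Y$, in the representable $X$, and in $[n]$. I would prove it directly from the termwise definition of $\otimes$: a morphism $X\otimes\Delta^n\to Y$ is a natural family of maps $\amalg_{(\Delta^n)_m}X\to Y_m$, that is, a simplicial map from $\Delta^n$ into the simplicial set $m\mapsto\mathrm{Hom}_{Rad(C)}(X,Y_m)$, which by the Yoneda lemma in $Rad(C)$ is the simplicial set $m\mapsto Y_m(X)$; such a map is precisely an $n$-simplex, i.e.\ an element of $Y_n(X)$. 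Note that this identity also re-proves the finite presentability of the generators, since $Y\mapsto Y_n(X)$ visibly commutes with filtered colimits (these are computed levelwise, and representables are compact by Lemma \ref{2009comcom}).

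Using this identity I would show that the family $\{X\otimes\Delta^n\}$ is a strong generator, i.e.\ that the functor $Y\mapsto (Y_n(X))_{X,n}$ into $\prod_{X,n}Sets$ is faithful and reflects isomorphisms. Faithfulness: if $f,g:Y\to Y'$ induce the same map on every $Y_n(X)$, then $f_n(X)=g_n(X)$ for all $X$ and $n$, so $f_n=g_n$ for all $n$ (morphisms of radditive functors agree once they agree on all objects of $C$), whence $f=g$. Conservativity: if $f$ induces a bijection $Y_n(X)\to Y'_n(X)$ for every $X$ and $n$, then each $f_n$ is bijective on all objects of $C$, hence an isomorphism of presheaves, and its inverse is again a morphism in $Rad(C)$ since the latter is full in presheaves (and uses Proposition \ref{rad2}); thus $f$ is a levelwise isomorphism and therefore an isomorphism in $\Delta^{op}Rad(C)$.

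Finally I would invoke the structure theory of locally finitely presentable categories: a cocomplete category carrying a strong generator of finitely presentable objects is locally finitely presentable, and in any such category every object is a filtered colimit of finitely presentable objects (\cite{Adamek}); taking $\mathcal{A}$ as above then matches the definition verbatim. The main obstacle I anticipate is precisely this last passage, from the strong-generator property to the filtered-colimit description demanded by the definition. Reproving it by hand would require showing that the finitely presentable objects are closed under finite colimits and that each comma category $\mathcal{A}\downarrow Y$ is filtered with canonical colimit $Y$; the economical route, and the one I would take, is to do the genuinely category-specific work — establishing the identity $\mathrm{Hom}(X\otimes\Delta^n,Y)=Y_n(X)$ and the resulting strong-generator statement — and then cite the known equivalence of the characterizations of local finite presentability rather than redevelop it here.
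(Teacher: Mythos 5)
Your proposal is correct and follows essentially the same route as the paper: both invoke the characterization from \cite[Th.~1.11, p.17]{Adamek} (cocomplete plus a strong generator of compact objects), take cocompleteness from Lemma \ref{rad3}, use the objects $X\oo\Delta^n$ as generators, and get their compactness from Lemma \ref{2009comcom}. The only difference is that you spell out, via the identity $\mathrm{Hom}(X\oo\Delta^n,Y)=Y_n(X)$ and the faithfulness/conservativity argument, the strong-generator property that the paper simply asserts.
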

\begin{proof}
By \cite[Th. 1.11, p.17]{Adamek} a category is locally finitely presentable if and only if it is cocomplete and has a strong generator (see  \cite[p.2]{Adamek}) which consists of compact objects. That the category $\Delta^{op}Rad(C)$ is cocomplete follows from Lemma \ref{rad3}. The set of objects of the form $X\oo\Delta^n$ for $X\in C$ and $n\ge 0$ forms a strong generator of $\Delta^{op}Rad(C)$ and by Lemma \ref{2009comcom}, the objects $X\oo\Delta^n$ are compact.
\end{proof}

\subsection{Projective closed model structure}
\llabel{pcms}
\begin{definition}
\llabel{2009peq}
Let $C$ be a category with finite coproducts. A morphism $f:X\sr Y$ of simplicial radditive  functors is called a projective  equivalence if for any $U$ in $C$, the map of
simplicial sets $X(U)\sr Y(U)$ defined by $f$ is a weak
equivalence.
\end{definition}
We denote the class of projective equivalences by $W_{proj}$. 
\begin{example}\rm\llabel{2009exs3}
\begin{enumerate}
\item
\llabel{2007st1}
The equivalence 
$$Rad(C^{\amalg_{<\infty}})\sr Funct(C^{op},Sets)$$
of Example \ref{2009exs1}(\ref{st1}) identifies projective equivalences of simplicial radditive functors on $C^{\amalg_{<\infty}}$ with section-wise equivalences of simplicial presheaves on $C$.
\item
\llabel{2007st2}
The equivalence
$$Rad(C_+)\sr Rad(C)_{\bullet}$$
of Lemma \ref{2007pointed} identifies projective equivalences of simplicial radditive functors on $C_+$ with projective equivalences of pointed simplicial radditive functors on $C$.
\item \llabel{2007st0} The equivalence of Example \ref{2009exs1}(\ref{st0}) identifies projective equivalences with quasi-isomorphisms of the corresponding normalized complexes. 
\item \llabel{2007alg} The equivalences of Example \ref{2009exs1}(\ref{alg}) identify projective equivalences with the usual notion of weak equivalences for simplicial algebras, groups etc. 
\end{enumerate}
\end{example}
\begin{example}\rm
\llabel{2007grnot}
The radditivization functor $r$  need not take projective equivalences of simplicial presheaves to projective equivalences of radditive functors. Let $i$ be a morphism of Example \ref{chuck2}. Since it is a monomorphism of presheaves the natural morphism $p:cone(i)\sr \pi_0(cone(i))$ is a weak equivalence of presheaves. The radditification $r(p)$ of this morphism is not a projective equivalence since $r(i)$ is not a monomorphism and therefore $r(cone(i))=cone(r(i))$ has a non-trivial $\pi_1$. 
\end{example} 
\begin{proposition}
\llabel{2009tdl0}
The class of projective equivalences is $\tdl$-closed and in particular contains $cl_{\tdl}(\emptyset)$. 
\end{proposition}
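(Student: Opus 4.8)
The plan is to verify the three conditions of Definition \ref{2009td} directly for the class $W_{proj}$, namely that it is $\Delta$-closed and closed under filtered colimits. The key observation throughout is that projective equivalences are defined \emph{section-wise}: $f$ is in $W_{proj}$ iff for every $U\in C$ the map $f(U):X(U)\sr Y(U)$ is a weak equivalence of simplicial sets. This reduces every assertion about $W_{proj}$ to the corresponding assertion about weak equivalences in $\Delta^{op}Sets$, for which Theorem \ref{diag} tells us the class of weak equivalences is exactly $cl_{\tdl}(\emptyset)$, hence is itself $\tdl$-closed.

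First I would check the $\Delta$-closedness conditions of Definition \ref{fd}. For condition (1), a homotopy equivalence $f$ in $\Delta^{op}Rad(C)$ is sent by the evaluation functor $(-)(U)$ to a homotopy equivalence of simplicial sets, since simplicial extensions preserve homotopies as noted before Lemma \ref{fandd}; and homotopy equivalences of simplicial sets are weak equivalences. Condition (2), the two-out-of-three property, is immediate because evaluation at $U$ is a functor and two-out-of-three holds section-wise for weak equivalences of simplicial sets. Condition (3) concerns the diagonal of a bisimplicial morphism whose rows (or columns) lie in $W_{proj}$; evaluating at $U$ commutes with the diagonal construction (as $(F(X))_{ij}=F(X_{ij})$ and $F\circ\Delta=\Delta\circ F$), so this reduces to the fact that the class of weak equivalences of simplicial sets satisfies the diagonal condition, which is part of Theorem \ref{diag}.

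Next I would verify closure under filtered colimits. Here the crucial point is Proposition \ref{2009elprop}(\ref{rad5}): filtered colimits in $Rad(C)$ are computed section-wise, i.e.\ $(colim_i F_i)(U)=colim_i(F_i(U))$. Consequently a filtered colimit of a morphism of filtered systems in $\Delta^{op}Rad(C)$ is sent by $(-)(U)$ to the filtered colimit of the corresponding morphism of simplicial sets. Since the class of weak equivalences of simplicial sets is closed under filtered colimits (again by Theorem \ref{diag}, or by the closure part of its proof), the colimit morphism is a section-wise weak equivalence and hence lies in $W_{proj}$.

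Having established all three $\Delta$-closure conditions together with closure under filtered colimits, $W_{proj}$ is $\tdl$-closed by Definition \ref{2009td}, and since $cl_{\tdl}(\emptyset)$ is by definition the smallest $\tdl$-closed class it is contained in $W_{proj}$, giving the ``in particular'' clause. I do not anticipate a genuine obstacle: the argument is entirely a matter of transporting the known structure of weak equivalences of simplicial sets through the section-wise evaluation functors. The only point requiring slight care is making sure that evaluation at $U$ genuinely commutes with each operation involved—diagonals of bisimplicial objects and filtered colimits—but both commutations are supplied, respectively, by the identity $F\circ\Delta=\Delta\circ F$ and by Proposition \ref{2009elprop}(\ref{rad5}).
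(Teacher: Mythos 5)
Your proof is correct and follows essentially the same route as the paper, which disposes of the statement in one line by combining Theorem \ref{diag} with Lemma \ref{fandt} applied to the section functors $(-)(U)$ for $U\in C$ (these are continuous precisely by Proposition \ref{2009elprop}(\ref{rad5})). Your explicit section-wise verification of the conditions of Definitions \ref{fd} and \ref{2009td} is exactly the content of that one-line argument, spelled out.
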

\begin{proof}
It follows from Theorem \ref{diag} and Lemma \ref{fandt} applied to the functors of sections over $U\in C$.
\end{proof}
\begin{cor}
\llabel{2007ss}
For an object $X$ of $\Delta^{op}Rad(C)$ and a weak equivalence of simplicial sets $K\sr L$ the morphism $X\oo K\sr X\oo L$ is a projective equivalence.
\end{cor}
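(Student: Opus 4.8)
The plan is to exhibit the morphism $X\oo f:X\oo K\sr X\oo L$ as the diagonal of a morphism of bisimplicial radditive functors whose rows are already covered by Corollary \ref{unexpect}, and then to invoke the $\Delta$-closedness of $cl_{\tdl}(\emptyset)$. Writing $f:K\sr L$ for the given weak equivalence of simplicial sets and $X_m$ for the terms of the simplicial radditive functor $X$, I would consider the bisimplicial radditive functors $B$ and $B'$ with $B_{m,n}=X_m\oo K_n$ and $B'_{m,n}=X_m\oo L_n$, together with the morphism $g:B\sr B'$ whose component in bidegree $(m,n)$ is $X_m\oo f_n$. This $g$ is natural in both indices: in $m$ through the simplicial structure maps of $X$ tensored with the identity, and in $n$ through the simplicial structure maps of $K$ and $L$ and the naturality of $f$. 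By the very definition of $X\oo K$ its term in degree $n$ is $X_n\oo K_n$, so the diagonal of $g$ is precisely $X\oo f$.

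Next I would analyze the rows of $g$. Fixing $m$ and letting $n$ vary, the corresponding row is the morphism $X_m\oo f:X_m\oo K\sr X_m\oo L$ in $\Delta^{op}Rad(C)$, where the fixed radditive functor $X_m$ is regarded as a constant simplicial object. Since $Rad(C)$ is cocomplete (Lemma \ref{rad3}) it has small coproducts, so Corollary \ref{unexpect}, applied with base category $Rad(C)$, object $X_m$ and weak equivalence $f$, shows that each such row lies in $cl_{\tdl}(\emptyset)$.

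Thus all rows of $g$ belong to the $\tdl$-closed, hence in particular $\Delta$-closed, class $cl_{\tdl}(\emptyset)$. Condition 3 of Definition \ref{fd} then forces the diagonal $\Delta(g)$ into $cl_{\tdl}(\emptyset)$ as well, that is, $X\oo f\in cl_{\tdl}(\emptyset)$. Finally, Proposition \ref{2009tdl0} asserts that the class of projective equivalences contains $cl_{\tdl}(\emptyset)$, so $X\oo f$ is a projective equivalence, as required.

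I expect the only delicate point to be the bookkeeping in the first step: one must keep the two uses of $\oo$ straight, namely the tensoring of the genuinely simplicial object $X$ with $K$, whose degree-$n$ term is $X_n\oo K_n$, against the tensoring of the fixed (constant) radditive functor $X_m$ with $K$ that appears in the rows, and verify that the diagonal of $B$ recovers the former from the latter. Once this identification is secured the argument is a direct assembly of the preceding results, with no genuine obstacle remaining.
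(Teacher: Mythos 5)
Your proof is correct and is essentially the paper's proof: the paper's one-line argument (``It follows from Proposition \ref{2009tdl0} and Corollary \ref{unexpect}'') is exactly your reasoning, with the bisimplicial-diagonal bookkeeping left implicit. You have simply made explicit the bridge from Corollary \ref{unexpect} (which treats a fixed object $X_m$ of $Rad(C)$) to a genuinely simplicial $X$, namely viewing $X\oo f$ as the diagonal of the morphism with rows $X_m\oo f$ and invoking $\Delta$-closedness (Definition \ref{fd}, condition 3) together with Proposition \ref{2009tdl0}.
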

\begin{proof}
It follows from Proposition \ref{2009tdl0} and Corollary \ref{unexpect}.
\end{proof}
Let $C_0$ be the set of objects of $C$ considered as a category where all morphisms are identities and let $\phi:C_0\sr C$ be the obvious functor. Then $\phi$ defines a pair of adjoint functors between $Funct(C_0^{op},Sets)$ and $Funct(C^{op},Sets)$. Composing the functors of this pair with the inclusion $Rad(C)\sr Funct(C^{op},Sets)$ and its left adjoint we get a pair of functors 
$$\phi_r:Rad(C)\sr Funct(C_0^{op},Sets)$$
$$\phi^l:Funct(C_0^{op},Sets)\sr Rad(C)$$
where $\phi_r$ is the right adjoint and $\phi^l$ the left adjoint. Consider the cotriple $L=\phi^l\phi_r$ on $Rad(C)$ defined by this adjunction. An object of $Funct(C_0^{op},Sets)$ is a family of sets $(F_U)_{U\in C}$ parametrized by objects of $C$ and one has
$$\phi^l((F_U)_{U\in C})=\amalg_{U\in C} U\oo F_U$$
where on the right hand side $U$ is considered as an object of $Rad(C)$. Therefore for $F\in Rad(C)$ we have 
\begin{eq}
\llabel{2009lres1}
L(F)=\amalg_{U\in C} U\oo F(U)
\end{eq}
In particular, $L$ takes values in $\Delta^{op}\bar{C}$ where $\bar{C}$ is the full subcategory in $Rad(C)$ which consists of coproducts of representable functors. Let $L_*$ be the functor $Rad(C)\sr \Delta^{op}\bar{C}$ defined in the standard way by the cotriple $L$. 
\begin{proposition}
\llabel{2009lres3}
Let $C$ be a category with finite coproducts. Then one has:
\begin{enumerate}
\item for any $U\in C$ the morphism $L_*(U)\sr U$ is a simplicial homotopy equivalence in $\Delta^{op}\bar{C}$,
\item for any  $F\in Rad(C)$ and $U\in C$ the morphism $L_*(F)(U)\sr F(U)$ is a homotopy equivalence of simplicial sets.
\end{enumerate}
\end{proposition}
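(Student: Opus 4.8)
**The plan is to analyze the cotriple $L$ and its associated simplicial resolution $L_*$.** First I would recall the general formalism: for any cotriple (comonad) $L = \phi^l \phi_r$ arising from an adjunction $(\phi^l, \phi_r)$, the standard simplicial object $L_*(F)$ with terms $L_n(F) = L^{\circ(n+1)}(F)$ comes equipped with a coaugmentation $L_*(F) \to F$, and the key classical fact is that after applying the right adjoint $\phi_r$ (equivalently, after restricting along $\phi$, i.e. evaluating at individual objects $U \in C$) this coaugmented simplicial object acquires an \emph{extra degeneracy}. An extra degeneracy gives a contracting simplicial homotopy, which is precisely a simplicial homotopy equivalence. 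So the content of both assertions is an instance of the standard ``cotriple resolutions are $\phi_r$-split'' lemma.

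\textbf{For the second assertion}, I would argue as follows. Evaluating at $U \in C$ is, up to the identification built into the adjunction, the composite of $\phi_r$ with a forgetful step; more concretely, $L_*(F)(U)$ is the simplicial set obtained by applying the section-over-$U$ functor, and I would verify that the coaugmented simplicial set $L_*(F)(U) \to F(U)$ admits an extra degeneracy $s_{-1}$. The extra degeneracy is constructed from the unit of the adjunction $(\phi^l,\phi_r)$, or equivalently from the counit $\phi^l \phi_r \to Id$ paired against the comonad structure; concretely, using the explicit formula $L(F) = \amalg_{U \in C} U \oo F(U)$ from (\ref{2009lres1}), the component at $U$ is built from the summand indexed by $U$ itself together with the identity $Id_U \in Hom(U,U)$. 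One then checks the extra-degeneracy identities ($\partial_0 s_{-1} = Id$, $\partial_{i+1} s_{-1} = s_{-1}\partial_i$, $s_{i+1}s_{-1} = s_{-1}s_i$, and compatibility with the coaugmentation), which are the defining relations guaranteeing that the coaugmented object is contractible. This yields a simplicial homotopy equivalence of simplicial sets $L_*(F)(U) \to F(U)$.

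\textbf{For the first assertion}, the point is to upgrade the levelwise contraction of the second assertion to a single simplicial homotopy equivalence \emph{in $\Delta^{op}\bar{C}$ itself} when $F = U$ is representable. Here I would use that for a representable functor $U$, evaluated on any test object the extra degeneracy is natural in the test object, so the contracting homotopy is in fact given by a morphism in $\bar{C}$ at each level, not merely a map of underlying sets. Equivalently, since the simplicial homotopy is assembled from the cotriple's counit and comultiplication, which are morphisms \emph{in $Rad(C)$} (indeed landing in $\bar{C}$ by the remark following (\ref{2009lres1})), the extra degeneracy for $L_*(U) \to U$ is a genuine morphism of simplicial objects over $\bar{C}$. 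One then invokes the standard fact that a coaugmented simplicial object with an extra degeneracy in the ambient category is a simplicial homotopy equivalence there.

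\textbf{The main obstacle} I anticipate is not the abstract extra-degeneracy machinery, which is routine, but making precise the identification ``evaluation at $U$ equals (up to forgetting) the right adjoint $\phi_r$,'' so that the extra degeneracy really does exist after evaluating at each $U$. The subtlety is that $\phi_r: Rad(C) \to Funct(C_0^{op}, Sets)$ sends $F$ to the \emph{family} $(F(U))_{U \in C}$, so $\phi_r$ does not literally forget down to a single $U$-section; I would therefore phrase the split-resolution lemma at the level of $Funct(C_0^{op}, Sets)$ (where $\phi_r L_* (F)$ is split by the comonad's counit), and then extract the section-over-$U$ as the $U$-component of that family, checking that the extra degeneracy is compatible with projecting onto the $U$-component. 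Once this bookkeeping is set up correctly, both statements follow formally from the contractibility supplied by the extra degeneracy.
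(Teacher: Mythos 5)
Your proposal follows the same route as the paper: both assertions are deduced from the standard splitting properties of cotriple resolutions, and your treatment of the second assertion is correct and matches the paper's. Indeed, the extra degeneracy $x\mapsto (Id_U,(U,x))$ (first entry a morphism, second entry the summand of $L(F)=\amalg_W W\oo F(W)$ it maps into) is exactly the unit of the adjunction applied to $\phi_r F$, it exists at the level of $Funct(C_0^{op},Sets)$, and projecting to the $U$-component gives the contraction of $L_*(F)(U)\to F(U)$.

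Your justification of the first assertion, however, has a genuine flaw. The extra degeneracy you constructed in assertion 2 is \emph{not} natural in the test object, even when $F=U$ is representable: for $q:V'\to V$ and $p\in U(V)=Hom(V,U)$, naturality would force the equality in $L(U)(V')$ of the element $(Id_{V'},(V',pq))$ with the element $(q,(V,p))$, and these are distinct because they involve different summands of the coproduct $\amalg_{W}\amalg_{y\in Hom(W,U)}W$, between which there are no identifications. This failure of naturality is precisely why the splitting in general exists only after applying $\phi_r$. Likewise, the contracting homotopy is not ``assembled from the counit and comultiplication'': those furnish the faces and degeneracies of $L_*$, and if they alone produced a contraction, then $L_*(F)\to F$ would be a simplicial homotopy equivalence in $\Delta^{op}Rad(C)$ for \emph{every} $F$, which is false (take $C$ to be finitely generated free abelian groups and $F=\zz/2$: any map $\zz/2\to L(\zz/2)$ into a coproduct of representables, i.e.\ into a free abelian group, is zero, and the zero map is not simplicially homotopic to the identity of the constant object $\zz/2$). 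The correct mechanism --- and the content of the paper's remark that ``the radditive functor $U$ is $\phi^l(U)$'' --- is that a representable lies in the image of the left adjoint: $U=\phi^l(G)$, where $G$ is the family with one point at the index $U$ and empty elsewhere. The bar resolution of any object of the form $\phi^l(G)$ is split by a \emph{different} homotopy, namely $\phi^l$ applied to the unit $\eta$ of the adjunction, whose components $\phi^l(\phi_r\phi^l)^{n+1}(G)\to\phi^l(\phi_r\phi^l)^{n+2}(G)$ are by construction morphisms in $\bar{C}$; on sections it sends $p:V\to U$ to $(p,(U,Id_U))$, not to $(Id_V,(V,p))$. Replacing your naturality claim by this splitting repairs the proof of the first assertion.
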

\begin{proof}
For $U\in C$ the radditive functor $U$ is $\phi^l(U)$ and for $F\in Rad(C)$ and $U\in C$ the simplicial morphism $L_*(F)(U)\sr X(U)$ is obtained by evaluation on $U$ of the morphism $\phi_r(L_*(F)\sr F)$. Therefore both assertions of the proposition follow from the standard properties of the simplicial objects associated with cotriples.
\end{proof}
Applying $L_*$ to a simplicial radditive functor $X$ we get a bisimplicial radditive functor which we also denote by $L_*(X)$. A simple explicit  computation shows the columns of $L_*(X)$ are of the form:
\begin{eq}
\llabel{2009lres2}
(L_*(X))_n=\amalg_{U_0\sr\dots\sr U_n} U_0\oo X(U_n)
\end{eq}
where the coproduct is taken over all sequences of arrows in $C$ of length $n$. 

Let $C^{\#}$ be the full subcategory of $Rad(C)$ which consists of filtered colimits of representable functors. Note that $\bar{C}\subset C^{\#}$.
\begin{proposition}
\llabel{2009sum1}
One has:
\begin{enumerate}
\item\llabel{lresfilt} $\Delta L_*$ commutes with filtered colimits and reflexive coequalizers,
\item\llabel{lresd} for any $X\in \Delta^{op}C^{\#}$ the morphism $\Delta L_*(X)\sr X$ is in $cl_{\tdl}(\emptyset)$,
\item\llabel{all} for any $X\in \Delta^{op}Rad(C)$ the morphism $\Delta L_*(X)\sr X$ is in $W_{proj}$,
\item\llabel{naoborot} $\Delta L_*$ takes projective  equivalences to
elements of $cl_{\tdl}(\emptyset)$.
\end{enumerate}
\end{proposition}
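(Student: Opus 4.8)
The plan is to prove the four assertions in the order given, treating (\ref{lresfilt}) and (\ref{lresd}) as the substantive parts and deriving (\ref{all}) and (\ref{naoborot}) as formal consequences.

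For assertion (\ref{lresfilt}), I would argue that each level $(L_*(X))_n = \amalg_{U_0\sr\dots\sr U_n} U_0\oo X(U_n)$, as displayed in (\ref{2009lres2}), is built out of operations that commute with filtered colimits and reflexive coequalizers: the coproduct indexing is fixed, the representable summand $U_0$ is unaffected by the colimit taken in the $X$-variable, and evaluation $X\mapsto X(U_n)$ commutes with filtered colimits and reflexive coequalizers of radditive functors by Proposition \ref{2009elprop}(\ref{rad5},\ref{2007rad1}). Taking the diagonal is a colimit construction and so commutes with the filtered colimits and reflexive coequalizers computed levelwise in $\Delta^{op}Rad(C)$; hence $\Delta L_*$ inherits the property. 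The one point to check carefully is that filtered colimits and reflexive coequalizers in $\Delta^{op}Rad(C)$ are computed termwise, which follows from cocompleteness (Lemma \ref{rad3}) together with the explicit description of these particular colimits in Proposition \ref{2009elprop}.

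For assertion (\ref{lresd}) I would use the explicit column description (\ref{2009lres2}) and recognize each column as a homotopy colimit. For $X\in\Delta^{op}C^{\#}$, each $X(U_n)$ is a filtered colimit of representables, and the combinatorics of $L_*$ reproduce exactly the $hocolim$ construction: I expect that $\Delta L_*(X)$ is naturally the diagonal of $hocolim$ of an appropriate filtered diagram whose colimit is $X$, so that the comparison morphism $\Delta L_*(X)\sr X$ becomes the canonical map $hocolim\sr colim$ treated in Proposition \ref{import1}. The strategy is therefore to reduce, via (\ref{lresfilt}) and writing $X$ as a filtered colimit of representables, to the case $X = U$ representable, where Proposition \ref{2009lres3}(1) already gives that $L_*(U)\sr U$ is a simplicial homotopy equivalence and hence lies in $cl_{\tdl}(\emptyset)$; the filtered-colimit closure of $cl_{\tdl}(\emptyset)$ then propagates this to all $X\in\Delta^{op}C^{\#}$. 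The main obstacle I anticipate is handling the bookkeeping of the double-indexing: one must commute the diagonal with the filtered colimit correctly and confirm that the comparison morphism really is a filtered colimit of the representable case rather than merely being levelwise so, which is where Proposition \ref{import1} and Lemma \ref{2009li} do the essential work.

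Assertion (\ref{all}) follows by applying $cl_{\tdl}(\emptyset)\subset W_{proj}$ (Proposition \ref{2009tdl0}): the morphism $\Delta L_*(X)\sr X$ is obtained by evaluating on each $U\in C$, and by Proposition \ref{2009lres3}(2) each evaluation $\Delta L_*(X)(U)\sr X(U)$ is a homotopy equivalence of simplicial sets, hence a weak equivalence, so the morphism is a projective equivalence for every $X\in\Delta^{op}Rad(C)$. Finally, for assertion (\ref{naoborot}) I would take a projective equivalence $f:X\sr Y$ and form the naturality square relating $\Delta L_*(f)$ to $f$ via the maps of (\ref{all}); since $\Delta L_*(X)\sr X$ and $\Delta L_*(Y)\sr Y$ are projective equivalences and $f$ is a projective equivalence, the two-out-of-three property shows $\Delta L_*(f)$ is a projective equivalence between objects of $\Delta^{op}C^{\#}$, and then (\ref{lresd}) together with the $\tdl$-closure arguments places $\Delta L_*(f)$ in $cl_{\tdl}(\emptyset)$. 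Here the key is that $\Delta L_*$ lands in $\Delta^{op}\bar C\subset\Delta^{op}C^{\#}$, so that (\ref{lresd}) is applicable to both source and target.
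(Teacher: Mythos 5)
Your treatments of assertions (\ref{lresfilt}), (\ref{lresd}) and (\ref{all}) are essentially the paper's own proof: (\ref{lresfilt}) from the column formula (\ref{2009lres2}) together with Proposition \ref{2009elprop}(\ref{rad5},\ref{2007rad1}); (\ref{lresd}) from (\ref{lresfilt}) and Proposition \ref{2009lres3}(1), reducing row-wise to representables and using that $cl_{\tdl}(\emptyset)$ is closed under filtered colimits and diagonals; and (\ref{all}) from Proposition \ref{2009lres3}(2) together with the $\Delta$-closedness of $W_{proj}$ (Proposition \ref{2009tdl0}). These parts are correct.

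Your proof of assertion (\ref{naoborot}), however, is circular. The two-out-of-three argument only yields that $\Delta L_*(f)$ is a \emph{projective equivalence} between objects of $\Delta^{op}\bar{C}\subset \Delta^{op}C^{\#}$; you then need to upgrade this to membership in $cl_{\tdl}(\emptyset)$. The only statement in the paper that performs such an upgrade is Theorem \ref{coin} (projective equivalences between objects of $\Delta^{op}C^{\#}$ coincide with $cl_{\tdl}(\emptyset)$), and the proof of Theorem \ref{coin} is exactly the naturality square you wrote, quoting Proposition \ref{2009sum1}(\ref{naoborot}) --- the very assertion you are proving. Nor does (\ref{lresd}) rescue the argument: in assertion (\ref{naoborot}) the objects $X$, $Y$ are arbitrary objects of $\Delta^{op}Rad(C)$, so (\ref{lresd}) does not apply to $\Delta L_*(X)\sr X$; and applying it instead to $\Delta L_*(X)$, $\Delta L_*(Y)$ merely reproduces the same question for $\Delta L_*\Delta L_*(f)$, an infinite regress. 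The available inclusion $cl_{\tdl}(\emptyset)\subset W_{proj}$ points in the wrong direction, so at this stage of the paper ``projective equivalence in $\Delta^{op}C^{\#}$'' cannot be converted into ``element of $cl_{\tdl}(\emptyset)$''.

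The correct argument --- and the reason assertion (\ref{naoborot}) has independent content --- is direct and never mentions projective equivalence of $\Delta L_*(f)$: by (\ref{2009lres2}) the $n$-th column of $L_*(f)$ is $\amalg_{U_0\sr\dots\sr U_n} Id_{U_0}\oo f(U_n)$, a coproduct of morphisms of the form $Id_{U_0}\oo w$ where $w=f(U_n):X(U_n)\sr Y(U_n)$ is a weak equivalence of simplicial sets because $f\in W_{proj}$. Corollary \ref{unexpect} places each $Id_{U_0}\oo f(U_n)$ in $cl_{\tdl}(\emptyset)$; closure of $cl_{\tdl}(\emptyset)$ under coproducts (Proposition \ref{2009fcop} and Corollary \ref{2009fcope}) places each column of $L_*(f)$ in $cl_{\tdl}(\emptyset)$; and condition 3 of Definition \ref{fd} then places the diagonal $\Delta L_*(f)$ in $cl_{\tdl}(\emptyset)$. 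With this replacement your proposal becomes a complete proof, and Theorem \ref{coin} can then be deduced exactly as you attempted to use it.
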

\begin{proof}
The first assertion follows from Proposition \ref{2009elprop}(\ref{rad5}, \ref{2007rad1}) and (\ref{2009lres2}). The second follows from the first and Proposition \ref{2009lres3}(1). The third follows from Proposition \ref{2009lres3}(2) and Proposition \ref{2009tdl0}. The fourth one follows from (\ref{2009lres2}) and Corollary \ref{unexpect}.
\end{proof}
\begin{remark}\rm
The functor $L_*$ does not commute with finite coproducts. Example where $C$
is the category of finitely generated free abelian groups and
$Rad(C)$ is the category of all abelian groups shows that in general it is not
possible to find a functor from $Rad(C)$ to $\Delta^{op}C^{\#}$
which commutes with finite coproducts and satisfies \ref{2009sum1}(\ref{all}).
\end{remark}
\begin{theorem}
\llabel{coin} The class of morphisms in $\Delta^{op}C^{\#}$ which are
projective  equivalences as morphisms of simplicial radditive functors
coincides with $cl_{\tdl}(\emptyset)$. In particular it is
$(\tdl,\amalg)$-closed.
\end{theorem}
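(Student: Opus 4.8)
The plan is to prove the two inclusions $cl_{\tdl}(\emptyset)\subset W_{proj}$ and $W_{proj}\cap\Delta^{op}C^{\#}\subset cl_{\tdl}(\emptyset)$ separately, and then to read off the final ``in particular'' clause from Corollary \ref{2009fcope}. Throughout, $cl_{\tdl}(\emptyset)$ is understood as the smallest $\tdl$-closed class computed inside $\Delta^{op}C^{\#}$, which makes sense because $C^{\#}$ is closed under finite coproducts, under filtered colimits, and under the diagonal construction, so none of the defining operations of a $\tdl$-closed class leaves $\Delta^{op}C^{\#}$.

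The inclusion $cl_{\tdl}(\emptyset)\subset W_{proj}$ is the easy one. By Proposition \ref{2009tdl0} the class $W_{proj}$ is $\tdl$-closed in $\Delta^{op}Rad(C)$. Since the embedding $\Delta^{op}C^{\#}\sr\Delta^{op}Rad(C)$ preserves homotopy equivalences, diagonals of bisimplicial objects and filtered colimits, the morphisms of $\Delta^{op}C^{\#}$ that are projective equivalences form a $\tdl$-closed class inside $\Delta^{op}C^{\#}$; as $cl_{\tdl}(\emptyset)$ is the smallest such class, it is contained in it.

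For the reverse inclusion I would use the functorial resolution $\Delta L_*$ and its augmentation $a_X\colon \Delta L_*(X)\sr X$. Let $f\colon X\sr Y$ be a morphism in $\Delta^{op}C^{\#}$ which is a projective equivalence. Naturality of the augmentation gives a commutative square with top arrow $\Delta L_*(f)$, bottom arrow $f$, and vertical arrows $a_X,a_Y$, all lying in $\Delta^{op}\bar C\subset\Delta^{op}C^{\#}$ on the upper row. Because $X,Y\in\Delta^{op}C^{\#}$, Proposition \ref{2009sum1}(\ref{lresd}) places $a_X$ and $a_Y$ in $cl_{\tdl}(\emptyset)$; because $f$ is a projective equivalence, Proposition \ref{2009sum1}(\ref{naoborot}) places $\Delta L_*(f)$ in $cl_{\tdl}(\emptyset)$. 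Now $cl_{\tdl}(\emptyset)$ is $\Delta$-closed, hence satisfies the two-out-of-three property of Definition \ref{fd} and in particular is closed under composition. Therefore the composite $f\circ a_X=a_Y\circ\Delta L_*(f)$ lies in $cl_{\tdl}(\emptyset)$, and since $a_X$ lies there too, a second application of two-out-of-three to the pair $(a_X,f)$ yields $f\in cl_{\tdl}(\emptyset)$.

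The ``in particular'' assertion is then immediate: a coproduct of filtered colimits of representable functors is again a filtered colimit of representable functors, so $C^{\#}$ has coproducts and Corollary \ref{2009fcope} applies with $C$ replaced by $C^{\#}$, giving $cl_{\tdl,\amalg}(\emptyset)=cl_{\tdl}(\emptyset)$ in $\Delta^{op}C^{\#}$; hence $cl_{\tdl}(\emptyset)$ is already closed under coproducts and is $(\tdl,\amalg)$-closed. I expect the main obstacle to be not any of these diagram chases, which are short once Proposition \ref{2009sum1} is in hand, but the bookkeeping that keeps the whole argument inside $\Delta^{op}C^{\#}$: one must know that $\Delta L_*$ takes values in $\Delta^{op}\bar C\subset\Delta^{op}C^{\#}$ and that $C^{\#}$ has the closure properties used above, so that $cl_{\tdl}(\emptyset)$ computed there is the class named in the statement.
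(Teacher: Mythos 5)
Your proposal is correct and follows essentially the same route as the paper: the inclusion $cl_{\tdl}(\emptyset)\subset W_{proj}$ from Proposition \ref{2009tdl0}, the reverse inclusion via the naturality square for the augmentation $\Delta L_*(X)\sr X$ using Proposition \ref{2009sum1}(\ref{lresd}) and (\ref{naoborot}) together with two-out-of-three, and the final clause from Corollary \ref{2009fcope}. The only difference is that you spell out the bookkeeping (that $\Delta L_*$ lands in $\Delta^{op}\bar C$ and that $C^{\#}$ is stable under the relevant operations), which the paper leaves implicit.
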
 
\begin{proof}
The second statement follows from the first one and Corollary \ref{2009fcope}.
By Proposition \ref{2009tdl0} it is sufficient to show that any projective equivalence $f:X\sr Y$ between objects of $\Delta^{op}C^{\#}$ lies in $cl_{\tdl}(\emptyset)$. 
Consider the square
$$
\begin{CD}
{\Delta L_*}(X) @>>> {\Delta L_*}(Y)\\
@VVV @VVV\\
X @>f>> Y
\end{CD}
$$
The vertical arrows are in $cl_{\tdl}(\emptyset)$ by Proposition \ref{2009sum1}(\ref{lresd}). The upper horizontal arrow is in $cl_{\tdl}(\emptyset)$ by Proposition \ref{2009sum1}(\ref{naoborot}). Therefore the lower horizontal arrow is in $cl_{\tdl}(\emptyset)$.
\end{proof}
Let $\Delta_{Mon}$ be the subcategory of monomorphisms in the standard
simplicial category $\Delta$. A contravariant functor from
$\Delta_{Mon}$ to a category is a ``simplicial object with no
degeneracies'' (also called a semi-simplicial object, see \cite[Section 8.1.9, 8.1.10]{W}). Let $\pi_*$ be the obvious forgetful functor from
$\Delta^{op}C$ to $\Delta^{op}_{Mon}C$. A general argument shows that if
$C$ has colimits then $\pi_*$ has a left adjoint $\pi^*$. In fact,
since any morphism in $\Delta$ has a canonical decomposition into an
epimorphism followed by a monomorphism, one needs only finite
coproducts to define $\pi^*$. For a functor $Z=(Z_i)$ from
$\Delta_{Mon}$ to $C$ the simplicial object $\pi^*(Z)$ has terms of
the form 
$$\pi^*(Z)_i=\amalg_{[i]\sr [j]} Z_j$$
where $[i]\sr [j]$ runs through epimorphisms from $[i]$ to $[j]$ in
$\Delta$ (see \cite[Ex.8.1.5]{W}). An object $X$ in
$\Delta^{op}Rad(C)$ is called degeneracy-free if it belongs to the image of
this functor.

If $Z=(Z_n)$ is an object of $\Delta^{op}_{Mon}$ and $X=\pi^*(Z)$ the
corresponding degeneracy-free simplicial object we say that $X$ is
based on $(Z_n)$.  The composition $Wr=\pi^*\pi_*$ is called the wrapping functor. For any simplicial object $X$ the object $Wr(X)$ is degeneracy-free and its terms are given by the formula
\begin{equation}
\llabel{eq7.28.1}
Wr(X)_i=\amalg_{[i]\sr [j]} X_j
\end{equation}
where $[i]\sr [j]$ runs through epimorphisms from $[i]$ to $[j]$ in
$\Delta$ and $X_j$ are the terms of $X$. For any
$X$, the adjunction defines a natural morphism $Wr(X)\sr X$.
\begin{theorem}
\llabel{2009innov}
For any $X\in \Delta^{op}C^{\#}$ the morphism $p_X:Wr(X)\sr X$ is a projective equivalence.
\end{theorem}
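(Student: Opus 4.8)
The plan is to reduce the statement to membership in $cl_{\tdl}(\emptyset)$ and then to analyze the wrapping functor through the skeletal filtration of a degeneracy-free simplicial object. First I would check that $Wr(X)$ again lies in $\Delta^{op}C^{\#}$: by (\ref{eq7.28.1}) each term $Wr(X)_i=\amalg_{[i]\sr[j]}X_j$ is a \emph{finite} coproduct of the objects $X_j\in C^{\#}$, since there are only finitely many epimorphisms out of $[i]$; and a finite coproduct of objects of $C^{\#}$ is again in $C^{\#}$, because a finite coproduct of representable functors is representable by Proposition \ref{2009elprop}(\ref{rad6}) and finite coproducts commute with the filtered colimits presenting the factors. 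As both $Wr(X)$ and $X$ then belong to $\Delta^{op}C^{\#}$, Theorem \ref{coin} lets me replace ``$p_X$ is a projective equivalence'' by the equivalent assertion $p_X\in cl_{\tdl}(\emptyset)$.

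Next I would exploit the fact that $Wr(X)=\pi^{*}(\pi_{*}X)$ is degeneracy-free. For such an object the latching inclusion in degree $n$ is the canonical coprojection $L_n\hookrightarrow L_n\amalg X_n$, so the skeletal filtration $sk_0\subset sk_1\subset\cdots$ of $Wr(X)$ is built from elementary push-out squares
\[
\begin{CD}
X_n\oo\partial\Delta^n @>>> sk_{n-1}\\
@VVV @VVV\\
X_n\oo\Delta^n @>>> sk_n
\end{CD}
\]
in which the left vertical arrow is a term-wise coprojection (the map $\partial\Delta^n\sr\Delta^n$ is a term-wise injection of finite simplicial sets). One has $colim_n sk_n=Wr(X)$, a countable, hence filtered, colimit, and $p_X$ is the colimit of the restrictions $q_n\colon sk_n\sr X$. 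The homotopical input for collapsing the attached cells is Corollary \ref{unexpect}: since $\Delta^n\sr\Delta^0$ is a weak equivalence of simplicial sets, the collapse $X_n\oo\Delta^n\sr X_n$ lies in $cl_{\tdl}(\emptyset)$.

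I would then run an induction on $n$, comparing the elementary push-out square above with the corresponding piece of $X$ by means of Proposition \ref{2009short} (which controls the map induced on push-outs of a morphism of elementary push-out squares) and Lemma \ref{twosq} (which splices successive squares), together with the two-out-of-three property; closure of $cl_{\tdl}(\emptyset)$ under countable compositions from Lemma \ref{obv2} then passes the conclusion to the colimit. To supply the $X$-side of each comparison square I would first replace $X$ by the resolution $\Delta L_{*}(X)$, whose comparison map to $X$ lies in $cl_{\tdl}(\emptyset)$ by Proposition \ref{2009sum1}(\ref{lresd}); by (\ref{2009lres2}) the object $\Delta L_{*}(X)$ is assembled from coproducts of representable functors, which are exactly the objects for which the elementary push-out machinery of Section \ref{section1} and Lemma \ref{exactsquares} apply.

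The main obstacle is precisely this last point. For the induction the target must itself be presented by \emph{elementary} push-out squares, i.e.\ its latching maps must be coprojections; this holds automatically for the free object $Wr(X)$ but fails for a general $X\in\Delta^{op}C^{\#}$, where the degeneracies need not split off a complement in $Rad(C)$ (there is no workable notion of ``non-degenerate complement''). The force of the proof therefore lies in reducing, via $\Delta L_{*}$ and the representable building blocks of $C^{\#}$, to objects whose latching maps are honest coprojections, and in verifying that this reduction is compatible with $Wr$ in the sense needed to transport $p_{\Delta L_{*}(X)}\in cl_{\tdl}(\emptyset)$ back to $p_X\in cl_{\tdl}(\emptyset)$. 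The remaining work is bookkeeping with the epimorphism/monomorphism factorizations in $\Delta$ that identify the attaching maps of the two filtrations, which is routine once the coprojection property is secured.
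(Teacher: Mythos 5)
Your preparatory steps are sound (the reduction via Theorem \ref{coin}, the observation that $Wr(X)$ stays in $\Delta^{op}C^{\#}$, the recognition that a general $X\in\Delta^{op}C^{\#}$ has no coprojection latching maps), but there is a genuine gap at precisely the point you defer to ``routine bookkeeping,'' and it cannot be closed with the tools you cite. First, the skeletal induction cannot be run through Proposition \ref{2009short} or Lemma \ref{twosq}: in any morphism of attaching squares relating $sk_n(Wr(X))$ to $sk_n(X)$ (or to $sk_n$ of a resolution), the corner map on cells is the fold-type map $Wr(X)_n=\amalg_{[n]\sr[j]}X_j\sr X_n$ given by the degeneracies of $X$, which is not in $cl_{\tdl}(\emptyset)$ and not an equivalence of any kind, so those results yield nothing. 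The reason $Wr(X)\sr X$ is an equivalence is not a cell-by-cell matter: the degenerate cells of $Wr(X)$ are freely attached and must cancel against cells of lower skeletal degree, and this cancellation is exactly the homotopical content of the classical fact that $Wr(S)\sr S$ is a weak equivalence for a simplicial \emph{set} $S$ --- a fact your argument never invokes and would implicitly have to reprove. Second, your transport step is circular: to carry $p_{\Delta L_*(X)}\in cl_{\tdl}(\emptyset)$ over to $p_X$ you need the left vertical map $Wr(\Delta L_*(X))\sr Wr(X)$ of the naturality square to be an equivalence, i.e.\ you need $Wr$ to preserve projective equivalences. But $Wr$ is not the simplicial extension of any functor (its $i$-th term involves all $X_j$ with $j\le i$), so Lemma \ref{fandt} does not apply to it; nor does $Wr$ commute with evaluation at $U\in C$, since coproducts in $Rad(C)$ are not computed section-wise, so the claim cannot be checked on sections. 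That $Wr$ preserves equivalences is essentially the theorem itself.

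The paper's proof avoids both problems by applying the wrapping in the other direction. It regards $L_*(X)$ as a bisimplicial object whose $n$-th column is $\amalg_{U_0\sr\dots\sr U_n}U_0\oo X(U_n)$ and applies $Wr$ column-wise, i.e.\ in the internal simplicial-set direction. Since $U_0\oo(-)$ converts coproducts of sets into coproducts in $Rad(C)$, the column-wise counit becomes a coproduct of morphisms $U_0\oo Wr(X(U_n))\sr U_0\oo X(U_n)$ with $Wr$ now applied to simplicial sets; Corollary \ref{2007ss} and Theorem \ref{coin} make this map a projective equivalence, Proposition \ref{2009sum1} handles the two maps down to $Wr(X)$ and $X$, and 2-out-of-3 concludes. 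So the resolution $L_*$ is used not to improve latching maps, as in your plan, but to move the problem to where the simplicial-set input $Wr(S)\sr S$ applies; some form of that input, which is absent from your proposal, is indispensable.
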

\begin{proof}
Consider $L_*(X)$ as a bi-simplicial object whose rows are of the form $L_*(X_m)$. Then its n-th column is of the form 
$$\amalg_{U_0\sr\dots\sr U_n}U_0\oo X(U_n)$$
where the coproduct is in $Rad(C)$ over all sequences of morphisms $U_0\sr\dots\sr U_n$ in $C$. Let $Wr^v$ be the vertical wrapping functor  i.e. the wrapping functor applied column by column. Consider the following square:
$$
\begin{CD}
\Delta Wr^v(L_*(X)) @>a>> \Delta L_*(X)\\
@VbVV @VcVV\\
Wr(X) @>p>> X
\end{CD}
$$
Let us show that the morphisms $a,b,c$ are projective equivalences and therefore $p$ is a projective equivalence by the 2-out-of-3 property. We have $c\in W_{proj}$ by Proposition \ref{2009sum1}(\ref{all}). The morphism $b$ is the diagonal of the morphism whose $i$-th row is of the form
$$\amalg_{[i]\sr [j]} L_*(X_j)\sr \amalg_{[i]\sr [j]}X_j.$$
where the coproduct is over all the order-preserving surjections $[i]\sr [j]$. These morphisms are projective equivalences by Proposition \ref{2009sum1}(\ref{all}) and Theorem \ref{coin}. Therefore $b$ is a projective equivalence since the class of projective equivalences is $\Delta$-closed. The morphism $a$ is the diagonal of the morphism whose columns are of the form 
\begin{eq}
\llabel{innovcolumns}
\amalg_{U_0\sr\dots\sr U_n}U_0\oo Wr(X(U_n))\sr \amalg_{U_0\sr\dots\sr U_n}U_0\oo X(U_n)
\end{eq}
where $Wr(X(U_n))$ is the wrapping functor on the simplicial {\em set} $X(U_n)$. Since for a simplicial set $S$, the morphism $Wr(S)\sr S$ is a weak equivalence we conclude by Corollary \ref{2007ss} that the morphisms (\ref{innovcolumns}) are  coproducts of projective equivalences between objects of $\Delta^{op}C^{\#}$ and therefore projective equivalences by Theorem \ref{coin}. The theorem is proved. 
\end{proof}
\begin{remark}\rm
The proof of Theorem \ref{2009innov} shows that for a category $C$ such that $W_{proj}$ is closed under coproducts the morphism $p_X:Wr(X)\sr X$ is a projective equivalence for any $X$.
\end{remark}

\begin{definition}
\llabel{2009pcms}
Let $C$ be a category with finite coproducts and $f:X\sr Y$ be a morphism of simplicial radditive  functors.
\begin{enumerate}
\item  $f$ is called a projective fibration if for any $U$ in $C$, the map of
simplicial sets $X(U)\sr Y(U)$ defined by $f$ is a Kan fibration,
\item $f$ is called a projective cofibration if it has the left lifting property for morphisms which are projective fibrations and projective equivalences. 
\end{enumerate}
\end{definition}
We denote the classes of projective fibrations and cofibrations by $Fib_{proj}$ and $Cof$ respectively. 
Let $I$ be the set of morphisms of the form $U\oo\partial\Delta^n\sr U\oo\Delta^n$ for $U\in C$ and $n\ge 0$ and $J_{proj}$ be the class of morphisms of the form $U\oo \Lambda^{n,k}\sr U\oo \Delta^n$ for $U\in C$ and $\Lambda^{n,k}$ being the usual "horn" simplicial sets. 
\begin{theorem}
\llabel{2009pcmsth}
The classes $(W_{proj}, Fib_{proj}, Cof)$ form a finitely generated closed model structure on $\Delta^{op}Rad(C)$ with the classes of generating cofibrations and generating trivial cofibrations being $I$ and $J_{proj}$ respectively. 
\end{theorem}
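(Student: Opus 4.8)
The plan is to invoke the recognition theorem for cofibrantly generated model categories \cite[Th. 2.1.19]{Hovey} with weak equivalences $W_{proj}$, generating cofibrations $I$ and generating trivial cofibrations $J_{proj}$. The category $\Delta^{op}Rad(C)$ is cocomplete by Lemma \ref{rad3} and complete by Proposition \ref{2009elprop}(\ref{rad1}), and $W_{proj}$ satisfies the two-out-of-three property and is closed under retracts because these hold section-wise for weak equivalences of simplicial sets. The first step is to identify the injective classes. For a representable $U$ and a simplicial set $K$, a direct computation with Yoneda's lemma and radditivity gives a natural bijection $Hom(U\oo K,Y)\cong Hom_{sSet}(K,Y(U))$, so a lifting square with left edge $U\oo A\sr U\oo B$ and right edge $f:X\sr Y$ transposes to a lifting square over $f(U):X(U)\sr Y(U)$ with left edge $A\sr B$. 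Hence $f\in I\mbox{-inj}$ iff each $X(U)\sr Y(U)$ has the right lifting property against all $\partial\Delta^n\sr\Delta^n$, i.e. is a trivial Kan fibration, and $f\in J_{proj}\mbox{-inj}$ iff each $X(U)\sr Y(U)$ is a Kan fibration; thus $J_{proj}\mbox{-inj}=Fib_{proj}$ while $I\mbox{-inj}$ is the class of section-wise trivial fibrations. Smallness of the domains holds because $U\oo\partial\Delta^n$ and $U\oo\Lambda^{n,k}$ are compact by Lemma \ref{2009comcom} and $\Delta^{op}Rad(C)$ is locally finitely presentable by Proposition \ref{2009pr1}, so the small object argument applies to both $I$ and $J_{proj}$.

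The remaining hypotheses are the inclusions $I\mbox{-inj}\subset W_{proj}\cap J_{proj}\mbox{-inj}$ and $W_{proj}\cap J_{proj}\mbox{-inj}\subset I\mbox{-inj}$, together with $J_{proj}\mbox{-cell}\subset W_{proj}\cap I\mbox{-cof}$. The two inclusions between injective classes reduce, section by section, to the classical facts that a trivial Kan fibration is both a weak equivalence and a Kan fibration, and that a Kan fibration which is a weak equivalence is a trivial fibration. In particular $Fib_{proj}\cap W_{proj}=J_{proj}\mbox{-inj}\cap W_{proj}=I\mbox{-inj}$, so the class $Cof$ of Definition \ref{2009pcms}, defined by the left lifting property against $Fib_{proj}\cap W_{proj}$, coincides with $I\mbox{-cof}$; this ensures the cofibrations match those produced by the recognition theorem. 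The inclusion $J_{proj}\subset I\mbox{-cof}$ is immediate, since each horn inclusion $\Lambda^{n,k}\sr\Delta^n$ is a monomorphism of simplicial sets and therefore lifts against section-wise trivial fibrations.

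The main obstacle is the inclusion $J_{proj}\mbox{-cell}\subset W_{proj}$: one cannot simply evaluate at $U$ and invoke stability of anodyne maps under pushout, because colimits in $Rad(C)$ are not computed section-wise. Here the $\tdl$-closure machinery does the work, and I will in fact prove the stronger $J_{proj}\mbox{-cell}\subset cl_{\tdl}(\emptyset)$. First, each generating trivial cofibration equals $Id_U\oo(\Lambda^{n,k}\sr\Delta^n)$ with $\Lambda^{n,k}\sr\Delta^n$ a weak equivalence of simplicial sets, so it lies in $cl_{\tdl}(\emptyset)$ by Corollary \ref{unexpect}. Second, such a map is a term-wise coprojection, so any pushout of it is an elementary push-out square in the sense of Definition \ref{elex}; by Lemma \ref{exactsquares} the resulting pushout morphism lies in $cl_{\Delta,\amalg_{<\infty}}(\{U\oo\Lambda^{n,k}\sr U\oo\Delta^n\})$. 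Since $cl_{\tdl}(\emptyset)$ is $\Delta$-closed and, by Corollary \ref{2009fcope}, closed under finite coproducts, it contains this class, so every such pushout morphism lies in $cl_{\tdl}(\emptyset)$. Finally $cl_{\tdl}(\emptyset)$ is closed under filtered colimits and, via two-out-of-three, under finite compositions, hence under transfinite compositions by Lemma \ref{obv2}; therefore every relative $J_{proj}$-cell complex lies in $cl_{\tdl}(\emptyset)$, which is contained in $W_{proj}$ by Proposition \ref{2009tdl0}.

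With all hypotheses verified, the recognition theorem yields a cofibrantly generated model structure on $\Delta^{op}Rad(C)$ whose weak equivalences are $W_{proj}$, whose fibrations are $J_{proj}\mbox{-inj}=Fib_{proj}$, and whose cofibrations are $I\mbox{-cof}=Cof$, exactly the classes of Definitions \ref{2009peq} and \ref{2009pcms}. It is finitely generated because the domains and codomains of all maps in $I$ and $J_{proj}$ are compact by Lemma \ref{2009comcom}. I expect the only genuinely delicate point to be the crux above, and specifically the passage from pushouts of term-wise coprojections to elementary push-out squares, since that is what allows the section-wise failure of colimits in $Rad(C)$ to be bypassed.
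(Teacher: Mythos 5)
Your proof is correct and follows essentially the same route as the paper: both verify the hypotheses of Hovey's recognition theorem, identify the injective classes section-wise via the $(-)\oo K$ adjunction and compactness, and settle the crux --- that $J_{proj}$-cells are projective equivalences --- by combining Lemma \ref{exactsquares} with Proposition \ref{2009tdl0} and closure of the relevant class under transfinite compositions. The only cosmetic difference is that you place the generating maps in $cl_{\tdl}(\emptyset)$ via Corollary \ref{unexpect} and track membership in $cl_{\tdl}(\emptyset)$ throughout, whereas the paper notes that elements of $J_{proj}$ are simplicial homotopy equivalences and works directly with $W_{proj}$.
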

\begin{proof}
For the definition of a finitely generated c.m.s. see \cite[Def. 2.1.17, p.34]{Hovey}. 

The domains and codomains of elements of $I$ and $J_{proj}$ are compact and a morphism is a projective fibration (resp. a projective fibration and a projective equivalence) if and only if it has the right lifting property with respect to $J_{proj}$ (resp. $I$). We will show that the classes $I$ and $J_{proj}$ satisfy the conditions of \cite[Th. 2.1.19, p.35]{Hovey} with respect to $W_{proj}$. Standard reasoning shows that elements of $J_{proj}$ are $I$-cells and in particular $I$-cofibrations and therefore $J_{proj}$-cells are $I$-cofibrations. It is also obvious that $I$-injective morphisms are $J$-injective and that they are projective equivalences and that a projective equivalence which is $J_{proj}$-injective is $I$-injective. 

It remains to show that $J_{proj}$-cells are projective equivalences. By Proposition \ref{2009tdl0} the class $W_{proj}$ is closed under transfinite compositions. Therefore it is sufficient to show that a push-out of an element of $J_{proj}$ is in $W_{proj}$. Elements of $J_{proj}$ are term-wise coprojections and simplicial homotopy equivalences. Therefore by Lemma \ref{exactsquares} a push-out of an element of $J_{proj}$ belongs to $cl_{\tdl}(\emptyset)$ and by Proposition \ref{2009tdl0} to $W_{proj}$.
\end{proof}
\begin{example}\rm
The push-out squares of \cite[Cor. 1.14, p.358]{GJ} imply that for an object $X$ of $\Delta^{op}C^{\#}$ the object $Wr(X)$ is projectively cofibrant.
\end{example}
Recall (see \cite{Hirs}) that for a class of morphisms $A$ a morphism $f$ is called a sequential $A$-cell if it is a countable composition of push-outs of coproducts of elements of $A$. Since domains and codomains of elements of $I$ (resp. $J_{proj}$) are compact we can use the small object argument to obtain for any morphism $f:X\sr Y$ a functorial decomposition of the form $X\stackrel{f_1}{\sr}\tilde{Y}\stackrel{f_2}{\sr}Y$ where $f_1$ is a sequential $I$-cell (resp. $J_{proj}$-cell) and $f_2$ has the right lifting property for $I$ (resp. $J_{proj}$). Let us define the standard cofibrant and fibrant replacement functors $Cof$ and $Fib_{proj}$ for the projective c.m.s. using this construction.  One verifies easily that if elements of $A$ are term-wise coprojections then sequential $A$-cells are term-wise coprojections as well. In particular we get the following result. 
\begin{proposition}
\llabel{2009cofrep}
One has:
\begin{enumerate}
\item the standard cofibrant replacement functor $Cof$ takes values in $\Delta^{op}\bar{C}$,
\item any cofibration is a domain preserving retract of a coprojection whose terms are of the form $X\sr X\amalg Y$ for $Y\in \bar{C}$, 
\end{enumerate}
\end{proposition}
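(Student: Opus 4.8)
The plan is to read off both assertions from the explicit shape of the cells produced by the small object argument, using the fact (granted in the discussion preceding the proposition) that sequential $I$-cells are term-wise coprojections. The first thing I would do is pin down a single generating cofibration: in simplicial degree $i$ the map $U\oo\partial\Delta^n\sr U\oo\Delta^n$ is the coprojection $\amalg_{(\partial\Delta^n)_i}U\sr\amalg_{(\Delta^n)_i}U$, whose complementary summand $\amalg_{(\Delta^n)_i\setminus(\partial\Delta^n)_i}U$ is a finite coproduct of copies of $U$ and hence lies in $\bar{C}$. Thus every element of $I$ is a term-wise coprojection whose complement is an object of $\bar{C}$, and $\emptyset$ (the initial object of $\Delta^{op}Rad(C)$, realized as the empty coproduct) lies in $\Delta^{op}\bar{C}$.

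Next I would propagate this shape through the three operations out of which a sequential $I$-cell is built, which is where the real bookkeeping lies. A coproduct of maps from $I$ is again a term-wise coprojection whose complement $\amalg_\alpha Y'_\alpha$ is a coproduct of objects of $\bar{C}$, hence again in $\bar{C}$ since $\bar{C}$ is closed under coproducts. By the push-out square (\ref{elexe0}), the push-out of a coprojection $A\sr A\amalg Y'$ along any morphism $A\sr Z$ is the coprojection $Z\sr Z\amalg Y'$, so push-outs preserve both the coprojection shape and the complement. Finally a countable composition of coprojections each adjoining a coproduct of representables has terms that are themselves coproducts of representables, the colimit being the total coproduct of all the adjoined summands. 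Running the construction of $Cof(X)$, which factors $\emptyset\sr X$ as $\emptyset\sr Cof(X)\sr X$ with $\emptyset\sr Cof(X)$ a sequential $I$-cell, therefore yields $Cof(X)_i\in\bar{C}$ for every $i$; this is assertion (1).

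For assertion (2) I would apply the same analysis to the factorization of the cofibration $f:X\sr Y$ itself, say $X\stackrel{f_1}{\sr}\tilde{Y}\stackrel{f_2}{\sr}Y$ with $f_1$ a sequential $I$-cell and $f_2$ an $I$-injective morphism, i.e. a trivial fibration. The same tracking as above, now begun at $X$ rather than $\emptyset$, shows that $f_1$ is a coprojection whose terms have the form $X_i\sr X_i\amalg Y'_i$ with $Y'_i\in\bar{C}$. It then remains to exhibit $f$ as a domain-preserving retract of $f_1$: since $f$ is a cofibration and $f_2$ is a trivial fibration, the defining lifting property supplies a morphism $g:Y\sr\tilde{Y}$ with $gf=f_1$ and $f_2g=Id_Y$, and the morphisms of arrows $(Id_X,g):f\sr f_1$ and $(Id_X,f_2):f_1\sr f$ compose to the identity on $f$, displaying $f$ as a retract of $f_1$ fixing the common domain $X$. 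The only step demanding genuine care is the middle paragraph, namely verifying that the successive complements remain \emph{coproducts of representables} (and not merely that the cells are term-wise coprojections, which is already granted) through push-outs and the countable colimit; everything else reduces to the standard retract argument.
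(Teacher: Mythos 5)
Your proof is correct and takes essentially the same approach as the paper: the paper deduces the proposition directly from the preceding observation that the small object argument produces sequential $I$-cells and that sequential cells on term-wise coprojections are again term-wise coprojections, leaving the tracking of the complements in $\bar{C}$ and the lifting/retract argument as the ``one verifies easily'' step. Your proposal is precisely that verification carried out in detail (generating cofibrations, coproducts, push-outs via the square (\ref{elexe0}), countable compositions, then the standard retract argument for (2)), so there is nothing to correct.
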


The following proposition gives an important sufficient condition for a class of morphism in $\Delta^{op}Rad(C)$ to be $\tdl$-closed. In this proposition we let $CofEnds$ denote the class of morphisms between the cofibrant objects.
\begin{proposition}
\llabel{2009.07.23.1}
Let $C$ be a small category with finite coproducts and $E$ a class of morphisms in $\Delta^{op}Rad(C)$ which satisfies the following conditions:
\begin{enumerate}
\item $E$ contains $W_{proj}$,
\item $E$ satisfies the 2-out-of-3 property (i.e. the second condition of Definition \ref{fd}),
\item $E\cap \Delta^{op}C^{\#}\cap CofEnds$ is closed under coproducts,
\item for $f\in E\cap \Delta^{op}C^{\#}\cap CofEnds$ and $i\ge 1$ one has $f\oo Id_{\partial\Delta^i}\in E$,
\item for a morphism of push-out squares
$$
\left(
\begin{CD}
f_1\,\, @. f_2\\
f_3\,\, @. f_4
\end{CD}
\right)
:
\left(
\begin{CD}
X_1 @>>> X_2\\
@VgVV @VVV\\
X_3 @>>> X_4
\end{CD}
\right)
{\longrightarrow}
\left(
\begin{CD}
X_1' @>>>X_2'\\
@Vg'VV @VVV\\
X_3' @>>> X_4'
\end{CD}
\right)
$$
such that all the objects are in $\Delta^{op}C^{\#}\cap CofEnds$, the morphisms $g$, $g'$ are cofibrations and term-wise coprojections and $f_1,f_2,f_3$ are in $E$, one has $f_4\in E$,
\end{enumerate}
Then $E$ is $\tdl$-closed.
\end{proposition}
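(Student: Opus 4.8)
The plan is to verify directly that $E$ is $\Delta$-closed (Definition \ref{fd}) and closed under filtered colimits (Definition \ref{2009td}), so that it is $\tdl$-closed by definition. Conditions \ref{fd}(1) and \ref{fd}(2) are immediate: every simplicial homotopy equivalence is a projective equivalence, so hypothesis (1) gives \ref{fd}(1), and hypothesis (2) is \ref{fd}(2). The rest of the argument rests on a reduction to the subcategory $\mathcal{N}=\Delta^{op}C^{\#}\cap CofEnds$. Composing the functors of Proposition \ref{2009sum1} and Theorem \ref{2009innov} I set $R=Wr\circ\Delta L_*$: for every $X$ the object $R(X)$ lies in $\Delta^{op}\bar{C}$ and is projectively cofibrant (by the example following Theorem \ref{2009pcmsth}), hence $R(X)\in\mathcal{N}$; the morphism $R(X)\sr X$ is a projective equivalence and so lies in $E$; and $R$ commutes with filtered colimits because $\Delta L_*$ does (Proposition \ref{2009sum1}(\ref{lresfilt})) and $Wr$ is a finite coproduct in each degree. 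Together with hypothesis (2) this yields the basic equivalence $f\in E\Leftrightarrow R(f)\in E\cap\mathcal{N}$.

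Applying $R$ row-wise and using that $W_{proj}$ is itself $\Delta$-closed (Proposition \ref{2009tdl0}) to compare diagonals, I would reduce the bisimplicial condition \ref{fd}(3) to the case of a morphism $\phi:B\sr B'$ of bisimplicial objects whose entries all lie in $\mathcal{N}$ and whose rows lie in $E$. For this case the plan is to run the skeletal filtration $\Delta(B)=colim_n sk_nB$ of the diagonal in the horizontal direction, in which $sk_{n-1}B\sr sk_nB$ is the push-out of the attaching map $(L_nB\oo\Delta^n)\cup_{(L_nB\oo\partial\Delta^n)}(B_n\oo\partial\Delta^n)\sr B_n\oo\Delta^n$, with $L_nB$ the $n$-th latching object. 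Since $\mathcal{N}$ is stable under $(-)\oo K$ for finite $K$, under finite coproducts (hypothesis (3)) and under the elementary push-outs of Lemma \ref{exactsquares}, every object here stays in $\mathcal{N}$; moreover the attaching maps are term-wise coprojections and cofibrations. I would then induct on $n$: the base $sk_0\phi=\phi_0$ is a row; the latching maps $L_n\phi$ lie in $E$ by hypothesis (3), the maps $\phi_n\oo Id_{\Delta^n}$ lie in $E$ by Corollary \ref{2007ss} and hypothesis (2), and the maps $\phi_n\oo Id_{\partial\Delta^n}$ lie in $E$ by hypothesis (4); comparing the two attaching push-out squares, hypothesis (5) then gives $sk_n\phi\in E$.

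The remaining, intertwined tasks are to pass from $sk_n\phi\in E$ to $\Delta(\phi)=colim_n sk_n\phi\in E$ and to prove that $E$ is closed under all filtered colimits. I would handle both through the comparison $hocolim\sr colim$ of Proposition \ref{import1}, which always lies in $cl_{\tdl}(\emptyset)\subset W_{proj}\subset E$. Given a filtered system $(f_i)$ with $f_i\in E$, applying $R$ (which commutes with filtered colimits) reduces us to $f_i\in E\cap\mathcal{N}$; by Lemma \ref{2009li} the object $hocolim(f_i)$ is the diagonal of a bisimplicial morphism whose rows are the coproducts $\amalg_{i_0\sr\cdots\sr i_n}f_{i_0}$, which lie in $E\cap\mathcal{N}$ by hypothesis (3), so the diagonal condition places $hocolim(f_i)\in E$, whence $colim(f_i)\in E$ by Proposition \ref{import1} and hypothesis (2). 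Once $E$ is $\Delta$-closed and closed under filtered colimits it is $\tdl$-closed; equivalently, being $\Delta$-closed, closed under the coproducts of hypothesis (3) and containing $cl_{\tdl}(\emptyset)$ (hypothesis (1) and Proposition \ref{2009tdl0}), it is $(\tdl,\amalg)$-closed by Proposition \ref{bdlchar}.

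The main obstacle is exactly the circular-looking dependence just exposed: the diagonal condition is what lets the $hocolim$ comparison prove closure under filtered colimits, while closure under the sequential colimit $colim_n sk_nB$ is what finishes the diagonal condition. I would break this by treating the two as a single induction on the horizontal skeletal degree, letting hypothesis (5) govern the successor stages and invoking Proposition \ref{import1} together with hypothesis (3) at a limit stage only through the strictly lower stages already known to be in $E$. The most delicate verification along the way is that the attaching maps really are term-wise coprojections and cofibrations, so that hypothesis (5) applies and all intermediate objects remain inside $\mathcal{N}$, where alone hypotheses (3), (4) and (5) are available.
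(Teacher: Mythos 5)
Your reduction to morphisms between objects of $\Delta^{op}C^{\#}\cap CofEnds$ and your skeletal induction with hypothesis (5) at successor stages follow the paper's outline, but two steps fail as proposed. The first gap is the latching-object step: you claim the latching maps $L_n\phi$ lie in $E$ ``by hypothesis (3)''. Hypothesis (3) gives closure under coproducts only, whereas the latching object $L_nB$ of a general bisimplicial object is a colimit over the category of surjections $[n]\twoheadrightarrow [j]$, $j<n$ --- a quotient of a coproduct, not a coproduct. It need not lie in $\Delta^{op}C^{\#}\cap CofEnds$, the map $L_n\phi$ need not be in $E$, and the attaching map $(L_nB\oo\Delta^n)\cup_{(L_nB\oo\partial\Delta^n)}(B_n\oo\partial\Delta^n)\sr B_n\oo\Delta^n$ need not be a cofibration or a term-wise coprojection, so hypothesis (5) cannot be applied. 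Your functor $R=Wr\circ\Delta L_*$ applied row-wise cannot repair this: it wraps inside each row but does nothing to make the simplicial structure in the filtration direction degeneracy-free. The paper's fix is exactly to wrap in that transverse direction ($Wr_r$ applied to the rows of the column-wise cofibrant replacement $Cof_c$): the resulting object is degeneracy-free in the direction of the filtration, so its latching objects become plain coproducts $\amalg_{[n]\twoheadrightarrow[j],\,j<n}Cof(B_{j*})$ and the attaching maps become coprojection-cofibrations, which is what hypotheses (3) and (5) need.

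The second gap is the circularity you yourself flag, and your proposed escape does not break it. To pass from $sk_n\phi\in E$ for all $n$ to $\Delta(\phi)=colim_n\,sk_n\phi\in E$ you invoke Lemma \ref{2009li} and Proposition \ref{import1}; but concluding $hocolim_n(sk_n\phi)\in E$ from the fact that its rows (coproducts of the $sk_n\phi$) lie in $E$ is itself an instance of the diagonal condition of Definition \ref{fd} --- the property being proved --- applied to a \emph{new} bisimplicial object. Your ``single induction on the horizontal skeletal degree'' does not cover that new object: its skeleta are not among the stages of the original induction, so its limit stage spawns yet another hocolim, and the regress never terminates. The paper breaks the circle with a separate device, the telescope lemma (Lemma \ref{2009.07.23.2}): the mapping telescope $Tel_{\infty}((g_i))$ is defined by a \emph{single} elementary push-out square whose corners are coproducts, so hypotheses (3), (4), (5) apply directly to put $Tel_{\infty}(f)$ in $E$, while $Tel_{\infty}\sr colim$ lies in $cl_{\tdl}(\emptyset)\subset W_{proj}\subset E$; this yields closure under sequential colimits of morphisms between objects of $\Delta^{op}C^{\#}\cap CofEnds$ without ever invoking the diagonal condition. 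Only after the diagonal condition is established this way does the paper run your hocolim mechanism (through Proposition \ref{bdlchar}) to obtain closure under general filtered colimits; that final step of yours is fine, but it must come after, not inside, the proof of the diagonal condition.
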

\begin{proof}
The first two condition of Definition \ref{fd} follow from the first two conditions of our proposition. 
To prove the third one we start with the following lemma.
\begin{lemma}
\llabel{2009.07.23.2}
Let $E$ be a class satisfying the conditions of the proposition. Then for two sequences of morphisms $(X_i\stackrel{g_{i+1}}{\sr} X_{i+1})_{i\ge 0}$, $(X_i'\stackrel{g_{i+1}}{\sr} X_{i+1}')_{i\ge 0}$ and a morphism of sequences $f_i:X_i\sr X_i'$ such that $X_i,X_i'\in \Delta^{op}C^{\#}\cap CofEnds$ and $f_i\in E$, the morphism $colim f_i:colim\,X_i\sr colim\,X_i'$ is in $E$.
\end{lemma}
\begin{proof}
Let us recall in our context the standard construction of a telescope for the sequence $(X_i\stackrel{g_{i+1}}{\sr}X_{i+1})_{i\ge 0}$. Let $\iota_i$ be the canonical morphism $X_i\sr \amalg_{i\ge 0} X_i$. Consider the morphism 
$$(\amalg_{i\ge 0}X_i)\oo\partial\Delta^1\sr \amalg_{i\ge 0}X_i$$
which is identity on $(\amalg_{i\ge 0}X_i)\oo\{0\}$ and the "shift" morphism $\amalg_{i\ge 0}\iota_{i+1}g_{i+1}$ on   $(\amalg_{i\ge 0}X_i)\oo\{1\}$.  The telescope of the sequence $(g_i)_{i\ge 1}$ is defined by the elementary push-out square
\begin{eq}\llabel{2009.07.23.eq1}
\begin{CD}
(\amalg_{i\ge 0}X_i)\oo\partial\Delta^1 @>>> \amalg_{i\ge 0}X_i\\
@VVV @VVV\\
(\amalg_{i\ge 0}X_i)\oo\Delta^1 @>>> Tel_{\infty}((g_i))
\end{CD}
\end{eq}
There is an obvious morphism $Tel_{\infty}((g_i))\sr colim_{i} X_i$. We claim that this morphism belongs to $cl_{\tdl}(\emptyset)$. Indeed, consider the partial telescopes given by
$$
\begin{CD}
(\amalg_{n>i\ge 0}X_i)\oo\partial\Delta^1 @>>> \amalg_{n\ge i\ge 0}X_i\\
@VVV @VVV\\
(\amalg_{n>i\ge 0}X_i)\oo\Delta^1 @>>> Tel_{n}((g_i))
\end{CD}
$$
There are obvious morphisms $Tel_n\sr Tel_{n+1}$ and $colim_n Tel_n=Tel_{\infty}$. One further observes that $Tel_0=X_0$, $Tel_1=cyl(g_1)$ and more generally that for each $n$ there is a simplicial homotopy equivalence $Tel_n((g_i))\sr X_n$ and these equivalences form a morphism of sequences $(Tel_n)_{n\ge 0}\sr (X_n)_{n\ge 0}$ whose colimit is the morphism $Tel_{\infty}((g_i))\sr colim_i X_i$ which is, therefore, in $cl_{\tdl}(\emptyset)$. 

To finish the proof of the lemma it remains to show that the morphism $Tel_{\infty}((g_i))\sr Tel_{\infty}((g_i'))$ is in $E$. It follows from the conditions 3,4 and then 5 applied to the morphism of squares of the form (\ref{2009.07.23.eq1}) defined by the family $(f_i)$ (note that for $f:X\sr Y$ in $E$ the morphism $f\oo Id_{\Delta^i}$ is in $E$ by the first two properties of $E$). 
\end{proof}

Let $f:B\sr B'$ be a morphism of bisimplicial
radditive functors with columns $f_i:B_{i*}\sr B_{i*}'$ in $E$.  Applying the standard cofibrant replacement
functor to each column of $B$ and $B'$ we get a commutative square of the
form 
$$
\begin{CD}
Cof_{c}(B) @>>> Cof_{c}(B')\\
@VVV @VVV\\
B @>>> B'
\end{CD}
$$
where the columns of $Cof_c(B)$ and $Cof_c(B')$ are cofibrant and belong to $\Delta^{op}C^{\#}\cap CofEnds$.  In view of Proposition \ref{2009tdl0} the vertical arrows define projective 
equivalences on the diagonal objects. Therefore, in order to prove the third property of Definition \ref{fd} it is sufficient to show that the diagonal of the upper horizontal arrow is in $E$.

Let $Wr_rCof_c(B)$ be the bisimplicial object obtained by the application of the wrapping functor to each row of $Cof_c(B)$.  Its columns are of the form $Z_i=\amalg_{[i]\sr [j]} Cof(B_{j*})$. In particular they are cofibrant and belong to $\Delta^{op}C^{\#}$. The diagonal of the projection $Wr_rCof_c(B)\sr Cof_c(B)$ is in $W_{proj}$ by Theorem \ref{2009innov} and Proposition \ref{2009tdl0}. The same construction applies to $B'$. It remains to show that the morphism
\begin{eq}
\llabel{2007diag1}
\Delta(Wr_rCof_c(B))\sr \Delta(Wr_rCof_c(B'))
\end{eq}
is in $E$.  By \cite[Cor. 1.14, p.358]{GJ} applied to the simplicial objects formed by columns, the morphism  (\ref{2007diag1}) is the colimit of the sequence of morphisms 
\begin{eq}
\llabel{skim}
\Delta(sk_i(Wr_rCof_c(B))) \sr \Delta(sk_i(Wr_rCof_c(B)))
\end{eq}
which fit into morphisms of elementary push-out squares of the form
$$
\left(
\begin{CD}
Z_i\oo\partial \Delta^i @>>> \Delta(sk_{i-1}(Wr_{r}Cof_c(B)))\\
@VVV @VVV\\
Z_i\oo\Delta^i @>>> \Delta(sk_i(Wr_{r}Cof_c(B)))
\end{CD}
\right)
\sr
\left(
\begin{CD}
Z_i'\oo\partial \Delta^i @>>> \Delta(sk_{i-1}(Wr_{r}Cof_c(B')))\\
@VVV @VVV\\
Z_i'\oo\Delta^i @>>> \Delta(sk_i(Wr_{r}Cof_c(B')))
\end{CD}
\right)
$$
All of the objects in these squares are cofibrant and belong to $\Delta^{op}C^{\#}$ and the vertical morphisms are cofibrations. Therefore the diagonals $\Delta(Wr_rCof_c(B))$ and  $\Delta(Wr_rCof_c(B'))$ are in $\Delta^{op}C^{\#}\cap CofEnds$ as well. By Lemma \ref{2009.07.23.2} it remains to show that the morphisms (\ref{skim}) are in $E$.  
The morphisms $Z_i\sr Z_i'$ are in $E$ by property 3, the morphisms $Z_i\oo\partial \Delta^i\sr Z'_i\oo\partial \Delta^i$ and $Z_i\oo\Delta^i\sr Z'_i\oo\Delta^i$ are in $E$ by properties 1,2 and 4 and we conclude that (\ref{skim}) is in $E$ by property 5. 

It remains to show that $E$ is closed under filtered colimits. Let $F,G:I\sr \Delta^{op}Rad(C)$ be two directed diagrams and $f:F\sr G$ a morphism between these diagrams such that for all $i\in I$, $f(i)\in E$. Applying to these diagrams functor $Cof$ or ${\Delta L_*}$ and taking into account that the class of projective equivalences is closed under filtered colimits, we reduce the problem to the case when  the diagrams take values in $\Delta^{op}C^{\#}$. 

The class $E\cap \Delta^{op}C^{\#}$ is $\Delta$-closed by the first part of the proof and by Proposition \ref{2009tdl0} it contains $cl_{\tdl}(\emptyset)$. It is also closed under coproducts by condition 3. Applying to it Proposition \ref{bdlchar} we conclude that it is closed under filtered colimits which finishes the proof of the proposition. 
\end{proof}

\begin{proposition}
\llabel{l7.26.3}
Let $f:X\sr Y$ be a projective cofibration and $i:K\sr L$ a
cofibration of simplicial sets. Then the morphism
$$h(f,i):(X\oo L)\amalg_{X\oo K} (Y\oo K)\sr Y\oo L$$
is a cofibration. If $f$ is a projective equivalence or $i$ is a weak equivalence then $h(f,i)$ is a projective equivalence.
\end{proposition}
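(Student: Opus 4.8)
The plan is to bypass the explicit pushout $(X\oo L)\amalg_{X\oo K}(Y\oo K)$ entirely and pass to the adjoint ``fibration'' form of the statement, where all the relevant constructions become limits that are computed sectionwise and the corresponding classical fact for simplicial sets can be quoted section by section. Recall that $\Delta^{op}Rad(C)$ is a simplicial (tensored and cotensored) category with tensor $(-)\oo K$, cotensor $Hom_{\oo}(K,-)$ and mapping space $S(-,-)$. For a cofibration $i:K\sr L$ of simplicial sets and a morphism $p:E\sr B$ of simplicial radditive functors write $\langle i,p\rangle$ for the pullback-hom
\[
\langle i,p\rangle:Hom_{\oo}(L,E)\sr Hom_{\oo}(K,E)\times_{Hom_{\oo}(K,B)}Hom_{\oo}(L,B).
\]
The first thing I would record is the formal two-variable (Leibniz) adjunction carried by the simplicial structure: for every $f$, $i$ and $p$, the morphism $h(f,i)$ has the left lifting property with respect to $p$ if and only if $f$ has the left lifting property with respect to $\langle i,p\rangle$. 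This adjunction is the engine of the whole argument.

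The technical heart is then the sectionwise claim $(\ast)$: \emph{if $i:K\sr L$ is a cofibration of simplicial sets and $p:E\sr B$ is a projective fibration, then $\langle i,p\rangle$ is a projective fibration, and it is a projective equivalence as soon as $i$ is a weak equivalence or $p$ is a projective equivalence.} This is where the simplification occurs. By the definition of $Hom_{\oo}$ one has $Hom_{\oo}(K,X)(U)=S(K,X(U))$, and since limits in $Rad(C)$ are computed sectionwise (Proposition \ref{2009elprop}), evaluation of $\langle i,p\rangle$ at $U\in C$ is exactly the simplicial-set pullback-hom $\langle i,p(U)\rangle$, where $p(U):E(U)\sr B(U)$ is a Kan fibration by the definition of a projective fibration. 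The classical function complex axiom for $\Delta^{op}Sets$ (see \cite{GJ}) then gives that $\langle i,p(U)\rangle$ is a Kan fibration, trivial as soon as $i$ or $p(U)$ is trivial. As projective fibrations and projective equivalences are detected sectionwise, $(\ast)$ follows at once.

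Granting $(\ast)$, both assertions are formal consequences of the lifting characterizations supplied by the projective c.m.s.\ (Theorem \ref{2009pcmsth}). For the cofibration statement: $h(f,i)$ is a cofibration iff it has the left lifting property against every trivial projective fibration $p$; by the adjunction this is equivalent to $f$ lifting against $\langle i,p\rangle$, and $(\ast)$ (applied with $p$ trivial) makes $\langle i,p\rangle$ a trivial projective fibration, against which the cofibration $f$ lifts. For the second statement I would first note that $h(f,i)$ is already a cofibration, so it suffices to show it is a trivial cofibration, i.e.\ that it has the left lifting property against every projective fibration $p$; by the adjunction this reduces to $f$ lifting against $\langle i,p\rangle$. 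If $f$ is a projective equivalence then $f$ is a trivial cofibration and $\langle i,p\rangle$ is a fibration by $(\ast)$; if instead $i$ is a weak equivalence then $\langle i,p\rangle$ is a trivial fibration by $(\ast)$ and the cofibration $f$ lifts against it. In either case the required lift exists.

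The main obstacle is conceptual rather than computational. A frontal assault on the cofibration claim through the pushout $(X\oo L)\amalg_{X\oo K}(Y\oo K)$ runs straight into the fact that coproducts and pushouts in $Rad(C)$ are \emph{not} computed sectionwise and that $W_{proj}$ is not known to be stable under coproducts, so the usual termwise simplicial-set reasoning does not transport. Replacing $h(f,i)$ by the adjoint $\langle i,p\rangle$ trades these colimits for limits, which \emph{are} sectionwise, and this is precisely what allows the simplicial-set axiom to be applied one section at a time. The only points demanding care are the exact form of the two-variable adjunction in this tensored and cotensored simplicial category and the verification that the defining pullback of $\langle i,p\rangle$ commutes with evaluation at $U$; both are routine given Proposition \ref{2009elprop} and the stated simplicial structure.
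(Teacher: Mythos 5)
Your proof is correct, but it follows a genuinely different route from the paper's. You work entirely on the adjoint side: the Leibniz adjunction converts every lifting problem for $h(f,i)$ into one for $f$ against the pullback-hom $\langle i,p\rangle$, and since the cotensor $Hom_{\oo}(K,-)$, limits in $Rad(C)$, projective fibrations and projective equivalences are all defined or computed sectionwise, the classical function-complex axiom for simplicial sets gives your claim $(\ast)$ section by section; the proposition then drops out of the lifting characterizations of cofibrations and trivial cofibrations available once Theorem \ref{2009pcmsth} is in place. The paper instead works on the colimit side: it first reduces, by a retract argument, to the case where $f$ is a sequential $I$-cell, proves the cofibration claim by cellular induction (the class of $f$ for which $h(f,i)$ is always a cofibration contains $I$ and is closed under coproducts, push-outs and countable compositions), and proves the equivalence claims with the $\tdl$-closure machinery: when $i$ is a weak equivalence, Corollary \ref{unexpect} and Lemma \ref{exactsquares} show that $h(f,i)\in cl_{\tdl}(\emptyset)$, and when $f$ is a projective equivalence one combines Corollary \ref{2007ss}, push-out stability of trivial cofibrations, and 2-out-of-3. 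Your argument is shorter and is the standard one for projective-type model structures, and your diagnosis of why it works is exactly right: the pullback-hom trades the colimits of $Rad(C)$, which are not sectionwise, for limits, which are. What the paper's longer route buys is finer information in the currency used throughout the rest of the paper: for $f$ an $I$-cell (hence a term-wise coprojection) and $i$ a weak equivalence it places $h(f,i)$ in $cl_{\tdl}(\emptyset)$, a class preserved by simplicial extensions of arbitrary continuous functors, which is strictly stronger than membership in $W_{proj}$ and matches the strategy of the functoriality theorems the paper is building toward. Both arguments rest only on Theorem \ref{2009pcmsth} and earlier results, so neither is circular.
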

\begin{proof}
As was mentioned above, any cofibration is a domain fixing retract of a sequential $I$-cell. For a domain fixing retract $f'$ of $f$ the morphism $h(f',i)$ is a retract of $h(f,i)$ which implies that we may assume that $f$ is a sequential $I$-cell.

To show that $h(f,i)$ is a cofibration it is sufficient to show that the class of morphisms $f$, such
that $h(f,i)$ is a cofibration for all $i$, contains elements of $I$,
i.e. morphisms of the form $U\oo\partial\Delta^n\sr U\oo\Delta^n$, and
is closed under coproducts, push-outs and countable
compositions. For $f$ of the form  $U\oo\partial\Delta^n\sr
U\oo\Delta^n$ the morphism $h(f,i)$ is of the form $U\oo K'\sr U\oo
L'$ where $K'\sr L'$ is a monomorphism of simplicial sets. Any such
morphism is a cofibration because it has the right lifting property
for trivial projective fibrations.  The fact that our class is closed under
coproducts is straightforward. To prove that it is closed under push-outs and countable compositions one has to consider more complex diagrams (especially in the case of push-outs) but the proof remains straightforward. 

Since $f$ is a sequential $I$-cell it is a term-wise coprojection.  If $i$ is a projective  equivalence then Corollary \ref{unexpect} together with Lemma \ref{exactsquares} imply that $h(f,i)\in cl_{\tdl}(\emptyset)$ and therefore is a projective equivalence by Proposition \ref{2009tdl0}.

Assume that $f$ is a projective  equivalence. Then $f\oo Id_K$ and $f\oo Id_L$ are projective equivalences by Corollary \ref{2007ss} and $X\oo L\sr Y\oo K\amalg_{X\oo K} X\oo L$ is a projective equivalence as a push-out of a trivial cofibration. We conclude that $h(f,i)$ is a projective equivalence. 
\end{proof}
\begin{cor}
\llabel{2007c1}
For any cofibrant $X$ and any monomorphism of simplicial sets $K\sr L$ the map $X\oo K\sr X\oo L$ is a projective cofibration. In particular for a cofibrant $X$ and a simplicial set $K$ the object $X\oo K$ is cofibrant.
\end{cor}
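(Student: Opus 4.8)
The plan is to deduce the corollary directly from Proposition \ref{l7.26.3} by specializing its morphism $f$ to the canonical map out of the initial object. First I would recall that a morphism of simplicial sets is a monomorphism if and only if it is a cofibration in the standard c.m.s. on $\Delta^{op}Sets$, so the given monomorphism $i:K\sr L$ is in particular a cofibration of simplicial sets. Next, by the very definition of a cofibrant object, the unique morphism $\emptyset\sr X$ from the initial object $\emptyset$ of $\Delta^{op}Rad(C)$ is a projective cofibration; I will take this morphism as the $f$ of Proposition \ref{l7.26.3}.

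Applying Proposition \ref{l7.26.3} to this $f$ and to $i$ gives that
$$h(f,i):(\emptyset\oo L)\amalg_{\emptyset\oo K}(X\oo K)\sr X\oo L$$
is a cofibration. The key point is then to identify the source of $h(f,i)$ with $X\oo K$. Since $X\oo K$ has terms $X_n\oo K_n=\amalg_{K_n}X_n$ and a coproduct of copies of the initial object is again initial, one has $\emptyset\oo K=\emptyset\oo L=\emptyset$ termwise; hence the pushout $(\emptyset\oo L)\amalg_{\emptyset\oo K}(X\oo K)$ of the two maps out of $\emptyset$ collapses to $X\oo K$, and under this identification $h(f,i)$ becomes precisely the structure map $X\oo K\sr X\oo L$. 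This proves the first assertion.

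For the ``in particular'' clause I would apply the statement just proved to the monomorphism $\emptyset\sr K$ from the empty simplicial set. The tensor $X\oo\emptyset$ has terms $X_n\oo\emptyset=\amalg_{\emptyset}X_n$, an empty coproduct, and hence equals the initial object of $\Delta^{op}Rad(C)$. Thus the first part yields that $\emptyset=X\oo\emptyset\sr X\oo K$ is a projective cofibration, which is exactly the assertion that $X\oo K$ is cofibrant.

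I do not expect any genuine obstacle here: all the real work is carried by Proposition \ref{l7.26.3}, and what remains is only the bookkeeping identification of tensors of the initial (and the empty) object with the initial object, together with the standard equivalence between monomorphisms and cofibrations of simplicial sets. The one point I would state carefully is that these identifications hold degreewise and are compatible with the pushout, so that $h(f,i)$ genuinely is the map $X\oo K\sr X\oo L$ and not merely isomorphic to it over $X\oo L$.
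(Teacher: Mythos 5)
Your proof is correct and is exactly the argument the paper intends: the corollary is stated without proof precisely because it is the specialization of Proposition \ref{l7.26.3} to $f:\emptyset\sr X$, with the source of $h(f,i)$ collapsing to $X\oo K$ since $\emptyset\oo K=\emptyset$, and the ``in particular'' clause following by taking the monomorphism $\emptyset\sr K$. Your care in checking the degreewise identifications ($\emptyset\oo K=\emptyset$ and $X\oo\emptyset=\emptyset$) is exactly the right bookkeeping and nothing further is needed.
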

Since $Rad(C)$ has colimits we can define the skeletons $sk_n(X)$ of an object $X$ in $\Delta^{op}Rad(C)$ in the usual
way such that $X=colim_n sk_n(X)$ and for each $n$ one has a
push-out square of the form
\begin{equation}
\llabel{eq7.20.1}
\begin{CD}
L_n(X)\oo\Delta^n\amalg_{L_n(X)\oo\partial\Delta^n}
X_n\oo\partial\Delta^n @>>> sk_{n-1}(X)\\
@VVV @VVV\\
X_n\oo\Delta^n @>>> sk_n(X)
\end{CD}
\end{equation}
where $L_n(X)=(sk_{n-1}(X))_n$ is the n-th latching object of $X$. Since the left hand side vertical arrows in (\ref{eq7.20.1}) are as in Proposition \ref{l7.26.3} for $f$ being the morphisms $L_n(X)\sr X_n$ we get the following result.
\begin{cor}
\llabel{2009lcof}
Let $X$ be an object of $\Delta^{op}Rad(C)$ such that the morphisms $L_n(X)\sr X_n$ are projective cofibrations. Then $X$ is projectively cofibrant.
\end{cor}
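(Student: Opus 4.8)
The plan is to identify the left-hand vertical arrow of the skeletal push-out square (\ref{eq7.20.1}) with an instance of the morphism $h(f,i)$ studied in Proposition \ref{l7.26.3}, and then to build up $X$ from its skeleta using the standard closure properties of the class of cofibrations.

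First I would observe that the left vertical arrow
$$L_n(X)\oo\Delta^n\amalg_{L_n(X)\oo\partial\Delta^n} X_n\oo\partial\Delta^n\sr X_n\oo\Delta^n$$
of (\ref{eq7.20.1}) is precisely $h(f,i)$ for $f$ the $n$-th latching morphism $L_n(X)\sr X_n$ and $i$ the boundary inclusion $\partial\Delta^n\sr\Delta^n$. Since $f$ is a projective cofibration by the hypothesis of the corollary and $i$ is a monomorphism of simplicial sets, the first assertion of Proposition \ref{l7.26.3} shows that this arrow is a projective cofibration.

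Next, the right vertical arrow $sk_{n-1}(X)\sr sk_n(X)$ is, by the square (\ref{eq7.20.1}), the push-out of the left vertical arrow along the top horizontal morphism. The class $Cof$ is defined (Definition \ref{2009pcms}) by the left lifting property against trivial projective fibrations, so it is closed under push-outs; hence each $sk_{n-1}(X)\sr sk_n(X)$ is a projective cofibration. Finally, since $X=colim_n sk_n(X)$ with $sk_{-1}(X)=\emptyset$ the initial object, the morphism $\emptyset\sr X$ is a countable composition of these projective cofibrations. A class defined by a left lifting property is also closed under transfinite, in particular countable, composition, so $\emptyset\sr X$ is a projective cofibration and $X$ is therefore cofibrant.

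I expect no serious obstacle: the whole argument is an assembly of Proposition \ref{l7.26.3} with the routine closure properties (push-outs and compositions) enjoyed by any class specified through a lifting property. The only point requiring mild care is the bookkeeping of the base case $sk_{-1}(X)=\emptyset$, so that the skeletal tower is genuinely a composition starting from the initial object and the conclusion is indeed cofibrancy of $\emptyset\sr X$ rather than merely that the maps between consecutive skeleta are cofibrations.
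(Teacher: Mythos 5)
Your proof is correct and is exactly the argument the paper intends: the text preceding the corollary identifies the left vertical arrow of (\ref{eq7.20.1}) as $h(f,i)$ with $f$ the latching morphism and $i$ the boundary inclusion, and the corollary then follows from Proposition \ref{l7.26.3} together with closure of the lifting-property-defined class $Cof$ under push-outs and countable compositions. Your explicit attention to the base case $sk_{-1}(X)=\emptyset$ is a reasonable piece of bookkeeping that the paper leaves implicit.
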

\begin{theorem}
\llabel{2009issimpl}
The projective closed model structure is simplicial i.e. it satisfies
the axiom SM7 (see e.g. \cite[p.89]{GJ}).
\end{theorem}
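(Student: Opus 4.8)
Recall that SM7 states: for a cofibration $f:X\sr Y$ and a cofibration of simplicial sets $i:K\sr L$, the pushout-product map
$$h(f,i):(X\oo L)\amalg_{X\oo K}(Y\oo K)\sr Y\oo L$$
is a cofibration, which is trivial if either $f$ or $i$ is trivial.

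The key observation is that this is almost exactly the content of Proposition \ref{l7.26.3}, which has already been established in the excerpt. Let me sketch how to prove SM7.

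=== PROOF PROPOSAL ===

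The plan is to observe that the statement of the axiom SM7 is, for the projective closed model structure, precisely the content of Proposition \ref{l7.26.3} which has already been established. Recall that SM7 requires that for a projective cofibration $f:X\sr Y$ and a cofibration of simplicial sets $i:K\sr L$, the pushout-product morphism
$$h(f,i):(X\oo L)\amalg_{X\oo K}(Y\oo K)\sr Y\oo L$$
is a projective cofibration, and that it is moreover a projective equivalence whenever either $f$ is a projective equivalence or $i$ is a weak equivalence of simplicial sets. These three assertions are exactly the three conclusions of Proposition \ref{l7.26.3}, so the theorem follows immediately.

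The one point requiring care is the translation between the various equivalent forms in which SM7 is commonly stated (see e.g. \cite[p.89]{GJ}). First I would recall that $\Delta^{op}Rad(C)$ is a simplicial category: this was noted after Lemma \ref{rad3}, where the simplicial structure is given by the functors $(-)\oo K$, their right adjoints $Hom_{\oo}(K,-)$, and the mapping spaces $S(X,Y)$ with $S(X,Y)_n=Hom(X\oo\Delta^n,Y)$. By the standard adjunction argument the pushout-product form of SM7 stated above is equivalent to its mapping-space form, which asserts that for a projective cofibration $f:X\sr Y$ and a projective fibration $g:Z\sr W$ the induced map
$$S(Y,Z)\sr S(X,Z)\times_{S(X,W)} S(Y,W)$$
is a Kan fibration, trivial if $f$ or $g$ is trivial. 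Since the two forms are interchangeable under the adjunction, it suffices to verify the pushout-product form, which is already done.

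The only genuine obstacle, and it is a minor one, is bookkeeping: one must check that the cofibrations of simplicial sets appearing in SM7 are the monomorphisms, so that the hypothesis $i:K\sr L$ in Proposition \ref{l7.26.3}, where $i$ is either an arbitrary cofibration (equivalently monomorphism) of simplicial sets or a weak equivalence, matches the hypothesis of SM7 exactly. This is standard for the Kan--Quillen model structure on $\Delta^{op}Sets$. Given this identification, the proof reduces to a single sentence citing Proposition \ref{l7.26.3}, and no new argument is needed.
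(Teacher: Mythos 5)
Your proposal is correct and follows essentially the same route as the paper: the paper's proof also states SM7 in its mapping-space form (a Kan fibration $S(B,X)\sr S(A,X)\times_{S(A,Y)}S(B,Y)$, trivial if the cofibration or the fibration is trivial) and observes that this follows by adjunction from Proposition \ref{l7.26.3}. Your extra remarks about the simplicial structure on $\Delta^{op}Rad(C)$ and the identification of cofibrations of simplicial sets with monomorphisms are just the bookkeeping the paper leaves implicit.
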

\begin{proof}
We have to show that for a projective cofibration $j:A\sr B$ and a
projective fibration $q:X\sr Y$ the morphism
$$S(B,X)\sr S(A,X)\times_{S(A,Y)}S(B,Y)$$
is a Kan fibration which is a weak equivalence if $j$ or $q$ is a weak
equivalence. This follows by adjunction from Proposition \ref{l7.26.3}.
\end{proof}
We let $H(C)$ denote the homotopy category of the projective c.m.s. on $\Delta^{op}Rad(C)$. 
The product $\times$ in $\Delta^{op}Rad(C)$ respects projective equivalences between all objects and defines a product in $H(C)$ which we also denote by $\times$. Let us denote the coproduct in $H(C)$ by $\amalg_{\LL}$ (or, in special cases, $-_{\LL}$ where $-$ is the notation for the coproduct in $\Delta^{op}Rad(C)$) . In general it is computed by the formula
$$X\amalg_{\LL} Y= Cof(X)\amalg Cof(Y)$$
Since the projective c.m.s. is simplicial we also have an adjoint pair of endo-functors $(-)\oo_{\LL} K$ and ${\RR}Hom_{\oo}(K,-)$ on $H(C)$ defined by the formulas
$$X\oo_{\LL} K = Cof(X)\oo K$$
$${\RR}Hom_{\oo}(K,X)=Hom_{\oo}(K, Fib_{proj}(X))$$
\begin{proposition}
\llabel{2009aoh}
For any $C$ one has:
\begin{enumerate}
\item if $X$, $Y$ are in $\Delta^{op}C^{\#}$ then $X\amalg_{\LL} Y=X\amalg Y$,
\item if $X$ is in $\Delta^{op}C^{\#}$ and $K$ is a simplicial set then $X\oo_{\LL} K=X\oo K$.
\end{enumerate}
\end{proposition}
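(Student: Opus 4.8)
The plan is to reduce both parts to showing that the evident comparison map is a projective equivalence between objects of $\Delta^{op}C^{\#}$. The essential point, which I would flag at the outset, is that although $W_{proj}$ need not be closed under coproducts on all of $\Delta^{op}Rad(C)$ (see Example \ref{2007grnot} and the remark after Theorem \ref{2009innov}), Theorem \ref{coin} tells us that the projective equivalences which happen to lie in $\Delta^{op}C^{\#}$ form a $(\tdl,\amalg)$-closed class, hence one that is $\Delta$-closed and closed under arbitrary coproducts and filtered colimits. So I would first record the needed stability of $\Delta^{op}C^{\#}$: since $C\sr Rad(C)$ preserves finite coproducts (Proposition \ref{2009elprop}(\ref{rad6})) a finite coproduct of representables is representable, so $C^{\#}$ is closed under finite coproducts, and for any simplicial set $K$ and $X\in\Delta^{op}C^{\#}$ the object $X\oo K$ (with terms $\amalg_{K_n}X_n$) again lies in $\Delta^{op}C^{\#}$. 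Recall also that $Cof$ takes values in $\Delta^{op}\bar{C}\subset\Delta^{op}C^{\#}$ by Proposition \ref{2009cofrep}.

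Part (1) is then immediate: the canonical morphism $Cof(X)\amalg Cof(Y)\sr X\amalg Y$ is the coproduct of the two cofibrant replacement maps $Cof(X)\sr X$ and $Cof(Y)\sr Y$, both projective equivalences between objects of $\Delta^{op}C^{\#}$. By Theorem \ref{coin} their coproduct is again a projective equivalence, so $X\amalg_{\LL}Y=Cof(X)\amalg Cof(Y)=X\amalg Y$ in $H(C)$.

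For part (2) I must show that $f\oo Id_K$ is a projective equivalence, where $f\colon Cof(X)\sr X$ is the cofibrant replacement and $K$ is arbitrary, building $K$ up from its skeleta. For $K=\Delta^n$ I would use that $\Delta^n\sr\Delta^0$ is a weak equivalence, so by Corollary \ref{2007ss} the vertical maps in

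$$
\begin{CD}
Cof(X)\oo\Delta^n @>f\oo Id>> X\oo\Delta^n\\
@VVV @VVV\\
Cof(X) @>f>> X
\end{CD}
$$

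are projective equivalences, whence $f\oo Id_{\Delta^n}$ is one by the 2-out-of-3 property. Next, for finite-dimensional $K$ I induct on $\dim K$: applying the colimit-preserving functors $Cof(X)\oo(-)$ and $X\oo(-)$ to the skeletal push-out attaching the nondegenerate $d$-simplices yields a morphism of elementary push-out squares (after $\oo$ the map $\coprod\partial\Delta^d\sr\coprod\Delta^d$ becomes a term-wise coprojection), to which Proposition \ref{2009short} applies. This places $f\oo Id_{sk_d K}$ in $cl_{\Delta,\amalg_{<\infty}}$ of the three maps $f\oo Id_{\coprod\partial\Delta^d}$, $f\oo Id_{\coprod\Delta^d}$ and $f\oo Id_{sk_{d-1}K}$; the first two are coproducts of the already treated $f\oo Id_{\partial\Delta^d}$ (dimension $d-1$, by induction) and $f\oo Id_{\Delta^d}$, and the third is the inductive hypothesis. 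Since all objects remain in $\Delta^{op}C^{\#}$, Theorem \ref{coin} makes $cl_{\Delta,\amalg_{<\infty}}$ of projective equivalences consist of projective equivalences, so $f\oo Id_K$ is one. Finally a general $K$ is the filtered colimit of its skeleta, $f\oo Id_K=colim_d(f\oo Id_{sk_d K})$, and closure under filtered colimits (Theorem \ref{coin}, or Proposition \ref{2009tdl0}) finishes, giving $X\oo_{\LL}K=Cof(X)\oo K=X\oo K$ in $H(C)$.

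The main obstacle is precisely the passage through coproducts: because $W_{proj}$ is not coproduct-closed on all of $\Delta^{op}Rad(C)$, each intermediate object — the coproducts $\coprod_\lambda\Delta^d$, the skeleta, and the terminal filtered colimit — must be checked to lie in $\Delta^{op}C^{\#}$ so that Theorem \ref{coin} is applicable. Arranging the skeletal induction so that this confinement to $\Delta^{op}C^{\#}$ is preserved at every stage, rather than any individual homotopical estimate, is the delicate point.
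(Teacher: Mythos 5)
Your proof is correct, and your part (1) is precisely the paper's argument: the paper's entire proof is the remark that both statements ``follow easily from Theorem \ref{coin}.'' Where you genuinely diverge is part (2). The short route the paper has in mind is the diagonal trick used throughout the text: for $f:Cof(X)\sr X$, the morphism $f\oo Id_K$ is the diagonal of the morphism of bisimplicial objects $(n,m)\mapsto f_n\oo Id_{K_m}$, whose columns are the coproducts $f\oo K_m=\amalg_{K_m}f$ of copies of $f$; by Theorem \ref{coin} the class of projective equivalences inside $\Delta^{op}C^{\#}$ is $(\tdl,\amalg)$-closed, so each column lies in it (coproduct closure) and then so does the diagonal (condition 3 of Definition \ref{fd}). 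This replaces your whole skeletal induction --- Corollary \ref{2007ss}, Proposition \ref{2009short}, the induction on $\dim K$, and the final passage to the filtered colimit of skeleta --- by two lines, while resting on exactly the coproduct-plus-diagonal closure you identified as the crux; what your longer route buys is that it never reformulates $\oo$ bisimplicially, at the cost of re-deriving closure properties that the $\tdl$-machinery already packages. One small repair to your preliminary paragraph: closure of $C^{\#}$ under \emph{finite} coproducts (finite coproducts of representables being representable) is not enough for your argument, since the sets $K_n$ and the sets of nondegenerate $d$-simplices may be infinite; you need that $C^{\#}$ is closed under arbitrary coproducts, which holds because, as in Lemma \ref{rad21}, an arbitrary coproduct is a filtered colimit of finite ones. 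With that fixed, your insistence that every intermediate object and every closure $cl_{\Delta,\amalg_{<\infty}}(-)$ be confined to $\Delta^{op}C^{\#}$ --- the genuinely delicate point, since $W_{proj}$ is not coproduct-closed on all of $\Delta^{op}Rad(C)$ --- is exactly the right thing to watch.
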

\begin{proof}
It follows easily from Theorem \ref{coin}. 
\end{proof}
\begin{proposition}
\llabel{2007lp}
The following conditions on $C$ are equivalent:
\begin{enumerate}
\item the projective closed model structure on $\Delta^{op}Rad(C)$ is left proper,
\item for any $f\in W_{proj}$ and $Z\in C$ one has $f\amalg Id_Z\in W_{proj}$, 
\item for any $f\in W_{proj}$ and $Z\in \Delta^{op}C^{\#}$ one has $f\amalg Id_Z\in W_{proj}$, 
\item for any $X\in \Delta^{op}Rad(C)$ and $Z\in \Delta^{op}C^{\#}$ one has $X\amalg_{\LL} Z=X\amalg Z$.
\end{enumerate}
\end{proposition}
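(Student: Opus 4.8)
The plan is to prove the four conditions equivalent through the chain $(1)\Rightarrow(2)\Rightarrow(3)\Rightarrow(1)$, which gives $(1)\Leftrightarrow(2)\Leftrightarrow(3)$, and then to close the remaining loop with $(3)\Leftrightarrow(4)$. The decisive structural inputs are Proposition \ref{2009tdl0} (so that $W_{proj}$ is $\tdl$-closed, hence $\Delta$-closed and closed under filtered colimits), the fact that $W_{proj}$ is the class of weak equivalences of the projective c.m.s. (Theorem \ref{2009pcmsth}) and is therefore closed under retracts, and the description of cofibrations furnished by Proposition \ref{2009cofrep}.

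For $(1)\Rightarrow(2)$ I would note that for $Z\in C$ the map $\emptyset\sr Z$ is the generating cofibration $Z\oo\partial\Delta^0\sr Z\oo\Delta^0$, so that $e_X:X\sr X\amalg Z$, being the pushout of $\emptyset\sr Z$ along $\emptyset\sr X$, is a cofibration. The square with top $e_X$, left $f$, right $f\amalg Id_Z$ and bottom $e_Y$ is a push-out, so if the c.m.s. is left proper then pushing the weak equivalence $f$ along the cofibration $e_X$ forces $f\amalg Id_Z\in W_{proj}$. For $(2)\Rightarrow(3)$ I would first write $f\amalg Id_Z$ as the diagonal of the bisimplicial morphism whose rows are $f\amalg Id_{Z_i}$ with $Z_i\in C^{\#}$; since $W_{proj}$ is $\Delta$-closed this reduces the claim to the case $Z\in C^{\#}$. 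Writing such a $Z$ as a filtered colimit $\colim_\alpha U_\alpha$ of representables and using that $X\amalg(-)$ preserves filtered colimits, $f\amalg Id_Z=\colim_\alpha(f\amalg Id_{U_\alpha})$ is a filtered colimit of morphisms lying in $W_{proj}$ by $(2)$, hence lies in $W_{proj}$.

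The core step is $(3)\Rightarrow(1)$, and this is where I expect the main obstacle to lie, since here a statement about push-outs along arbitrary cofibrations has to be reduced to the purely coproduct-theoretic condition $(3)$. The reduction is made possible by Proposition \ref{2009cofrep}(2): any cofibration is a domain-preserving retract of a coprojection $A\sr A\amalg Y$ with $Y\in\Delta^{op}\bar{C}\subset\Delta^{op}C^{\#}$. Pushing a weak equivalence $g:A\sr B$ out along such a coprojection produces precisely the morphism $g\amalg Id_Y:A\amalg Y\sr B\amalg Y$, which lies in $W_{proj}$ by condition $(3)$ applied with $Z=Y$; since $W_{proj}$ is retract-closed, the push-out of $g$ along the original cofibration, being a retract of $g\amalg Id_Y$, also lies in $W_{proj}$, which is left properness. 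The point to be careful about is exactly that Proposition \ref{2009cofrep}(2) exhibits the ``complement'' $Y$ inside $\Delta^{op}\bar{C}$, keeping us in the range of objects for which $(3)$ is available; without this the coproduct condition would not suffice.

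Finally, for $(3)\Leftrightarrow(4)$: the comparison morphism $Cof(X)\amalg Cof(Z)\sr X\amalg Z$ computing $X\amalg_{\LL}Z\sr X\amalg Z$ factors as $(c_X\amalg Id_Z)\circ(Id_{Cof(X)}\amalg c_Z)$, where $c_X,c_Z$ are the cofibrant replacement maps. Up to the symmetry of $\amalg$, each factor is the coproduct of a projective equivalence with the identity of an object of $\Delta^{op}C^{\#}$ (using $Cof(X)\in\Delta^{op}\bar{C}$ from Proposition \ref{2009cofrep}(1) and $Z\in\Delta^{op}C^{\#}$), so condition $(3)$ makes each factor a projective equivalence, giving $(3)\Rightarrow(4)$. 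Conversely, assuming $(4)$, for $f\in W_{proj}$ and $Z\in\Delta^{op}C^{\#}$ I would form the naturality square relating $Cof(f)\amalg Id_{Cof(Z)}$ to $f\amalg Id_Z$ through the two comparison maps: its top arrow is a weak equivalence because $(-)\amalg Cof(Z)$ preserves cofibrations and trivial cofibrations and hence weak equivalences between cofibrant objects (Ken Brown's lemma), the two vertical comparison maps are weak equivalences by $(4)$, and 2-out-of-3 then yields $f\amalg Id_Z\in W_{proj}$, which is $(3)$.
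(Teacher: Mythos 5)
Your implications $(1)\Rightarrow(2)$, $(2)\Rightarrow(3)$ and $(3)\Leftrightarrow(4)$ are correct and essentially agree with the paper (which runs the main step as $(2)\Rightarrow(1)$ rather than $(3)\Rightarrow(1)$, a cosmetic difference). The genuine gap is in your core step $(3)\Rightarrow(1)$, and it lies exactly where you placed your confidence: you read Proposition \ref{2009cofrep}(2) as saying that every cofibration is a domain-preserving retract of a \emph{global} coprojection $A\sr A\amalg Y$ with $Y\in\Delta^{op}\bar{C}$. The proposition asserts something strictly weaker: the approximating morphism is a \emph{term-wise} coprojection, i.e. each component has the form $A_n\sr A_n\amalg Y_n$ with $Y_n\in\bar{C}$, but the complements $Y_n$ do not assemble into a simplicial object and the codomain is not of the form $A\amalg Y$ (the proposition is deduced from the fact that sequential $I$-cells are term-wise coprojections, nothing more). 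The distinction is not pedantic: the generating cofibrations $U\oo\partial\Delta^n\sr U\oo\Delta^n$ are term-wise coprojections but not global ones, and they are not even retracts of global coprojections. Already in $\Delta^{op}Sets$ (the case $Rad(C)$ for $C$ the category of finite sets, Example \ref{2009exs1}) a factorization $\Delta^1\sr \partial\Delta^1\amalg W\sr \Delta^1$ under $\partial\Delta^1$ composing to the identity is impossible, since $\Delta^1$ is connected while its two vertices would have to land in the two distinct components of the summand $\partial\Delta^1$. Consequently the pushout of a weak equivalence $g$ along the approximating morphism is \emph{not} of the form $g\amalg Id_Y$ for any simplicial object $Y$, and condition (3) cannot be invoked the way you do.

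What the paper does instead is precisely the work your global-splitting assumption was meant to bypass. After the retract reduction one knows only that the terms of the pushed-out morphism $g'$ are $g_n\amalg Id_{Z_n}$ with $Z_n\in\bar{C}$; the paper then assembles this term-wise information by exhibiting $g'$ as the diagonal of a morphism of bisimplicial objects whose rows are $g\amalg Id_{Z_n}$, where each $Z_n$ is a constant simplicial object in $\bar{C}\subset C^{\#}$. Each row lies in $W_{proj}$ by condition (2) extended to $C^{\#}$ via closure under filtered colimits (i.e. by your condition (3)), and then the $\Delta$-closedness of $W_{proj}$ (Proposition \ref{2009tdl0}) gives $g'\in W_{proj}$. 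Note that the coarser route through Lemma \ref{exactsquares} would not suffice here either, since it produces coproduct companions built from the (arbitrary) domain and codomain of the cofibration rather than from the complements $Z_n$, taking you outside the reach of (3). So the diagonal argument keyed to the term-wise complements is the essential device; your proof needs it inserted into $(3)\Rightarrow(1)$, while everything else you wrote stands.
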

\begin{proof}
Recall that a c.m.s. is called left proper if for any push-out square
\begin{eq}
\llabel{2009lp}
\begin{CD}
A @>f>> X\\
@VgVV @VVg'V\\
B @>>> Y
\end{CD}
\end{eq}
where $f$ is a cofibration and $g$ is a weak equivalence, $g'$ is a weak equivalence. Since objects of $C$ are cofibrant this immediately implies the (1)$\Rightarrow$(2) part of the lemma. 

To prove that (2)$\Rightarrow$(1) assume that for any $f\in W_{proj}$ and $Z\in C$ one has $f\amalg Id_Z\in W_{proj}$. Then since $W_{proj}$ is closed under filtered colimits the same holds for $Z\in C^{\#}$. Consider a square of the form (\ref{2009lp}).

By Proposition \ref{2009cofrep}, $f$ is a domain preserving retract of a morphism $f'$ whose terms are of the form  $A_n\sr A_n\amalg Z_n$ where $Z_n\in \bar{C}$.  Then $g'$ is a retract of the push-out of $g$ by $f'$ and since $W_{proj}$ is closed under retracts we may assume that terms of $f$ are of the form $A_n\sr A_n\amalg Z_n$ for $Z_n\in \bar{C}$.   Then $g'$ is the diagonal of a morphism whose rows are of the  form $g\amalg Id_{Z_n}$. This finishes the proof by Proposition \ref{2009tdl0}.

The implication (3)$\Rightarrow$(2) is obvious. The implication (2)$\Rightarrow$(3) follows easily from Proposition \ref{2009tdl0}. The equivalence between (3) and (4) is obvious.
\end{proof}
\begin{proposition}
\llabel{2009neq}
The following conditions on $C$ are equivalent:
\begin{enumerate}
\item for any $f\in W_{proj}$ and $Z\in Rad(C)$ one has $f\amalg Id_Z\in W_{proj}$,
\item the class $W_{proj}$ is $(\tdl,\amalg)$-closed,
\item for any $X$, $Y$ in $\Delta^{op}Rad(C)$ one has $X\amalg_{\LL} Y=X\amalg Y$.
\end{enumerate}
\end{proposition}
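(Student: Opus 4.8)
The plan is to prove the cyclic chain $(1)\Rightarrow(2)\Rightarrow(3)\Rightarrow(1)$, using throughout that $W_{proj}$ is $\tdl$-closed and contains $cl_{\tdl}(\emptyset)$ by Proposition \ref{2009tdl0}. For $(1)\Rightarrow(2)$ I would observe that, since $W_{proj}$ is already $\tdl$-closed, it suffices to show it is closed under coproducts, and compare it with $cl_{\tdl,\amalg}(W_{proj})=cl_{\tdl}(W_{proj}\amalg Id_{Rad(C)})$. Taking the class of objects $A$ to be all of $Rad(C)$ in Proposition \ref{2009fcop} is legitimate: $Rad(C)$ has coproducts, $A$ is closed under finite coproducts, and every object is trivially a filtered colimit of objects of $A$; hence $cl_{\tdl}(W_{proj}\amalg Id_{Rad(C)})$ is $(\tdl,\amalg)$-closed. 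Condition $(1)$ says precisely $W_{proj}\amalg Id_{Rad(C)}\subset W_{proj}$, so $cl_{\tdl}(W_{proj}\amalg Id_{Rad(C)})\subset cl_{\tdl}(W_{proj})=W_{proj}$, and since the reverse inclusion is automatic, $W_{proj}=cl_{\tdl,\amalg}(W_{proj})$ is $(\tdl,\amalg)$-closed.

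The implication $(2)\Rightarrow(3)$ is then immediate. By $(2)$ the class $W_{proj}$ is closed under coproducts, so the coproduct $Cof(X)\amalg Cof(Y)\sr X\amalg Y$ of the two cofibrant-replacement projective equivalences is again a projective equivalence. This map is exactly the canonical comparison from $X\amalg_{\LL} Y=Cof(X)\amalg Cof(Y)$ to the strict coproduct $X\amalg Y$, so it is an isomorphism in $H(C)$, which is the assertion $X\amalg_{\LL}Y=X\amalg Y$.

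The heart of the argument is $(3)\Rightarrow(1)$. Given $f:X\sr Y$ in $W_{proj}$ and $Z\in Rad(C)$, I would pass to the functorial cofibrant replacement and form the commutative square whose top row is $Cof(f)\amalg Id_{Cof(Z)}$, whose bottom row is $f\amalg Id_Z$, and whose vertical maps are the comparison maps $Cof(X)\amalg Cof(Z)\sr X\amalg Z$ and $Cof(Y)\amalg Cof(Z)\sr Y\amalg Z$. Condition $(3)$ makes both vertical maps projective equivalences. For the top row the key point is that $Cof(f)$ is a projective equivalence between objects of $\Delta^{op}\bar{C}\subset\Delta^{op}C^{\#}$ (Proposition \ref{2009cofrep}), hence lies in $cl_{\tdl}(\emptyset)$ by Theorem \ref{coin}; the identity $Id_{Cof(Z)}$ is a homotopy equivalence and so lies in $cl_{\tdl}(\emptyset)$ as well, and by Corollary \ref{2009fcope} this class is closed under coproducts, so $Cof(f)\amalg Id_{Cof(Z)}\in cl_{\tdl}(\emptyset)\subset W_{proj}$. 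With three sides of the square in $W_{proj}$, the $2$-out-of-$3$ property of Definition \ref{fd} forces $f\amalg Id_Z\in W_{proj}$.

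I expect the last implication to be the main obstacle, and precisely because a general $f\in W_{proj}$ need not itself lie in $cl_{\tdl}(\emptyset)$ — that coincidence holds only on $\Delta^{op}C^{\#}$ by Theorem \ref{coin} — so one cannot apply coproduct-closure to $f$ directly. Condition $(3)$ is exactly the missing bridge: it permits replacing $f$ by the better-behaved $Cof(f)$, running the robust coproduct argument inside $cl_{\tdl}(\emptyset)$, and then descending back to $f\amalg Id_Z$ along the comparison maps. The remaining steps $(1)\Rightarrow(2)$ and $(2)\Rightarrow(3)$ are, by contrast, formal consequences of the coproduct-closure results already established.
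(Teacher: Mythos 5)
Your proof is correct and runs on exactly the ingredients the paper's own (very terse) proof appeals to: Proposition \ref{2009tdl0} for $(1)\Rightarrow(2)$, and the coproduct/cofibrant-replacement manipulations via Proposition \ref{2009fcop}, Theorem \ref{coin} and saturation of $W_{proj}$ for the parts the paper declares ``obvious''. Your reorganization as the cycle $(1)\Rightarrow(2)\Rightarrow(3)\Rightarrow(1)$ is a harmless, in fact slightly cleaner, repackaging of the same argument, since it lets the binary condition $(3)$ reach closure under arbitrary coproducts through $(1)$ rather than having to prove $(3)\Rightarrow(2)$ directly.
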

\begin{proof}
The implication (1)$\Rightarrow$(2) follows immediately from Proposition \ref{2009tdl0}. The inverse implication and the equivalence between (2) and (3) are obvious. 
\end{proof}
\begin{examples}\rm\llabel{2009exs2}
\begin{enumerate}
\item An example of a category $C$ such that the projective c.m.s. is not left proper is given in \ref{2009bip}.
\item For any commutative ring $A$ the category of finitely generated free commutative algebras over $A$ satisfy the conditions of Proposition \ref{2007lp}.
\item The category of finitely generated free commutative algebras over $A$ satisfies the conditions of  Proposition \ref{2009neq} if and only if $A$ is a field. 
\end{enumerate}
\end{examples}

Let now $C$ be a pointed category i.e. the initial object of $C$ is also a final object. We will denote this object by $pt$ and the coproduct by $\vee$. Note that $pt$ is also the initial and final object in $Rad(C)$. As usual we will write $X/Y$ for the pushout of the pair of morphisms $Y\sr X$, $Y\sr pt$.  Since $C$ is pointed the category, $H(C)$ is equipped with a functor $\Sigma$ which take values in co-monoids over $H(C)$ with respect to $\vee_{\LL}$ (see \cite[I.2]{Quillen} or \cite[Ch. 6]{Hovey}) and with the class of cofibration "sequences".  

For a pointed simplicial set $K$ and $X\in \Delta^{op}Rad(C)$ denote by $X\wedge K$ the object defined by the elementary push-out square
$$
\begin{CD}
X @>>> X\oo K\\
@VVV @VVV\\
pt @>>> X\wedge K
\end{CD}
$$
in $\Delta^{op}Rad(C)$, where the upper arrow is defined by the distinguished point of $K$.  
\begin{lemma}
\llabel{2009lc1}
The suspension functor $\Sigma^1:H(C)\sr H(C)$ (see \cite[2.9]{Quillen}) may be represented on $\Delta^{op}C^{\#}$ by $X\mapsto X\wedge S^1$ where $S^1=\Delta^1/\partial\Delta^1$ is the simplicial circle. 
\end{lemma}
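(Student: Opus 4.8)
The plan is to treat the cofibrant case by standard simplicial model category theory and then transport the identification along a cofibrant replacement. First I would record the point-set identity $X\wedge S^1=(X\oo\Delta^1)/(X\oo\partial\Delta^1)$, valid for every $X$: since $K\mapsto X\oo K$ commutes with colimits of simplicial sets and $S^1=\Delta^1/\partial\Delta^1$, collapsing the basepoint copy of $X$ inside $X\oo S^1$ yields exactly the push-out of $pt\leftarrow X\oo\partial\Delta^1\sr X\oo\Delta^1$. When $A$ is cofibrant, $A\oo\partial\Delta^1=A\vee A\sr A\oo\Delta^1$ is a cofibration (Corollary \ref{2007c1}, as $\partial\Delta^1\sr\Delta^1$ is a monomorphism) and $A\oo\Delta^1\sr A$ is a projective equivalence (Corollary \ref{2007ss}), so $A\oo\Delta^1$ is a good cylinder object and the quotient above is the Quillen suspension, i.e.\ $A\wedge S^1=\Sigma^1 A$ in $H(C)$ (the model structure being simplicial by Theorem \ref{2009issimpl}).

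For general $X\in\Delta^{op}C^{\#}$ the derived suspension is therefore $\Sigma^1 X=Cof(X)\wedge S^1$, and by Proposition \ref{2009cofrep} the replacement $Cof(X)\sr X$ is a projective equivalence whose source lies in $\Delta^{op}\bar{C}\subset\Delta^{op}C^{\#}$. Hence it suffices to prove that $-\wedge S^1$ carries projective equivalences between objects of $\Delta^{op}C^{\#}$ to projective equivalences; applied to $Cof(X)\sr X$ this produces a natural projective equivalence $Cof(X)\wedge S^1\sr X\wedge S^1$, identifying $X\wedge S^1$ with $\Sigma^1 X$ in $H(C)$, naturally in $X$.

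To prove this claim, let $f:X\sr Y$ be a projective equivalence with $X,Y\in\Delta^{op}C^{\#}$, so that $f\in cl_{\tdl}(\emptyset)$ by Theorem \ref{coin}. Since a finite coproduct of representables is representable (Proposition \ref{2009elprop}(\ref{rad6})), finite coproducts preserve $C^{\#}$, and thus $X\oo\Delta^1$ and $X\oo\partial\Delta^1=X\vee X$ again lie in $\Delta^{op}C^{\#}$. Consequently $f\oo\partial\Delta^1=f\amalg f\in cl_{\tdl}(\emptyset)$ by closure under coproducts (Corollary \ref{2009fcope}), while $f\oo\Delta^1$, being a projective equivalence between objects of $\Delta^{op}C^{\#}$ by two-out-of-three against the equivalences $X\oo\Delta^1\sr X$, $Y\oo\Delta^1\sr Y$ of Corollary \ref{2007ss}, also lies in $cl_{\tdl}(\emptyset)$ by Theorem \ref{coin}. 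The square defining $X\wedge S^1$ is an elementary push-out square, because $X\oo\partial\Delta^1\sr X\oo\Delta^1$ is a term-wise coprojection induced level-wise by $(\partial\Delta^1)_n\hookrightarrow(\Delta^1)_n$, and no cofibrancy of $X$ is needed. Therefore $f$ gives a morphism of elementary push-out squares with known corners $Id_{pt}$, $f\oo\partial\Delta^1$, $f\oo\Delta^1$, and Proposition \ref{2009short} yields $f\wedge S^1\in cl_{\Delta,\amalg_{<\infty}}(\{Id_{pt}, f\oo\partial\Delta^1, f\oo\Delta^1\})\subset cl_{\tdl}(\emptyset)$, the inclusion holding because $cl_{\tdl}(\emptyset)$ is $(\Delta,\amalg_{<\infty})$-closed (Corollary \ref{2009fcope}). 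In particular $f\wedge S^1\in W_{proj}$, which finishes the claim.

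The main obstacle I anticipate is precisely that objects of $\Delta^{op}C^{\#}$ are in general neither cofibrant nor have $W_{proj}$ closed under the coproducts that appear when one smashes with $\partial\Delta^1$, so $-\wedge S^1$ is not visibly a left derived functor there. The device that resolves this is to run the entire estimate inside $cl_{\tdl}(\emptyset)$, which on $\Delta^{op}C^{\#}$ coincides with $W_{proj}$ (Theorem \ref{coin}) yet is always closed under coproducts; this is exactly what makes the elementary-push-out bound of Proposition \ref{2009short} collapse back into $cl_{\tdl}(\emptyset)$ rather than merely into the $\Delta$-closure of $W_{proj}$.
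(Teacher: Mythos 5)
Your proof is correct, and its skeleton is the same as the paper's: identify $\Sigma^1$ with $-\wedge S^1$ on cofibrant objects via the simplicial structure, replace $X\in\Delta^{op}C^{\#}$ by $Cof(X)\in\Delta^{op}\bar{C}$, and then show that $-\wedge S^1$ carries the projective equivalence $Cof(X)\sr X$ to a projective equivalence, using Theorem \ref{coin} to pass back and forth between $W_{proj}$ and $cl_{\tdl}(\emptyset)$ on $\Delta^{op}C^{\#}$. Where you diverge is in the key step: the paper disposes of it in one line by invoking Lemma \ref{fandt}, i.e.\ the general principle that $\tdl$-closures commute with (simplicial extensions of) continuous functors, so that $p\wedge S^1\in cl_{\tdl}(\emptyset)$ whenever $p\in cl_{\tdl}(\emptyset)$. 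You instead unwind $X\wedge S^1$ as the elementary push-out $(X\oo\Delta^1)/(X\oo\partial\Delta^1)$ and run the comparison through Proposition \ref{2009short}, feeding it $Id_{pt}$, $f\oo\partial\Delta^1=f\amalg f$ (coproduct closure, Corollary \ref{2009fcope}) and $f\oo\Delta^1$ (two-out-of-three against Corollary \ref{2007ss}). Each route has its merits: the paper's is shorter and works verbatim for $X\wedge K$ with an arbitrary pointed simplicial set $K$; but strictly speaking $-\wedge S^1$ is not the simplicial extension of a functor on $Rad(C)$ (its $n$-th term mixes $X_n$ with $(S^1)_n$), so applying Lemma \ref{fandt} really requires factoring through the bisimplicial object with rows $-\wedge(S^1)_m$ and the diagonal condition of Definition \ref{fd} — a small gap your argument avoids entirely, since the push-out presentation and Proposition \ref{2009short} make explicit exactly which closure properties ($\Delta$-closure plus finite coproducts) are used. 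Your concluding remark about why one must work in $cl_{\tdl}(\emptyset)$ rather than with cofibrancy or $W_{proj}$ directly is exactly the point of the paper's framework.
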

\begin{proof}
Since the projective c.m.s. is simplicial the suspension functor $\Sigma^1$ on $H(C)$ may be defined as $X\mapsto X\wedge S^1$ on cofibrant objects $X$. For any $X$ the standard cofibrant replacement $Cof(X)$ of $X$ lies in $\Delta^{op}C^{\#}$ and for $X\in \Delta^{op}C^{\#}$ the morphism $p:Cof(X)\sr X$ is in $cl_{\tdl}(\emptyset)$ by Theorem \ref{coin}. Therefore, $p\wedge S^1$ is in $cl_{\tdl}(\emptyset)$ by Lemma \ref{fandt} and by Theorem \ref{coin} it is a projective equivalence.
\end{proof}
For a morphism $f:X\sr X'$ define $cone(f)$ by the elementary push-out square (in $\Delta^{op}Rad(C)$)
$$
\begin{CD}
X @>>> X\wedge\Delta^1\\
@VVV @VVV\\
X' @>>> cone(f)
\end{CD}
$$
\begin{definition}
\llabel{2009cs}
A (term-wise) coprojection sequence in $\Delta^{op}Rad(C)$ is a pair of morphisms 
$$X\sr Y\sr Z$$
such that the first morphism is a term-wise coprojection and $Y\sr Z$ is isomorphic to $Y\sr Y/X$. 
\end{definition}
Clearly, any term-wise coprojection $X\sr Y$ extends canonically to a term-wise coprojection sequence $X\sr Y\sr Y/X$.
\begin{lemma}
\llabel{2007.1}
For any term-wise coprojection sequence $X\stackrel{f}{\sr} Y\sr Z$ there is a commutative diagram 
$$\begin{CD}
X @>>> cyl(f) @>>> cone(f)\\
@VVV @V1VV @V2VV\\
X@>>> Y@>>> Z
\end{CD}
$$
where the arrows (1) and (2) are in $cl_{\Delta}(\emptyset)$ and therefore in $W_{proj}$. 
\end{lemma}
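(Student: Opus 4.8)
The plan is to write down the two new vertical arrows explicitly and then reduce the whole statement to the elementary push-out calculus developed earlier. First I would record that both $cyl(f)$ and $cone(f)$ are push-outs along $f$: by the proof of Lemma \ref{cyl} the object $cyl(f)$ is the push-out of $f:X\sr Y$ and $Id\oo\partial_0:X\sr X\oo\Delta^1$, while by definition $cone(f)$ is the push-out of $f:X\sr Y$ and the cone inclusion $X\sr X\wedge\Delta^1$. The quotient $X\oo\Delta^1\sr X\wedge\Delta^1$ collapsing the basepoint slice is compatible with these two inclusions of $X$ and with $Id_Y$, and therefore induces a natural morphism $cyl(f)\sr cone(f)$; together with the inclusion of the free end $X\sr cyl(f)$ this provides the top row. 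I take arrow $(1)$ to be the projection $cyl(f)\sr Y$ and arrow $(2)$ to be the morphism $cone(f)\sr Z$ induced by the collapse $X\wedge\Delta^1\sr pt$ and $Id_Y$ (using the isomorphism $Z\cong Y/X$ supplied by the coprojection sequence). Comparing the two defining push-outs shows at once that the square commutes.

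For arrow $(1)$ there is nothing to prove: by Lemma \ref{cyl} the projection $cyl(f)\sr Y$ is a homotopy equivalence, so it lies in every $\Delta$-closed class, in particular in $cl_{\Delta}(\emptyset)$. The content of the lemma is arrow $(2)$. Here I would treat the two defining push-outs as \emph{elementary} push-out squares — which is exactly where the hypothesis is used, since $f$ is a term-wise coprojection and hence the vertical leg $X\sr Y$ of each square is a term-wise coprojection — and regard $Id_X$, $Id_Y$ and $X\wedge\Delta^1\sr pt$, together with arrow $(2)$, as a morphism of elementary push-out squares from the square defining $cone(f)$ to the square defining $Y/X\cong Z$. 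Proposition \ref{2009short} then gives directly $(cone(f)\sr Z)\in cl_{\Delta,\amalg_{<\infty}}(\{Id_X, Id_Y, X\wedge\Delta^1\sr pt\})$.

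It only remains to see that each of the three generating morphisms already lies in $cl_{\Delta}(\emptyset)$: $Id_X$ and $Id_Y$ are identities, and the cone collapse $X\wedge\Delta^1\sr pt$ is a homotopy equivalence because $\Delta^1$, pointed at the vertex being collapsed, contracts onto its basepoint and this contraction smashes with $X$. Consequently $cl_{\Delta,\amalg_{<\infty}}(\{Id_X, Id_Y, X\wedge\Delta^1\sr pt\})=cl_{\Delta,\amalg_{<\infty}}(\emptyset)=cl_{\Delta}(\emptyset)$ by Corollary \ref{fcope}, so arrow $(2)$ is in $cl_{\Delta}(\emptyset)$ as well. Finally $cl_{\Delta}(\emptyset)\subset cl_{\tdl}(\emptyset)\subset W_{proj}$ by Proposition \ref{2009tdl0}, which yields the ``and therefore in $W_{proj}$'' clause. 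I expect the only genuinely fiddly step to be the bookkeeping of basepoints and of the orientation of $\Delta^1$ needed to make the induced map $cyl(f)\sr cone(f)$ and arrow $(2)$ well defined and the diagram commutative; once the four arrows are pinned down, recognizing the morphism of elementary push-out squares and invoking Proposition \ref{2009short} is immediate.
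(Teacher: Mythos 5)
Your argument is correct, and it runs on the same engine as the paper's proof: arrow (1) is Lemma \ref{cyl}, and arrow (2) comes from Proposition \ref{2009short} applied to a morphism of elementary push-out squares whose target is the square $(X\stackrel{f}{\sr} Y,\ X\sr pt)$ presenting $Z\cong Y/X$ --- which, as you correctly note, is the one place the term-wise coprojection hypothesis enters. The difference is the choice of source square. You take the defining square of $cone(f)$, so your comparison morphism is $(Id_X,\, Id_Y,\, X\wedge\Delta^1\sr pt)$, and the burden falls on showing that the cone collapse $X\wedge\Delta^1\sr pt$ is a simplicial homotopy equivalence. That is true, but it is an extra fact the paper never establishes, and making it precise requires exactly the verification you flag as fiddly: $(-)\oo\Delta^1$ preserves push-outs (being a left adjoint), so the pointed contraction of $\Delta^1$ onto its base vertex descends to a unit homotopy $(X\wedge\Delta^1)\oo\Delta^1\sr X\wedge\Delta^1$ from the identity to the constant map. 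The paper avoids this by instead presenting $cone(f)$ as $cyl(f)/X$: its source square is $(X\sr cyl(f),\ X\sr pt)$ and its comparison morphism is $(Id_X,\, cyl(f)\sr Y,\, Id_{pt})$, so the only non-identity generator is $cyl(f)\sr Y$, already known to be a homotopy equivalence by Lemma \ref{cyl}, and no new homotopy-theoretic input is needed. Both routes then finish identically, via $cl_{\Delta,\amalg_{<\infty}}(\emptyset)=cl_{\Delta}(\emptyset)$ (Proposition \ref{fcop}, Corollary \ref{fcope}) and Proposition \ref{2009tdl0}. One cosmetic point: to invoke Proposition \ref{2009short} literally you should transpose your two squares so that the coprojection $f$ sits as the horizontal edge $e_B$ of (\ref{elexe1}); this is harmless, since the construction of $K_Q$ is symmetric in the two legs $A$ and $Y$.
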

\begin{proof}
By  Proposition \ref{2009tdl0} it is sufficient to verify that morphisms (1) and (2) are in $cl_{\Delta}(\emptyset)$.  Our diagram is a part of the morphism of elementary push-out squares $Q_1\sr Q_2$ where
$$
Q_1=
\left(
\begin{CD}
X @>>> Cyl(f)\\
@VVV @VVV\\
pt @>>> cone(f)
\end{CD}
\right)
\,\,\,\,\,\,\,\,\,\,
Q_2=
\left(
\begin{CD}
X @>>> Y\\
@VVV @VVV\\
pt @>>> Z
\end{CD}
\right)
$$
The morphism $Cyl(f)\sr Y$ is in $cl_{\Delta,\amalg_{<\infty}}(\emptyset)$ by Lemma \ref{cyl} and therefore the morphism $cone(f)\sr Z$ is in  $cl_{\Delta,\amalg_{<\infty}}(\emptyset)$ by Lemma \ref{lm1}, \ref{lm0} and the 2-out-of-3 property of $\Delta$-closure. By Proposition \ref{fcop} we have $cl_{\Delta, \amalg_{<\infty}}(\emptyset)=cl_{\Delta}(\emptyset)$.
\end{proof}
For any $f:X\sr X'$ one has 
\begin{eq}
\llabel{2009cone}
cone(f)/X=(X'/X)\vee (X\wedge S^1)
\end{eq}
Let $X\stackrel{f}{\sr} Y{\sr} Z$ be a term-wise coprojection sequence. Then $cone(f)\sr Z$ is a projective equivalence and (\ref{2009cone}) defines a morphism $Z\sr Z\vee (X\wedge S^1)$ in $H(C)$. If $Z$ and $X$ are in $\Delta^{op}C^{\#}$ then $Z\vee (X\wedge S^1)$ is canonically isomorphic to $Z\vee_{\LL} \Sigma^1(X)$ by Lemma \ref{2009lc1}.  In particular any term-wise coprojection sequence in $\Delta^{op}C^{\#}$ defines in a natural way a pair of the form
\begin{eq}
\llabel{2009cfs0}
(X\sr Y\sr Z,\,\,\, Z\sr Z\vee_{\LL} \Sigma^1(X))
\end{eq}
in $H(C)$.
\begin{theorem}
\llabel{2009cfs}
A pair of the form
\begin{eq}
\llabel{2009cfs1}
(X'\sr Y'\sr Z',\,\,\, Z'\sr Z'\vee_{\LL} \Sigma^1(X'))
\end{eq}
in $H(C)$ is a cofibration sequence (see \cite[Def. 6.2.1, p.156]{Hovey})  if it is isomorphic in $H(C)$ to a pair (\ref{2009cfs0}) for a term-wise coprojection sequence in $\Delta^{op}C^{\#}$ and only if it is isomorphic to such a pair for a term-wise coprojection sequence in $\Delta^{op}\bar{C}$.
\end{theorem}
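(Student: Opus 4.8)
The plan is to use the mapping cylinder to convert the homotopy class of a cofibration into a term-wise coprojection, and to use Lemma \ref{2007.1} to identify, up to $cl_{\Delta}(\emptyset)$, the strict cofiber of a term-wise coprojection with its mapping cone. Recall from \cite[Def. 6.2.1, p.156]{Hovey} that a pair in $H(C)$ is a cofibration sequence exactly when it is isomorphic to the pair $(A\sr B\sr B/A,\ B/A\sr (B/A)\vee_{\LL}\Sigma^1(A))$ attached to a cofibration $i:A\sr B$ of cofibrant objects. The point of Lemma \ref{2007.1} together with (\ref{2009cone}) and Lemma \ref{2009lc1} is that a term-wise coprojection sequence $X\stackrel{f}{\sr} Y\sr Z$ already carries this data in $H(C)$: the maps $cyl(f)\sr Y$ and $cone(f)\sr Z$ lie in $cl_{\Delta}(\emptyset)\subset W_{proj}$, so in $H(C)$ the sequence is identified with $X\sr cyl(f)\sr cyl(f)/X=cone(f)$, and (\ref{2009cone}) together with Lemma \ref{2009lc1} rewrites the resulting coaction as $Z\sr Z\vee_{\LL}\Sigma^1(X)$. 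Thus in each direction it remains only to match this cone with a genuine cofibration of cofibrant objects.

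For the ``only if'' direction, start from a cofibration sequence; by definition it is the pair of a cofibration $i:A\sr B$ between cofibrant objects. Set $A'=Cof(A)$, so $A'\in\Delta^{op}\bar{C}$ and $A'\sr A$ is a projective equivalence by Proposition \ref{2009cofrep}(1). Factor the composite $A'\sr A\stackrel{i}{\sr}B$ by the small object argument as $A'\stackrel{j}{\rightarrowtail}B'\stackrel{\sim}{\sr}B$, where $j$ is a sequential $I$-cell. As sequential $I$-cells are term-wise coprojections with added terms in $\bar{C}$ (Proposition \ref{2009cofrep}), $j$ is a term-wise coprojection and $B'\in\Delta^{op}\bar{C}$; since $A'$ is cofibrant, $j$ is a cofibration of cofibrant objects. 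Its strict cofiber $B'/A'$ is then the homotopy cofiber of $i$, and because both $j$ and $i$ are cofibrations between cofibrant objects fitting into a levelwise projective equivalence of arrows, the cofiber sequence of $j$ is isomorphic in $H(C)$ to that of $i$. Hence the term-wise coprojection sequence $A'\sr B'\sr B'/A'$ in $\Delta^{op}\bar{C}$ produces, by the first paragraph, a pair (\ref{2009cfs0}) isomorphic to the given cofibration sequence.

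For the ``if'' direction, start from a term-wise coprojection sequence $X\stackrel{f}{\sr} Y\sr Z$ in $\Delta^{op}C^{\#}$. Applying the cofibrant replacement functor and then the cylinder to $Cof(f):Cof(X)\sr Cof(Y)$ yields, by the standard mapping-cylinder factorization (its front inclusion being a cofibration by Corollary \ref{2007c1}), a cofibration $Cof(X)\sr cyl(Cof(f))$ of cofibrant objects whose strict cofiber $cone(Cof(f))$ is the homotopy cofiber of $f$. It remains to identify this homotopy cofiber with $Z$. By Lemma \ref{2007.1} one has $cone(f)\simeq Z$ in $H(C)$, so it suffices to show that on $\Delta^{op}C^{\#}$ the mapping cone is invariant under $cl_{\tdl}(\emptyset)$, i.e.\ that $cone(Cof(f))\sr cone(f)$ is a projective equivalence. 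This holds because $cone(-)$ is assembled from the cylinder $cyl(-)$ and the collapse of the front-inclusion term-wise coprojection $X\sr cyl(f)$: the equivalences $Cof(X)\sr X$ and $Cof(Y)\sr Y$ lie in $cl_{\tdl}(\emptyset)$ by Theorem \ref{coin}, Lemma \ref{lm0} shows that $cyl(-)$ sends $cl_{\tdl}(\emptyset)$-equivalences of arrows to $cl_{\tdl}(\emptyset)$, and Proposition \ref{2009short} (applied to the elementary push-out square defining $cyl(f)/X$) does the same for the quotient. Therefore $cone(Cof(f))\simeq cone(f)\simeq Z$, the cofibration sequence of $Cof(X)\sr cyl(Cof(f))$ is isomorphic in $H(C)$ to the pair (\ref{2009cfs0}) of $X\sr Y\sr Z$, and the latter is a cofibration sequence.

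The main obstacle is that the projective c.m.s.\ is not assumed left proper, so strict cofibers $Y/X$ are not a priori homotopy invariant and cannot be transported freely along projective equivalences. The whole argument is arranged to quotient only along term-wise coprojections, for which Lemma \ref{2007.1} guarantees that the strict cofiber agrees with the mapping cone; this is exactly what lets $Z=Y/X$ serve as the homotopy cofiber. The one step requiring genuine care is the identification $cone(Cof(f))\simeq cone(f)$ in the ``if'' direction, i.e.\ the $cl_{\tdl}(\emptyset)$-invariance of the cone on $\Delta^{op}C^{\#}$, which is where Theorem \ref{coin} and the elementary push-out square calculus of Section \ref{section1} (Lemmas \ref{lm0}, \ref{exactsquares} and Proposition \ref{2009short}) are used.
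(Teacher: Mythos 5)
Your proof is correct and follows essentially the same route as the paper's: replace the given data by a cofibration of cofibrant objects that is simultaneously a term-wise coprojection (cofibrant replacement plus a factorization), then compare the resulting cofibers and coactions using the elementary push-out square calculus of Section \ref{section1} (Lemmas \ref{lm1}, \ref{lm0}, Proposition \ref{2009short}) together with Theorem \ref{coin}. The only cosmetic difference is that in the ``if'' direction you factor through the mapping cylinder $cyl(Cof(f))$ where the paper factors $Cof(X)\sr X\sr Y$ directly into a cofibration followed by a trivial fibration, and your ``only if'' argument (factoring $Cof(A)\sr B$ as a sequential $I$-cell followed by a trivial fibration, landing in $\Delta^{op}\bar{C}$) is exactly the argument the paper leaves implicit with ``follows easily by an argument similar to the proof of if''.
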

\begin{proof}
"if" Using the standard decomposition of the morphism $Cof(X)\sr X\sr Y$ into a cofibration and a trivial fibration, we get a square
$$
\begin{CD}
Cof(X) @>\tilde{f}>> \tilde{Y}\\
@VVV @VVV\\
X @>f>> Y
\end{CD}
$$
where the vertical arrows are projective equivalences, the upper horizontal one is a cofibration and a term-wise coprojection and all objects are in $\Delta^{op}C^{\#}$. It remains to show that the pair
$$(Cof(X)\sr \tilde{Y}\sr \tilde{Y}/Cof(X),\,\, \tilde{Y}/Cof(X)\sr (\tilde{Y}/Cof(X))\vee_{\LL} \Sigma^1(Cof(X)))$$
is isomorphic in $H(C)$ to (\ref{2009cfs1}). This follows easily from Lemmas \ref{lm1}, \ref{lm0} and Theorem \ref{coin}.

"only if" This direction follows easily by an argument similar to the proof of "if".
\end{proof}

\subsection{$E$-local equivalences}
\label{easysec}
The following results and definitions concerning $E$-local objects and $E$-local equivalences are mostly standard. In particular our definitions agree with the ones given in \cite{Hirs}. 
\begin{definition}
\llabel{2007locdef}
Let $E$ be a class of morphisms in $\Delta^{op}Rad(C)$. An object $Y$ of
this category is called $E$-local if it is projectively fibrant and for any simplicial set $K$
and any element $f:X\sr X'$ in $E$ the map
\begin{eq}
\llabel{hc1}
Hom_{H(C)}(X',Hom_{\oo}(K,Y))\sr Hom_{H(C)}(X,Hom_{\oo}(K,Y))
\end{eq}
defined by $f$, is bijective. 
\end{definition}
\begin{lemma}
\llabel{d7.20.4}
Let $E$ be a class of morphisms in $\Delta^{op}Rad(C)$. An object $Y$ of
this category is $E$-local iff it is projectively fibrant and
for any element $f:X\sr X'$ of $E$  and a representative $\tilde{f}:\tilde{X}\sr \tilde{X'}$  of $f$ such that $\tilde{X}$ and $\tilde{X}'$ are cofibrant, the map of simplicial sets 
$$S(\tilde{X}', Y)\sr S(\tilde{X}, Y)$$
defined by $\tilde{f}$, is a weak equivalence. 
\end{lemma}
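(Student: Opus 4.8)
The plan is to reduce the defining condition of $E$-locality, which quantifies over \emph{all} simplicial sets $K$, to a single weak-equivalence condition on function complexes, by exploiting the simplicial (tensored and cotensored) structure from Theorem \ref{2009issimpl} together with a Yoneda argument in the homotopy category of simplicial sets. Both conditions require $Y$ to be projectively fibrant, so I assume this throughout. Fix $f:X\sr X'$ in $E$ and a representative $\tilde f:\tilde X\sr \tilde X'$ with $\tilde X,\tilde X'$ cofibrant. Since $\tilde X$ is cofibrant and $Y$ fibrant, SM7 (Theorem \ref{2009issimpl}) applied to $\emptyset\sr\tilde X$ and $Y\sr pt$ shows that $S(\tilde X,Y)$ and $S(\tilde X',Y)$ are Kan complexes; moreover, for any fibrant $Z$ and any cofibrant representative $\tilde X$ of $X$ the standard description of morphisms in the homotopy category of a simplicial model category identifies $Hom_{H(C)}(X,Z)$ with $\pi_0 S(\tilde X,Z)$. (All statements below are invariant under the choice of cofibrant representative, so it does not matter which $\tilde f$ we take.)

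First I would deal with the auxiliary variable $K$. The functor $(-)\oo K$ preserves cofibrations and trivial cofibrations: taking $i$ to be $\emptyset\sr K$ in Proposition \ref{l7.26.3} identifies $h(f,i)$ with $f\oo Id_K$, so a (trivial) cofibration $f$ produces a (trivial) cofibration $f\oo Id_K$. Hence its right adjoint $Hom_{\oo}(K,-)$ preserves fibrant objects, and $Hom_{\oo}(K,Y)$ is fibrant. Therefore $Hom_{H(C)}(X',Hom_{\oo}(K,Y))\cong \pi_0 S(\tilde X',Hom_{\oo}(K,Y))$, and similarly for $X$. The simplicial structure gives natural isomorphisms
$$S(\tilde X',Hom_{\oo}(K,Y))\cong S(\tilde X'\oo K,Y)\cong S(K,S(\tilde X',Y)),$$
so that the map (\ref{hc1}) is identified with the map $[K,S(\tilde X',Y)]\sr [K,S(\tilde X,Y)]$ of simplicial homotopy classes induced by $S(\tilde f,Y):S(\tilde X',Y)\sr S(\tilde X,Y)$.

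It then remains to invoke the fact that a morphism $g:A\sr B$ of Kan complexes induces bijections $[K,A]\sr [K,B]$ for every simplicial set $K$ if and only if $g$ is a weak equivalence. One direction is immediate, since a weak equivalence of Kan complexes is a simplicial homotopy equivalence; for the converse, $[K,A]$ coincides with the set of morphisms $K\sr A$ in the homotopy category of simplicial sets (as $K$ is cofibrant and $A$ fibrant), so bijectivity for all $K$ makes $g$ an isomorphism there by Yoneda, i.e. a weak equivalence (a homotopy inverse is obtained by pulling back $Id_B$ and $Id_A$). Applying this to $g=S(\tilde f,Y)$ shows that the condition of Definition \ref{2007locdef} (bijectivity of (\ref{hc1}) for all $K$) holds for the given $f$ precisely when $S(\tilde f,Y)$ is a weak equivalence, which is the condition of the lemma; ranging over all $f\in E$ then yields the asserted equivalence. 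The hard part is purely bookkeeping rather than conceptual: one must track which objects must be cofibrant and which fibrant so that passing to $\pi_0$ and to the homotopy category is homotopically meaningful—in particular one needs the fibrancy of $Hom_{\oo}(K,Y)$—after which the Yoneda step is the clean conceptual core.
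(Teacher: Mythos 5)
Your proof is correct and takes essentially the same route as the paper's: both identify the two sides of (\ref{hc1}) with the simplicial homotopy classes $[K,S(\tilde{X}',Y)]$ and $[K,S(\tilde{X},Y)]$, and then invoke the fact that a map of Kan complexes is a weak equivalence iff it induces bijections on homotopy classes of maps from every $K$. The only difference is one of detail: you verify the identification by hand (the adjunction isomorphisms, the fibrancy of $Hom_{\oo}(K,Y)$ via Proposition \ref{l7.26.3}, and the Yoneda step), where the paper simply cites \cite[Prop. 3.10, p.93]{GJ}.
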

\begin{proof}
Note first that $S(\tilde{X}', Y)$ and $S(\tilde{X}, Y)$ are Kan simplicial sets.  A map between two such sets is a weak equivalence iff for any $K$ it induces the bijection on homotopy classes of maps from $K$. These homotopy classes of maps are identified with the sides of (\ref{hc1}) by \cite[Prop. 3.10, p.93]{GJ}. 
\end{proof}
\begin{definition}
\llabel{2007eq}
Let $E$ be a class of morphisms in $\Delta^{op}Rad(C)$. A morphism
$f:X\sr X'$ is called a (left) E-local equivalence if for any
$E$-local $Y$ the map
$$Hom_{H(C)}(X',Y)\sr Hom_{H(C)}(X,Y)$$
defined by ${f}$ is bijective. 
\end{definition}
\begin{lemma}
\llabel{d7.20.5}
Let $E$ be a class of morphisms in $\Delta^{op}Rad(C)$. A morphism
$f:X\sr X'$ between cofibrant objects is an $E$-local equivalence  if and only if for any
$E$-local $Y$ the map of simplicial sets 
$$S(X', Y)\sr S(X, Y)$$
defined by $f$ is a weak equivalence. 
\end{lemma}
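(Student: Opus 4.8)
The plan is to reduce the statement to the standard criterion, already invoked in the proof of Lemma \ref{d7.20.4}, that a map between Kan simplicial sets is a weak equivalence if and only if it induces a bijection on homotopy classes of maps $[K,-]$ out of every simplicial set $K$. Since $X$ and $X'$ are cofibrant and any $E$-local $Y$ is projectively fibrant, the function complexes $S(X,Y)$ and $S(X',Y)$ are Kan, so this criterion applies verbatim to the map $S(X',Y)\sr S(X,Y)$ induced by $f$. The whole proof will then be a bookkeeping of iff's, with the higher homotopy information accessed by varying $K$.

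The first ingredient I would record is the identification used in Lemma \ref{d7.20.4}: since $Hom_{\oo}(K,-)$ is right adjoint to $(-)\oo K$ one has $S(X\oo K,Y)\cong Hom_{sSet}(K,S(X,Y))$, and as $X\oo K$ is cofibrant by Corollary \ref{2007c1} while $Y$ is fibrant, passing to $\pi_0$ gives, via \cite[Prop. 3.10, p.93]{GJ},
$$[K,S(X,Y)]=Hom_{H(C)}(X\oo K,Y)=Hom_{H(C)}(X,Hom_{\oo}(K,Y)),$$
naturally in the variable $X$. The second ingredient is the observation that $Hom_{\oo}(K,Y)$ is itself $E$-local whenever $Y$ is: it is projectively fibrant because $Hom_{\oo}(K,Y)(U)=S(K,Y(U))$ is Kan for each $U$, and the locality condition of Definition \ref{2007locdef} for $Hom_{\oo}(K,Y)$ tested against a simplicial set $L$ reduces, through the adjunction identity $Hom_{\oo}(L,Hom_{\oo}(K,Y))=Hom_{\oo}(L\times K,Y)$, to the locality condition for $Y$ tested against $L\times K$.

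Combining these, by the Kan-complex criterion and the displayed identification, the condition "$S(X',Y)\sr S(X,Y)$ is a weak equivalence for every $E$-local $Y$" is equivalent to "$Hom_{H(C)}(X',Hom_{\oo}(K,Y))\sr Hom_{H(C)}(X,Hom_{\oo}(K,Y))$ is bijective for every $E$-local $Y$ and every simplicial set $K$". I would then close this up against Definition \ref{2007eq} in two steps. For one direction I specialize to $K=\Delta^0$, where $Hom_{\oo}(\Delta^0,Y)=Y$, recovering bijectivity of $Hom_{H(C)}(X',Y)\sr Hom_{H(C)}(X,Y)$ for all $E$-local $Y$, i.e. that $f$ is an $E$-local equivalence. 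For the converse I apply the $E$-local equivalence hypothesis to the $E$-local object $Z=Hom_{\oo}(K,Y)$ for each $K$, which is exactly the content secured by the second ingredient.

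The two adjunction identities and the $\pi_0$-identification are formal consequences of the simplicial structure established in Theorem \ref{2009issimpl}, and so are routine. The one genuine point of content — and the step I would treat most carefully — is the passage from $\pi_0$-level bijections to honest weak equivalences via the Kan-complex criterion; it is precisely this passage that forces the detour through all the objects $Hom_{\oo}(K,Y)$ rather than $Y$ alone, and it is why the stability of $E$-local objects under $Hom_{\oo}(K,-)$ is not a cosmetic remark but the hinge on which the "only if" direction turns.
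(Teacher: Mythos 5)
Your proof is correct and takes essentially the same approach as the paper, whose entire proof of this lemma reads ``Similar to the proof of Lemma \ref{d7.20.4}'' --- that is, the Kan-complex criterion for weak equivalences combined with the identification $[K,S(X,Y)]=Hom_{H(C)}(X,Hom_{\oo}(K,Y))$ from \cite[Prop. 3.10, p.93]{GJ}, which are exactly the two ingredients you use. Your explicit verification that $Hom_{\oo}(K,Y)$ is $E$-local whenever $Y$ is, via $Hom_{\oo}(L,Hom_{\oo}(K,Y))=Hom_{\oo}(L\times K,Y)$, fills in the one step the paper leaves implicit, and you rightly observe that this closure property is precisely what the quantifier over all simplicial sets $K$ in Definition \ref{2007locdef} is designed to secure.
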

\begin{proof}
Similar to the proof of Lemma \ref{d7.20.4}.
\end{proof}
We denote the class of $E$-local equivalences by $cl_{l}(E)$.  Smith's localization theorem together with Proposition \ref{2009pr1} gives the following result.
\begin{theorem}
\llabel{2009th1}
Assume that the projective c.m.s. on $\Delta^{op}Rad(C)$ is left proper. Then for any set of morphisms $E$ in $\Delta^{op}Rad(C)$ there exists a closed model structure on $\Delta^{op}Rad(C)$ with the classes of weak equivalences and cofibrations being $cl_l(E)$ and $Cof$ respectively. 
\end{theorem}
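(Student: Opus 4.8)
The plan is to recognize this statement as a direct instance of Smith's theorem on the existence of left Bousfield localizations of combinatorial model categories, all of whose hypotheses have already been assembled in the preceding subsections. Concretely, I would first observe that the projective closed model structure is combinatorial: by Proposition \ref{2009pr1} the underlying category $\Delta^{op}Rad(C)$ is locally finitely presentable, hence locally presentable, and by Theorem \ref{2009pcmsth} the projective c.m.s. is finitely generated, hence cofibrantly generated. A cofibrantly generated model structure on a locally presentable category is by definition combinatorial, and together with the left properness assumed in the statement this puts us exactly in the setting of Smith's theorem.

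Next I would apply the theorem itself: for a left proper combinatorial model category and a \emph{set} $E$ of morphisms, the left Bousfield localization exists. It is a closed model structure on the same underlying category $\Delta^{op}Rad(C)$ whose class of cofibrations is unchanged, and hence equal to $Cof$, and whose class of weak equivalences is the class of $E$-local equivalences, with fibrant objects being the $E$-local objects. This is precisely the conclusion we want, provided the vocabulary matches.

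The remaining step is to check that the notions of $E$-local object and $E$-local equivalence built into Smith's theorem coincide with Definitions \ref{2007locdef} and \ref{2007eq}, so that the weak equivalences of the localized structure are literally $cl_l(E)$. Since the projective c.m.s. is simplicial by Theorem \ref{2009issimpl}, the mapping-space reformulations in Lemmas \ref{d7.20.4} and \ref{d7.20.5} identify our definitions with the standard simplicial ones used in the statement of the localization theorem, so this identification is immediate. The argument therefore involves no computation whatsoever; the only point requiring care is confirming that every hypothesis of the cited theorem is in place and that the paper's $cl_l(E)$ agrees with the localization-theoretic definition of weak equivalence, which is exactly what the earlier results have been arranged to provide.
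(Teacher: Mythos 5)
Your proposal is correct and matches the paper's own argument, which is precisely the one-line invocation ``Smith's localization theorem together with Proposition \ref{2009pr1}'': local finite presentability plus the finitely generated structure of Theorem \ref{2009pcmsth} make the projective c.m.s.\ combinatorial, and with left properness assumed Smith's theorem applies directly. Your additional care in matching $cl_l(E)$ with the localization-theoretic weak equivalences (via Lemmas \ref{d7.20.4} and \ref{d7.20.5}) is the same point the paper disposes of by noting its definitions agree with those of \cite{Hirs}.
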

\begin{proposition}
\llabel{2009afg}
Assume that the projective c.m.s. on $\Delta^{op}Rad(C)$ is left proper and let $E$ be such that the domains and codomains of its elements are cofibrant and compact. Then the $E$-local c.m.s is almost finitely generated (see \cite[Def. 4.1, p.82]{Hoveystab}).
\end{proposition}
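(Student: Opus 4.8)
The plan is to verify the two conditions of \cite[Def. 4.1, p.82]{Hoveystab} directly for the $E$-local c.m.s., whose existence (as a combinatorial, hence cofibrantly generated, model category) is guaranteed by Theorem \ref{2009th1} together with Proposition \ref{2009pr1}. Recall that a cofibrantly generated c.m.s. is almost finitely generated if (a) the generating cofibrations can be chosen with compact (finitely presentable) domains and codomains, and (b) there is a set $J'$ of morphisms with compact domains and codomains such that a morphism $f:A\sr B$ with $B$ fibrant is a fibration if and only if it lies in $J'\mbox{-inj}$. Since the $E$-local c.m.s. is a left Bousfield localization, its cofibrations coincide with $Cof$, so by Theorem \ref{2009pcmsth} its generating cofibrations are again the set $I$. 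As the objects $U\oo\partial\Delta^n$ and $U\oo\Delta^n$ are compact by Lemma \ref{2009comcom}, condition (a) holds with $I$ as the generating set, and all the real work lies in condition (b).

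For (b) I would construct $J'$ from $E$ by a horn construction. Using the mapping cylinder of Lemma \ref{cyl}, first replace each $f:X\sr X'$ in $E$ by a cofibration $\hat f:X\sr cyl(f)$ between cofibrant objects; since $X$ and $X'$ are compact and $cyl(f)$ is a finite colimit of $X$, $X'$ and $X\oo\Delta^1$, it is again compact by Lemma \ref{2009comcom}. I then set
$$J'=J_{proj}\cup\{\,\hat f\,\square\,(\partial\Delta^n\hookrightarrow\Delta^n)\ :\ f\in E,\ n\ge 0\,\},$$
where $\square$ denotes the pushout-product with respect to the simplicial structure $\oo$. Every object occurring as a domain or codomain of an element of $J'$ is a finite colimit of objects of the form $X\oo K$ or $cyl(f)\oo K$ with $K\in\{\partial\Delta^n,\Delta^n\}$ finite, hence compact by Lemma \ref{2009comcom} and the fact that a finite colimit of compact objects is compact. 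Thus $J'$ consists of morphisms with compact domains and codomains, which is the new content specific to our setting and is exactly where the hypothesis that the elements of $E$ have compact cofibrant domains and codomains is used.

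It remains to prove that, for $B$ an $E$-local object (equivalently fibrant in the $E$-local c.m.s.), a morphism $f:A\sr B$ is an $E$-local fibration if and only if $f\in J'\mbox{-inj}$. One direction is straightforward: each element of $J_{proj}$ is a projective, hence $E$-local, trivial cofibration; and each horn $\hat f\,\square\,(\partial\Delta^n\hookrightarrow\Delta^n)$ is a projective cofibration by Proposition \ref{l7.26.3} which is also an $E$-local equivalence, since $\hat f\in cl_l(E)$ (by 2-out-of-3 applied to $X\sr cyl(f)\sr X'$, using $f\in cl_l(E)$ and Lemma \ref{cyl}) and the localized structure is simplicial. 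Hence every element of $J'$ is an $E$-local trivial cofibration and every $E$-local fibration lies in $J'\mbox{-inj}$. The converse is the standard characterization, valid in our left proper setting by the theory of left Bousfield localizations (see \cite{Hirs}): right lifting against $J_{proj}$ makes $f$ a projective fibration, and, for fibrant $B$, additional right lifting against the horns on $E$ promotes this to an $E$-local fibration.

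The main obstacle is precisely this converse. One must show that for fibrant target $B$ no trivial cofibrations beyond the saturation of the compact set $J'$ are needed to detect the fibration property, i.e. that the (generally non-compact) generating trivial cofibrations produced by Smith's argument become superfluous once the codomain is local. This is where left properness enters, and it is the standard Bousfield--Smith analysis which I would invoke rather than reprove; the genuinely new input is the compactness of the horns, which follows from Lemma \ref{2009comcom} and the compactness assumption on the ends of the elements of $E$.
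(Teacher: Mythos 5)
Your proof is correct and takes essentially the same route as the paper: the paper's entire proof is a citation of Hovey's Proposition 4.2 together with Lemma \ref{2009comcom}, and what you have written out --- keeping $I$ as the generating cofibrations, building $J'$ from $J_{proj}$ plus pushout-product horns on cylinder replacements of elements of $E$, extracting compactness of the horns from Lemma \ref{2009comcom}, and delegating the converse fibration-detection step for fibrant codomains to the standard Bousfield--Smith/Hirschhorn analysis --- is precisely an unpacking of the proof of that cited proposition. Note only that you and the paper share the same mild caveat: the cited machinery (Hovey, Hirschhorn) is stated for cellular model categories, whereas here cofibrations need not be monomorphisms (Example \ref{2009bip}), so strictly one must use the combinatorial/Smith form of localization from Theorem \ref{2009th1}; this is not a gap relative to the paper's own argument.
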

\begin{proof}
It follows from \cite[Prop. 4.2, p.83]{Hoveystab} and Lemma \ref{2009comcom}.
\end{proof}
Example \ref{2009bip} shows that the projective c.m.s. is not always left proper and that the pair $(cl_l(E),Cof)$ does not always define a c.m.s. Nevertheless a number of important properties of $cl_l(E)$ can be proved for any $C$ and $E$ as will be shown below. 
\begin{example}\rm
\llabel{2009bip}
Let $C$ be the category $\{1,2\}/Fsets$ whose objects are maps $\{1,2\}\sr X$ where $X$ is a finite set and morphisms are the obvious commutative triangles. This category has the initial object $I=\{1,2\}$ and finite coproducts given by push-out squares of sets under $\{1,2\}$ which we will denote by $\vee$ by analogy with the wedge of pointed sets. 

Let $pt=(\{1,2\}\sr \{1\})$ be the final object of $C$ and $X=\{1,2\}\subset \{1,2,3\}$. Then any object of $C$ is a coproduct of the form $(\vee_n X)\vee (\vee_m pt)$ where $n\ge 0$ and $m=0,1$. Therefore, a radditive functor $F$ on $C$ is determined by its values on $X$ and $pt$ with $F(pt)$ being $pt$ or $\emptyset$. Two morphisms from $X$ to $I$ define a map $\{1,2\}\sr F(X)$ and explicit considerations show that $Rad(C)$ can be described  as the subcategory of the category of pairs $((X;x_1,x_2),\phi)$ where $(X;x_1,x_2)$ is a bi-pointed set and $\phi=pt$ or $\phi=\emptyset$ which consists of pairs such that $x_1=x_2$ or $\phi=\emptyset$. 

The functor represented by a  finite by-pointed set $(X;x_1,x_2)$ is $((X,x_1,x_2),\emptyset)$ if $x_1\ne x_2$ and $((X,x_1,x_2),pt)$ if $x_1=x_2$. Therefore all radditive functors except for the ones of the form $((X;x,x),\emptyset)$ are in $C^{\#}$. Any monomorphism in $C^{\#}$ is a coprojection and in particular a projective cofibration.

However the canonical morphism $I\sr pt$ is not a mono since there are two morphisms $X\sr I$, but it is a projective cofibration as the morphism from the initial object to a representable functor. Therefore not all projective cofibrations in $C$ are monomorphisms. 

The coproduct of two radditive functors is given by 
$$((X;x_1,x_2),\phi)\vee ((X';x'_1,x'_2),\phi')=((X;x_1,x_2)\vee(X';x_1',x_2'),\phi\cup\phi')$$
A simplicial radditive functor on $C$ is a pair $((X;x_1,x_2),\phi)$ where now $(X;x_1,x_2)$ is a bi-pointed simplicial set and $\phi=\emptyset$ or $\phi=pt$ with the condition that if $x_1\ne x_2$ then $\phi=\emptyset$. 

Let $\Psi=((pt;pt,pt),\emptyset)$ be the image of the canonical map $I\sr pt$ and $Y=((\Delta^1;x_0,x_1),\emptyset)$ where $x_0$, $x_1$ are two vertices of $\Delta^1$. The morphism $Y\sr \Psi$ is a projective equivalence. Using Corollary \ref{2009lcof} one verifies easily that $Y$ is cofibrant. On the other hand 
$$Y\vee pt=((S^1;x,x),pt)\sr pt=\Psi\vee pt$$
is not a projective equivalence which shows that the projective c.m.s on $\Delta^{op}Rad(C)$ is not left proper. 

Let $E=\{f:I\sr pt\}$. Note that $f$ is a cofibration between cofibrant objects. An object $((X,x_1,x_2),\phi)$ is $E$-local if and only if $\phi=pt$ and $x_1=x_2$ and one verifies easily that $H(C,E)$ is equivalent to the homotopy category of pointed simplicial sets. 

On the other hand we have a push-out square
$$
\begin{CD}
I @>>> \Psi\\
@VfVV @VVV\\
pt @>>> pt
\end{CD}
$$
and since $\Psi$ is isomorphic to $Y\vee pt=(S^1,pt)$  in $H(C,E)$ the morphism $\Psi\sr pt$ is not an $E$-local equivalence. We conclude that the class $cl_l(E)\cap Cof$ is not closed under push-outs and therefore the left Bousfield localization of the projective c.m.s. by $E$ does not exist. 
\end{example}

\begin{theorem}
\llabel{2009innov2}
For any $E$ in $\Delta^{op}Rad(C)$ the class $cl_l(E)$ is $\tdl$-closed.
\end{theorem}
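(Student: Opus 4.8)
The plan is to deduce the theorem from Proposition \ref{2009.07.23.1} by checking that $\mathcal{E}=cl_l(E)$ satisfies its five hypotheses. The uniform tool will be the mapping-space criterion of Lemma \ref{d7.20.5}: a morphism $f:X\sr X'$ with $X,X'$ cofibrant lies in $cl_l(E)$ if and only if for every $E$-local $Y$ the induced map of Kan complexes $S(X',Y)\sr S(X,Y)$ is a weak equivalence. Since the relevant hypotheses of Proposition \ref{2009.07.23.1} are phrased on $\Delta^{op}C^{\#}\cap CofEnds$, this criterion will do most of the work.

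Conditions 1 and 2 are immediate. Any projective equivalence becomes an isomorphism in $H(C)$ and hence induces a bijection $Hom_{H(C)}(X',Y)\sr Hom_{H(C)}(X,Y)$ for every $Y$, so $W_{proj}\subset cl_l(E)$; and because membership in $cl_l(E)$ is defined by a bijectivity condition, the class inherits the $2$-out-of-$3$ property of bijections. For conditions 3 and 4 I would use two natural isomorphisms of simplicial sets. First, since $(-)\oo\Delta^n$ preserves coproducts, $S(\amalg_i X_i,Y)\cong\prod_i S(X_i,Y)$; as a product of weak equivalences of Kan complexes is again a weak equivalence, Lemma \ref{d7.20.5} gives $\amalg_i f_i\in cl_l(E)$ whenever each $f_i$ is (the objects $\amalg_i X_i$ remain cofibrant and in $\Delta^{op}C^{\#}$, both classes being closed under coproducts). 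Second, the simplicial enrichment yields a natural isomorphism $S(X\oo K,Y)\cong S(K,S(X,Y))$; since $S(K,-)$ preserves weak equivalences between Kan complexes, $f\oo Id_K\in cl_l(E)$ for any simplicial set $K$ and any $f\in cl_l(E)$ between cofibrant objects (here $X\oo K$ is cofibrant by Corollary \ref{2007c1}). Specializing to $K=\partial\Delta^i$ gives condition 4.

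The substance of the argument is condition 5, a gluing statement for $E$-local equivalences along cofibrations. Because $(-)\oo\Delta^n$ is a left adjoint, $S(-,Y)$ sends the two push-out squares to pull-back squares of Kan complexes; because $g$ and $g'$ are cofibrations and $Y$ is fibrant, the axiom SM7 (Theorem \ref{2009issimpl}) makes $S(X_3,Y)\sr S(X_1,Y)$ and $S(X_3',Y)\sr S(X_1',Y)$ fibrations, so both squares are homotopy pull-backs. Applying Lemma \ref{d7.20.5} to $f_1,f_2,f_3$ shows that the induced map from the primed square of Kan complexes to the unprimed one is a weak equivalence on three of the four corners, whence invariance of homotopy pull-backs under weak equivalences forces $S(X_4',Y)\sr S(X_4,Y)$ to be a weak equivalence for every $E$-local $Y$, i.e. $f_4\in cl_l(E)$.

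I expect this last step to be the main obstacle, not because the homotopy-theoretic input is deep but because one must carefully track fibrancy and cofibrancy: the recognition of the two squares as genuine homotopy pull-backs depends on the cofibration hypothesis on $g,g'$ (to get fibrant legs) and on the cofibrancy of all corners (to be allowed to use Lemma \ref{d7.20.5}). Once the squares are so recognized, the conclusion is the standard invariance lemma for homotopy pull-backs of simplicial sets. With all five hypotheses in place, Proposition \ref{2009.07.23.1} concludes that $cl_l(E)$ is $\tdl$-closed.
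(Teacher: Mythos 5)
Your proposal is correct and follows essentially the same route as the paper: the paper's proof also applies Proposition \ref{2009.07.23.1} to $cl_l(E)$, dismisses conditions (1)--(2) as obvious, verifies (3) via coproducts of $E$-local equivalences between cofibrant objects, gets (4) from Lemma \ref{d7.20.5}, and gets (5) from Lemma \ref{d7.20.5} together with the fact that a morphism of pull-back squares of simplicial sets with fibration legs which is a weak equivalence on the three generating vertices is a weak equivalence on the fourth. Your write-up merely fills in the details (the isomorphisms $S(\amalg_i X_i,Y)\cong\prod_i S(X_i,Y)$ and $S(X\oo K,Y)\cong S(K,S(X,Y))$, and the use of SM7 to make the legs fibrations) that the paper leaves implicit.
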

\begin{proof}
We apply to $cl_l(E)$ Proposition \ref{2009.07.23.1}. The first two conditions are obvious. 
Coproducts of $E$-local equivalences between cofibrant objects are $E$-local equivalences which implies condition (3). Condition (4) follows easily from Lemma \ref{d7.20.5}.
The same lemma together with the fact that a morphism of pull-back squares of simplicial sets where vertical arrows are fibrations and which is a weak equivalence on three generating vertices is also a weak equivalence on the fourth vertex, implies condition (5). 
\end{proof}

\begin{theorem}
\llabel{exlemma} 
For any set of morphisms $E$ in $\Delta^{op}Rad(C)$ there exists a functor 
$$Ex_E:\Delta^{op}Rad(C)\sr \Delta^{op}C^{\#}$$
and a natural transformation $Id\sr Ex_E$ such that one has:
\begin{enumerate}
\item\llabel{exlemma1} for any $X$ the object $Ex_E(X)$ is $E$-local,
\item\llabel{exlemma2} for $X$ in $\Delta^{op}C^{\#}$ the morphism $X\sr Ex_E(X)$ is in $cl_{\tdl}(Cof(E)\amalg Id_C)$,
\item\llabel{exlemma3} for $X$ in $\Delta^{op}C^{\#}$ the morphism $X\sr Ex_E(X)$ is in $cl_{l}(E)$.
\end{enumerate}
\end{theorem}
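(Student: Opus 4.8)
The plan is to realize $Ex_E$ as an $E$-local fibrant replacement produced by a small object argument, arranged so as to remain inside $\Delta^{op}C^{\#}$. First I would reduce $E$-locality to a lifting property. Replace each $f:X\sr X'$ in $E$ by a cofibration $\tilde f:\tilde X\sr\tilde X'$ between cofibrant objects of $\Delta^{op}\bar{C}$ representing $f$ in $H(C)$ (take $Cof(X)\sr X\sr X'$ and factor it as a cofibration followed by a trivial fibration; by Proposition \ref{2009cofrep} the result lies in $\Delta^{op}\bar{C}\subset\Delta^{op}C^{\#}$). Set
$$\Lambda=J_{proj}\cup\{\,h(\tilde f,\partial\Delta^n\sr\Delta^n)\ :\ f\in E,\ n\ge 0\,\},$$
where $h$ is the pushout-product of Proposition \ref{l7.26.3}. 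Using Lemma \ref{d7.20.4} together with SM7 (Theorem \ref{2009issimpl}) I would check that $Y$ is $E$-local iff $Y\sr pt$ has the right lifting property with respect to $\Lambda$: lifting against $J_{proj}$ is projective fibrancy, and --- $Y$ being then fibrant and each $\tilde f$ a cofibration --- lifting against $h(\tilde f,\partial\Delta^n\sr\Delta^n)$ for all $n$ says exactly that the fibration $S(\tilde X',Y)\sr S(\tilde X,Y)$ is trivial, i.e.\ a weak equivalence.

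Next, the construction proper. On $\Delta^{op}C^{\#}$, factor $X\sr pt$ functorially by the small object argument for $\Lambda$ (legitimate since $\Delta^{op}Rad(C)$ is locally finitely presentable, Proposition \ref{2009pr1}), obtaining a natural $j_X:X\sr Ex_E(X)$ with $Ex_E(X)\sr pt$ in $\Lambda$-inj; then $Ex_E(X)$ is $E$-local, which gives (1). The point that makes the codomain correct is that every element of $\Lambda$ is a term-wise coprojection whose term-wise complement is a coproduct of representables, so each pushout has terms of the form $Z_m\amalg(\text{coproduct of representables})$; since $C^{\#}$ is closed under finite coproducts with $\bar{C}$ and under filtered colimits, the transfinite composition $j_X$ lands in $\Delta^{op}C^{\#}$. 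To extend $Ex_E$ to all of $\Delta^{op}Rad(C)$ with codomain in $\Delta^{op}C^{\#}$ I would precompose with the resolution $\Delta L_*$ of Proposition \ref{2009sum1} (which takes values in $\Delta^{op}\bar{C}$ and is a natural projective equivalence $\Delta L_*(X)\sr X$) and manufacture the natural map $X\sr Ex_E(X)$ by functorially factoring $\Delta L_*(X)\sr X$ into a trivial cofibration followed by a trivial fibration and extending the localization map across it.

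For (2), I would unwind $j_X$ (for $X\in\Delta^{op}C^{\#}$) as a transfinite composition of pushouts of coproducts of elements of $\Lambda$ and show each stays inside $cl_{\tdl}(Cof(E)\amalg Id_C)$. Pushouts of $J_{proj}$-cells lie in $cl_{\tdl}(\emptyset)$ by Lemma \ref{exactsquares}, so they are harmless. For a localization cell, Lemma \ref{exactsquares} and the pushout-product structure of Proposition \ref{l7.26.3} reduce each pushout of $h(\tilde f,\partial\Delta^n\sr\Delta^n)$, under $\Delta$-closure and finite coproducts, to the class generated by $\tilde f$; and since $\tilde f$ and $Cof(f)$ both represent $f$ and differ by projective equivalences, which lie in $cl_{\tdl}(\emptyset)$ on $\Delta^{op}C^{\#}$ by Theorem \ref{coin}, the $2$-out-of-$3$ property places $\tilde f\in cl_{\tdl}(Cof(E)\amalg Id_C)$. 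The transfinite compositions and the coproducts of cells are then absorbed by $\tdl$-closedness and the coproduct-closure of Proposition \ref{2009fcop}. The delicate piece of bookkeeping here is keeping the finite coproducts that appear generated by $Id_C$ rather than by arbitrary objects.

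Finally (3) is formal: by Lemma \ref{d7.20.5}, for cofibrant ends and any $E$-local $Y$ the functor $S(-,Y)$ sends coproducts to products, so $Cof(f)\amalg Id_X\sr$ induces a weak equivalence on $S(-,Y)$ and is therefore an $E$-local equivalence; thus $Cof(E)\amalg Id_C\subset cl_l(E)$, and since $cl_l(E)$ is $\tdl$-closed (Theorem \ref{2009innov2}) we get $cl_{\tdl}(Cof(E)\amalg Id_C)\subset cl_l(E)$, whence (3) follows from (2). I expect the main obstacle to be the simultaneous demands of (1) and (2): engineering the small object argument so that the replacement genuinely lands in $\Delta^{op}C^{\#}$ and the comparison map lands in the prescribed closure $cl_{\tdl}(Cof(E)\amalg Id_C)$ --- not merely in $cl_l(E)$ --- together with making $X\sr Ex_E(X)$ natural for objects outside $\Delta^{op}C^{\#}$.
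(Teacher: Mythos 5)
Your core construction coincides with the paper's: characterize $E$-locality by a right lifting property against $J_{proj}$ together with pushout-products of cofibrant representatives of elements of $E$ with $\partial\Delta^n\sr\Delta^n$, run the transfinite small object argument (justified by local presentability), observe that all the cells are term-wise coprojections between objects of $\Delta^{op}C^{\#}$ so that Lemma \ref{exactsquares}, the 2-out-of-3 property and closure under filtered colimits give (2), and deduce (3) from $Cof(E)\amalg Id_C\subset cl_l(E)$ together with Theorem \ref{2009innov2}. The one structural difference is that the paper replaces $Cof(f)$ by the cylinder coprojection $i_{Cof(f)}:Cof(X)\sr cyl(Cof(f))$, whereas you replace $f$ by a factorization $\tilde f$; this choice matters, as explained below.

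Two of your steps have genuine problems. First, your extension to all of $\Delta^{op}Rad(C)$ does not produce the natural transformation $Id\sr Ex_E$: factoring $\Delta L_*(X)\sr X$ as a trivial cofibration $\Delta L_*(X)\sr W(X)$ followed by a trivial fibration $W(X)\sr X$ and extending the localization map along the trivial cofibration yields at best a map $W(X)\sr Ex_E(X)$, but the remaining leg $W(X)\sr X$ points \emph{toward} $X$, so no map out of $X$ is obtained; moreover such extensions are produced by lifts, which are not natural. The paper does something simpler: it runs the small object argument on every object of $\Delta^{op}Rad(C)$ directly, so $Id\sr Ex_E$ is automatic, and the membership $Ex_E(X)\in\Delta^{op}C^{\#}$ is verified (and later used) only for $X\in\Delta^{op}C^{\#}$ --- precisely because, as you noticed, cells attached to an arbitrary $X$ keep $X_n$ as a summand of every term. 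Second, your placement of $\tilde f$ in $cl_{\tdl}(Cof(E)\amalg Id_C)$ is not justified as written: the projective equivalences connecting $\tilde f$ to $Cof(f)$ are $\tilde X'\sr X'$ and $Cof(X')\sr X'$, whose common target $X'$ is an arbitrary object of $\Delta^{op}Rad(C)$, so Theorem \ref{coin} does not apply to them and the 2-out-of-3 argument cannot be run through $X'$. This is repairable: lift $Cof(X')\sr X'$ through the trivial fibration $\tilde X'\sr X'$ to get $g:Cof(X')\sr\tilde X'$, a projective equivalence between objects of $\Delta^{op}\bar C$, hence in $cl_{\tdl}(\emptyset)$; but $g\circ Cof(f)$ agrees with $\tilde f$ only up to simplicial homotopy, so you additionally need the (true, but unstated) fact that a $\Delta$-closed class containing one of two simplicially homotopic morphisms contains the other. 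The paper's cylinder trick is designed to avoid exactly this: $i_{Cof(f)}$ is a term-wise coprojection, the comparison $cyl(Cof(f))\sr Cof(X')$ is a homotopy equivalence inside $\Delta^{op}\bar C$, and the triangle expressing $Cof(f)$ as the composite of these two commutes on the nose, so 2-out-of-3 applies immediately.
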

\begin{proof}
For a morphism $f:X\sr X'$ denote by $i_f:X\sr cyl(f)$ the composition
$$X\stackrel{Id\oo\partial_1}{\sr}X\oo\Delta^1\sr cyl(f).$$
For any $f$,  $i_f$ is a term-wise coprojection which is homotopy equivalent to $f$ by  Lemma \ref{cyl}. Let further $A_E$ be the class of morphisms 
of the form
$$(cyl(f)\oo\partial\Delta^i)\amalg_{X\oo\partial\Delta^i}(X\oo\Delta^i)\sr cyl(f)\oo\Delta^i$$
defined by $i_f$ for $f:X\sr X'$ in $Cof(E)$. 

Let $Y$ be an object such that the morphism $Y\sr pt$ has the right lifting property for $A_E\cup J_{proj}$. Then $Y$ is projectively cofibrant and for any $f:X\sr X'$ in $Cof(E)$ the map $S(cyl(f),Y)\sr S(X,Y)$ defined by $f$ is a trivial Kan fibration. Since $cyl(f)$ is homotopy equivalent to $X'$ this implies that the map $S(X',Y)\sr S(X,Y)$ defined by $f$ is a weak equivalence and we conclude by Lemma \ref{d7.20.4} that $Y$ is $E$-local. 

Therefore, by Proposition \ref{2009pr1} and \cite[Prop. 1.3, p.452]{Beke} we can use the transfinite small object argument to construct for any $X$ a functorial decomposition of the morphism $X\sr pt$ of the form $X\sr Ex_E(X)\sr pt$ where $X\sr Ex_E(X)$ is an $(A_E\cup J_{proj})$-cell and $Ex_E(X)$ is $E$-local. 

Elements of $A_E$ and $J_{proj}$ are term-wise coprojections between objects of $\Delta^{op}C^{\#}$. Applying Lemma \ref{exactsquares} and the 2-out-of-3 property of $\tdl$-closed classes we conclude that any $(A_E\cup J_{proj})$-cell which starts at an object of $\Delta^{op}C^{\#}$ lies in $cl_{\tdl}(Cof(E)\amalg C^{\#})=cl_{\tdl}(Cof(E)\amalg Id_C)$. 

By Proposition \ref{2009aoh}(1) we conclude that $Cof(E)\amalg Id_C\subset cl_l(E)$ and therefore by Theorem \ref{2009innov2}, that $cl_{\tdl}(Cof(E)\amalg Id_C)\subset cl_l(E)$. The theorem is proved. 
\end{proof}
\begin{theorem}
\llabel{l7.19.9.1}
\llabel{2009main}
Let $E$ be a set of morphisms in 
$\Delta^{op}C^{\#}$. Then one has
$$cl_l(E)\cap \Delta^{op}C^{\#}=cl_{\tdl}(E\amalg Id_C)$$
where the closure on the right is in $\Delta^{op}C^{\#}$.
\end{theorem}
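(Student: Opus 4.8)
The plan is to prove the equality of the two classes by establishing both inclusions, with the real work concentrated in a single comparison lemma and in the use of the local replacement functor $Ex_E$ from Theorem \ref{exlemma}. The comparison lemma I would isolate first is the identity
$$cl_{\tdl}(E\amalg Id_C)=cl_{\tdl}(Cof(E)\amalg Id_C)$$
of classes in $\Delta^{op}C^{\#}$. To prove it, fix $f:X\sr X'$ in $E$ and $Z\in C$ and form the square whose bottom row is $f\amalg Id_Z$, whose top row is $Cof(f)\amalg Id_Z$, and whose vertical maps are $p_X\amalg Id_Z$ and $p_{X'}\amalg Id_Z$, where $p_X:Cof(X)\sr X$ is the standard cofibrant replacement. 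By Proposition \ref{2009cofrep} the objects $Cof(X)$ lie in $\Delta^{op}\bar C\subset\Delta^{op}C^{\#}$, and $p_X$ is a projective equivalence, hence lies in $cl_{\tdl}(\emptyset)$ by Theorem \ref{coin}. Since $cl_{\tdl}(\emptyset)$ is closed under coproducts by Corollary \ref{2009fcope} and $Id_Z\in cl_{\tdl}(\emptyset)$, the vertical maps lie in $cl_{\tdl}(\emptyset)$ as well. The two-out-of-three property of $\tdl$-closed classes then propagates membership between $f\amalg Id_Z$ and $Cof(f)\amalg Id_Z$ in either direction, yielding the claimed equality.

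Given the lemma, the inclusion $\supset$ is immediate. It was shown in the proof of Theorem \ref{exlemma} that $Cof(E)\amalg Id_C\subset cl_l(E)$, and $cl_l(E)$ is $\tdl$-closed by Theorem \ref{2009innov2}; hence $cl_{\tdl}(Cof(E)\amalg Id_C)\subset cl_l(E)$, and the comparison lemma rewrites the left side as $cl_{\tdl}(E\amalg Id_C)$. Since this closure is computed inside $\Delta^{op}C^{\#}$, every element also lies in $\Delta^{op}C^{\#}$, so $cl_{\tdl}(E\amalg Id_C)\subset cl_l(E)\cap\Delta^{op}C^{\#}$.

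For the reverse inclusion $\subset$, take $f:X\sr Y$ in $cl_l(E)$ with $X,Y\in\Delta^{op}C^{\#}$ and consider the naturality square for $Id\sr Ex_E$, that is $i_Y\circ f=Ex_E(f)\circ i_X$ with $i_X:X\sr Ex_E(X)$ and $i_Y:Y\sr Ex_E(Y)$. By Theorem \ref{exlemma} the maps $i_X,i_Y$ lie both in $cl_l(E)$ (part (\ref{exlemma3})) and in $cl_{\tdl}(Cof(E)\amalg Id_C)$ (part (\ref{exlemma2})), while $Ex_E(X),Ex_E(Y)$ are $E$-local (part (\ref{exlemma1})) and lie in $\Delta^{op}C^{\#}$. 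First I would deduce from two-out-of-three in $cl_l(E)$ that $Ex_E(f)\in cl_l(E)$, and then observe that an $E$-local equivalence between $E$-local objects is a projective equivalence: this is the standard representability argument from Definitions \ref{2007locdef} and \ref{2007eq}, where one lifts the identity along the bijection induced on $Hom_{H(C)}(-,Ex_E(X))$ to build a homotopy inverse in $H(C)$. Hence $Ex_E(f)$ is a projective equivalence between objects of $\Delta^{op}C^{\#}$, so $Ex_E(f)\in cl_{\tdl}(\emptyset)$ by Theorem \ref{coin}. Feeding $i_X,Ex_E(f),i_Y\in cl_{\tdl}(Cof(E)\amalg Id_C)$ into the square and applying two-out-of-three once more forces $f\in cl_{\tdl}(Cof(E)\amalg Id_C)=cl_{\tdl}(E\amalg Id_C)$, which is exactly what is needed.

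The main obstacle is the comparison lemma of the first paragraph: everything downstream is a clean application of two-out-of-three, but the lemma is where one converts the cofibrant-replacement statement of Theorem \ref{exlemma}(\ref{exlemma2}), phrased with $Cof(E)$, into one phrased with the given generators $E$, and this is precisely the point at which the coproduct-closure of $cl_{\tdl}(\emptyset)$ (Corollary \ref{2009fcope}) is essential — without it the verticals $p_X\amalg Id_Z$ need not be projective equivalences, since the projective structure is not assumed left proper. The only other non-formal ingredient is the fact that an $E$-local equivalence between $E$-local objects is a projective equivalence, which I would treat as standard.
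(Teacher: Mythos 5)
Your proposal is correct and is essentially the paper's own proof: your inclusion $\subset$ uses the same naturality square for $Id\sr Ex_E$, the same observation that the resulting $E$-local equivalence between $E$-local objects $Ex_E(X)\sr Ex_E(Y)$ is a projective equivalence and hence lies in $cl_{\tdl}(\emptyset)$ by Theorem \ref{coin}, and your inclusion $\supset$ rests on the same two ingredients (Theorem \ref{2009innov2} and Proposition \ref{2009aoh}, the latter entering through the fact that $Cof(E)\amalg Id_C\subset cl_l(E)$). Your explicit comparison lemma $cl_{\tdl}(E\amalg Id_C)=cl_{\tdl}(Cof(E)\amalg Id_C)$ is a worthwhile clarification rather than a departure: the paper tacitly makes this identification when it cites Theorem \ref{exlemma}(\ref{exlemma2}), which is stated with $Cof(E)\amalg Id_C$, to place the maps $X\sr Ex_E(X)$ in $cl_{\tdl}(E\amalg Id_C)$.
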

\begin{proof}
"$\subset$" Let $f:X\sr Y$ be an element of $cl_l(E)\cap \Delta^{op}C^{\#}$. Consider the diagram 
$$
\begin{CD}
X @>>> Ex_E(X)\\
@VVV @VVV\\
Y @>>> Ex_E(Y)
\end{CD}
$$
The horizontal arrows are in $cl_l(E)$ by  Theorem \ref{exlemma}(\ref{exlemma3}).  Therefore the right hand side vertical arrow is in $cl_l(E)$. Since the objects in question are $E$-local by Theorem \ref{exlemma}(\ref{exlemma1}) the right vertical arrow is a projective equivalence, and by Theorem \ref{coin} it is an element of $cl_{\tdl}(\emptyset)$. By Theorem \ref{exlemma}(\ref{exlemma2}) the horizontal arrows are in $cl_{\tdl}(E\amalg Id_C)$.

"$\supset$"  By Theorem \ref{2009innov2} we have $cl_{\tdl}(E\amalg Id_C)\subset cl_l(E\amalg Id_C)$ and Proposition \ref{2009aoh} implies that $cl_l(E\amalg Id_C)=cl_{l}(E)$.
\end{proof}
\begin{cor}
\llabel{2009main2}
For any set of morphisms $E$ in $\Delta^{op}C^{\#}$ one has
\begin{eq}
\llabel{2009gd1}
cl_l(E)=cl_{\tdl}((E\amalg Id_C)\cup W_{proj})
\end{eq}
\end{cor}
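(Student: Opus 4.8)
The plan is to prove the two inclusions of (\ref{2009gd1}) separately, reducing the general case to morphisms of $\Delta^{op}C^{\#}$, where Theorem \ref{2009main} already identifies $cl_l(E)$. Throughout write $F=cl_{\tdl}((E\amalg Id_C)\cup W_{proj})$ for the right-hand side, and recall that $cl_l(E)$ is $\tdl$-closed by Theorem \ref{2009innov2} and that $\Delta L_*$ is supplied by Proposition \ref{2009sum1}.

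For the inclusion $F\subset cl_l(E)$ I would check that $cl_l(E)$ is a $\tdl$-closed class containing both generating families. It contains $W_{proj}$ because a projective equivalence becomes an isomorphism in $H(C)$ and hence makes the map of Definition \ref{2007eq} a bijection for every $E$-local $Y$, so it is an $E$-local equivalence for any $E$. It contains $E\amalg Id_C$: since $E\subset\Delta^{op}C^{\#}$ and $C^{\#}$ is closed under finite coproducts (a coproduct of two representables is again representable, and filtered colimits pass through), one has $E\amalg Id_C\subset\Delta^{op}C^{\#}$, and Theorem \ref{2009main} gives $E\amalg Id_C\subset cl_{\tdl}(E\amalg Id_C)=cl_l(E)\cap\Delta^{op}C^{\#}$. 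As $cl_l(E)$ is $\tdl$-closed and contains $(E\amalg Id_C)\cup W_{proj}$, it contains the $\tdl$-closure $F$.

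For the reverse inclusion $cl_l(E)\subset F$ I would use $\Delta L_*$ to push an arbitrary morphism into $\Delta^{op}C^{\#}$. Given $f:X\sr Y$ in $cl_l(E)$, the naturality square of $\Delta L_*$ has vertical arrows $\Delta L_*(X)\sr X$ and $\Delta L_*(Y)\sr Y$ in $W_{proj}$ by Proposition \ref{2009sum1}(\ref{all}), hence in both $cl_l(E)$ and $F$. The $2$-out-of-$3$ property of $cl_l(E)$ gives $\Delta L_*(f)\in cl_l(E)$, and since $\Delta L_*$ takes values in $\Delta^{op}\bar C\subset\Delta^{op}C^{\#}$ we obtain, by Theorem \ref{2009main}, that $\Delta L_*(f)\in cl_l(E)\cap\Delta^{op}C^{\#}=cl_{\tdl}(E\amalg Id_C)$, the closure being computed inside $\Delta^{op}C^{\#}$. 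The one step needing a short verification is that this subcategory-closure lands in $F$: the intersection $F\cap\Delta^{op}C^{\#}$ is again a $\tdl$-closed class in $\Delta^{op}C^{\#}$, because $\Delta^{op}C^{\#}$ is stable under the diagonal and filtered-colimit operations defining $\tdl$-closedness, and it contains $E\amalg Id_C$; hence it contains $cl_{\tdl}(E\amalg Id_C)$. Thus $\Delta L_*(f)\in F$, and applying the $2$-out-of-$3$ property of $F$ to the naturality square once more (both vertical arrows lie in $W_{proj}\subset F$) yields $f\in F$.

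The genuinely delicate point is the last one: reconciling the $\tdl$-closure taken inside the subcategory $\Delta^{op}C^{\#}$, as it appears in Theorem \ref{2009main}, with the $\tdl$-closure taken in the ambient $\Delta^{op}Rad(C)$ that defines $F$. This is why I would isolate the observation that $\Delta^{op}C^{\#}$ is closed under diagonals and filtered colimits, so that intersecting any $\tdl$-closed class with it produces a $\tdl$-closed class there; everything else is bookkeeping with the two $2$-out-of-$3$ arguments and direct appeals to Theorems \ref{2009main} and \ref{2009innov2} together with the properties of $\Delta L_*$.
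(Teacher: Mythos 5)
Your proof is correct and essentially matches the paper's: the paper handles the inclusion $\supset$ exactly as your first paragraph does (compressed to ``immediately follows from Theorem \ref{2009main}''), and handles $\subset$ by replacing $f$ with a morphism of $\Delta^{op}C^{\#}$ --- via the standard cofibrant replacement $Cof$ of Proposition \ref{2009cofrep} where you use $\Delta L_*$, an immaterial difference since both land in $\Delta^{op}\bar{C}$ with counit in $W_{proj}$ --- and then applying Theorem \ref{2009main} and the 2-out-of-3 property. Your explicit check that the $\bar{\Delta}$-closure computed inside $\Delta^{op}C^{\#}$ is contained in the ambient closure (because $\Delta^{op}C^{\#}$ is stable under diagonals and filtered colimits) fills in a step the paper's one-line proof leaves implicit, which is a merit rather than a deviation.
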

\begin{proof}
Let $f:X\sr Y$ be in $cl_l(E)$. Taking the standard cofibrant replacement of $f$ and using the 2-out-of-3 property for $cl_{\Delta}$ we may assume that $X$ and $Y$ are in $\Delta^{op}C^{\#}$. Then $f\in cl_{\tdl}(E\amalg Id_C)$. The opposite inclusion immediately follows  from Theorem \ref{2009main}.
\end{proof}
\begin{cor}
\llabel{2009cs1}
Let $C$ be a pointed category and $(X\stackrel{f}{\sr} Y\stackrel{g}{\sr} Z,\,\,\, Z\sr Z\vee_{\LL} \Sigma^1(X))$ a cofibration sequence in $H(C)$. Then one has:
$$g\in cl_{l}(\{X\sr pt\})$$
$$(pt\sr Z)\in cl_{l}(\{f\})$$
\end{cor}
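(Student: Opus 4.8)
The plan is to reduce the statement to a single elementary push-out square and then read both conclusions off Lemma \ref{exactsquares} combined with Theorem \ref{2009main}. The key point is that everything of interest is invariant under isomorphism in $H(C)$, so I am free to choose a convenient strict model of the cofibration sequence.

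First I would replace the abstract cofibration sequence by a concrete one. Whether a morphism lies in $cl_l(E)$ depends only on its image in $H(C)$, and whether an object is $E$-local depends only on the $H(C)$-isomorphism classes of the morphisms in $E$; hence both $cl_l(\{X\sr pt\})$ and $cl_l(\{f\})$, and membership of $g$ and of $pt\sr Z$ in them, are invariant under isomorphism of the data in $H(C)$. By Theorem \ref{2009cfs} I may therefore assume that $X\stackrel{f}{\sr}Y\stackrel{g}{\sr}Z$ is a genuine term-wise coprojection sequence in $\Delta^{op}C^{\#}$ with $Z=Y/X$. Since $C$ is pointed, $pt$ is the zero object and is representable, so it too lies in $\Delta^{op}C^{\#}$; thus both localizing sets $\{X\sr pt\}$ and $\{f\}$ consist of morphisms of $\Delta^{op}C^{\#}$, as required by Theorem \ref{2009main}.

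Next I would apply Lemma \ref{exactsquares} to the defining push-out square
$$
\begin{CD}
X @>f>> Y\\
@VVV @VVgV\\
pt @>>> Z
\end{CD}
$$
which is elementary precisely because $f$ is a term-wise coprojection. With the matching $e_B=f$, $u=(X\sr pt)$, $v=g$, $e_A=(pt\sr Z)$, the two inclusions supplied by that lemma read
$$g\in cl_{\Delta,\amalg_{<\infty}}(\{X\sr pt\}),\qquad (pt\sr Z)\in cl_{\Delta,\amalg_{<\infty}}(\{f\}).$$
This is the heart of the argument: both assertions of the corollary fall out of a single square, since the first datum of the cofibration sequence determines the square and hence both morphisms at once.

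Finally I would promote these $\Delta$-closures to $\tdl$-closures. By Proposition \ref{fcop} one has $cl_{\Delta,\amalg_{<\infty}}(E')=cl_{\Delta}(E'\amalg Id_C)$ for any $E'$, and since every $\tdl$-closed class is in particular $\Delta$-closed this is contained in $cl_{\tdl}(E'\amalg Id_C)$. Hence $g\in cl_{\tdl}((X\sr pt)\amalg Id_C)$ and $(pt\sr Z)\in cl_{\tdl}(\{f\}\amalg Id_C)$, and Theorem \ref{2009main} identifies these $\tdl$-closures in $\Delta^{op}C^{\#}$ with $cl_l(\{X\sr pt\})\cap\Delta^{op}C^{\#}$ and $cl_l(\{f\})\cap\Delta^{op}C^{\#}$ respectively, giving the corollary. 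The step demanding the most care is the opening reduction, namely verifying that both the relation $cl_l$ and the localizing sets themselves are genuinely invariant under $H(C)$-isomorphism, so that the passage to a term-wise coprojection sequence in $\Delta^{op}C^{\#}$ via Theorem \ref{2009cfs} is legitimate; once that is secured, the remainder is the mechanical combination of Lemma \ref{exactsquares} with Theorem \ref{2009main}.
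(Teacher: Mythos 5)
Your proof follows the paper's own argument almost step for step: reduce via Theorem \ref{2009cfs} to a term-wise coprojection sequence in $\Delta^{op}C^{\#}$, read both memberships off Lemma \ref{exactsquares} applied to the defining elementary push-out square (your matching of the square with (\ref{elexe1}) is correct, and both conclusions do fall out of the single square), and then pass to $cl_l$ via Theorem \ref{2009main}. Your explicit justification of the $H(C)$-invariance of the reduction is a point the paper leaves implicit, and it is done correctly.

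The one place where your argument has a genuine, though local, gap is the promotion step at the end. The elementary push-out square lives in $\Delta^{op}C^{\#}$, so the class that Lemma \ref{exactsquares} hands you is $cl_{\Delta,\amalg_{<\infty}}(\{u\})$ computed in $\Delta^{op}C^{\#}$, which by Proposition \ref{fcop} (applied to the category $C^{\#}$, which is the category with finite coproducts in play here) equals $cl_{\Delta}(\{u\}\amalg Id_{C^{\#}})$ — coproducts with arbitrary objects of $C^{\#}$ are allowed, not only with objects of $C$. Your claimed identity $cl_{\Delta,\amalg_{<\infty}}(E')=cl_{\Delta}(E'\amalg Id_{C})$ is therefore not what Proposition \ref{fcop} gives in this ambient category, and the inclusion you actually need, $cl_{\Delta}(E'\amalg Id_{C^{\#}})\subset cl_{\tdl}(E'\amalg Id_{C})$, is not formal: one must know that $cl_{\tdl}(E'\amalg Id_{C})$ is closed under coproducts with arbitrary objects of $C^{\#}$. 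This is exactly what Proposition \ref{2009fcop} provides (using that every object of $C^{\#}$ is a filtered colimit of objects of $C$ and that $\tdl$-closed classes absorb filtered colimits), and it is precisely the extra citation in the paper's proof, which invokes Theorem \ref{2009main} \emph{together with} Proposition \ref{2009fcop}. So the gap is real but closes immediately once you replace your appeal to Proposition \ref{fcop} by an appeal to Proposition \ref{2009fcop}; everything else in your write-up matches the paper's argument.
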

\begin{proof}
By Theorem \ref{2009cfs} we may assume that there is an elementary push-out square
$$
\begin{CD}
X @>f>> Y\\
@VVV @VVgV\\
pt @>>> Z
\end{CD}
$$
in $\Delta^{op}C^{\#}$. By Lemma \ref{exactsquares} we conclude that $g\in cl_{\Delta,\amalg_{<\infty}}(\{X\sr pt\})$ and $(pt\sr Z)\in cl_{\Delta,\amalg_{<\infty}}(\{f\})$. Applying Theorem \ref{2009main} and Proposition \ref{2009fcop} we get the conclusion of the proposition.
\end{proof}

\section{Functoriality results}
Let $C,C'$ be two categories with finite coproducts and $F:C\sr Rad(C')$ be a functor. Considering $Rad(C')$ as a full subcategory of $Funct((C')^{op},Sets)$ we get a pair of adjoint functors $F^*,F_*$ where $F^*:Funct(C^{op},Sets)\sr Funct((C')^{op},Sets)$ is the left adjoint which extends $F$. As a left adjoint it commutes with colimits and since it takes representable functors to radditive ones we conclude by Lemma \ref{2007s1} that it defines a functor
$$F^{rad}:Rad(C)\sr Rad(C')$$
\begin{lemma}
\llabel{2007f3}
For any $F$ the functor $F^{rad}$ commutes with filtered colimits and reflexive coequalizers.
\end{lemma}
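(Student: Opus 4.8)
The plan is to exploit the fact that $F^{rad}$ is simply the restriction to $Rad(C)$ of the colimit-preserving functor $F^*$, and then to identify the two relevant kinds of colimits in $Rad(C)$ and $Rad(C')$ with the corresponding colimits in the ambient functor categories. First I would make explicit that $F^*$ carries $Rad(C)$ into $Rad(C')$, so that $F^{rad}$ is literally $F^*$ restricted to $Rad(C)$: by Lemma \ref{2007s1} every radditive functor is a reflexive coequalizer of a pair whose terms are filtered colimits of representables, and since $F^*$ commutes with all colimits and sends representable functors to radditive ones, while filtered colimits and reflexive coequalizers of radditive functors are again radditive by Proposition \ref{2009elprop}(\ref{rad5}) and (\ref{2007rad1}), it follows that $F^*(G)$ is radditive whenever $G$ is.

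The heart of the argument is the observation that, although a general colimit in $Rad(C)$ must be formed by applying the radditivization functor $r$ to the colimit taken in $Funct(C^{op},Sets)$ (Lemma \ref{rad3}), filtered colimits and reflexive coequalizers are precisely the two classes of colimits that are computed directly inside the functor category, by Proposition \ref{2009elprop}(\ref{rad5}) and (\ref{2007rad1}) respectively; and the same holds verbatim for $Rad(C')$. Given a filtered diagram, or a reflexive coequalizer diagram, in $Rad(C)$, I would therefore compute its colimit in $Funct(C^{op},Sets)$, apply $F^*$ using that a left adjoint preserves colimits, and then recognize the resulting colimit of radditive functors in $Funct((C')^{op},Sets)$ as the colimit computed in $Rad(C')$, again by Proposition \ref{2009elprop}(\ref{rad5}) and (\ref{2007rad1}). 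In the reflexive case one checks the trivial point that $F^*$ preserves the common section, so the image diagram is still reflexive. Chaining these identifications yields $F^{rad}(colim) = colim\, F^{rad}$ with the right-hand colimit formed in $Rad(C')$.

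I do not expect a genuine obstacle here: everything except one step is formal from $F^*$ being a left adjoint. The single piece of real content—and the reason the statement is restricted to filtered colimits and reflexive coequalizers rather than all colimits—is the agreement between these colimits in $Rad$ and in the surrounding functor category, which is exactly what Proposition \ref{2009elprop}(\ref{rad5}) and (\ref{2007rad1}) supply. For arbitrary colimits this agreement fails (one must radditivize, as in Lemma \ref{rad3}), and $F^{rad}$ correspondingly need not commute with them, so the restriction in the statement is essential rather than cosmetic.
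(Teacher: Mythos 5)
Your proof is correct and is essentially the paper's own argument, written out in full: the paper's one-line proof cites exactly Proposition \ref{2009elprop}(\ref{rad5},\ref{2007rad1}), relying implicitly on the fact (already noted when $F^{rad}$ was defined) that $F^{rad}$ is the restriction of the colimit-preserving left adjoint $F^*$, and that filtered colimits and reflexive coequalizers in $Rad(C)$ and $Rad(C')$ are computed in the ambient functor categories. Your identification of this agreement of colimits as the single point of real content, and your remark on why the statement cannot extend to arbitrary colimits, match the paper's intent precisely.
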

\begin{proof}
It follows from Proposition \ref{2009elprop}(\ref{rad5} ,\ref{2007rad1}).
\end{proof} 
We will use the notation $F^{rad}$ also in the case when $F$ is a functor from $C$ to $C'$ considering $C'$ as a full subcategory of $Rad(C')$ or from $C$ to $(C')^{\#}$ or, equivalently, when $F$ is a continuous functor from $C^{\#}$ to $(C')^{\#}$.  Lemma \ref{2007f3} implies in particular that for any $F:C\sr C'$ the functor $F^{rad}$ takes $C^{\#}$ to $(C')^{\#}$ and that the resulting functor $C^{\#}\sr (C')^{\#}$ is continuous. One observes easily that this construction commutes with compositions of functors. 
\begin{example}
\llabel{exsp2}\rm
Let $C$ be the category of  pointed finite simplicial sets, $D$ the category of free finite simplicial sets and $F:C\sr D$ the forgetful functor. Then one can easily see that $F^{rad}$ does not take $\bar{C}$ to $\bar{D}$ since an infinite wedge of finite simplicial sets can not be represented as an infinite coproduct of finite simplicial sets. 
\end{example}
\begin{lemma}
\llabel{2009lcomp}
Let $C^{\#}\stackrel{F}{\sr} (C')^{\#}\stackrel{G}{\sr} (C'')^{\#}$ be a composable pair of continuous functors (resp. a composable pair of functors $C\stackrel{F}{\sr} C'\stackrel{G}{\sr} C''$). Then there is a canonical isomorphism $(F\circ G)^{rad}=F^{rad}\circ G^{rad}$.
\end{lemma}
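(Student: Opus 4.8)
The plan is to show that both $(F\circ G)^{rad}$ and $F^{rad}\circ G^{rad}$ are functors $Rad(C)\sr Rad(C'')$ which commute with filtered colimits and reflexive coequalizers and which agree, naturally, on the representable functors; the characterization of radditive functors in Lemma \ref{2007s1} will then force these two functors to be canonically isomorphic. Throughout I read composition in the diagrammatic order of the statement, so that $F\circ G$ means ``first $F$, then $G$'' and correspondingly $F^{rad}\circ G^{rad}$ sends $Rad(C)$ to $Rad(C'')$, matching the target categories.

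First I would record the two preservation properties. By Lemma \ref{2007f3} each of $F^{rad}$ and $G^{rad}$ commutes with filtered colimits and reflexive coequalizers, hence so does the composite $F^{rad}\circ G^{rad}$; the same lemma gives the property for $(F\circ G)^{rad}$ directly. I would also recall, from the construction of $F^{rad}$ as the restriction of the colimit-preserving extension $F^*$, that $F^{rad}$ agrees with $F$ on representables and, being continuous and agreeing with the continuous $F$ on representables, restricts to $F$ on all of $C^{\#}$; likewise $G^{rad}$ restricts to $G$ on $(C')^{\#}$.

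The key pointwise comparison is on a representable $U\in C$. Here $(F\circ G)^{rad}(U)=G(F(U))$ by construction. On the other side $F^{rad}(U)=F(U)$, which is an object of $(C')^{\#}$, so $G^{rad}(F^{rad}(U))=G^{rad}(F(U))=G(F(U))$, where the last equality uses that $G^{rad}$ restricts to $G$ on $(C')^{\#}$ as noted above. Thus the two functors agree, naturally in $U$, on representables. This step is where the continuity of $G$ (via Lemma \ref{2007f3}) is genuinely used, since $F(U)$ is in general only a filtered colimit of representable functors on $C'$ and not itself representable.

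Finally I would globalize. By Lemma \ref{2007s1} every object of $Rad(C)$ is the reflexive coequalizer of a pair of morphisms whose terms are filtered colimits of representable functors, and these presentations are natural in the object. Since both functors preserve filtered colimits and reflexive coequalizers and agree naturally on representables, applying them to such a presentation produces canonically isomorphic reflexive coequalizers of canonically isomorphic filtered colimits; one then checks these isomorphisms are compatible with morphisms of presentations, giving the desired natural isomorphism $(F\circ G)^{rad}\cong F^{rad}\circ G^{rad}$. I expect the main obstacle to be precisely this last bookkeeping: verifying that the isomorphism assembled from the comparison on representables is independent of the chosen presentation and natural in the argument, i.e.\ that the density principle — a functor preserving filtered colimits and reflexive coequalizers is determined up to canonical isomorphism by its restriction to representables — applies cleanly here. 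The case of a composable pair $C\sr C'\sr C''$ is identical, using that $F^{rad}$ then restricts to $F$ on $C$.
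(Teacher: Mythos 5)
Your proof is correct and is essentially the paper's own argument (the paper disposes of the lemma in one line, citing Lemma \ref{2007f3} together with the fact that every radditive functor on $C$ is a reflexive coequalizer of a diagram in $C^{\#}$), which you have simply written out in full detail, including the correct identification of where continuity of $G$ is genuinely needed. The presentation-independence bookkeeping you worry about at the end is resolved by using the canonical functorial presentation (\ref{chuck1}) from the proof of Proposition \ref{rad2}, which is natural in the object, so the assembled isomorphism is automatically natural and independent of choices.
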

\begin{proof}
It follows from Lemma \ref{2007f3} since any radditive functor on $C$ is a reflexive coequalizer of a diagram in $C^{\#}$.
\end{proof}
\begin{lemma}
\llabel{2007f2}
Let  $F$ be a functor $C\sr Rad(C')$. Then the functor $F^{rad}$ has a right adjoint $F_{rad}$ if and only if $F$ respects finite coproducts. In that case the right adjoint maps $X\in Rad(C')$ to the functor given by 
\begin{eq}
\llabel{2009radj}
U\mapsto Hom_{Funct((C')^{op},Sets)}(F(U),X)
\end{eq}
\end{lemma}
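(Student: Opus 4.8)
The plan is to treat the two implications separately and to extract the explicit formula as a by-product of the ``if'' direction, relying throughout on the fact (Lemma \ref{2007f3}) that $F^{rad}$ commutes with filtered colimits and reflexive coequalizers, since these are the only colimits we are guaranteed to have preserved.

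For the ``only if'' direction I would argue that a functor admitting a right adjoint commutes with all colimits, in particular with finite coproducts and with the initial object. The Yoneda embedding $\iota:C\sr Rad(C)$ respects finite coproducts by Proposition \ref{2009elprop}(\ref{rad6}), and $F^{rad}\circ\iota$ is naturally isomorphic to $F$ because $F^{rad}$ extends $F$, sending the representable functor of $U$ to $F(U)$. Hence $F=F^{rad}\circ\iota$, as a composite of two functors respecting finite coproducts, respects finite coproducts.

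For the ``if'' direction, suppose $F$ respects finite coproducts and set $F_{rad}(X)(U)=Hom(F(U),X)$. First I would verify that $F_{rad}(X)$ is radditive: one has $F_{rad}(X)(\emptyset)=Hom(F(\emptyset),X)=pt$ since $F(\emptyset)$ is initial in $Rad(C')$, and $F_{rad}(X)(U\amalg V)=Hom(F(U)\amalg F(V),X)=F_{rad}(X)(U)\times F_{rad}(X)(V)$, both steps using the hypothesis on $F$. Next, combining Lemma \ref{2007f3} with the description of a coproduct of representables as the filtered colimit of its finite subcoproducts (Lemma \ref{rad21}) and the hypothesis on $F$, I would show that $F^{rad}$ carries any coproduct of representables $\amalg_j U_j$ to the coproduct $\amalg_j F(U_j)$ in $Rad(C')$; preservation of reflexive coequalizers is given directly by Lemma \ref{2007f3}.

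Finally, to produce the adjunction isomorphism $Hom(F^{rad}(G),X)\cong Hom(G,F_{rad}(X))$ I would invoke the canonical presentation of any radditive functor $G$ as a reflexive coequalizer of coproducts of representables constructed in the proof of Proposition \ref{rad2}. Both contravariant functors $Hom(F^{rad}(-),X)$ and $Hom(-,F_{rad}(X))$ convert this presentation into the same limit: the latter because a representable functor always sends colimits to limits, the former because $F^{rad}$ preserves the coproducts of representables and the reflexive coequalizer involved (the previous step), after which $Hom(-,X)$ sends the resulting colimit to a limit. Since on a single representable $h_U$ both sides reduce by the Yoneda lemma to $Hom(F(U),X)=F_{rad}(X)(U)$ naturally in $U$, the two functors agree on all of $Rad(C)$, which yields the adjunction and at the same time identifies the right adjoint with the stated formula. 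The point demanding care, and around which the whole argument is organized, is precisely that Lemma \ref{2007f3} supplies only filtered colimits and reflexive coequalizers rather than arbitrary colimits: the proof is therefore arranged so that only these are ever used, the coproduct-of-representables case being reduced to finite coproducts through a filtered colimit.
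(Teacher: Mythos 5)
Your proof is correct, but the ``if'' direction takes a genuinely different route from the paper's. The paper works in the ambient presheaf categories: the left Kan extension $F^*:Funct(C^{op},Sets)\sr Funct((C')^{op},Sets)$ \emph{always} has a right adjoint $F_*$ given by exactly the formula (\ref{2009radj}); since $F^{rad}$ is by construction the restriction of $F^*$ to the full subcategory $Rad(C)$, the only thing to check is that $F_*$ preserves radditivity when $F$ respects finite coproducts (your radditivity computation for $F_{rad}(X)$ is precisely this check), after which the adjunction restricts to the full subcategories for free. You instead rebuild the adjunction entirely inside the categories of radditive functors: you show $F^{rad}$ preserves coproducts of representables (Lemma \ref{rad21} plus Lemma \ref{2007f3} plus the hypothesis), take the canonical reflexive-coequalizer presentation (\ref{chuck1}) of an arbitrary $G\in Rad(C)$, and check that both $Hom(F^{rad}(-),X)$ and $Hom(-,F_{rad}(X))$ send it to the same equalizer, reducing to Yoneda on representables. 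This is longer but more self-contained, and it makes visible exactly which preservation properties carry the argument -- filtered colimits and reflexive coequalizers from Lemma \ref{2007f3}, finite coproducts from the hypothesis -- whereas the paper's one-line proof hides this inside the general theory of Kan extensions. One point you should make explicit to fully close your argument: the isomorphism you construct must be natural in $G$ as well as in $X$, which follows because the presentation (\ref{chuck1}) is functorial in $G$ and all your term-wise identifications are Yoneda isomorphisms, hence natural; this is routine but is the price of not inheriting naturality from a pre-existing adjunction. Your ``only if'' direction coincides with the paper's: left adjoints preserve colimits, and the embedding $C\sr Rad(C)$ respects finite coproducts by Proposition \ref{2009elprop}(\ref{rad6}).
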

\begin{proof}
"If" The functor $F^*:Funct(C^{op},Sets)\sr Funct((C')^{op},Sets)$ always has a right adjoint $F_*$ given by (\ref{2009radj}) and if $F$ commutes with finite coproducts then $F_*$ takes radditive functors to radditive functors and defines a right adjoint to $F^{rad}$. 

"Only if" It follows from the fact that a left adjoint preserves colimits and therefore if $F^{rad}$ has a right adjoint, $F$ must commute with finite coproducts by Proposition \ref{2009elprop}(\ref{rad6}).
\end{proof}
\begin{lemma}
\llabel{2009frad1}
Let $F:C\sr C'$ be a functor which commutes with finite coproducts. Then $F_{rad}$ commutes with filtered colimits and reflexive coequalizers.
\end{lemma}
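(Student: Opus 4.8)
The plan is to reduce $F_{rad}$ to a plain restriction (``precomposition'') functor and then invoke the fact, already recorded in Proposition~\ref{2009elprop}, that filtered colimits and reflexive coequalizers of radditive functors are computed objectwise.

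First I would unwind the explicit description of $F_{rad}$ given by Lemma~\ref{2007f2}: it sends $X\in Rad(C')$ to the functor $U\mapsto Hom_{Funct((C')^{op},Sets)}(F(U),X)$. The hypothesis that $F$ is a functor $C\sr C'$, rather than merely $C\sr Rad(C')$, is what makes each $F(U)$ a representable functor on $C'$; by the Yoneda lemma the $Hom$-set above is then canonically $X(F(U))$, so that $F_{rad}(X)=X\circ F^{op}$ is simply precomposition with $F^{op}:C^{op}\sr (C')^{op}$. Note that the same finite-coproduct hypothesis is exactly what guarantees that $X\circ F^{op}$ is again radditive (for radditive $X$ one has $X(F(U_1\amalg U_2))=X(F(U_1)\amalg F(U_2))=X(F(U_1))\times X(F(U_2))$), so this restriction indeed lands in $Rad(C)$.

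Next I would recall from Proposition~\ref{2009elprop}(\ref{rad5}) and (\ref{2007rad1}) that a filtered colimit (resp. a reflexive coequalizer) of radditive functors is computed in the underlying category of presheaves, i.e. objectwise. Given a filtered system $(X_i)_{i\in I}$ in $Rad(C')$, evaluating at $U\in C$ yields $F_{rad}(colim_i X_i)(U)=(colim_i X_i)(F(U))=colim_i\,X_i(F(U))$, using the objectwise computation of the colimit in $Rad(C')$ at the object $F(U)$; on the other hand $(colim_i F_{rad}(X_i))(U)=colim_i\,F_{rad}(X_i)(U)=colim_i\,X_i(F(U))$, using the objectwise computation of the colimit in $Rad(C)$ at $U$. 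Thus both functors are canonically identified, objectwise and naturally, with $U\mapsto colim_i\,X_i(F(U))$, so the canonical comparison morphism between them is an isomorphism. Replacing the filtered-colimit clause by the reflexive-coequalizer clause of Proposition~\ref{2009elprop}(\ref{2007rad1}) throughout handles reflexive coequalizers by the identical computation.

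The argument is almost entirely formal once the Yoneda reduction of the first paragraph is in place, so I do not expect a genuine obstacle; the one point that deserves explicit mention, and the place where the finite-coproduct assumption is used, is precisely that reduction, which both produces $F_{rad}$ (via Lemma~\ref{2007f2}) and certifies that the objectwise formula stays inside $Rad(C)$. This mirrors the proof of the companion statement for $F^{rad}$ in Lemma~\ref{2007f3}.
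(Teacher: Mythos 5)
Your proposal is correct and follows essentially the same route as the paper: the paper also identifies $F_{rad}$ as the restriction to $Rad(C')$ of the precomposition functor $F_*$ (which commutes with all colimits of presheaves, computed objectwise) and then invokes Proposition \ref{2009elprop}(\ref{rad5},\ref{2007rad1}) to conclude that filtered colimits and reflexive coequalizers of radditive functors are computed in presheaves, hence are preserved. Your write-up merely makes explicit the Yoneda identification $Hom(F(U),X)\cong X(F(U))$ and the objectwise computation that the paper leaves implicit.
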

\begin{proof}
The functor $F_{rad}$ is the restriction to $Rad(C')$ of the functor $F_*$ which commutes with all colimits. The inclusion $Rad(C')\sr Funct((C')^{op},Sets)$ commutes with filtered colimits and reflexive coequalizers by Proposition \ref{2009elprop}(\ref{rad5},\ref{2007rad1}). Therefore $F_{rad}$ also commutes with these two types of colimits.
\end{proof}
Note that the conclusion of Lemma \ref{2009frad1} may be false for continuous functors $C^{\#}\sr (C')^{\#}$. 

Let $F:C\sr C'$ be a functor which commutes with finite coproducts and which is surjective on objects. Then we get an adjoint pair of functors $(F^{rad},F_{rad})$ between $Rad(C)$ and $Rad(C')$ such that $F_{rad}$ reflects isomorphisms. By Lemma \ref{2009frad1}, $F_{rad}$ commutes with reflexive coequalizers. Therefore, by Beck's Theorem (see \cite{Beck}) in the reflexive coequalizer form we get the following result.
\begin{proposition}
\llabel{2009mon}
Under the assumptions made above the category $Rad(C')$ is equivalent to the category of $M$-algebras, where $M=F^{rad}F_{rad}$ is the monad (triple) defined by the adjoint pair $(F^{rad},F_{rad})$. 
\end{proposition}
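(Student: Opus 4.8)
The plan is to recognize the statement as a direct instance of the monadicity theorem and to verify its hypotheses for the right adjoint $F_{rad}:Rad(C')\sr Rad(C)$. Recall that an adjunction $F^{rad}\dashv F_{rad}$ automatically produces the monad $M=F^{rad}F_{rad}$ on $Rad(C)$, i.e.\ the endofunctor $X\mapsto F_{rad}(F^{rad}(X))$ (in the diagrammatic composition order used throughout the paper), together with a comparison functor $Rad(C')\sr Rad(C)^{M}$ into the category of $M$-algebras. The assertion of the proposition is precisely that this comparison functor is an equivalence, that is, that $F_{rad}$ is monadic.

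Since $F_{rad}$ is only known to preserve reflexive coequalizers (Lemma \ref{2009frad1}), and not arbitrary coequalizers, I would invoke the reflexive-coequalizer form of Beck's theorem: a functor admitting a left adjoint is monadic provided that (i) it reflects isomorphisms and (ii) its source category has, and the functor preserves, coequalizers of reflexive pairs. All three ingredients are supplied by the preceding material. The left adjoint is $F^{rad}$, which exists because $F$ commutes with finite coproducts (Lemma \ref{2007f2}). The functor $F_{rad}$ reflects isomorphisms, as noted in the paragraph introducing the proposition, using that $F$ is surjective on objects. Finally, $Rad(C')$ is cocomplete and hence has reflexive coequalizers (Lemma \ref{rad3}, with Proposition \ref{2009elprop}(\ref{2007rad1}) describing how they are computed), and $F_{rad}$ preserves them by Lemma \ref{2009frad1}. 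With these checks in place the reflexive monadicity theorem applies verbatim and yields the equivalence $Rad(C')\simeq Rad(C)^{M}$.

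There is essentially no genuine obstacle here beyond the bookkeeping, since the real content was already isolated in Lemmas \ref{2007f2} and \ref{2009frad1}; the main thing to get right is simply citing the correct form of the theorem. The one point worth flagging is \emph{why} the reflexive version suffices: every $M$-algebra is canonically the coequalizer of a reflexive pair of free $M$-algebras (its two-step bar presentation), so preservation of reflexive coequalizers is exactly what is needed for the comparison functor to be fully faithful and essentially surjective. Because $F_{rad}$ commutes with reflexive (and filtered) colimits but not with all colimits, the reflexive form is both the applicable and the natural version of Beck's theorem to use.
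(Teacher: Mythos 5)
Your proposal is correct and follows essentially the same route as the paper: the paper's proof consists precisely of noting that $F_{rad}$ reflects isomorphisms (since $F$ is surjective on objects), that it preserves reflexive coequalizers (Lemma \ref{2009frad1}), and then invoking Beck's theorem in the reflexive coequalizer form. Your additional remarks --- the explicit check that $Rad(C')$ has reflexive coequalizers via Lemma \ref{rad3}, and the explanation via the bar presentation of why the reflexive form of monadicity suffices --- are correct elaborations of what the paper leaves implicit.
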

\begin{cor}
\llabel{2009mon1}
A category is equivalent to the category of radditive functors if and only if it is equivalent to the category of algebras over a continuous  monad $M$ on the category  $Sets^A$ of families of sets.
\end{cor}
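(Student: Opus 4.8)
The statement is a biconditional, and the two directions have very different flavor. The forward (``only if'') direction is almost immediate from Proposition \ref{2009mon} once one exhibits a generating category whose radditive functors form $Sets^A$; the reverse (``if'') direction requires reconstructing a category $C''$ with finite coproducts out of the monad and identifying $M\text{-alg}$ with $Rad(C'')$. The plan is to treat them separately.

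\textbf{The ``only if'' direction.} Suppose the category is $Rad(C)$ for some $C$ with finite coproducts, and let $A$ be its set of objects. Let $D=A^{\amalg_{<\infty}}$ be the category of finite coproducts of representable presheaves on the discrete category $A$; by Example \ref{2009exs1}(\ref{st1}) one has $Rad(D)\simeq Funct(A^{op},Sets)=Sets^A$. I would then define a functor $F:D\sr C$ sending a formal finite coproduct of elements of $A$ (objects of $C$) to the corresponding coproduct in $C$, with the evident effect on morphisms (a map of finite coproducts is a reindexing compatible with the $A$-labels). By construction $F$ commutes with finite coproducts and is surjective on objects, so Proposition \ref{2009mon} yields an equivalence $Rad(C)\simeq M\text{-alg}$ for the monad $M=F^{rad}F_{rad}$ on $Rad(D)=Sets^A$. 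Finally $M$ is continuous: by Lemma \ref{2007f3} the functor $F^{rad}$ commutes with filtered colimits, by Lemma \ref{2009frad1} so does $F_{rad}$, hence so does their composite.

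\textbf{The ``if'' direction: construction.} Conversely, given a continuous monad $M$ on $Sets^A=Rad(D)$ with free/forgetful adjunction $(Free_M\dashv U_M)$, I would set $C''$ to be the full subcategory of $M\text{-alg}$ on the free algebras $Free_M(d)$, $d\in D$. Since $Free_M$ preserves coproducts, $C''$ has finite coproducts and initial object $Free_M(\emptyset)$, so $Rad(C'')$ makes sense. I would then study the restricted Yoneda functor $N:M\text{-alg}\sr Funct((C'')^{op},Sets)$, $N(X)=Hom(-,X)|_{C''}$; because $Free_M$ preserves coproducts, $N(X)$ is radditive, so $N$ factors through $Rad(C'')$, and the goal becomes proving $N$ is an equivalence. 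Two preliminary inputs feed into this. First, each $d\in D$ is compact in $Sets^A$ and, using continuity of $M$ (so that $U_M$ preserves filtered colimits), each $Free_M(d)$ is compact in $M\text{-alg}$; hence $N$ preserves filtered colimits. Second, by Lemma \ref{2007s1} every object of $Rad(C'')$ is a reflexive coequalizer of a pair whose terms are filtered colimits of representables $y(Free_M d)=N(Free_M d)$, while every $M$-algebra admits its canonical (monadic bar) presentation as a reflexive coequalizer of free algebras, whose terms are filtered colimits of objects of $C''$ since every family of sets is a filtered colimit of objects of $D$.

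\textbf{The main obstacle.} Given these inputs, the equivalence will follow once I show $N$ \emph{also} preserves reflexive coequalizers, because then $N$ carries the canonical presentation of each algebra to the Lemma \ref{2007s1} presentation of a radditive functor and, being the Yoneda embedding on $C''$, is forced to be fully faithful and essentially surjective. I expect this preservation to be the hard part. I would reduce it to showing that $U_M$ preserves reflexive coequalizers: since $Hom(Free_M d,-)=Hom_{Sets^A}(d,U_M(-))$ and a finite coproduct of representables in $Sets^A$ evaluates to a finite product of sets, preservation by $U_M$ plus the fact that reflexive coequalizers commute with finite products in $Sets$ (the computation underlying Proposition \ref{2009elprop}(\ref{2007rad1})) gives preservation by $N$. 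The genuinely new point, not formal from continuity alone, is that a continuous monad $M$ on $Sets^A$ preserves reflexive coequalizers; I would argue this by writing a finitary functor as a filtered colimit of finite coproducts of representables $Hom_{Sets^A}(d,-)$, each of which preserves reflexive coequalizers by the same finite-products lemma, and invoking that filtered colimits commute with reflexive coequalizers. Granting this, $M\text{-alg}$ has reflexive coequalizers created by $U_M$ (so $U_M$, hence $N$, preserves them), and combining with the compactness input, Lemma \ref{2007s1} and Lemma \ref{rad3} yields the equivalence $M\text{-alg}\simeq Rad(C'')$, completing the proof.
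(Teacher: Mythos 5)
Your proposal takes a genuinely different route from the paper: the paper's entire ``proof'' of Corollary \ref{2009mon1} is a citation of \cite[Th. 3.18, p.149]{Adamek} (finitary varieties are precisely the categories of algebras of finitary monads on $Sets^A$), together with the earlier remark identifying categories of radditive functors with finitary varieties. You instead attempt a self-contained argument from the paper's own toolkit. Your ``only if'' direction is correct and clean: with $A$ the set of objects of $C$ and $D=A^{\amalg_{<\infty}}$, the coproduct-preserving, object-surjective functor $D\sr C$ combined with Proposition \ref{2009mon} and Lemmas \ref{2007f3}, \ref{2009frad1} does yield a continuous monad on $Rad(D)\simeq Sets^A$ whose algebras form $Rad(C)$. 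The architecture of your ``if'' direction (compactness of the free algebras $Free_M(d)$, bar presentation of an algebra versus the presentation of Lemma \ref{2007s1}, reduction to preservation of reflexive coequalizers) is also sound.

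There is, however, a genuine error in your justification of the step you yourself single out as the crux. It is \emph{false} that every continuous endofunctor of $Sets^A$ is a filtered colimit of finite coproducts of representables $Hom_{Sets^A}(d,-)$. Any such filtered colimit preserves pullbacks: representables preserve all limits, coproducts in $Sets$ commute with connected limits, and filtered colimits commute with finite limits. But the free commutative monoid monad $M(X)=\amalg_{n} X^n/\Sigma_n$ on $Sets$ is continuous and fails to preserve pullbacks already in degree two: for $X=Y=\{a,b\}$ over a point, $(X\times Y)^2/\Sigma_2$ has ten elements while $(X^2/\Sigma_2)\times (Y^2/\Sigma_2)$ has nine, the comparison map identifying nothing but hitting $(\{a,b\},\{c,d\})$ twice. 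So the decomposition you invoke does not exist, even for monads. Fortunately the claim it was meant to prove is true, and the repair is small: continuity says $M$ is the left Kan extension of its restriction to $D$, i.e. the coend $M(X)=\int^{d\in D} Hom(d,X)\cdot M(d)$, where $S\cdot Z=\amalg_S Z$ denotes the copower. This exhibits $M$ as a colimit --- not filtered, but filteredness was never needed --- of the functors $X\mapsto Hom(d,X)\cdot M(d)$. Each $Hom(d,-)$ is a finite product of evaluation functors and hence preserves reflexive coequalizers, by the same commutation of reflexive coequalizers with finite products in $Sets$ that underlies Proposition \ref{2009elprop}; copowers preserve all colimits; and the coend, being a colimit of functors computed pointwise, commutes with coequalizers. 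Hence $M$, and then $U_M$ and your functor $N$, preserve reflexive coequalizers. (Alternatively you could simply cite the classical multi-sorted universal algebra fact that reflexive coequalizers of algebras are computed on underlying families.) With this substitution the rest of your argument goes through.
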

This is a reformulation in the language of radditive functors of \cite[Th. 3.18, p.149]{Adamek} where continuous functors are called finitary functors.

\begin{theorem}
\llabel{2009th2} Let $C$, $D$ be categories with finite coproducts and $F:C\sr Rad(C')$ a functor. Then $F^{rad}:\Delta^{op}C^{\#}\sr \Delta^{op}Rad(C')$ takes projective equivalences to projective equivalences.
\end{theorem}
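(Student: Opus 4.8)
The plan is to reduce the statement entirely to the characterization of projective equivalences in Theorem \ref{coin} together with the functoriality of $\tdl$-closures from Lemma \ref{fandt}. The decisive point is that, although $F^{rad}$ is neither a left nor a right adjoint and does not commute with arbitrary colimits, it is \emph{continuous}: by Lemma \ref{2007f3} it commutes with filtered colimits, and its simplicial extension $\Delta^{op}Rad(C)\sr \Delta^{op}Rad(C')$ inherits this property termwise. This continuity is exactly the hypothesis needed to invoke Lemma \ref{fandt}.

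First I would rewrite the source of the morphisms in terms of $\tdl$-closures. By Theorem \ref{coin}, a morphism $f:X\sr Y$ with $X,Y\in \Delta^{op}C^{\#}$ is a projective equivalence if and only if it lies in $cl_{\tdl}(\emptyset)$; in particular every projective equivalence between objects of $\Delta^{op}C^{\#}$ is an element of $cl_{\tdl}(\emptyset)$, viewed now inside $\Delta^{op}Rad(C)$. Next I would apply Lemma \ref{fandt} to the continuous functor $F^{rad}$ with $E=\emptyset$. Since $F^{rad}(\emptyset)=\emptyset$, this yields
$$F^{rad}(cl_{\tdl}(\emptyset))\subset cl_{\tdl}(\emptyset),$$
where the closure on the right is taken in $\Delta^{op}Rad(C')$. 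Finally, Proposition \ref{2009tdl0} asserts that the class $W_{proj}$ of projective equivalences in $\Delta^{op}Rad(C')$ is itself $\tdl$-closed and therefore contains $cl_{\tdl}(\emptyset)$. Combining the three inclusions, any projective equivalence $f$ between objects of $\Delta^{op}C^{\#}$ satisfies $F^{rad}(f)\in cl_{\tdl}(\emptyset)\subset W_{proj}$, which is the assertion of the theorem.

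In this argument there is essentially no computational obstacle: the genuine work has already been absorbed into Theorem \ref{coin}, whose proof identifies projective equivalences in $\Delta^{op}C^{\#}$ with the $\tdl$-closure of the empty class. The only point that deserves care is the restriction to $\Delta^{op}C^{\#}$: the conclusion genuinely fails for arbitrary objects of $\Delta^{op}Rad(C)$, since $F^{rad}$ need not preserve projective equivalences there—compare the radditivization functor $r$, which already fails to do so in Example \ref{2007grnot}. It is precisely the coincidence of $W_{proj}$ with $cl_{\tdl}(\emptyset)$ on $\Delta^{op}C^{\#}$, and not on the whole category, that makes the continuity of $F^{rad}$ sufficient to force the result.
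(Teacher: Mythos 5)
Your proof is correct and is exactly the paper's own argument: the paper's proof consists of the single sentence ``It follows from Theorem \ref{coin}, Lemma \ref{fandt} and Proposition \ref{2009tdl0},'' which is precisely the three-step chain you spelled out (projective equivalences in $\Delta^{op}C^{\#}$ equal $cl_{\tdl}(\emptyset)$, continuity of $F^{rad}$ via Lemma \ref{2007f3} lets Lemma \ref{fandt} carry this into $cl_{\tdl}(\emptyset)$ in $\Delta^{op}Rad(C')$, and the latter sits inside $W_{proj}$). Your explicit remark that continuity of $F^{rad}$ is the hypothesis enabling Lemma \ref{fandt}, and that the restriction to $\Delta^{op}C^{\#}$ is essential (cf.\ Example \ref{2007grnot}), merely makes explicit what the paper leaves implicit.
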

\begin{proof}
It follows from Theorem \ref{coin}, Lemma \ref{fandt} and Proposition \ref{2009tdl0}. 
\end{proof}
\begin{cor}
\llabel{2009forin}
Let $C$, $C'$ be categories with finite coproducts and let $F:C^{\#}\sr (C')^{\#}$ be a continuous functor. Then the simplicial extension $F:\Delta^{op}C^{\#}\sr \Delta^{op}(C')^{\#}$ of $F$ takes projective equivalences to projective equivalences.
\end{cor}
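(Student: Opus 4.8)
The plan is to reduce the statement to Theorem \ref{2009th2} by exhibiting the simplicial extension of $F$ as the simplicial extension of $F_0^{rad}$ for a suitable functor $F_0:C\sr Rad(C')$. First I would restrict $F$ along the full embedding $C\sr C^{\#}$ that sends an object to its representable functor, obtaining a functor $F_0:C\sr (C')^{\#}\subset Rad(C')$. Theorem \ref{2009th2}, applied to $F_0$, already asserts that $F_0^{rad}:\Delta^{op}C^{\#}\sr \Delta^{op}Rad(C')$ takes projective equivalences to projective equivalences. Hence it suffices to identify $F$ with $F_0^{rad}$ on $\Delta^{op}C^{\#}$.

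The key step is the comparison of $F$ and $F_0^{rad}$ on $C^{\#}$ itself. By Lemma \ref{2007f3} the functor $F_0^{rad}$ commutes with filtered colimits, and, as observed just after that lemma, it carries $C^{\#}$ into $(C')^{\#}$; since $C^{\#}$ is closed under filtered colimits in $Rad(C)$ and the inclusion preserves them, the restriction $F_0^{rad}|_{C^{\#}}$ is a continuous functor $C^{\#}\sr (C')^{\#}$. Both $F$ and $F_0^{rad}|_{C^{\#}}$ are continuous, and by construction they agree on representable functors, where $F_0^{rad}(X)=F_0(X)=F(X)$. Because every object of $C^{\#}$ is a filtered colimit of representables, a continuous functor on $C^{\#}$ is determined up to canonical isomorphism by its values on representables, so $F$ and $F_0^{rad}|_{C^{\#}}$ are canonically isomorphic.

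Finally I would pass to simplicial extensions. A natural isomorphism of functors $C^{\#}\sr (C')^{\#}$ induces, term by term, a natural isomorphism of their simplicial extensions $\Delta^{op}C^{\#}\sr \Delta^{op}(C')^{\#}$; thus the simplicial extension of $F$ coincides, up to natural isomorphism, with the restriction of $F_0^{rad}$ to $\Delta^{op}C^{\#}$. Since $\Delta^{op}(C')^{\#}$ is a full subcategory of $\Delta^{op}Rad(C')$ and projective equivalences are detected sectionwise over objects of $C'$ regardless of which of the two categories a morphism is viewed in, Theorem \ref{2009th2} yields that $F$ sends projective equivalences between objects of $\Delta^{op}C^{\#}$ to projective equivalences.

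The main obstacle is the comparison step, namely the rigorous verification that $F$ is recovered from its restriction to representables as a continuous functor. This rests on the density of representables in $C^{\#}$ (every object is a filtered colimit of representables) together with the continuity of both $F$ and $F_0^{rad}$; once this identification is in place, the reduction to Theorem \ref{2009th2} is purely formal.
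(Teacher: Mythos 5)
Your proof is correct and takes essentially the same route as the paper: the paper states this corollary as an immediate consequence of Theorem \ref{2009th2}, via the identification (asserted in the paragraph preceding that theorem) between continuous functors $C^{\#}\sr (C')^{\#}$ and functors $C\sr (C')^{\#}$, which is precisely the identification you spell out. The one ingredient worth making explicit in your comparison step is the compactness of representables (Lemma \ref{2009comcom}), since density of $C$ in $C^{\#}$ alone does not force a continuous functor to be determined by its restriction to $C$; compactness is what makes the canonical diagram of representables over an object of $C^{\#}$ filtered, so that the standard ind-completion argument applies.
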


For a functor $F$ denote by $iso(F)$ the class of morphisms $f$
such that $F(f)$ is an isomorphism. Recall that a functor is called a strict
localization if it is a localization and any morphism in the target category is isomorphic to the image of a morphism in the source category
\begin{proposition}
\llabel{preloc} Let $C$ be a small category with finite
coproducts. Then the functor $\Phi:\Delta^{op}C^{\#}\sr H(C)$ is a strict
localization and $iso(\Phi)=cl_{\tdl}(\emptyset)$.
\end{proposition}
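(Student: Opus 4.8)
The plan is to realize $\Phi$ as the restriction to $\Delta^{op}C^{\#}$ of the canonical localization functor $\gamma:\Delta^{op}Rad(C)\sr H(C)$ at $W_{proj}$, and to exploit $\Delta L_*$ as a one-sided inverse (up to projective equivalence) to the inclusion $\Delta^{op}C^{\#}\sr\Delta^{op}Rad(C)$.

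First I would identify $iso(\Phi)$. A morphism $f$ of $\Delta^{op}Rad(C)$ has $\gamma(f)$ invertible in $H(C)$ if and only if $f\in W_{proj}$, by the standard property of the homotopy category of a closed model structure. Since $\Phi=\gamma|_{\Delta^{op}C^{\#}}$, a morphism of $\Delta^{op}C^{\#}$ lies in $iso(\Phi)$ exactly when it is a projective equivalence, which by Theorem \ref{coin} is precisely $cl_{\tdl}(\emptyset)$. Hence $iso(\Phi)=cl_{\tdl}(\emptyset)$.

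Next, the localization property. As $\Phi$ inverts $cl_{\tdl}(\emptyset)$ it factors as $\bar\Phi\circ q$, where $q:\Delta^{op}C^{\#}\sr \Delta^{op}C^{\#}[cl_{\tdl}(\emptyset)^{-1}]$ is the formal localization, and it suffices to prove that $\bar\Phi$ is an equivalence. To build a quasi-inverse I would use $\Delta L_*$: by Proposition \ref{2009sum1}(\ref{naoborot}) the composite $q\circ\Delta L_*$ sends $W_{proj}$ into $cl_{\tdl}(\emptyset)$ and hence inverts $W_{proj}$, so it factors through $\gamma$ as $\Psi\circ\gamma$. The natural transformation $\Delta L_*\Rightarrow Id$ of Proposition \ref{2009sum1} has components which are projective equivalences by (\ref{all}) and which, restricted to objects of $\Delta^{op}C^{\#}$, lie in $cl_{\tdl}(\emptyset)$ by (\ref{lresd}); applying $\gamma$, respectively $q$, converts it into natural isomorphisms $\bar\Phi\Psi\cong Id_{H(C)}$ and $\Psi\bar\Phi\cong Id$. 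Here one must check that these transformations, defined a priori only by whiskering with $\gamma$ and $q$, extend to genuine natural transformations on the localizations, which is automatic once naturality holds on the image of the localization functors because the inverted morphisms become invertible. Thus $\bar\Phi$ is an equivalence and $\Phi$ is a localization at $cl_{\tdl}(\emptyset)$.

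Finally, strictness. Given $\phi:A\sr B$ in $H(C)$, I would replace $A,B$ by their standard cofibrant replacements in $\Delta^{op}\bar{C}\subset\Delta^{op}C^{\#}$ (Proposition \ref{2009cofrep}); since $Cof(X)\sr X$ is a projective equivalence this reduces to $\phi':A'\sr B'$ with $A',B'\in\Delta^{op}C^{\#}$ and $A'$ cofibrant. Composing with the fibrant replacement $j:B'\sr Fib_{proj}(B')$ and using that $A'$ is cofibrant and $Fib_{proj}(B')$ fibrant, the morphism $\gamma(j)\circ\phi'$ is represented by an actual map $h:A'\sr Fib_{proj}(B')$ of $\Delta^{op}Rad(C)$. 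Since $Cof(Fib_{proj}(B'))\sr Fib_{proj}(B')$ is a trivial fibration and $A'$ is cofibrant, $h$ lifts to $\tilde h:A'\sr Cof(Fib_{proj}(B'))$, a morphism of $\Delta^{op}C^{\#}$, and chasing the diagram gives $\phi'=\gamma(j)^{-1}\gamma(p)\gamma(\tilde h)$ where $p$ is the trivial fibration above; the iso $\gamma(j)^{-1}\gamma(p)$ exhibits $\phi'$, and hence $\phi$, as isomorphic in the arrow category of $H(C)$ to $\Phi(\tilde h)$.

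The main obstacle I anticipate is the localization step: because $\Delta L_*$ is neither a left adjoint nor colimit-preserving, one cannot simply invoke a Quillen-equivalence formalism, and the delicate point is verifying that the isomorphisms built from the counit $\Delta L_*\Rightarrow Id$ descend to bona fide natural transformations of functors on the two localizations, not merely pointwise isomorphisms. The strictness step is more routine, its only subtlety being the choice of cofibrant and fibrant replacements that allows the representing map to be lifted back into $\Delta^{op}C^{\#}$.
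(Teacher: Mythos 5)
Your proof is correct, and its treatment of $iso(\Phi)$ --- saturation of $W_{proj}$ in the homotopy category plus Theorem \ref{coin} --- is exactly the paper's. Where you diverge is in the localization/strictness part. The paper's entire argument there is one sentence: the standard fibrant-cofibrant replacement $Fib_{proj}\circ Cof$ of any object of $\Delta^{op}Rad(C)$ lies in $\Delta^{op}C^{\#}$; given that, this single replacement functor, landing in $\Delta^{op}C^{\#}$ and naturally weakly equivalent to the identity, yields both the equivalence $\Delta^{op}C^{\#}[cl_{\tdl}(\emptyset)^{-1}]\simeq H(C)$ (by the same descent-to-the-localization formalism you spell out) and strictness (a morphism of $H(C)$ between cofibrant-fibrant objects is the image of an actual morphism). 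You instead use $\Delta L_*$, via Proposition \ref{2009sum1}(\ref{naoborot}), (\ref{all}), (\ref{lresd}), as the replacement functor for the equivalence, and then a separate cofibrant-replacement-plus-lifting argument for strictness. The trade-off: the paper's route requires the easy but unstated verification that the fibrant replacement of an object of $\Delta^{op}\bar{C}$ stays in $\Delta^{op}\bar{C}$ (it is a $J_{proj}$-cell, hence adds coproducts of representables term by term), whereas every property of $\Delta L_*$ you invoke is already proved in the paper; likewise your lifting of $h$ through the trivial fibration $Cof(Fib_{proj}(B'))\sr Fib_{proj}(B')$ deliberately avoids having to know that $Fib_{proj}(B')$ lies in $\Delta^{op}C^{\#}$. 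So your proof is longer but leans only on stated results, while the paper's is shorter at the cost of one implicit check. Finally, your attention to the descent of natural transformations along localization functors (naturality on the generators $\gamma(f)$ and $\gamma(w)^{-1}$ suffices, since every morphism of the localization is a composite of these) is precisely the point that the paper's ``follows immediately'' glosses over, and your justification of it is correct.
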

\begin{proof}
The fact that $\Phi$ is a strict localization follows immediately from the fact that the standard fibrant-cofibrant replacement of any object of $\Delta^{op}Rad(C)$ belongs to $\Delta^{op}C^{\#}$. 
The fact that $iso(\Phi)=cl_{\tdl}(\emptyset)$ follows from Theorem \ref{coin}.
\end{proof}
\begin{remark}
\rm Proposition \ref{preloc} remains valid if we replace $C^{\#}$ with $\bar{C}$.
\end{remark}
In view of Proposition \ref{preloc} and Theorem \ref{2009th2} any functor $F:C\sr Rad(C')$ defines a functor $\LL F^{rad}:H(C)\sr H(C')$. One verifies immediately that for any composable pair of continuous functors $F:C^{\#}\sr (C')^{\#}$ and $G:(C')^{\#}\sr (C'')^{\#}$ there is a canonical isomorphism 
$$\LL (G\circ F)^{rad}=\LL G^{rad}\circ \LL F^{rad}.$$
\begin{lemma}
\llabel{2007fcc}
Let $F:C\sr C'$ be a functor which commutes with finite coproducts. Then $F_{rad}$ takes projective equivalences between objects of $\Delta^{op}Rad(C')$ to projective equivalences and the corresponding functor ${\RR}F_{rad}$ is right adjoint to $\LL F^{rad}$. 
\end{lemma}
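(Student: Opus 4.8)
The plan is to first make the functor $F_{rad}$ completely explicit and then to recognise $(F^{rad},F_{rad})$ as a simplicial Quillen pair, after which both assertions become essentially formal. The key input is the formula of Lemma \ref{2007f2}, which sends $X$ to $U\mapsto Hom_{Funct((C')^{op},Sets)}(F(U),X)$. Since $F(U)$ is an object of $C'$, viewed as a representable functor in $Rad(C')$, the Yoneda lemma identifies this with $X(F(U))$, so that $F_{rad}(X)(U)=X(F(U))$ naturally in $U$ and $X$. On simplicial objects this holds degreewise, giving an identification of simplicial sets $F_{rad}(X)(U)=X(F(U))$; thus evaluating $F_{rad}(X)$ on $U\in C$ is nothing but evaluating $X$ on $F(U)\in C'$.

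From this description the first assertion is immediate. If $f:X\sr Y$ is a projective equivalence, then $X(V)\sr Y(V)$ is a weak equivalence for every $V\in C'$, hence in particular for $V=F(U)$, which by the identification above says exactly that $F_{rad}(f)(U)$ is a weak equivalence for every $U\in C$, i.e. that $F_{rad}(f)\in W_{proj}$. The identical argument with ``Kan fibration'' in place of ``weak equivalence'' shows that $F_{rad}$ preserves projective fibrations, and therefore also trivial projective fibrations. Combined with Theorem \ref{2009pcmsth} and the adjunction of Lemma \ref{2007f2} (with $F^{rad}$ the left adjoint, a colimit-preserving functor by Lemma \ref{2007f3}), this shows that $(F^{rad},F_{rad})$ is a Quillen adjunction.

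To derive the adjunction on homotopy categories I would exploit that $F^{rad}$, being a left adjoint, commutes with the coproducts defining $(-)\oo K$, so that $F^{rad}(A\oo K)=F^{rad}(A)\oo K$ and the point-set adjunction is simplicial: $S(F^{rad}(A),B)\cong S(A,F_{rad}(B))$. For $A$ cofibrant and $B$ fibrant both sides compute derived mapping spaces, $F^{rad}(A)$ being cofibrant (as $F^{rad}$ is left Quillen) and $F_{rad}(B)$ fibrant (by fibration preservation). Taking $A=Cof(X)$ and $B=Fib_{proj}(Y)$ and applying $\pi_0$ then yields a natural bijection $Hom_{H(C')}(\LL F^{rad}(X),Y)\cong Hom_{H(C)}(X,\RR F_{rad}(Y))$. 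Here I would use that $Cof(X)$ lies in $\Delta^{op}\bar{C}\subset\Delta^{op}C^{\#}$ by Proposition \ref{2009cofrep}, together with Theorem \ref{2009th2}, to identify $F^{rad}(Cof(X))$ with the functor $\LL F^{rad}$ coming from Proposition \ref{preloc}; and I would use that $F_{rad}$ preserves \emph{all} projective equivalences to identify $\RR F_{rad}$ with the functor induced by $F_{rad}$ on homotopy categories, so that $F_{rad}(Fib_{proj}(Y))$ is a fibrant model of $\RR F_{rad}(Y)$.

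The one point requiring genuine care, and the step I would treat as the main obstacle, is this last identification of the paper's $\LL F^{rad}$ (defined through $\Delta^{op}C^{\#}$-representatives via Proposition \ref{preloc}) with the total left derived functor of the Quillen pair, and the dual statement for $\RR F_{rad}$. Once the identity $F_{rad}(X)(U)=X(F(U))$ is in hand, the preservation of projective equivalences and of fibrations is automatic and everything else is formal bookkeeping with the simplicial adjunction.
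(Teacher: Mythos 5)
Your proposal is correct, but the adjunction half follows a genuinely different route from the paper's own proof. The first assertion is handled the same way (the paper dismisses it as obvious from the definitions; your Yoneda computation $F_{rad}(X)(U)=X(F(U))$ is precisely the reason it is obvious). For the adjunction, however, the paper never invokes Quillen-pair machinery: it constructs the unit and counit of $\LL F^{rad}\dashv \RR F_{rad}$ by hand out of the cotriple resolution $\Delta L_*$ of Section \ref{pcms} (writing $\LL F^{rad}$ as the descent of $F^{rad}\circ \Delta L_*$) and the point-set adjunction $(a,b)$ between $F^{rad}$ and $F_{rad}$, and then verifies the two triangle identities directly with commutative diagrams; the delicate step there is checking that the two natural maps $F^{rad}\Delta L_*\Delta L_*\sr F^{rad}\Delta L_*$ coincide in $H(C')$. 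Your route --- observe that $F_{rad}$ preserves fibrations and trivial fibrations, so $(F^{rad},F_{rad})$ is a simplicial Quillen pair, then compute homotopy-category Hom-sets as $\pi_0$ of mapping spaces between cofibrant/fibrant representatives using $S(F^{rad}(A),B)\cong S(A,F_{rad}(B))$ --- is the standard derived-adjunction argument, and the paper itself endorses its validity in the remark immediately following its proof, which states that $(F^{rad},F_{rad})$ is a Quillen adjunction and that $\LL F^{rad}$, $\RR F_{rad}$ are canonically isomorphic to the standard derived functors. You also correctly isolate the one non-formal step: the paper's $\LL F^{rad}$ is defined through $\Delta^{op}C^{\#}$-representatives via Proposition \ref{preloc} (a construction available for arbitrary $F:C\sr Rad(C')$, where no right adjoint and hence no Quillen pair exists), so it must be matched against the total left derived functor; your use of $Cof(X)\in\Delta^{op}\bar{C}\subset\Delta^{op}C^{\#}$ from Proposition \ref{2009cofrep} together with Theorem \ref{2009th2} accomplishes exactly that. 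The trade-off: your argument is shorter and reuses standard model-category theory, but it works only because $F$ preserves finite coproducts; the paper's explicit unit/counit construction is heavier but stays entirely within its own framework, in which $\LL F^{rad}$ is not a priori the derived functor of any Quillen pair. One small citation slip: $F^{rad}$ preserves all colimits here because it is a left adjoint by Lemma \ref{2007f2}; Lemma \ref{2007f3}, which you cite for this, only gives filtered colimits and reflexive coequalizers.
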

\begin{proof}
The fact that $F_{rad}$ takes projective equivalences to projective equivalences is obvious from the definitions. To prove that ${\RR}F_{rad}$ and ${\bf L}F^{rad}$ are adjoint it is sufficient to construct natural transformations $Id\sr {\RR}F_{rad}\LL F^{rad}$ and $\LL F^{rad}{\RR}F_{rad}\sr Id$ such that the compositions
$${\RR}F_{rad}\sr {\RR}F_{rad}\LL F^{rad}{\RR}F_{rad}\sr {\RR}F_{rad}$$
$$\LL F^{rad}\sr \LL F^{rad}{\RR}F_{rad}\LL F^{rad}\sr \LL F^{rad}$$
are identities. Note first that $\LL F^{rad}$ fits into a commutative square
$$
\begin{CD}
\Delta^{op}Rad(C) @>F^{rad}\circ {\Delta L_*}>> \Delta^{op}Rad(C')\\
@VVV @VVV\\
H(C) @>\LL F^{rad}>> H(C')
\end{CD}
$$
where $L_*$ is the resolution functor defined in Section \ref{pcms}.

Let $a:F^{rad}F_{rad}\sr Id$ and $Id\sr F^{rad}F_{rad}$ be the adjunctions between $F_{rad}$ and $F^{rad}$. Define the adjunctions between ${\RR}F_{rad}$ and $\LL F^{rad}$ as follows. For $\LL F^{rad} {\RR}F_{rad}\sr Id$ we take
$$F^{rad}{\Delta L_*} F_{rad}\sr F^{rad}F_{rad}\stackrel{a}{\sr} Id$$
and for $Id\sr {\RR}F_{rad}\LL F^{rad}$ we take
$$Id \sr {\Delta L_*}\stackrel{b}{\sr} F_{rad} F^{rad} {\Delta L_*}$$
where the first arrow is the inverse in $H(C)$  of the morphism ${\Delta L_*}\sr Id$. That the first composition is identity follows from the diagram
$$
\begin{CD}
{\Delta L_*} F_{rad} @>>> F_{rad}  F^{rad} {\Delta L_*} F_{rad} @>>> F_{rad} \\
@VVV @VVV @VVV\\
F_{rad} @>>> F_{rad} F^{rad} F_{rad} @>>> F_{rad}
\end{CD}
$$
and the fact that the pair $(a,b)$ is an adjunction between $F_{rad}$ and $F^{rad}$. For the second composition consider the diagram
$$
\begin{CD}
F^{rad}{\Delta L_*} Id @<<< F^{rad}{\Delta L_*}{\Delta L_*} @>>> F^{rad}{\Delta L_*}F_{rad} F^{rad}{\Delta L_*} @>>> F^{rad}{\Delta L_*}\\
@. @VVV @VVV @VVV\\
@. F^{rad} Id {\Delta L_*} @>>> F^{rad} F_{rad} F^{rad}{\Delta L_*} @>>> F^{rad}{\Delta L_*}
\end{CD}
$$
The lower composition is the identity since $(a,b)$ is an adjunction. To check that the upper one is the identity in $H(C')$ it remains to verify that the two morphisms
$$F^{rad}{\Delta L_*}{\Delta L_*}\sr F^{rad}Id {\Delta L_*}$$
$$F^{rad}{\Delta L_*} {\Delta L_*}\sr F^{rad} {\Delta L_*} Id$$
coincide in $H(C')$. Since all the functors involved respect projective equivalences it is sufficient to check it for $X\in \Delta^{op}C^{\#}$. For such an $X$ it follows from the commutative square
$$
\begin{CD}
{\Delta L_*}{\Delta L_*}(X) @>>> {\Delta L_*} Id(X)\\
@VVV @VVV\\
Id {\Delta L_*}(X) @>>> X
\end{CD}
$$
\end{proof}
\begin{remark}\rm
It is obvious from definitions that in the context of Lemma \ref{2007fcc} the pair $(F^{rad},F_{rad})$ forms a Quillen adjunction and our functors $\LL F^{rad}$, $\RR F_{rad}$ are canonically isomorphic to the standard derived functors for this adjunction.
\end{remark}
\begin{example}\rm
Even for $F:C\sr C'$ which commutes with finite coproducts the functor $F^{rad}$ need not respect all projective equivalences between all objects of $\Delta^{op}Rad(C)$. Let $F:C^{\amalg_{<\infty}}\sr C$ be the obvious functor. Using the equivalence between $Rad(C^{\amalg_{<\infty}})$ and presheaves on $C$ one can see that $F^{rad}$ is the radditivization functor of  Proposition \ref{rad2}. The fact that it need not respect projective equivalences is demonstrated in Example \ref{2007grnot}.
\end{example}

\begin{theorem}
\llabel{2009hce}
For any set of morphisms $E$ in $\Delta^{op}Rad(C)$ the localization 
$$H(C,E)=H(C)[cl_l(E)^{-1}]$$
exists and the projection functor $H(C)\sr H(C,E)$ has a right adjoint which identifies $H(C,E)$ with the full subcategory of $E$-local objects in $H(C)$.
\end{theorem}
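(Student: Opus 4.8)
The plan is to realize $H(C,E)$ as the full reflective subcategory $\mathcal{L}\subset H(C)$ of $E$-local objects, with the reflector supplied by the functor $Ex_E$ of Theorem \ref{exlemma}, and then to invoke the standard fact that a full reflective subcategory realizes the localization at the morphisms inverted by its reflector. Two facts about $cl_l(E)$ will be used throughout. First, $W_{proj}\subset cl_l(E)$: a projective equivalence is an isomorphism in $H(C)$, so $Hom_{H(C)}(-,Y)$ inverts it for every $Y$, in particular for every $E$-local $Y$. Second, $cl_l(E)$ satisfies the two-out-of-three property, since it is $\tdl$-closed, hence $\Delta$-closed, by Theorem \ref{2009innov2}. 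I will also repeatedly use that an $E$-local equivalence $g\colon A\sr B$ between two $E$-local objects is a projective equivalence: applying Definition \ref{2007eq} with the test objects $A$ and $B$ produces a two-sided inverse of $g$ in $H(C)$.

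First I would build the reflector. Set $R=Ex_E\circ\Delta L_*$, a functor $\Delta^{op}Rad(C)\sr\Delta^{op}C^{\#}$ whose value $R(X)$ is $E$-local by Theorem \ref{exlemma}(\ref{exlemma1}). By Proposition \ref{2009sum1}(\ref{naoborot}) the functor $\Delta L_*$ sends projective equivalences into $cl_{\tdl}(\emptyset)\subset W_{proj}$ and lands in $\Delta^{op}\bar{C}\subset\Delta^{op}C^{\#}$; and $Ex_E$ preserves projective equivalences between objects of $\Delta^{op}C^{\#}$, since for such an $f$ the naturality square for $Id\sr Ex_E$, together with Theorem \ref{exlemma}(\ref{exlemma3}) and two-out-of-three, places $Ex_E(f)$ in $cl_l(E)$, whence $Ex_E(f)$ is a projective equivalence by the remark on $E$-local objects above. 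Thus $R$ preserves projective equivalences and descends to $\bar{R}\colon H(C)\sr H(C)$ with image in $\mathcal{L}$. For each $X$ the morphism $\Delta L_*(X)\sr X$ is a projective equivalence by Proposition \ref{2009sum1}(\ref{all}), hence invertible in $H(C)$, and $\Delta L_*(X)\sr R(X)$ lies in $cl_l(E)$ by Theorem \ref{exlemma}(\ref{exlemma3}); composing defines a natural unit $\eta_X\colon X\sr\bar{R}(X)$ in $H(C)$, and two-out-of-three gives $\eta_X\in cl_l(E)$.

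Next I would verify the adjunction $\bar{R}\dashv i$, where $i\colon\mathcal{L}\sr H(C)$ is the inclusion. For $E$-local $Y$, the map $Hom_{H(C)}(\bar{R}(X),Y)\sr Hom_{H(C)}(X,Y)$ induced by $\eta_X$ is a bijection, which is precisely Definition \ref{2007eq} applied to $\eta_X\in cl_l(E)$ and to the $E$-local $Y$; naturality of $\eta$ turns these into the hom-set adjunction, so $\mathcal{L}$ is reflective. Now write $S=\{f:\bar{R}(f)\text{ is an isomorphism in }H(C)\}$. The reflector inverts $S$ by definition, each unit $\eta_X$ lies in $S$ (the triangle identities force $\bar{R}(\eta_X)$ invertible), and any functor on $H(C)$ inverting $S$ factors uniquely through $\bar{R}$ via $i$; thus $\bar{R}$ exhibits $\mathcal{L}$ as $H(C)[S^{-1}]$ with $i$ the fully faithful right adjoint to the localization, identifying $H(C,E)$ with the $E$-local objects. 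Finally $S=cl_l(E)$: the naturality relation $\eta_Y\circ f=\bar{R}(f)\circ\eta_X$, together with two-out-of-three and $\eta_X,\eta_Y\in cl_l(E)$, shows $f\in cl_l(E)$ if and only if $\bar{R}(f)\in cl_l(E)$; and for the morphism $\bar{R}(f)$ between $E$-local objects, membership in $cl_l(E)$ is equivalent to being a projective equivalence, i.e.\ an isomorphism in $H(C)$.

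The step I expect to be the main obstacle is the construction of the reflector at the level of $H(C)$, specifically checking that $Ex_E$ descends, i.e.\ preserves projective equivalences between objects of $\Delta^{op}C^{\#}$, and that the resulting unit is an $E$-local equivalence; this is exactly where Theorem \ref{exlemma} and the behaviour of $\Delta L_*$ from Proposition \ref{2009sum1} do the real work. Once the reflection $\bar{R}\dashv i$ is in hand with unit in $cl_l(E)$, the existence of $H(C,E)$ and its identification with the full subcategory of $E$-local objects are a formal consequence of the reflective-localization principle.
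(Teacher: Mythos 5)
Your proposal is correct and is exactly the argument the paper has in mind: the paper's proof consists of the single line ``It follows easily by general arguments from Theorem \ref{exlemma}(\ref{exlemma1},\ref{exlemma3})'', and what you have written out --- building a reflector onto the $E$-local objects from $Ex_E$ (precomposed with $\Delta L_*$, or equivalently $Cof$, to reduce arbitrary objects to $\Delta^{op}C^{\#}$ where Theorem \ref{exlemma}(\ref{exlemma3}) applies), showing the unit lies in $cl_l(E)$, and invoking the reflective-localization principle together with the fact that an $E$-local equivalence between $E$-local objects is a projective equivalence --- is precisely those general arguments made explicit. No discrepancy in approach; you have simply supplied the details the paper omits.
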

\begin{proof}
It follows easily by general arguments from Theorem \ref{exlemma}(\ref{exlemma1},\ref{exlemma3}).
\end{proof}
\begin{cor}
\llabel{strloc}
For any $E$ the functor
$$\Delta^{op}C^{\#}\sr H(C,E)$$
is a strict localization i.e. any morphism in the target category is isomorphic to the image of a morphism in the source category.
\end{cor}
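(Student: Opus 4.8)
The plan is to factor the functor under consideration as
$$\Delta^{op}C^{\#}\stackrel{\Phi}{\sr} H(C)\stackrel{L}{\sr} H(C,E),$$
where $\Phi$ is the functor of Proposition \ref{preloc} and $L$ is the localization functor of Theorem \ref{2009hce}, and to play off the special features of the two factors against each other. From Proposition \ref{preloc} I would use that $\Phi$ is a \emph{strict} localization with $iso(\Phi)=cl_{\tdl}(\emptyset)$; from Theorem \ref{2009hce} I would use that $L$ has a fully faithful right adjoint $R$ identifying $H(C,E)$ with the subcategory of $E$-local objects, so that the counit $LR\sr Id$ is an isomorphism.

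For the strictness assertion (which is the substance of the corollary) I would argue as follows. Given a morphism $\alpha$ in $H(C,E)$, the counit isomorphism gives $\alpha\cong L(R(\alpha))$ in the arrow category, and $R(\alpha)$ is a morphism of $H(C)$. Since $\Phi$ is a strict localization, $R(\alpha)$ is isomorphic in the arrow category of $H(C)$ to $\Phi(f)$ for some morphism $f$ of $\Delta^{op}C^{\#}$; applying $L$ and using that functors preserve isomorphisms yields $\alpha\cong L(\Phi(f))=(L\circ\Phi)(f)$. Thus every morphism of $H(C,E)$ is isomorphic to the image of a morphism of $\Delta^{op}C^{\#}$.

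It remains to check that $L\circ\Phi$ is a localization, i.e. that it enjoys the universal property with respect to the class $\mathcal W:=(L\Phi)^{-1}(\mathrm{Iso})$. Let $H:\Delta^{op}C^{\#}\sr\mathcal D$ invert $\mathcal W$. Since $cl_{\tdl}(\emptyset)=iso(\Phi)\subset\mathcal W$, the functor $H$ inverts $iso(\Phi)$ and hence factors uniquely as $H=H_1\circ\Phi$ through $H(C)$. The key point is that $H_1$ then inverts $cl_l(E)$: for $g\in cl_l(E)$ strictness of $\Phi$ provides $f$ in $\Delta^{op}C^{\#}$ with $g\cong\Phi(f)$, whence $(L\Phi)(f)\cong L(g)$ is an isomorphism, so $f\in\mathcal W$ and $H_1(g)\cong H_1\Phi(f)=H(f)$ is an isomorphism. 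By the universal property of $L$, $H_1$ factors uniquely through $L$, giving the desired factorization of $H$ through $L\circ\Phi$; uniqueness is inherited from the two intermediate universal properties, and essential surjectivity on objects follows from that of $\Phi$ and $L$.

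The hard part, and the place where the argument could not be replaced by a bare appeal to the slogan that a composite of localizations is a localization, is the verification that $H_1$ inverts all of $cl_l(E)$: a general $E$-local equivalence is a morphism of $H(C)$ that need not lie in the image of $\Phi$, and it is precisely the strictness of $\Phi$ from Proposition \ref{preloc} that lets one replace it, up to isomorphism, by the image of a morphism of $\Delta^{op}C^{\#}$ and thereby reduce to the defining property of $\mathcal W$. The same strictness is what powers the surjectivity-on-morphisms statement above.
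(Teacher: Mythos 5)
Your proof is correct and follows essentially the route the paper intends: the paper states Corollary \ref{strloc} without proof as an immediate consequence of Proposition \ref{preloc} (strictness of $\Phi:\Delta^{op}C^{\#}\sr H(C)$) and Theorem \ref{2009hce} ($L:H(C)\sr H(C,E)$ is a localization with fully faithful right adjoint), which is exactly the factorization $L\circ\Phi$ you exploit. Your writeup simply supplies the details left implicit, including the genuinely necessary observation that strictness of $\Phi$ is what makes the composite of the two localizations a localization.
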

\begin{proposition}
\llabel{2009sat}
The class $cl_l(E)$ is saturated i.e. it coincides with the class of morphisms which become isomorphisms in $H(C,E)$. 
\end{proposition}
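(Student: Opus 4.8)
The plan is to deduce saturation from the reflective description of the localization furnished by Theorem \ref{2009hce}. Write $q:H(C)\sr H(C,E)$ for the projection functor and $\iota:H(C,E)\sr H(C)$ for its right adjoint, which by Theorem \ref{2009hce} is fully faithful and identifies $H(C,E)$ with the full subcategory of $E$-local objects of $H(C)$; in particular $q\iota\cong Id$ and the $E$-local objects of $H(C)$ are precisely those isomorphic to $\iota(Z)$ for some $Z\in H(C,E)$. Since both conditions in question concern only the images of morphisms in $H(C)$ — membership in $cl_l(E)$ is phrased in Definition \ref{2007eq} through $Hom_{H(C)}$, and becoming invertible in the quotient $H(C,E)$ likewise depends only on the image in $H(C)$ — it suffices to compare the two classes as classes of morphisms of $H(C)$. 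One inclusion is then formal: by the universal property of the localization every morphism of $cl_l(E)$ is sent by $q$ to an isomorphism, so $cl_l(E)$ is contained in the class of morphisms inverted by $q$.

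For the converse I would reformulate Definition \ref{2007eq} through the adjunction $(q,\iota)$. Let $f:X\sr X'$ be a morphism of $H(C)$. For every $Z\in H(C,E)$ the adjunction gives a natural bijection
$$Hom_{H(C)}(X,\iota(Z))\cong Hom_{H(C,E)}(q(X),Z),$$
under which the map induced by $f$ on the left corresponds to the map $Hom_{H(C,E)}(q(X'),Z)\sr Hom_{H(C,E)}(q(X),Z)$ induced by $q(f)$. Because the $E$-local objects are exactly the $\iota(Z)$, the defining condition of Definition \ref{2007eq} — that $Hom_{H(C)}(X',Y)\sr Hom_{H(C)}(X,Y)$ be bijective for every $E$-local $Y$ — is equivalent to requiring that $Hom_{H(C,E)}(q(X'),Z)\sr Hom_{H(C,E)}(q(X),Z)$ be bijective for every $Z\in H(C,E)$. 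By the Yoneda lemma in $H(C,E)$ this holds if and only if $q(f)$ is an isomorphism, i.e.\ if and only if $f$ becomes invertible in $H(C,E)$. This is exactly the asserted coincidence of the two classes.

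I expect the only point requiring care to be the passage between the two quantifiers, ``for all $E$-local $Y$'' versus ``for all $Z\in H(C,E)$''. This uses precisely the content of Theorem \ref{2009hce} that $\iota$ is fully faithful with essential image the $E$-local objects, so that $Y$ ranging over $E$-local objects is the same as $\iota(Z)$ ranging over all objects $Z$ of $H(C,E)$. Once this identification is in place the remainder is the standard Yoneda argument and is routine; in particular the argument invokes no properness hypothesis on the projective c.m.s., relying only on the abstract adjunction of Theorem \ref{2009hce}.
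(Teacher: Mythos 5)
Your proof is correct, and it reaches the conclusion by a genuinely different mechanism than the paper. For the nontrivial inclusion (a morphism inverted in $H(C,E)$ lies in $cl_l(E)$), the paper argues via the universal property of the localization: each functor $Hom_{H(C)}(-,Y)$ with $Y$ $E$-local inverts $cl_l(E)$ by Definition \ref{2007eq}, hence factors through the projection $H(C)\sr H(C,E)$; consequently any morphism inverted by the projection is sent to bijections by all of these functors and so belongs to $cl_l(E)$. Note that this uses only the existence of $H(C,E)$ as a localization --- the right adjoint of Theorem \ref{2009hce} never enters. Your argument instead exploits the full reflective structure supplied by Theorem \ref{2009hce}: the adjunction isomorphism $Hom_{H(C)}(X,\iota(Z))\cong Hom_{H(C,E)}(q(X),Z)$, natural in $X$, together with the identification of the essential image of $\iota$ with the $E$-local objects, converts the defining condition of $cl_l(E)$ into bijectivity of $Hom_{H(C,E)}(q(X'),Z)\sr Hom_{H(C,E)}(q(X),Z)$ for all $Z$, and the Yoneda argument finishes. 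Your handling of the quantifier exchange (all $E$-local $Y$ versus all $\iota(Z)$) is exactly the point that needs care, and you address it correctly, since the bijectivity condition in Definition \ref{2007eq} is invariant under isomorphism of $Y$ in $H(C)$. What the paper's route buys is economy: it is the generic argument that a class defined as the morphisms inverted by a family of Set-valued functors is saturated once the localization exists, whether or not it is reflective. What your route buys is a single chain of equivalences that delivers both inclusions at once and makes explicit the easy direction, which the paper leaves implicit.
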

\begin{proof}
A morphism in $\Delta^{op}Rad(C)$ belongs to $cl_l(E)$ if and only if it is mapped to isomorphisms by the family of functors $Hom_{H(C)}(-,Y)$ for $E$-local objects $Y$. By the universal property of localization these functors factor through the projection to $H(C,E)$. Therefore, any morphism which maps to an isomorphism in $H(C,E)$ is mapped to isomorphisms by these functors and therefore belongs to $cl_l(E)$. 
\end{proof}

\begin{remark}
\rm The obvious analog of Corollary \ref{strloc} holds for $\bar{C}$ instead of $C^{\#}$.
\end{remark}

\begin{theorem}
\llabel{2007el1}
Let $F:C\sr Rad(C')$ be a functor.  Let further $E$ be a set of morphisms in $\Delta^{op}C^{\#}$ and $E'$ and a set of morphisms in $\Delta^{op}Rad(C')$ such that
$$F^{rad}(E\amalg Id_C)\subset cl_l(E').$$
Then 
\begin{eq}
\llabel{2009eq3}
F^{rad}(cl_l(E)\cap \Delta^{op}C^{\#})\subset cl_l(E').
\end{eq}
In particular $\LL F^{rad}(cl_l(E))\subset cl_l(E')$ and $\LL F^{rad}$ defines a functor 
$$H(C,E)\sr H(C',E').$$
\end{theorem}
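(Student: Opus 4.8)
The plan is to reduce everything to the description of $E$-local equivalences on $\Delta^{op}C^{\#}$ supplied by Theorem \ref{2009main} and then transport it across $F^{rad}$ by means of Lemma \ref{fandt}. By Lemma \ref{2007f3} the functor $F^{rad}:Rad(C)\sr Rad(C')$ is continuous, so its restriction to the full subcategory $C^{\#}$, which is closed under filtered colimits in $Rad(C)$, is again continuous; thus its simplicial extension $\Delta^{op}C^{\#}\sr \Delta^{op}Rad(C')$ is a continuous functor to which Lemma \ref{fandt} applies.

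The heart of the proof of (\ref{2009eq3}) is then the chain of inclusions
$$F^{rad}(cl_l(E)\cap \Delta^{op}C^{\#})=F^{rad}(cl_{\tdl}(E\amalg Id_C))\subset cl_{\tdl}(F^{rad}(E\amalg Id_C))\subset cl_l(E').$$
The first equality is precisely Theorem \ref{2009main}, the $\tdl$-closure on the left being formed inside $\Delta^{op}C^{\#}$. The middle inclusion is Lemma \ref{fandt} for the continuous functor $F^{rad}$, the closure on the right now being formed inside $\Delta^{op}Rad(C')$. The last inclusion holds because, by hypothesis, $F^{rad}(E\amalg Id_C)\subset cl_l(E')$, while $cl_l(E')$ is $\tdl$-closed by Theorem \ref{2009innov2}; hence $cl_l(E')$ contains the smallest $\tdl$-closed class generated by $F^{rad}(E\amalg Id_C)$.

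For the two remaining assertions I would first descend from $\Delta^{op}Rad(C)$ to $\Delta^{op}C^{\#}$ by cofibrant replacement. For arbitrary $f\in cl_l(E)$ the functor $Cof$ takes values in $\Delta^{op}\bar{C}\subset \Delta^{op}C^{\#}$ by Proposition \ref{2009cofrep}, and the replacement morphisms are projective equivalences, hence lie in $cl_l(E)$ because $W_{proj}\subset cl_l(E)$; by the $2$-out-of-$3$ property $Cof(f)\in cl_l(E)\cap \Delta^{op}C^{\#}$. Since $\LL F^{rad}$ is defined through the strict localization $\Phi$ of Proposition \ref{preloc} and Theorem \ref{2009th2}, it sends $[f]\in H(C)$ to the image in $H(C')$ of $F^{rad}(Cof(f))$, which lies in $cl_l(E')$ by (\ref{2009eq3}); this gives $\LL F^{rad}(cl_l(E))\subset cl_l(E')$. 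Finally, the morphisms in $cl_l(E')$ become invertible in $H(C',E')=H(C')[cl_l(E')^{-1}]$, so the composite $H(C)\stackrel{\LL F^{rad}}{\sr}H(C')\sr H(C',E')$ inverts $cl_l(E)$ and, by the universal property of the localization $H(C)\sr H(C,E)$, factors through a functor $H(C,E)\sr H(C',E')$.

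I expect the genuine content to be concentrated in the invocation of Theorem \ref{2009main}: once the opaque $E$-locality condition on $\Delta^{op}C^{\#}$ has been re-expressed as a $\tdl$-closure, the behaviour under a continuous functor is immediate, and the only point demanding real care is that the two $\tdl$-closures live in the different ambient categories $\Delta^{op}C^{\#}$ and $\Delta^{op}Rad(C')$ --- it is exactly the continuity of $F^{rad}$, via Lemma \ref{fandt}, that matches them up. The cofibrant-replacement passage and the final factorization are then routine and formal.
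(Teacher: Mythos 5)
Your proof is correct and is essentially identical to the paper's: the paper's entire argument is exactly your chain $F^{rad}(cl_l(E)\cap\Delta^{op}C^{\#})=F^{rad}(cl_{\tdl}(E\amalg Id_C))\subset cl_{\tdl}(F^{rad}(E\amalg Id_C))\subset cl_{\tdl}(cl_l(E'))=cl_l(E')$, citing Theorem \ref{2009main}, Lemma \ref{fandt}, the hypothesis, and Theorem \ref{2009innov2} in the same way. The paper simply declares the ``in particular'' statements to follow from (\ref{2009eq3}); your cofibrant-replacement and localization argument is a correct spelling-out of that routine step.
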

\begin{proof}
It is sufficient to prove (\ref{2009eq3}). By Theorem \ref{2009main} we have
$$cl_l(E)\cap \Delta^{op}C^{\#}=cl_{\tdl}(E\amalg Id_C)$$
By Lemma \ref{fandt}, our assumption and Theorem \ref{2009innov2} we get
$$F^{rad}(cl_{\tdl}(E\amalg Id_C))\subset cl_{\tdl}(F^{rad}(E\amalg Id_C))\subset cl_{\tdl}(cl_l(E'))=cl_l(E')$$
\end{proof}
\begin{theorem}
\llabel{2007eadj}
Let $F:C\sr C'$ be a functor which commutes with finite coproducts. Let $E$ be a set of morphisms in $\Delta^{op}C^{\#}$ and $E'$ a set of morphisms in $\Delta^{op}(C')^{\#}$. Assume further that one has:
$$F^{rad}(E\amalg Id_C)\subset cl_l(E')$$
$$F_{rad}(E'\amalg Id_{C'})\subset cl_l(E)$$
Then 
\begin{eq}
\llabel{2007in1}
F^{rad}(cl_l(E)\cap \Delta^{op}C^{\#})\subset cl_l(E')
\end{eq}
\begin{eq}
\llabel{2007in2}
F_{rad}(cl_l(E'))\subset cl_l(E)
\end{eq}
In particular, ${\bf L}F^{rad}(cl_l(E))\subset cl_l(E')$ and ${\RR}F_{rad}(cl_l(E'))\subset cl_l(E)$ and the resulting  functors between $H(C,E)$ and $H(C',E')$ are adjoint.
\end{theorem}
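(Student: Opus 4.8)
The plan is to treat the two inclusions separately and then assemble the adjunction. The first inclusion (\ref{2007in1}) is immediate: viewing $F$ as a functor $C\sr C'\subset Rad(C')$, the hypothesis $F^{rad}(E\amalg Id_C)\subset cl_l(E')$ is precisely the hypothesis of Theorem \ref{2007el1}, so (\ref{2007in1}) together with $\LL F^{rad}(cl_l(E))\subset cl_l(E')$ follows directly from that theorem. No additional argument is needed here.

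The substance is the second inclusion (\ref{2007in2}). The key observation is that, although $F_{rad}$ is a right adjoint and one would therefore not expect it to preserve local equivalences, it is nevertheless \emph{continuous} by Lemma \ref{2009frad1}, and this is enough. First I would apply Corollary \ref{2009main2} over $C'$ to rewrite
$$cl_l(E')=cl_{\tdl}((E'\amalg Id_{C'})\cup W_{proj}).$$
Since $F_{rad}$ is continuous, Lemma \ref{fandt} gives
$$F_{rad}(cl_l(E'))\subset cl_{\tdl}(F_{rad}((E'\amalg Id_{C'})\cup W_{proj})).$$
It then remains to check that the generators land in $cl_l(E)$: the inclusion $F_{rad}(E'\amalg Id_{C'})\subset cl_l(E)$ is one of our hypotheses, while $F_{rad}(W_{proj})\subset W_{proj}\subset cl_l(E)$ by Lemma \ref{2007fcc} together with the elementary fact that projective equivalences are $E$-local equivalences. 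Hence $F_{rad}((E'\amalg Id_{C'})\cup W_{proj})\subset cl_l(E)$, and since $cl_l(E)$ is $\tdl$-closed by Theorem \ref{2009innov2} we conclude $F_{rad}(cl_l(E'))\subset cl_{\tdl}(cl_l(E))=cl_l(E)$, which is (\ref{2007in2}). As $F_{rad}$ already preserves projective equivalences, $\RR F_{rad}$ is computed by $F_{rad}$ itself, so this also yields $\RR F_{rad}(cl_l(E'))\subset cl_l(E)$.

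For the adjointness of the induced functors between $H(C,E)$ and $H(C',E')$, I would descend the derived adjunction $\LL F^{rad}\dashv\RR F_{rad}$ of Lemma \ref{2007fcc} along the localization functors. By (\ref{2007in1}) and (\ref{2007in2}) both $\LL F^{rad}$ and $\RR F_{rad}$ carry the respective classes of local equivalences into local equivalences, so by the universal property of $H(C,E)=H(C)[cl_l(E)^{-1}]$ and $H(C',E')=H(C')[cl_l(E')^{-1}]$ (Theorem \ref{2009hce}) they factor through functors $\overline{\LL F^{rad}}:H(C,E)\sr H(C',E')$ and $\overline{\RR F_{rad}}:H(C',E')\sr H(C,E)$. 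The unit and counit of the original adjunction are natural transformations whose components descend under localization, and the triangle identities are preserved, so the descended pair is again adjoint. Equivalently, one may verify adjointness on the reflective subcategory of $E'$-local objects, using that $F_{rad}$ sends $E'$-local objects to $E$-local objects, which itself follows from the enriched adjunction $S(\tilde X,F_{rad}Y)\cong S(F^{rad}\tilde X,Y)$ (valid because $F^{rad}$ commutes with the coproducts defining $(-)\oo\Delta^n$) together with Lemmas \ref{d7.20.4} and \ref{d7.20.5}.

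I expect the main obstacle to be (\ref{2007in2}). The naive route — verifying the defining condition of an $E$-local equivalence for $F_{rad}(g)$ directly — stalls on the wrong variance of the adjunction, since maps out of $F_{rad}(X)$ are not controlled by the counit/unit. The point of the argument above is to bypass this entirely by exploiting the continuity of $F_{rad}$ and the explicit $\tdl$-presentation of $cl_l(E')$ from Corollary \ref{2009main2}; once this is in place, the first inclusion and the descent of the adjunction are formal.
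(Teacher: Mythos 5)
Your proposal is correct and follows essentially the same route as the paper: inclusion (\ref{2007in1}) via Theorem \ref{2007el1}, and inclusion (\ref{2007in2}) via the identical chain $F_{rad}(cl_l(E'))=F_{rad}(cl_{\tdl}((E'\amalg Id_{C'})\cup W_{proj}))\subset cl_{\tdl}(F_{rad}((E'\amalg Id_{C'})\cup W_{proj}))\subset cl_{\tdl}(cl_l(E))=cl_l(E)$ using Corollary \ref{2009main2}, Lemma \ref{fandt} (with continuity of $F_{rad}$ from Lemma \ref{2009frad1}), the hypotheses together with $F_{rad}(W_{proj})\subset W_{proj}$, and Theorem \ref{2009innov2}. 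The paper likewise reduces the adjointness statement to the two inclusions and leaves the descent of the adjunction of Lemma \ref{2007fcc} implicit, which your final paragraph simply makes explicit.
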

\begin{proof}
It is enough to prove the inclusions (\ref{2007in1}) and (\ref{2007in2}). The first one follows from Theorem \ref{2007el1}. For the second inclusion we have 
$$F_{rad}(cl_l(E'))=F_{rad}(cl_{\tdl}((E'\amalg Id_{C'})\cup W_{proj}))\subset cl_{\tdl}(F_{rad}((E'\amalg Id_{C'})\cup W_{proj}))\subset $$
$$\subset cl_{\tdl}(cl_l(E))\subset cl_l(E)$$
where the first equality follows from Theorem \ref{2009main2}, the first inclusion from Lemma \ref{fandt}, the second inclusion from our assumption and the fact that $F_{rad}(W_{proj})\subset W_{proj}$ and the last inclusion from Theorem \ref{2009innov2}. 
\end{proof}
\begin{cor}
\llabel{2007anfe}
Under the assumptions of the theorem one has:
\begin{enumerate}
\item the functor $F_{rad}$ takes $E'$-local objects to $E$-local objects.
\item if $F$ is a full embedding then $\LL F^{rad}:H(C,E)\sr H(C',E')$ is a full embedding and ${\RR}F_{rad}:H(C',E')\sr H(C,E)$ is a localization,
\item if $F$ is surjective on the isomorphism classes of objects then ${\RR}F_{rad}$ reflects isomorphisms i.e.
$$cl_l(E')=F_{rad}^{-1}(cl_l(E)).$$

\end{enumerate}
\end{cor}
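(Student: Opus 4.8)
The plan is to read everything off the adjunction $\LL F^{rad}\dashv\RR F_{rad}$ produced by Theorem~\ref{2007eadj}, whose descents to the localizations I denote $\bar F=\LL F^{rad}:H(C,E)\sr H(C',E')$ and $\bar G=\RR F_{rad}:H(C',E')\sr H(C,E)$, together with the two inclusions proved there and the fact (Remark after Lemma~\ref{2007fcc}) that $(F^{rad},F_{rad})$ is a Quillen adjunction, so $F^{rad}$ preserves cofibrant objects and $F_{rad}$ preserves fibrant objects. For part (1) I would check the hypotheses of Lemma~\ref{d7.20.4}. The object $F_{rad}(Y')$ is fibrant since $F_{rad}$ is right Quillen and $Y'$ is projectively fibrant. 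The only computation needed is the enriched-adjunction identity $S(X,F_{rad}(Y'))\cong S(F^{rad}(X),Y')$, which follows from the point-set adjunction $(F^{rad},F_{rad})$ and the equality $F^{rad}(X\oo\Delta^n)=F^{rad}(X)\oo\Delta^n$ (as a left adjoint, $F^{rad}$ commutes with the coproducts defining $X\oo\Delta^n$). For $f:X\sr X'$ in $E$ with cofibrant representatives, $F^{rad}(f)$ is a morphism of cofibrant objects in $F^{rad}(E\amalg Id_C)\subset cl_l(E')$, so Lemma~\ref{d7.20.5} applied to the $E'$-local $Y'$ shows $S(F^{rad}(X'),Y')\sr S(F^{rad}(X),Y')$ is a weak equivalence; transporting back along the identity gives the condition of Lemma~\ref{d7.20.4}, so $F_{rad}(Y')$ is $E$-local.

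For part (2) the key input is that full faithfulness of $F$ forces $F_{rad}F^{rad}\cong Id$ on $Rad(C)$. On a representable $U$ one computes from (\ref{2009radj}) that $F_{rad}F^{rad}(U)(V)=Hom_{C'}(F(V),F(U))=Hom_C(V,U)=U(V)$; since both $F^{rad}$ and $F_{rad}$ commute with filtered colimits and reflexive coequalizers (Lemmas~\ref{2007f3} and~\ref{2009frad1}) and every radditive functor is a reflexive coequalizer of such colimits of representables (Lemma~\ref{2007s1}), the isomorphism extends to all of $Rad(C)$, with invertible unit. Because $F_{rad}$ preserves all projective equivalences, this yields $\RR F_{rad}\LL F^{rad}\cong Id_{H(C)}$; combined with the defining relations $\bar F L_E=L_{E'}\LL F^{rad}$, $\bar G L_{E'}=L_E\RR F_{rad}$ and the essential surjectivity of $L_E$, it gives $\bar G\bar F\cong Id_{H(C,E)}$ with invertible unit, so $\bar F=\LL F^{rad}$ is a full embedding. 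Finally, a right adjoint $\bar G$ to a fully faithful functor is always a localization: the triangle identity together with invertibility of the unit shows each counit component $\epsilon_B$ satisfies $\bar G(\epsilon_B)$ invertible, so $\epsilon_B\in iso(\bar G)$, and any functor inverting $iso(\bar G)$ factors uniquely through $\bar G$ via $(-)\circ\bar F$; thus $\RR F_{rad}$ exhibits $H(C,E)$ as $H(C',E')[iso(\RR F_{rad})^{-1}]$.

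For part (3) the inclusion $cl_l(E')\subset F_{rad}^{-1}(cl_l(E))$ is precisely (\ref{2007in2}), so the content is the reverse inclusion, i.e. that $\RR F_{rad}$ reflects isomorphisms (equivalently, by saturation, Proposition~\ref{2009sat}, that $F_{rad}^{-1}(cl_l(E))\subset cl_l(E')$). The crux is that surjectivity of $F$ on isomorphism classes makes $F_{rad}$ \emph{reflect projective equivalences}: by (\ref{2009radj}) and representability of $F(U)$ one has $F_{rad}(A)(U)=A(F(U))$, so $F_{rad}(g)\in W_{proj}$ means $A(V)\sr B(V)$ is a weak equivalence for every $V$ in the image of $F$, hence for every $V\in C'$, i.e. $g\in W_{proj}$. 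Now given $f:X'\sr Y'$ with $F_{rad}(f)\in cl_l(E)$, form the square comparing $f$ with $Ex_{E'}(f)$ (Theorem~\ref{exlemma}); the vertical arrows lie in $cl_l(E')$, so their images under $F_{rad}$ lie in $cl_l(E)$ by (\ref{2007in2}), and the $2$-out-of-$3$ property of the $\tdl$-closed class $cl_l(E)$ (Theorem~\ref{2009innov2}) gives $F_{rad}(Ex_{E'}(f))\in cl_l(E)$. By part (1) its source and target are $E$-local, and a morphism of $cl_l(E)$ between $E$-local objects is a projective equivalence (a direct orthogonality and $2$-out-of-$3$ argument), so $F_{rad}(Ex_{E'}(f))\in W_{proj}$; reflection of projective equivalences then forces $Ex_{E'}(f)\in W_{proj}$, whence $f\in cl_l(E')$.

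The main obstacle, I expect, is organizational rather than computational: one must ensure that the several point-set identities ($F^{rad}(X\oo\Delta^n)=F^{rad}(X)\oo\Delta^n$ for part (1), $F_{rad}F^{rad}\cong Id$ for part (2), $F_{rad}(A)(U)=A(F(U))$ for part (3)) survive passage to the homotopy categories and remain compatible with the localizations $L_E,L_{E'}$ and their fully faithful right adjoints. In particular the subtle step is part (3): it is the only place that simultaneously requires part (1) and the auxiliary fact that $cl_l(E)$-equivalences between $E$-local objects are genuine projective equivalences, and it is where the reflection identity $F_{rad}(A)(U)=A(F(U))$ does the real work.
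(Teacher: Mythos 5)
Your proposal is correct and follows essentially the same route as the paper in all three parts: part (1) via the $(\LL F^{rad},\RR F_{rad})$ adjunction (you phrase it through the simplicial mapping spaces $S(-,-)$ and Lemmas \ref{d7.20.4}, \ref{d7.20.5}, where the paper works directly with hom-sets in $H(C)$ and $H(C')$ --- a cosmetic difference); part (2) by showing the unit $Id\sr F_{rad}F^{rad}$ is an isomorphism on representables and extending by filtered colimits and reflexive coequalizers, then descending to the localizations; part (3) by the $Ex_{E'}$ comparison square, part (1), the fact that an $E$-local equivalence between $E$-local objects is a projective equivalence, and reflection of projective equivalences coming from surjectivity of $F$ on isomorphism classes via $F_{rad}(A)(U)=A(F(U))$.

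One small repair is needed in part (3): Theorem \ref{exlemma}(\ref{exlemma3}) guarantees that $X'\sr Ex_{E'}(X')$ lies in $cl_l(E')$ only for $X'\in \Delta^{op}(C')^{\#}$, so before forming your comparison square you must replace $f$ by its cofibrant replacement $Cof(f)$, exactly as the paper does; since $Cof(X')\sr X'$ is a projective equivalence, this insertion costs nothing and the rest of your argument goes through unchanged.
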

\begin{proof}
For $f:X\sr Y$ and $Z'$ we have
$$Hom_{H(C)}(Y,F_{rad}(Z'))=Hom_{H(C')}(\LL F^{rad}(Y), Z')$$
$$Hom_{H(C)}(X,F_{rad}(Z'))=Hom_{H(C')}(\LL F^{rad}(X), Z')$$
and the map between the left hand sides defined by $f$ coincides with the map on the right hand sides defined by $\LL F^{rad}(f)$. If $f$ is in $E$ then $\LL F^{rad}(f)$ is in $E'$ and if $Z'$ is $E'$-local this map is a bijection, which means that $F_{rad}(Z')$ is $E$-local. This proves the first assertion. 

To prove the second assertion  it is sufficient to verify that the adjunction $Id\sr {\RR}F_{rad}\LL F^{rad}$ is an isomorphism in $H(C,E)$. Since any object of $H(C,E)$ is isomorphic to the image of an object from $\Delta^{op}C^{\#}$ and on such objects $\LL F^{rad}=F^{rad}$ it follows from the fact that $Id\sr F_{rad}F^{rad}$ is an isomorphism.

To prove the third assertion note first that since $F$ is surjective on isomorphism classes of objects one has $W_{proj}=F_{rad}^{-1}(W_{proj})$. 
Theorem \ref{2007eadj} implies the inclusion $"\subset"$. Let $f':X'\sr Y'$ be such that $F_{rad}(f')\in cl_l(E)$. We need to show that $f'\in cl_l(E')$. Consider the commutative diagram
$$
\begin{CD}
X' @<<< Cof(X') @>>> Ex_{E'}(Cof(X'))\\
@Vf'VV @Vg'VV @VVh'V\\
Y' @<<< Cof(Y') @>>> Ex_{E'}(Cof(Y'))
\end{CD}
$$
where $Cof$ is the standard cofibrant replacement functor,  the arrows going to the right are in $cl_l(E')$ and the arrows going to the left are projective equivalences. Since $F_{rad}(f)\in cl_l(E)$, Theorem \ref{2007eadj} implies that $F_{rad}(h')\in cl_l(E)$. Then by the first assertion of the corollary $F_{rad}(h')$ is an $E$-local equivalence between $E$-local objects and therefore a projective equivalence. Since $F_{rad}$ reflects projective equivalences we conclude that $h'$ is a projective equivalence and therefore $f'\in cl_l(E')$.
\end{proof}
\begin{remark}
\rm For any adjoint pair of functors such that one functor of the pair is surjective on isomorphism classes of objects the other one reflects isomorphisms. The issue in the proof of Corollary \ref{2007anfe}(3) is that while $F$ is surjective on isomorphism classes of objects the simplicial extension of $F^{rad}$ or even the simplicial extension of $F$ itself need not have this property since there may be many more morphisms and therefore many more simplicial objects in $C'$ then in $C$.
\end{remark}
\begin{remark}\rm
Even when left  Bousfield localizations of the projective c.m.s.'s  with respect to $E$ and $E'$ exist it, is not clear in general whether or not $(F^{rad}, F_{rad})$ is a Quillen adjunction between the localized model categories. 
\end{remark}


\begin{thebibliography}{10}

\bibitem{Adamek}
Ji{\v{r}}{\'{\i}} Ad{\'a}mek and Ji{\v{r}}{\'{\i}} Rosick{\'y}.
\newblock {\em Locally presentable and accessible categories}, volume 189 of
  {\em London Mathematical Society Lecture Note Series}.
\newblock Cambridge University Press, Cambridge, 1994.

\bibitem{Beck}
Jonathan~Mock Beck.
\newblock Triples, algebras and cohomology.
\newblock {\em Repr. Theory Appl. Categ.}, (2):1--59 (electronic), 2003.

\bibitem{Beke}
Tibor Beke.
\newblock Sheafifiable homotopy model categories.
\newblock {\em Math. Proc. Cambridge Philos. Soc.}, 129(3):447--475, 2000.

\bibitem{BKan}
A.K. Bousfield and D.M. Kan.
\newblock {\em Homotopy limits, completions and localizations}.
\newblock Lecture Notes in Math. 304. Springer-Verlag, 1972.

\bibitem{Delnotessub}
Pierre Deligne.
\newblock Voevodsky's lectures on motivic cohomology 2000/2001.
\newblock In {\em Algebraic Topology}, volume~4 of {\em Abel Symposia}, pages
  355--409. Springer, 2009.

\bibitem{GabZis}
P.~Gabriel and M.~Zisman.
\newblock {\em Calculus of fractions and homotopy theory}.
\newblock Springer-Verlag, Berlin, 1967.

\bibitem{GJ}
Paul~G. Goerss and John~F. Jardine.
\newblock {\em Simplicial Homotopy Theory}.
\newblock Birkhauser, 1999.

\bibitem{Hirs}
Philip~S. Hirschhorn.
\newblock {\em Model categories and their localizations}, volume~99 of {\em
  Mathematical Surveys and Monographs}.
\newblock American Mathematical Society, Providence, RI, 2003.

\bibitem{Hovey}
Mark Hovey.
\newblock {\em Model categories}.
\newblock AMS, Providence, 1999.

\bibitem{Hoveystab}
Mark Hovey.
\newblock Spectra and symmetric spectra in general model categories.
\newblock {\em J. Pure Appl. Algebra}, 165(1):63--127, 2001.

\bibitem{Quillen}
D.~Quillen.
\newblock {\em Homotopical algebra}.
\newblock Lecture Notes in Math. 43. Springer-Verlag, Berlin, 1973.

\bibitem{Swan2}
Richard~G. Swan.
\newblock Nonabelian homological algebra and ${K}$-theory.
\newblock In {\em Proceedings of symposia in pure mathematics}, volume~17,
  pages 88--123, Providens, 1970. AMS.

\bibitem{W}
Charles~A. Weibel.
\newblock {\em An introduction to homological algebra}.
\newblock Cambridge University Press, Cambridge, 1994.

\end{thebibliography}
\end{document}